\newtheorem{theo}{Theorem}
\newtheorem{theorem}{Theorem}[section]
\newtheorem{lemma}[theorem]{Lemma}
\newtheorem{proposition}[theorem]{Proposition}
\newtheorem{corollary}[theorem]{Corollary}
\newtheorem{coroll}[theo]{Corollary}
\theoremstyle{definition}
\newtheorem{definition}[theorem]{Definition}
\newtheorem{remark}[theorem]{Remark}
\newtheorem*{theorem*}{Theorem}
\numberwithin{equation}{section}
\numberwithin{figure}{section}
\newcommand{\ho}{\mathcal{H}}
\newcommand{\A}{\alpha}
\newcommand{\M}{\mathcal{M}}
\newcommand{\TT}{\mathcal{T}}
\newcommand{\B}{\beta}
\newcommand{\N}{\mathds{N}}
\newcommand{\Z}{\mathds{Z}}
\newcommand{\I}{\mathcal{I}}
\newcommand{\R}{\mathds{R}}
\newcommand{\C}{\mathcal{C}}
\newcommand{\CC}{\gamma}
\newcommand{\CCC}{\Gamma}
\newcommand{\Si}{\sigma}
\newcommand{\E}{\epsilon}
\newcommand{\T}{\tau}
\newcommand{\alk}[2]{\mathrm{Lk}_{\uparrow}(#1,#2)}
\newcommand{\dlk}[2]{\mathrm{Lk}_{\downarrow}(#1,#2)}
\newcommand{\hsmH}{\mathcal{H}}
\begin{document}

\title{Geometry of Houghton's groups}

%\author[D.\ P.~Guralnik]{Dan P.\ Guralnik}
%\address{Dept.\ of Mathematics\\
%        University of Oklahoma\\
%	Norman, OK 73019}
%\email{dan.guralnik@ou.edu}

\author[SR.~Lee]{Sang Rae Lee}
\address{Dept.\ of Mathematics\\
      Technion - Israel Institute of Technology\\
	Haifa, 32000
Israel}
\email{srlee@tx.technion.ac.il}

\date{\today}

\begin{abstract}

Hougthon's groups $\{\ho_n\}$ is a family of groups where each $\ho_n$ consists of `translations at infinity' on $n$ rays of discrete points emanating from the origin on the plane. Brown shows $\ho_n$ has type $F\!P_{n-1}$ but not $F\!P_{n}$ by constructing infinite dimensional cell complex on which $\ho_n$ acts with certain conditions. We modify his idea to construct $n$-dimensional CAT(0) cubical complex $X_n$ on which $\ho_n$ acts with the same conditions as before. Brown also shows $\ho_n$ is finitely presented provided $n \geq 3$. Johnson provides a finite presentation for $\ho_3$. We extend his result to provide finite presentations of $\ho_n$ for $n>3$. We also establish exponential isoperimetric inequalities of $\ho_n$ for $n\geq3$. %The construction of $G$ has potential to provide a family of amenable groups $G_n$ with infinite virtual first Betti numbers. We also discuss a CAT(0) cubical complex $\X_n$ on which $G_n$ acts.
\end{abstract}

\maketitle

%%%%%%%%%%%%%%%%%%%%%%%%%%%%%%%%%%%%%%%%%%%%%%%
\section{Intoduction}

\indent One of major paradigms in geometric group theory is the idea that one can understand algebraic properties of groups by studying their actions on geometric spaces. There is a basic geometric object that one can associate to any finitely generated group: its Cayley graph with the word metric.

Every pair $(G, S)$ of a group and its finite generating set $S$ has an associated $word\; metric$. The distance $d(g,h)$ between elements $g, h \in G$ is defined to be the length of the shortest word in $S \cup S^{-1}$ which is equal to $g^{-1}h$ in $G$. The $Cayley\; graph$ $\Gamma(G,S)$ of a pair $(G,S)$ has $G$ as its vertex set, and a vertex $g_1$ is connected to another vertex $g_2$ by an edge labeled by $s$ if $g_2 = g_1 s$ for some $s \in S$. A word metric on a pair $(G,S)$ extends to a metric on $\Gamma(G,S)$ provided one declares each edge in $\Gamma(G,S)$ has length $1$. The action of $G$ on itself by left multiplication extends naturally to an isometric action on the Cayley graph (with fixed generating set $S$).

We can use a Cayley graph $\CCC(G,S)$ to define geometric properties of a group $G$. One such property is the number of ends $e(G)$ of $G$. Roughly speaking, the number of ends $e(G)$ of a finitely generated group $G$ measures the number of ``connected components of $G$ at infinity". For a finitely generated group $G$, $e(G)$ is determined by the number of $ends$ $e(\CCC)$, of a Cayley graph $\CCC$ of $G$, which can be defined as follows. To find $e(\Gamma)$, remove a compact set $K$ from $\Gamma$, and count the number of unbounded components of $\Gamma - K$. The number $e(\CCC)$ is defined to be the supremum of this number over all compact sets. Finally it can be shown that $e(\CCC)$ is independent of the choice of a finite generating set $S$ used in the construction of $\CCC$ (\cite{Stallings2}). So it makes sense to define $e(G) = e(\CCC)$.

%It was shown by Hopf that the number of a group is $0, 1, 2$ or $\infty$.

In \cite{Stallings}, Stallings showed that, for a finitely generated group $G$, the geometric condition of having more than one end is equivalent to the algebraic condition that $G$ $splits$ (that is, $G$ can be written as a free product with amalgamation or HNN extension) over a finite subgroup. On the other hand, the theory of Bass-Serre relates the algebraic condition that $G$ splits to certain types of actions of $G$ on a tree. In view of Bass-Serre theory, Stallings' theorem can be restated as follows.

\begin{theorem}\label{Stallings} A finitely generated group $G$  satisfies $ e(G) > 1$ if and only if $G$ acts on a tree (without inversion) with finite edge-stabilizers.
\end{theorem}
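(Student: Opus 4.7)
The plan is to deduce this reformulation from Stallings' original algebraic theorem --- that a finitely generated group has more than one end if and only if it splits non-trivially as an amalgamated free product or HNN extension over a finite subgroup --- combined with the standard dictionary of Bass-Serre theory between such splittings and actions on trees.

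For the forward direction, suppose $e(G)>1$. Stallings' theorem in its original form produces either a non-trivial amalgam $G=A\ast_C B$ (with $A\neq C\neq B$) or an HNN extension $G=A\ast_C$ in which the amalgamating subgroup $C$ is finite. To such a decomposition Bass-Serre theory associates a tree $T$, the Bass-Serre tree, on which $G$ acts without inversion; the vertex stabilizers are conjugates of $A$ (and of $B$), while the edge stabilizers are conjugates of $C$. Since $C$ is finite, all edge stabilizers are finite, and this is the desired action.

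For the reverse direction, suppose $G$ acts on a tree $T$ without inversion and with finite edge stabilizers, and assume $G$ has no global fixed point. After passing to a minimal invariant subtree, the fundamental theorem of Bass-Serre theory identifies $G$ with the fundamental group of the quotient graph of groups $T/G$, whose edge groups are stabilizers of chosen lifts of edges in $T$. Selecting any edge of the quotient graph and either collapsing the complementary components (if the edge separates) or contracting a spanning tree of the quotient (if it lies on a loop) exhibits $G$ as a non-trivial amalgamated free product or HNN extension over a finite subgroup. The converse direction of Stallings' theorem~\cite{Stallings} then yields $e(G)>1$.

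The principal obstacle is that the hypothesis, taken literally, is insufficient: any group acts trivially on a single-vertex tree, where edge stabilizers are vacuously finite. One must therefore implicitly supplement the hypothesis with a non-triviality condition --- the absence of a global fixed point, equivalently the existence of an edge orbit --- in order to extract a non-trivial splitting. Once this point is settled, the theorem is essentially a translation between the algebraic and geometric viewpoints, with all of the substantive content residing in Stallings' original theorem.
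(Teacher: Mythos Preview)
The paper does not actually prove this theorem: it is stated as a background result in the introduction, attributed to Stallings \cite{Stallings}, and the paper explicitly says that the tree-action formulation arises by combining Stallings' original algebraic theorem (splitting over a finite subgroup) with Bass--Serre theory. Your proposal follows exactly this route, so it aligns with the paper's indicated approach; your observation that the reverse implication tacitly requires the action to have no global fixed point is correct and is indeed an imprecision in the statement as written.
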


There is a generalization of Theorem \ref{Stallings}, which replaces the tree by a possibly higher dimensional space and uses a more general notion of ends than $e(G)$. In \cite{Houghton_2}, C. H. Houghton introduced the concept of the number of ends $e(G,H)$ of a finitely generated group with respect to a subgroup $H\leq G$. Being a subgroup of $G$, $H$ acts on a Cayley graph $\Gamma$ of $G$ by the left multiplication. Now define $e(G,H)$ to be the number of ends of the quotient graph $\CCC/H$. Again, one can show that this is independent of the particular finite generating set $S$ used in the construction of $\CCC$ (\cite{Scott}).  A group $G$ is called \textit{multi-ended} if $e(G,H)>1$ for some subgroup $H$.

Roughly speaking, a piecewise Euclidean (PE) cubical complex is build from a collection of a regular Euclidean cubes by glueing their faces via isometries. A $cubing$ is a $1$-connected PE cubical complex satisfying some additional non-positive curvature conditions. We will make this notion precise in Section \ref{section_CC}.

One dimensional cubes are just unit length line segments, and so a $1$-dimensional cubical complex is simply a graph. The condition of being simply connected means that the graph is a tree. Therefore one can think of cubings as generalizations of trees.

In \cite{Sageev}, Sageev proved the following remarkable generalization of Stallings' result.

\begin{theorem}\label{Sageev} A finitely generated group $G$ is multi-ended if and only if $G$ acts `essentially' on a cubing.
\end{theorem}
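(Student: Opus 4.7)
The plan is to prove each direction separately. The forward direction (multi-ended $\Rightarrow$ essential action on a cubing) is the substantive content due to Sageev; the reverse direction is a comparatively straightforward argument using the hyperplane structure of a cubing.

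For the forward direction, I would begin by fixing a subgroup $H \leq G$ with $e(G,H) > 1$ and a finite generating set $S$. The quotient graph $\CCC(G,S)/H$ having more than one end means we can choose a finite set whose complement in $\CCC/H$ has at least two unbounded components. Lifting this back to $\CCC$, we obtain a subset $A \subseteq G$ that is \emph{$H$-almost-invariant} and whose coboundary in $\CCC$ is $H$-finite, with both $A$ and $G \setminus A$ containing infinitely many $H$-cosets. The key combinatorial object is then the pocset (partially ordered set with an order-reversing involution) generated by the $G$-translates of $A$ under inclusion and complementation. I would next invoke Sageev's cubical-complex construction: vertices of the space $X$ are \emph{ultrafilters} on this pocset (consistent choices of one element from each complementary pair, subject to a descending-chain condition), edges join ultrafilters differing on exactly one complementary pair, and higher-dimensional cubes are filled in whenever their $1$-skeleta appear. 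The pocset axioms and Sageev's gluing argument imply that $X$ is a CAT(0) cube complex in the sense of Section \ref{section_CC}. The natural $G$-action on translates of $A$ induces an action on $X$, and essentiality follows because the hyperplane corresponding to $A$ has both half-spaces meeting $G$-orbits arbitrarily far from it, by the choice of $A$.

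For the converse, suppose $G$ acts essentially on a cubing $X$, pick a hyperplane $h$ in $X$, and let $H$ be the stabilizer of $h$ in $G$. Each hyperplane in a CAT(0) cube complex separates $X$ into exactly two convex half-spaces, and essentiality guarantees that no $H$-invariant bounded neighborhood of $h$ swallows the $G$-orbit of $h$. Choosing an orbit map $G \to X$ and pulling back one of the half-spaces of $h$ produces an $H$-almost-invariant subset of $G$ whose two sides each contain infinitely many $H$-cosets; this translates to $e(G,H) > 1$.

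The main obstacle is the forward direction, and specifically two interlocking technical points inside Sageev's construction: first, proving that the ultrafilter complex $X$ is connected and locally finite-dimensional, where the descending-chain condition on ultrafilters is delicate and uses the $H$-finiteness of the coboundary of $A$ in an essential way; second, verifying Gromov's link condition so that $X$ is nonpositively curved and hence CAT(0). A secondary subtlety is ensuring that the $G$-action on $X$ does not trivially collapse to a fixed point or bounded invariant subcomplex, which again relies on both $A$ and its complement meeting infinitely many $H$-cosets, a property built into the hypothesis $e(G,H) > 1$.
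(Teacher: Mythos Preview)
The paper does not give its own proof of this statement: Theorem~\ref{Sageev} is simply quoted in the introduction as background, with the attribution ``In \cite{Sageev}, Sageev proved the following remarkable generalization of Stallings' result,'' and no argument is supplied anywhere in the text. There is therefore nothing in the paper to compare your proposal against.

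That said, your outline is a faithful sketch of how Sageev's original argument actually proceeds (almost-invariant sets from $e(G,H)>1$, the pocset of their translates, ultrafilters as vertices of a cube complex, verification of the link condition, and the hyperplane/stabilizer argument for the converse). If you intend this as a self-contained proof rather than a pointer to \cite{Sageev}, the gaps you flag---connectedness of the principal component, the descending-chain condition giving finite dimensionality (which in fact need not hold; the cubing can be infinite-dimensional, as the paper itself notes immediately after the theorem), and essentiality of the action---are exactly the places where real work is needed, and each requires several pages. For the purposes of this paper a citation suffices.
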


This theorem includes the possibility that the cubing is infinite dimensional. An essential action means that the action has an unbounded orbit provided the cubing is finite dimensional.

Non-positively curved cubical complexes play a central role in low-dimensional topology and geometric group theory. A striking example of
their importance is given by Agol's recent proof \cite{Agol} of the Virtual Haken Conjecture and the Virtual Fibration Conjecture in $3$--manifold
topology. Agol's proof relies on results of Haglund and Wise which concern fundamental groups of a $speical$ class of non-positively curved cubical complexes.
In \cite{Haglund-Wise}, Haglund and Wise showed if the fundamental group of a special cubical complex is word-hyperbolic then every quasiconvex subgroup is separable.

% quasiconvex subgroups of a fundamental groups of special cubical complexes are separable.

%The key theorem that a hyperbolic group $G$ acting on a cubing $X$ geometrically has a finite index subgroup $H$ such that $X/H$ belongs to $\C$.

This thesis explores geometric properties of a particular class of groups, termed Houghton's groups, introduced in \cite{Houghton}. Roughly speaking, Houghton's group $\ho_n$ ($n \in \N$) is the group of permutations of $n$ rays of discrete points which are eventual translations (each permutation acts as a translation along each ray outside a finite set). See Section \ref{definition_houghton_group} for details.

There are $n$ canonical copies of $\ho_{n-1}$ inside $\ho_n$, and $\ho_n$ is multi-ended with respect to each of them. The $i^{th}$ subgroup, $1\leq i\leq n$, is obtained by restricting to permutations which fix $i^{th}$ ray pointwise. One of the main results of this thesis is to produce an action of $\ho_n$ on a $n$-dimensional cubing $X_n$. Note that depending on subgroups which are taken into account, there are various cubings on which $\ho_n$ acts. One feature of our cubing is that $X_n$ encodes all of those subgroups $\ho_{n-1}$ at once. See Section \ref{sec_Sageev_construction} for a detailed description.

\begin{theo} \label{thm_action_on_X_n}
For each integer $n\!\geq\! 1$, there exists a $n$-dimensional cubing $X_n$ and a Morse function $h:X_n\rightarrow \mathds{R}_{\geq0}$ such that $\mathcal{H}_n$ acts on $X_n$ properly (but not cocompactly) by height-preserving semi-simple isometries. Furthermore, for each $r \in \mathds{R}_{\geq0}$ the action of $\mathcal{H}_n$ restricted to the level set $h^{\!-1}\!(r)$ is cocompact.
\end{theo}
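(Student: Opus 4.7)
The plan is to apply Sageev's cubulation (Theorem~\ref{Sageev}) to the $n$ canonical copies of $\mathcal{H}_{n-1}$ inside $\mathcal{H}_n$---the $i$-th copy being the pointwise stabilizer of the $i$-th ray---using Brown's infinite-dimensional construction as an organizing blueprint but cubing it via the wall structure induced by these $n$ subgroups. Each of the $n$ families of cosets supplies one ``direction'' of hyperplanes, and the expected dimension is exactly $n$: hyperplanes from the same family do not cross (they form a tree-like nested system describing how one ray has been reconfigured), while one hyperplane from each of the $n$ distinct families can mutually cross because the rays are independent.

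First I would give an explicit combinatorial model for $X_n$, to be developed in Section~\ref{sec_Sageev_construction}. A vertex will record, for each ray, a local description of how that ray has been ``disturbed'' relative to the basepoint, and two vertices will be adjacent when they differ by a single elementary move on one ray. The cube condition then reduces to the observation that elementary moves on different rays commute, so every cube of dimension $\leq n$ appears and none of dimension $>n$ does; non-positive curvature is immediate from this independence, yielding the CAT(0) link condition.

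Next I would define $h(v)$ on a vertex $v$ as the total number of points displaced, summed over all $n$ rays, and extend $h$ affinely across each cube. This $h$ is then a PL Morse function: affine on cubes, integer-valued on vertices, with finitely many critical cells in each sublevel set. The natural $\mathcal{H}_n$-action on ray configurations induces a cubical, hence semi-simple, action on $X_n$, and since every element of $\mathcal{H}_n$ agrees with a fixed eventual translation outside a finite set, the action preserves $h$. Properness follows because the stabilizer of a vertex is the finite permutation group of its displaced points. For each $r \in \mathds{R}_{\geq 0}$, cocompactness of the action on $h^{-1}(r)$ reduces to a finite enumeration of disturbance patterns of total size $r$ modulo the $\mathcal{H}_n$-action, whereas cocompactness on all of $X_n$ fails because the number of such patterns grows without bound in $r$.

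The main obstacle I foresee is establishing the precise dimension of $X_n$. The upper bound $\dim X_n \leq n$ requires showing that any pairwise-crossing collection of walls selects at most one wall from each family, which follows from the fact that two walls from the same family are always nested or disjoint; the lower bound $\dim X_n \geq n$ requires exhibiting a single $n$-cube in which one wall from each of the $n$ families participates. Once these dimension bounds and the link condition are in place, the Morse property and the cocompactness on level sets follow by direct combinatorial book-keeping.
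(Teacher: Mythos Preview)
Your approach via Sageev's cubulation is genuinely different from the paper's, which builds $X_n$ explicitly from the monoid $\mathcal{M}_n$ of injective (not necessarily surjective) eventual translations of $Y_n$: vertices are elements of $\mathcal{M}_n$, edges record left-multiplication by the $n$ commuting shifts $t_1,\dots,t_n$, and cubes are filled in along every commuting family of $t_i$'s. The height of a vertex $\alpha$ is $h(\alpha)=|Y_n\setminus (Y_n)\alpha|$, the number of points missed by the image; $\mathcal{H}_n=h^{-1}(0)$ acts by post-composition. This monoid picture is what makes the Morse function, the dimension bound, the link condition, and the level-set cocompactness all transparent.

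The principal gap in your proposal is the Morse function. Sageev's construction outputs a cubing whose vertices are consistent orientations of halfspaces; there is no canonical nonnegative height on such an object, and your proposed $h(v)=$ ``total number of points displaced'' is not defined on an ultrafilter and does not obviously descend from any natural quantity on the wall data. In the paper the height exists precisely because the vertices are \emph{non-surjective} maps---the monoid enlargement is the missing idea. A secondary gap is your dimension argument: the assertion that walls coming from the same copy of $\mathcal{H}_{n-1}$ are pairwise nested is the entire content of the upper bound $\dim X_n\le n$, but you have not said what the walls are concretely nor why two cosets $g\mathcal{H}_{n-1}^{(i)}$ and $g'\mathcal{H}_{n-1}^{(i)}$ give non-crossing hyperplanes; in the monoid model this is immediate because an ascending edge carries exactly one label $t_i$, so a $k$-cube uses $k$ distinct labels. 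Without the monoid (or an equivalent explicit description of halfspaces), both the Morse function and the dimension bound remain unproven.
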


An additional feature of $X_n$ is that it comes equipped with a height function (Morse function) to the non-negative reals. The group $\ho_n$ acts as a height-preserving fashion,  and the quotient of any level set by $\ho_n$ is cocompact. As an application we recover Brown's results for finiteness properties of $\ho_n$.

\begin{coroll}\label{coroll_fin_prop_H_n}
For $n\!\geq\!2$, $\mathcal{H}_n$ is of type $F\!P_{\!{n-1}}$ but not $F\!P_{\!n}$, it is finitely presented for $n \geq 3$.
\end{coroll}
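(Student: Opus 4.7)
The plan is to apply Brown's filtration criterion for finiteness properties to the $\mathcal{H}_n$-action on $X_n$ supplied by Theorem \ref{thm_action_on_X_n}. Being a cubing, $X_n$ is CAT(0) and hence contractible. The construction in Section \ref{sec_Sageev_construction} should also exhibit that cell-stabilizers of the $\mathcal{H}_n$-action are finite (a fact I would record separately from the construction). The Morse function $h$ gives an $\mathcal{H}_n$-invariant filtration by sublevel sets $X_n^{\leq r}:=h^{-1}([0,r])$, and cocompactness of $\mathcal{H}_n \curvearrowright X_n^{\leq r}$ follows at once from cocompactness on each level set $h^{-1}(r')$ together with the finiteness of the set of critical values in $[0,r]$.

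Brown's criterion then translates the question into essential connectivity of the filtration: $\mathcal{H}_n$ is of type $F_k$ if and only if for every $r$ there exists $R\geq r$ such that the inclusion $X_n^{\leq r}\hookrightarrow X_n^{\leq R}$ is null-homotopic in dimensions $\leq k-1$ (and the analogous homological statement governs $FP_k$). To control this I would run the Bestvina--Brady style Morse theory associated to $h$: the homotopy type of $X_n^{\leq r}$ changes only when $r$ crosses a critical value, and at such a crossing $X_n^{\leq r}$ is obtained from the previous sublevel set, up to homotopy, by coning off the descending links $\dlk{v}{X_n}$ of the newly appearing critical vertices.

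The main step, and where I expect the real work to lie, is the computation of these descending links. The target is to show that every $\dlk{v}{X_n}$ is $(n-2)$-connected, while at least one (and in fact generic ones) fails to be $(n-1)$-connected. Here I would exploit the product-like structure that $X_n$ inherits from the $n$ rays defining $\mathcal{H}_n$, so that $\dlk{v}{X_n}$ factors as an iterated join of finitely many discrete sets, one factor per ray on which $v$ has descended. For a generic critical vertex this join has the homotopy type of a wedge of $(n-2)$-spheres, giving $(n-2)$-connectivity but not $(n-1)$-acyclicity; this is precisely the input Brown's criterion needs to conclude type $FP_{n-1}$ and to obstruct $FP_n$.

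Finally, finite presentation for $n\geq 3$ is automatic from the Morse-theoretic approach: the essentially $(n-2)$-connected filtration actually yields topological finiteness $F_{n-1}$, and $F_{n-1}$ for $n\geq 3$ specializes to $F_2$, i.e.\ finite presentability. Thus, once the descending link analysis in the previous paragraph is carried out, Corollary \ref{coroll_fin_prop_H_n} follows from Theorem \ref{thm_action_on_X_n} without further ingredients.
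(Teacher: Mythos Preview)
Your overall strategy---Brown's criterion applied to the filtration of $X_n$ by sublevel sets of $h$, with the connectivity input coming from descending links---is exactly the paper's approach, and the preliminary ingredients (contractibility, finite cell-stabilizers, cocompactness on sublevel sets) are all supplied by Section~\ref{sec_action_of_H_n_on_X_n} just as you anticipate.

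The gap is in your descending link computation. The descending link is \emph{not} a join of discrete sets. By Remark~\ref{rmk_maxima_1-1_corres} and Corollary~\ref{coro_cube}, the vertices of $\dlk{\alpha}{X_n}$ for a vertex $\alpha$ with $h(\alpha)=h$ are indexed by pairs $(i,s)\in\{1,\dots,n\}\times S(\alpha)$, and a collection of such pairs spans a simplex precisely when the first coordinates are pairwise distinct \emph{and} the second coordinates are pairwise distinct. That second condition destroys any join decomposition ``one factor per ray'': the join of $n$ discrete sets of size $h$ would impose only the first condition. What one actually obtains is the chessboard-type complex $L_{n,h}$ described before Lemma~\ref{link}, and the paper needs a separate inductive Morse argument (Lemma~\ref{sphere}) to show that $L_{n,h}$ is homotopy equivalent to a wedge of $(n-1)$-spheres once $h\geq 2n-1$. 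Note also the dimension slip in your statement: a wedge of $(n-2)$-spheres is only $(n-3)$-connected, so it would not give the $(n-2)$-connectivity you claim; the correct target is a wedge of $(n-1)$-spheres, which is $(n-2)$-connected with nonvanishing $H_{n-1}$, exactly what makes the passage $X_{n,h}\hookrightarrow X_{n,h+1}$ an adjunction of $n$-cells up to homotopy and triggers Brown's criterion for $FP_{n-1}$ but not $FP_n$.
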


Knowing that $\ho_n$ is finitely presented for $n \geq 3$ prompts a natural question. What are explicit presentations for the $\ho_n$? In \cite{johnson}, Johnson answered this for $\ho_3$. Another main result of this thesis provides explicit presentations for all $\ho_n$ ($ n \geq 3$).

\begin{theo}\label{thm_fin_presen_Hough}
For $n\geq 3$, $\mathcal{H}_n$ is generated by $ g_1, \cdots, g_{n-1}, \alpha$ with relators
\begin{equation*}
    \alpha^2\!=\!1, \,(\alpha \alpha^{g_1}\!)^3\! =\!1,\, [\alpha , \alpha^{g_1^2}]\!=\!1, \alpha\!=\![g_i,g_j] ,\;\,\alpha^{g_i^{-1}}\! =\! \alpha^{g_i^{-1}}\;\text{for}\; 1\leq i\neq j \leq n-1.
\end{equation*}
\end{theo}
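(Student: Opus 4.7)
The plan is to proceed by induction on $n$, with Johnson's presentation of $\ho_3$ serving as the base case. Throughout I fix canonical representatives in $\ho_n$: $\alpha$ is the transposition of the first two points of ray~$1$, $g_1$ is the unit translation along ray~$1$, and for $i\geq 2$ the element $g_i$ is a ``balanced'' translation that advances ray~$i$ by one step while compensating by one step on ray~$1$, so that each $g_i$ indeed lies in $\ho_n$. A direct calculation verifies every listed relator in $\ho_n$: $\alpha^2=1$ is immediate; $(\alpha\alpha^{g_1})^3=1$ and $[\alpha,\alpha^{g_1^2}]=1$ are the standard braid and disjoint-transposition identities among transpositions on ray~$1$; the commutator $[g_i,g_j]$ of two balanced translations differs from the identity only at a single pair of points on ray~$1$, giving $\alpha$ after the right normalization; and the final conjugation-compatibility relators express that, near the support of $\alpha$, the maps $g_i^{-1}$ and $g_j^{-1}$ act identically on ray~$1$.

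Next I would show that these generators suffice. Using the short exact sequence
\[
1\longrightarrow S_\infty\longrightarrow\ho_n\longrightarrow\Z^{n-1}\longrightarrow 1,
\]
where $S_\infty$ denotes the group of finitely supported permutations of $\bigsqcup_{i=1}^n\N$, it is enough to observe that the images of the $g_i$ span the abelian quotient $\Z^{n-1}$ and that $\alpha$ together with its conjugates by monomials in the $g_i$ realizes every transposition of adjacent points on every ray, hence generates $S_\infty$. This gives a surjection $\phi_n\colon G_n\twoheadrightarrow\ho_n$ from the group $G_n$ defined by the presentation. For the inductive step, I would identify the subgroup $H\leq G_n$ generated by $g_1,\ldots,g_{n-2},\alpha$, verify that the defining relations of $G_{n-1}$ hold in $H$, and invoke the induction hypothesis to obtain $H\cong\ho_{n-1}$. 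The new relators then force conjugation by $g_{n-1}$ to act on $H$ exactly as translation along ray~$n-1$ acts inside $\ho_n$, so that $G_n$ fits into the same extension $1\to S_\infty\to G_n\to\Z^{n-1}\to 1$ as $\ho_n$ itself, and an extension-comparison argument yields $G_n\cong\ho_n$. Alternatively, one can follow a geometric route: Theorem~A provides a simply connected cubing $X_n$ on which $\ho_n$ acts with finite cell stabilizers and cocompactly on every level set, so Brown's method for groups acting on simply connected complexes furnishes a finite presentation, which a sequence of Tietze moves reduces to the stated form.

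The main obstacle is the rigidity of $\alpha$ in $G_n$: to make the extension comparison succeed one must show that the defining relators force every conjugate $\alpha^w$ with $w$ a word in the $g_i$ to depend only on the image of $w$ in $\Z^{n-1}$, so that the kernel of $\phi_n$ restricted to the ``permutation part'' of $G_n$ is trivial. Establishing this rigidity is the technical heart of the argument, and it must be distilled from the interplay between the Johnson-style relations confined to ray~$1$ and the new commutator relations $[g_i,g_{n-1}]=\alpha$ added at each inductive step.
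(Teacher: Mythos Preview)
Your overall architecture---induction on $n$ with Johnson's $\ho_3$ as base case, surjection $G_n\twoheadrightarrow\ho_n$, and a Five-Lemma/extension comparison over $1\to\Sigma_\infty\to\ho_n\to\Z^{n-1}\to1$---is exactly the paper's. The paper, however, does \emph{not} run the induction through the subgroup $H=\langle g_1,\dots,g_{n-2},\alpha\rangle\leq G_n$. Instead it interposes an intermediate \emph{infinite} presentation $H_n=\langle g_1,\dots,g_{n-1},\mathcal A_n\mid \mathcal Q_n,\mathcal Q'_n,[g_i,g_j]=\alpha\rangle$, where $\mathcal A_n$ is the full set of adjacent transpositions on $Y_n$, $\mathcal Q_n$ are the Coxeter relations among them, and $\mathcal Q'_n$ records how each $g_k$ conjugates each transposition. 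For this $H_n$ the Five Lemma applies cleanly (the normal subgroup $\langle\mathcal A_n\rangle$ is visibly $\Sigma_\infty$). The inductive step is then purely syntactic: show that the finite relator set $P_{n+1}$ implies every relator in $\mathcal Q_{n+1}\cup\mathcal Q'_{n+1}$. The engine for this is a short lemma proving, from $P_n$ alone, the identities
\[
\alpha^{\,g_i^{-k}}=\alpha^{\,g_j^{-k}},\qquad [\alpha,\alpha^{\,g_i^{-(k+1)}}]=1,\qquad [\alpha,\,g_j^{\,g_i^{-2}}]=1
\]
for all $k\geq1$ and $i\neq j$, by simultaneous induction on $k$. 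These are precisely the ``rigidity'' statements you flagged as the main obstacle; the paper proves them directly rather than deducing them from an embedded $\ho_{n-1}$.

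Two smaller points. First, your description of $g_1$ as ``the unit translation along ray~1'' cannot be right: a one-sided shift on a single ray is not a bijection of $Y_n$; in the paper every $g_i$ ($1\le i\le n-1$) is the balanced translation from ray~1 to ray~$i+1$. Second, your subgroup step (``the relations of $G_{n-1}$ hold in $H$, so by induction $H\cong\ho_{n-1}$'') only yields a surjection $G_{n-1}\to H$; you recover injectivity by composing with $\phi_n$ and noting the composite is $\phi_{n-1}$, but even with $H\cong\ho_{n-1}$ in hand you still have to show that conjugation by $g_{n-1}$ carries the transpositions in $H$ to the correct transpositions on ray~$n$ and commutes with the others---i.e., you must still verify the $\mathcal Q'_{n+1}$-type relations. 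That is exactly the calculation the paper isolates, and it is not bypassed by the subgroup identification.
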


Finally we determine bounds for the Dehn functions of $\ho_n$. We give a formal definition of Dehn functions in Section $4$. Intuitively, given a finite presentation $P = \langle A | R \rangle$ for a group $G$, and given a word $w$ representing the identity in $G$ with $|w| \leq x$, the Dehn function of a presentation $P$, $\delta_{P}(x)$, measures the least upper bound on the number of relations in term of $x$, which one must apply to check $w=1$. Although the function $\delta_{P}(x)$ depends on the presentation, the growth type of this function is independent of choice of a finite presentation for $G$. The $Dehn\; function$ of a finitely presented group $G$ is defined to be the growth type of $\delta_{P}(x)$. See Section \ref{section_Dehn_function} for details. An isoperimetric function for a group is an upper bound of the Dehn function. In Section $4$ we establish isoperimetric inequalities for $\ho_n$ for $n\geq3$.

\begin{theo}\label{thm:exp_bd_n}
For any $n\geq 3$ the Dehn function $\delta_{\mathcal{H}_n}(x)$ of $\mathcal{H}_n$ satisfies
$$
\delta_{\mathcal{H}_n}(x) \preccurlyeq e^x.
$$
\end{theo}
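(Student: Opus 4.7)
The strategy is to combine Theorem A, which provides the $n$-dimensional CAT(0) cubing $X_n$ with a height-preserving $\mathcal{H}_n$-action, with the explicit finite presentation of Theorem B. Given a word $w$ of length at most $x$ representing the identity in $\mathcal{H}_n$, I trace out a combinatorial loop $\gamma_w$ of length $\leq x$ in $X_n$ starting from a basepoint $v_0$. Because $X_n$ is CAT(0) and hence simply connected, $\gamma_w$ bounds a combinatorial disk diagram $D$ whose 2-cells are the squares of $X_n$. Using convexity of half-spaces in CAT(0) cube complexes, I arrange $D$ to lie inside the ball $B(v_0, C x)$ for some constant $C = C(n)$.

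The area of $D$ is at most the total number of 2-cells in $B(v_0, Cx)$, which I bound by exploiting the Morse structure from Theorem A. Since $\mathcal{H}_n$ acts properly by semi-simple isometries and cocompactly on every level set $h^{-1}(r)$, each level $h^{-1}(r)$ is quasi-isometric to $\mathcal{H}_n$. As $\mathcal{H}_n$ has exponential growth (it contains finitely supported symmetric groups on arbitrarily many letters), the number of 2-cells in $h^{-1}(r) \cap B(v_0, Cx)$ is at most $e^{C'x}$. The ball $B(v_0, Cx)$ meets only $O(x)$ distinct levels of $h$, so the total number of 2-cells is bounded by $O(x) \cdot e^{C'x} \leq e^{C''x}$. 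Since every 2-cell of $X_n$ corresponds, up to the $\mathcal{H}_n$-action, to a defining relator of bounded length in the presentation of Theorem B, the area of $D$ is an upper bound (up to a multiplicative constant) on the number of relators needed to reduce $w$ to the empty word. This yields $\delta_{\mathcal{H}_n}(x) \preccurlyeq e^x$.

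The principal technical difficulty is producing the combinatorial disk $D$ inside $B(v_0, Cx)$ with a controlled number of squares. Because $\mathcal{H}_n$ does not act cocompactly on all of $X_n$, one cannot appeal directly to the standard quadratic Dehn function bound for groups acting geometrically on CAT(0) cube complexes. Instead I expect to construct $D$ level by level using the Morse function $h$: subloops of $\gamma_w$ are filled inside the cocompact slices $h^{-1}(r)$ and then assembled across the finitely many levels spanned by $\gamma_w$, at each stage verifying that partial fillings glue consistently. Controlling the combinatorial width of $D$ across levels, and accounting for the factor of $O(x)$ levels, is the crux of the argument and is precisely what forces the final bound to be exponential rather than polynomial.
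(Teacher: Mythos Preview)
Your approach has a fundamental gap: the correspondence you assume between words in $\mathcal{H}_n$ and combinatorial loops in $X_n$, and between relators of $\mathcal{H}_n$ and $2$-cells of $X_n$, does not exist. The $1$-skeleton $\mathcal{C}_n$ of $X_n$ is the Cayley graph of the \emph{monoid} $\mathcal{M}_n$ with respect to the generators $t_1,\ldots,t_n$ of $\mathcal{T}_n$, not a Cayley graph of $\mathcal{H}_n$. Every edge increases height by $1$, so there are no edges whatsoever between the height-$0$ vertices (which are exactly the elements of $\mathcal{H}_n$); a word $w$ in the generators $g_1,\ldots,g_{n-1},\alpha$ does not trace out a path of length $|w|$ in $X_n$. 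Likewise the $2$-cells of $X_n$ are squares witnessing $t_it_j=t_jt_i$ in the commutative monoid $\mathcal{T}_n$; they have nothing to do with the relators of the presentation in Theorem~\ref{thm_fin_presen_Hough}. The standard passage from disk fillings in a complex to van Kampen diagrams over a presentation requires a proper \emph{cocompact} action, and your proposed level-by-level repair does not restore the missing dictionary between cells of $X_n$ and generators/relators of $\mathcal{H}_n$. (The side claim that level sets are quasi-isometric to $\mathcal{H}_n$ is true for each fixed $r$, but the quasi-isometry constants depend on $r$ because the vertex stabilizers are $\Sigma_r$, so you cannot use a uniform growth bound across levels either.)

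The paper's proof is entirely different and does not use $X_n$ at all. It exploits the short exact sequence $1\to\Sigma_{n,\infty}\to\mathcal{H}_n\to\mathbb{Z}^{n-1}\to 1$ directly: a null-homotopic word $w$ of length $\le x$ is rewritten, at polynomial area cost, as a word $w'$ of length $O(x^5)$ in the finite symmetric group $\Sigma_{n,x}$ on the ball of radius $x$ in $Y_n$; a uniform cubic bound $\delta_{\Sigma_m}(x)\le x^3$ (proved via the Deletion Theorem, independently of $m$) fills $w'$ over the Coxeter presentation of $\Sigma_{n,x}$; finally each Coxeter relator of $\Sigma_{n,x}$ is shown to have area $O(e^x)$ in $\mathcal{H}_n$ by explicit inductive fillings of the families $u_k=\alpha^{\bar g_i^{\,k}}\alpha^{\bar g_j^{\,k}}$ and $v_k=[\alpha,\alpha^{\bar g_i^{\,k+1}}]$. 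Multiplying these bounds gives $\delta_{\mathcal{H}_n}(x)\preccurlyeq e^x$.
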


%%%%%%%%%%%%%%%%%%%%%%%%%%%%%%%%%%%%%%%%%%%%%%%%%%%%%%%%%%%%%%%%%%%%%%%%%%%%%%
%%%%%%%%%%%%%%%%%%%%%%%%%%%%%%%%%%%%%%%%%%%%%%%%%%%%%%%%%%%%%%%%%%%%%%%%%%%%%%%55
%%%%%%%%%%%%%%%%%%%%%%%%%%%%%%%%%%%%%%%%%%%%%%%%%%%%%%%%%%%%%%%%%%%%%%%%%%%%%%%%%%5

\section{Houghton's Groups $\ho_n$}

%Roughly, these sets $X$ have ray structure at infinity which is preserved ----
\subsection{Definition of $\ho_n$}\label{definition_houghton_group}
Fix an integer $n \geq 1$. Let $\N$ be the positive integers. For each $k$, $1\leq k\leq n$, let
$$
R_k = \{m e^{i\theta}: m \in \N, \,\theta= {\pi/2 + 2\pi(k-1)}/n\}\subset \mathds{C}.
$$
and let $Y_n = \bigcup_{k=1}^n R_k$ be the disjoint union of $n$ copies of $\N$, each arranged along a ray emanating from the origin in the plane. We shall use the notation $\{1, \cdots, n\} \times \N$ for $Y_n$, letting $(k,p)$ denote the point of $R_k$ with distance $p$ from the origin. A bijection $g:Y_n \to Y_n$ is called an \emph{eventual translation} if it acts as a translation on each $R_k$ outside a finite set. More precisely $g$ is an eventual translation if the following holds:
\begin{itemize}
    \item [$\star$] There is an $n$-tuple $(m_1 ,\cdots, m_n) \in \mathds{Z}^n$ and a finite set $K_g \subset Y_n$ such that $ (k,p)\cdot g=(k, p+m_k)$ for all $(k,p) \in Y_n - K_g$.
\end{itemize}

\begin{definition}For an integer $n\geq1$, Houghton's group $\ho_n$ is defined to be the group of all eventual translations of $Y_n$.
\end{definition}

As indicated in the definition $\star$, Houghton's group $\ho_n$ acts on $Y_n$ on the right; thus $gh$ denotes $g$ followed by $h$ for $g,h \in \ho_n$. For notational convenience we denote $g^{-1}$ by $\overline{g}$.

Let $g_i$ be the translation on the ray of $R_1 \cup R_{i+1}$, from $R_1$ to $R_{i+1}$ by $1$ for $1 \leq i \leq n-1$. More precisely, $g_i$ is defined by
\begin{equation}\label{eq_generator_g_i}
    (j,p)g_i =
    \begin{cases}
        (1,p-1) & \text{if } j=1 \;\text{and } p \geq 2,\\
        (i+1,1) & \text{if } (j,p)=(1, 1)\\
        (i+1,p+1) & \text{if }j=i+1,\\
        (j,p)& \text{otherwise}.
    \end{cases}
\end{equation}

Figure \ref{examples} illustrates some examples of elements of $\ho_n$, where points which do not involve arrows are meant to be fixed, and points of each finite set $K$ are indicated by circles. Finite sets $K_{g_i}$ and $K_{g_j}$ are singleton sets. After simple computation, one can check that the commutator of $g_i$ and $g_j$ ($ i \neq j$) is the transposition exchanging $(1,1)$ and $(1,2)$. The last element $g$ is rather generic and $K_g$ consists of eight points (circles).

\begin{figure}[ht]
\includegraphics[width=1.0\textwidth]{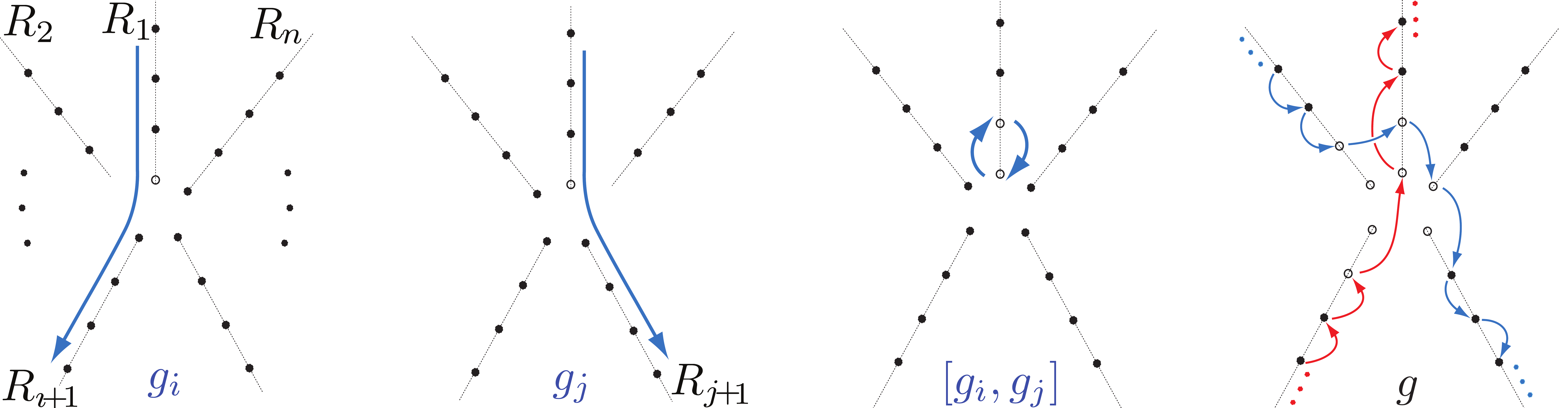}
\caption{Some elements of $\ho_n$} \label{examples}
\end{figure}

\subsection{Abelianization of $\ho_n$} \label{sec_abel}

Assigning an $n$-tuple $(m_1 ,\cdots, m_n) \in \mathds{Z}^n$ to each $g \in \ho_n$ defines a homomorphism $\varphi :\hsmH_n \to \mathds{Z}^n$. Let $\Sigma_{n,\infty}$ denote the infinite symmetric group consisting of all permutations of $Y_n$ with finite support.

\begin{lemma}
For $n\in \N$, $\Sigma_{n,\infty} \leq \ho_n$.
\end{lemma}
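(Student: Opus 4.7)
The plan is to verify the defining condition $\star$ of an eventual translation directly for any finitely supported permutation.

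First I would take an arbitrary $\sigma \in \Sigma_{n,\infty}$ and let $K = \mathrm{supp}(\sigma) \subset Y_n$, which is finite by hypothesis. Since $\sigma$ acts as the identity on the complement $Y_n \setminus K$, for every $(k,p) \in Y_n \setminus K$ we have $(k,p)\cdot\sigma = (k,p) = (k,p+0)$. Thus if we choose the $n$-tuple $(m_1,\dots,m_n) = (0,\dots,0) \in \mathds{Z}^n$ and set $K_\sigma := K$, then condition $\star$ is satisfied, so $\sigma \in \ho_n$.

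Since $\sigma$ was arbitrary and $\Sigma_{n,\infty}$ is already a subgroup of the full symmetric group on $Y_n$, the inclusion $\Sigma_{n,\infty} \leq \ho_n$ follows. There is essentially no obstacle here: the only subtlety is noticing that the zero tuple is a legitimate choice in $\star$, so that ``trivial translation outside a finite set'' is recorded as a genuine eventual translation. One could optionally remark that the containment is strict, since, for instance, the generator $g_i$ defined in \eqref{eq_generator_g_i} has infinite support.
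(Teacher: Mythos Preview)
Your proof is correct and is essentially the same as the paper's: both verify condition $\star$ directly by taking $K$ to be the (finite) support and noting that outside $K$ the permutation acts as the identity, i.e.\ as translation by the zero tuple. Your version is just slightly more explicit about naming $(m_1,\dots,m_n)=(0,\dots,0)$.
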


\begin{proof}Every element $f \in \Sigma_{n,\infty}$ has a finite support. This means there exists a finite set $K \subset Y_n$ such that $f$ acts on each $R_i \subset Y_n$ as the identity outside $K$. So $f \in \ho_n$.
\end{proof}

\begin{lemma}\label{lm_Si=ker}
For $n\geq 3$, $ker\varphi = \Sigma_{n,\infty} = [\ho_n , \ho_n]$.
\end{lemma}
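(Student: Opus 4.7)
The plan is to prove the two equalities separately. The equality $\ker\varphi=\Sigma_{n,\infty}$ is essentially unwinding definitions: $g\in\ker\varphi$ iff its translation tuple $(m_1,\ldots,m_n)$ vanishes, iff by condition $\star$ the element $g$ restricts to the identity outside a finite set $K_g$, iff $g$ has finite support, iff $g\in\Sigma_{n,\infty}$. The easy half of the second equality, $[\ho_n,\ho_n]\leq\ker\varphi$, is automatic since $\mathds{Z}^n$ is abelian.

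The substantive step is therefore the inclusion $\Sigma_{n,\infty}\leq[\ho_n,\ho_n]$. My starting point is the observation already noted after Figure \ref{examples}: for any indices $1\leq i\neq j\leq n-1$, a direct calculation from \eqref{eq_generator_g_i} yields $[g_i,g_j]=\tau_0$, where $\tau_0$ denotes the transposition exchanging $(1,1)$ and $(1,2)$. This is the precise point where the hypothesis $n\geq 3$ enters, as it guarantees the existence of at least two distinct generators $g_1,g_2$. In particular $\tau_0\in[\ho_n,\ho_n]$.

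To promote one transposition to all of $\Sigma_{n,\infty}$, I would use that $\ho_n$ acts $2$-transitively on $Y_n$ (in fact $\Sigma_{n,\infty}\leq\ho_n$ already does, since arbitrary finitely supported bijections are available). Given any distinct $x,y\in Y_n$, choose $h\in\ho_n$ with $(1,1)\cdot h=x$ and $(1,2)\cdot h=y$; then $h^{-1}\tau_0 h$ is the transposition swapping $x$ and $y$, and it lies in $[\ho_n,\ho_n]$ by normality of the commutator subgroup. Since every element of $\Sigma_{n,\infty}$ has finite support, it is a finite product of transpositions, each of which lies in $[\ho_n,\ho_n]$; the inclusion follows.

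The only step with any computational content is the verification $[g_i,g_j]=\tau_0$ from the piecewise formula \eqref{eq_generator_g_i}; this is where the specific shape of the generators matters and will be the main (but modest) obstacle. Everything else is a routine combination of normality of $[\ho_n,\ho_n]$, the $2$-transitivity of the natural $\ho_n$-action on $Y_n$, and the fact that symmetric groups are generated by transpositions.
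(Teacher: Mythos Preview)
Your proof is correct and follows essentially the same strategy as the paper: both hinge on the observation that $[g_i,g_j]$ is the transposition $\tau_0$ swapping $(1,1)$ and $(1,2)$, and that conjugates of $\tau_0$ yield all transpositions and hence generate $\Sigma_{n,\infty}$. The one notable difference is in the inclusion $[\ho_n,\ho_n]\subseteq\ker\varphi$: the paper invokes (by forward reference to Lemma~\ref{lm_fin_gen_H_n}) the fact that $\ho_n$ is generated by $g_1,\ldots,g_{n-1}$ and then argues via exponent sums, whereas you simply observe that the target $\mathds{Z}^n$ of $\varphi$ is abelian. Your version is more direct and self-contained; the paper's argument, while correct, leans on a result proved later. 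Similarly, for $\Sigma_{n,\infty}\subseteq[\ho_n,\ho_n]$ the paper forward-references the explicit conjugation formulas in the Claim of Lemma~\ref{lm_fin_gen_H_n}, while you argue abstractly via $2$-transitivity of the $\ho_n$-action on $Y_n$; both routes work equally well.
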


\begin{proof}Fix $n \geq 3$. We show the equalities by verifying a chain of inclusions. Each element $g \in ker\varphi$ acts on $Y_n$ as the identity outside a finite set $K \subset Y_n$. This means that $g$ is a permutation on $K$, and so $g \in \Sigma_{n,\infty}$. The claim in the proof of Lemma \ref{lm_fin_gen_H_n} implies that every element with finite support is a product of conjugations of $[g_i, g_j]$ ($i \neq j$). Suppose $g \in [\ho_n , \ho_n]$. Lemma \ref{lm_fin_gen_H_n} says $\ho_n$ is generated by $g_1, \cdots, g_{n-1}$ if $n\geq 3$. Take an expression for $g$ in letters $g_1, \cdots, g_{n-1}$. Since $g \in [\ho_n , \ho_n]$, the sum of powers of $g_i$'s in this expression is $0$ for each $ i= 1, \cdots, n-1$. So $g \in ker\varphi$.
\end{proof}

In Section \ref{sec_presentation}, we shall see that $\ho_n$ is generated by $g_1, \cdots, g_{n-1}$ defined in equation (\ref{eq_generator_g_i}). Note that $\varphi(g_i) \in \mathds{Z}^n$ has only two nonzero values $-1$, $1$, and
$$
\varphi(g_i)=(-1, 0, \cdots,0, 1 , 0,\cdots, 0)
$$where $1$ occurs in $(i+1)^{th}$ component. The image of $\ho_n$ is generated by those elements and $\varphi(\ho_n)=\{(m_1 ,\cdots, m_n)|\sum m_i =0\}$ is free abelian of rank $n-1$. %By the previous Lemma we have an abelianization of $\ho_n$.
\begin{corollary}\label{coro_abelianization}For $n\geq 3$, the ablelianization of $\ho_n$ is given by the following short exact sequence.
\begin{equation}\label{eq_abelianization_1}
1 \to [\ho_n,\ho_n] \to \hsmH_n \xrightarrow{\varphi} \mathds{Z}^{n-1} \to 1.
\end{equation}
\end{corollary}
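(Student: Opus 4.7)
The plan is to derive the corollary as a direct assembly of Lemma~\ref{lm_Si=ker} with an elementary computation of the image of $\varphi$. Lemma~\ref{lm_Si=ker} already identifies $\ker\varphi$ with the commutator subgroup $[\ho_n,\ho_n]$, so once I identify $\mathrm{Im}(\varphi)$ as a copy of $\mathds{Z}^{n-1}$, the first isomorphism theorem produces the desired short exact sequence; since $\mathds{Z}^{n-1}$ is abelian, that sequence automatically expresses the abelianization of $\ho_n$.

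For the image I would argue as follows. Consider the hyperplane $H=\{(m_1,\ldots,m_n)\in\mathds{Z}^n : m_1+\cdots+m_n = 0\}$. Using the (forthcoming) Lemma~\ref{lm_fin_gen_H_n} that $\ho_n$ is generated by $g_1,\ldots,g_{n-1}$, and reading off equation~(\ref{eq_generator_g_i}) that $\varphi(g_i)$ has entries $-1$ in position $1$, $+1$ in position $i+1$, and $0$ elsewhere, one sees at once that $\varphi(\ho_n)\subseteq H$ (since $H$ is a subgroup containing every $\varphi(g_i)$) and that the $n-1$ vectors $\varphi(g_i)$ are $\mathds{Z}$-linearly independent (inspect the last $n-1$ coordinates). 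A one-line check confirms that they span $H$ over $\mathds{Z}$: for any $(m_1,\ldots,m_n)\in H$ the combination $\sum_{i=1}^{n-1} m_{i+1}\,\varphi(g_i)$ equals $(-m_2-\cdots-m_n,\,m_2,\ldots,m_n)=(m_1,\ldots,m_n)$ by the hyperplane relation. Hence $\mathrm{Im}(\varphi)=H\cong \mathds{Z}^{n-1}$.

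There is no genuine obstacle here: the substantive content—the equality $\ker\varphi=[\ho_n,\ho_n]$—was handled in Lemma~\ref{lm_Si=ker}, and what remains is just a basis computation in $\mathds{Z}^n$ using the explicit formulas in equation~(\ref{eq_generator_g_i}). The only mild subtlety is that the image computation logically depends on $\ho_n$ being generated by $g_1,\ldots,g_{n-1}$, a fact forward-referenced here but established in Section~\ref{sec_presentation}; one should therefore be careful to avoid any circular appeal to the abelianization when proving finite generation.
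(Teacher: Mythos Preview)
Your proposal is correct and follows essentially the same approach as the paper: the paper also forward-references the finite generation by $g_1,\ldots,g_{n-1}$, computes $\varphi(g_i)=(-1,0,\ldots,0,1,0,\ldots,0)$, identifies the image as the hyperplane $\{(m_1,\ldots,m_n):\sum m_i=0\}\cong\mathds{Z}^{n-1}$, and combines this with Lemma~\ref{lm_Si=ker}. Your remark about avoiding circularity is well taken, and indeed the proof of Lemma~\ref{lm_fin_gen_H_n} uses only the map $\varphi$ itself, not the corollary.
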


The above result was originally shown by C. H. Houghton in \cite{Houghton}. Note that the group $\ho_n$ has the property that the rank of the abelianization is one less than the number of rays of the space $Y_n$ (namely, the number of ends of $Y_n$ with respect to the action) on which $\ho_n$ acts transitively. This was the main reason that Houghton introduced and studied a family of groups $\{\ho_n\}_{n \in \N}$ in the same paper.

\begin{remark}For $n\geq 3$, by replacing the commutator subgroup by $\Sigma_{n, \infty}$ in the short exact sequence (\ref{eq_abelianization_1}), we have the following short exact sequence.
\begin{equation}\label{eq_abelianization_Si}
1 \to \Sigma_{n,\infty} \to \hsmH_n \xrightarrow{\varphi} \mathds{Z}^{n-1} \to 1,
\end{equation} The embedding $\Sigma_{n,\infty} \hookrightarrow \ho_n$ is crucial for our study in two directions. Firstly, it gives rise to finite presentation for $\ho_n$ for $n\geq 3$ (Section \ref{sec_presentation}). Secondly, this inclusion is used to construct exponential isoperimetric inequalities for $\ho_n$ $n\geq 3$ (Section \ref{section_Exp_isoperi_ineq_ho}).
\end{remark}

We suppress $n$ in $\Sigma_{n, \infty}$ when the underlying set $Y_n$ is clear from the context.

\subsection{Finite Presentations for $\ho_n$ ($n\geq3$)} \label{sec_presentation}
Note that every element of $\ho_1$ acts on $Y_1 = \N$ by the identity outside a finite set. If the element acted as a non-trivial translation outside of a finite set, then it would fail to be a bijection of $Y_1$. So $\ho_1 \subset \Sigma_{1,\infty}$. The converse inclusion is obvious, therefore $\ho_1$ is the infinite symmetric group $\Sigma_{1, \infty}$ of all permutations of $\N$ with finite support. By considering elements with successively larger supports, one can argue that $\Sigma_{1,\infty}$ is not finitely generated. It is known \cite{Brown} that $\ho_2$ is finitely generated but not finitely presented, and that $\ho_n$ is finitely presented for $n \geq 3$. Brown showed more than just finite presentedenss, and in Section \ref{sec_finite_prop}, we will give a precise statement and a new proof of Brown's result about the finiteness properties of $\ho_n$ (Corollary \ref{coroll_fin_prop_H_n}). In this section we will provide explicit finite presentations for $\ho_n$ ($n \geq 3$).

\medskip
\noindent
{\bf Finite generating sets for $\ho_n$ ($n\geq2$).}\\ We begin by proving that $\ho_2$ is generated by two elements. Many of the techniques in this argument extend to the $\ho_n$ for $n \geq 3$.
Consider the two elements $g_1, \B  \in \ho_2$ where $g_1$ is defined by equation (\ref{eq_generator_g_i}) and $\B$ is the transposition exchanging $(1,1)$ and $(2,1)$ as depicted in Figure \ref{ho_2}.

\begin{figure}[ht]
\begin{center}
\includegraphics[width=1.0\textwidth]{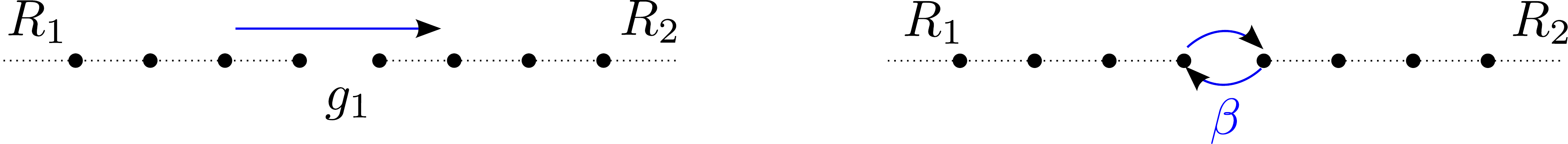}
\caption{$\ho_2$ is generated by $g_1$ and $\B$} \label{ho_2}
\end{center}
\end{figure}

Suppose $g  \in\ho_2$. Since $\varphi (g_1)$ generates the image $\varphi(\ho_2) \cong \mathds{Z}$ there exists $k \in \mathds{Z}$ such that the given $g$ and $g_1^k$ agree on $Y_2 -K$ for some finite set $K \subset Y_2$. So we have
$$
\overline{g}_1^{k} g= f\;\; \text {or}\;\; g= g_1^k f
$$
where $f \in Perm(K)$.
One can take suitable conjugates $\overline{g}_1^{k'} \B g_1^{k'}$ to express transpositions exchanging two consecutive points of $Y_2$. For example, $\overline{g}_1^{3} \B g_1^{3}$ is the transposition swapping $(2,3)$ and $(2,4)$. Since any permutation of a finite set $K$ can be written as a product of those transpositions, $f$ can be written as a product of conjugations of $\B$. So $g= g_1^k f \in \ho_2$.

\begin{remark}\label{rmk_conjugation}
The conjugation $\overline{h}gh$ is denoted by $g^h$. Note that $(i,p)g=(j,q)$ is equivalent to $(i,p)h g^{h}=(j,q)h$. The following observation is quite elementary but useful: If $\B$ is a transposition exchanging two points $(i,p)$ and $(j,q)$ then $\B^{h}$ is the transposition exchanging $(i,p)h$ and $(j,q)h$.
\end{remark}

We have seen that a translation and a transposition are sufficient to generate all of $\ho_2$. Moreover, we have seen from the third example of Figure \ref{examples} that $[g_i, g_j]$ is a transposition on $Y_n$ for $1 \leq i \neq j \leq n-1$. So the following lemma should not be a surprise.

\begin{lemma}\label{lm_fin_gen_H_n}
$\hsmH_n$ is generated by $g_1, \cdots, g_{n-1}$ for $n \geq 3$ .
\end{lemma}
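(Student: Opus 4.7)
The plan is to mirror the two-step argument just given for $\ho_2$: first reduce modulo the homomorphism $\varphi$ to land inside $\Sigma_{n,\infty}$, and then show $\Sigma_{n,\infty} \subseteq \langle g_1,\ldots,g_{n-1}\rangle$ by producing enough transpositions via conjugation.

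For the reduction step, any eventual translation $g \in \ho_n$ with $\varphi(g)=(m_1,\ldots,m_n)$ must satisfy $\sum_k m_k = 0$, since $g$ is a bijection of $Y_n$ and so the net flow through infinity along the rays must cancel. Hence $\varphi(g)$ lies in the rank-$(n-1)$ sublattice of $\Z^n$ spanned by $\varphi(g_i) = -e_1 + e_{i+1}$ for $1 \leq i \leq n-1$. Setting $k_i := m_{i+1}$, the product $g \cdot \prod_{i=1}^{n-1} g_i^{-k_i}$ lies in $\ker\varphi$, and directly from the definition of $\varphi$ any element of $\ker\varphi$ acts as the identity outside a finite subset of $Y_n$, hence sits in $\Sigma_{n,\infty}$.

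To handle the $\Sigma_{n,\infty}$ part I would fix $\alpha := [g_1,g_2]$, which by the third example of Figure \ref{examples} is the transposition exchanging $(1,1)$ and $(1,2)$. Applying Remark \ref{rmk_conjugation}, for any $h$ in the subgroup the conjugate $\alpha^h$ is the transposition exchanging $(1,1)h$ and $(1,2)h$. Explicit conjugations then give: $\alpha^{g_1^{-p}}$ yields every transposition of consecutive points on $R_1$; $\alpha^{g_{k-1}^{p}}$ for $p \geq 2$ yields every transposition of consecutive points on the ray $R_k$ with $k\geq 2$; and $\alpha^{g_{k-1}}$ yields the transposition $(1,1) \leftrightarrow (k,1)$. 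Together these transpositions correspond to the edges of a connected graph on $Y_n$ (the rays, glued at the base vertex $(1,1)$), so by the standard fact that transpositions indexed by the edges of a connected graph generate the full symmetric group on the vertex set, they generate every transposition in $\Sigma_{n,\infty}$. Any $f \in \Sigma_{n,\infty}$ is supported on a finite subset of $Y_n$ and hence factors as a product of these conjugates of $\alpha$, finishing the proof.

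The only real obstacle is careful bookkeeping of how $g_i^{\pm p}$ acts on the two base points $(1,1)$ and $(1,2)$, since $g_i$ behaves asymmetrically on $R_1$ versus $R_{i+1}$; the computation splits into the cases $p = 1$ versus $p \geq 2$ and positive versus negative powers, but there is no conceptual difficulty. This case analysis is also exactly what is needed for the ``claim'' invoked in the proof of Lemma \ref{lm_Si=ker}, so it is worth isolating as a separate claim inside the proof.
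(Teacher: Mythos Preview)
Your proof is correct and follows the same two-step skeleton as the paper: reduce modulo $\varphi$ to land in $\Sigma_{n,\infty}$, then exhibit $\Sigma_{n,\infty}$ as generated by conjugates of $\alpha=[g_1,g_2]$. The only difference is tactical, in the second step. The paper produces \emph{arbitrary} transpositions directly as single conjugates of $\alpha$: for instance $h_1=\alpha^{g_1\overline{g}_2^{\,p-2}\overline{g}_1}$ swaps $(1,1)$ and $(1,p)$, and then $h_1^{h_2}$, $h_1^{h_3}$, $h_3^{h_4}$ realize transpositions between any pair of points on any pair of rays. You instead manufacture only the \emph{adjacent} transpositions (consecutive points along each ray, plus $(1,1)\leftrightarrow(k,1)$) via the simpler conjugations $\alpha^{g_i^{\pm p}}$, and then invoke the standard fact that edge-transpositions of a connected graph generate the finitary symmetric group on its vertex set. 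Your route has easier conjugations to verify; the paper's route yields the marginally stronger statement that every transposition is itself a single conjugate of $\alpha$, though only the weaker ``product of conjugates'' form is what is actually cited in the proof of Lemma~\ref{lm_Si=ker}.
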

\begin{proof}Let $g \in \hsmH_n$ and $\varphi(g)= (m_1 ,\cdots, m_n)$. Note that $m_1 = -(m_2 +\cdots +m_n)$. Consider the element $g' \in \hsmH_n$ given by $$
g'= g_1^{m_2}g_2^{m_3} \cdots g_{n-1}^{m_n}.
$$
By construction, $\varphi(g') = \varphi(g)$, and so two element $g'$ and $g$ agree on $Y_n - K$ for some finite set $K$. This means there exists $f \in Perm(K)$ such that $g=g'f$. So it suffices to show that every finite permutation can be written as a product of the $g_i^{\pm1}$, $i=1, \cdots,n-1$.

\textbf{Claim:} Any transposition of $Y_n$ is a conjugation of $[g_1, g_2]= g_1 g_2 \overline{g}_1 \overline{g}_2$ by a product of $g_i$'s. First we express transpositions exchanging points of $R_1$ in this manner. We already saw that $\A=[g_1, g_2]$ is the transposition exchanging $(1,1)$ and $(1,2)$.

Consider the following elements $h_1, h_2\in \hsmH_n$:
$$
h_1=\A^{g_1 \overline{g}_2^{(p-2)}\overline{g}_1} \; \text{and}\; h_2=\A^{g_1 \overline{g}_2^{(q-2)}\overline{g}_1}
$$
where $p, q \geq 2$ are integers. By the observation in Remark \ref{rmk_conjugation}, $h_1$ is the transposition exchanging $(1,1)$ and $(1,p)$. By the same reason, one sees $h_2$ is the transposition swapping $(1,1)$ and $(1,q)$. Therefore the permutation exchanging $(1,p)$ and $(1,q)$ can be expresses by the conjugation $h_1^{h_2}$. This allows one to express transpositions of $R_{i+1}$ for all $i$ by conjugations $h_1^{h_2 g_i^{m}}$ for some $m \in \mathds{N}$.

\begin{figure}[ht]
\begin{center}
\includegraphics[width=0.8\textwidth]{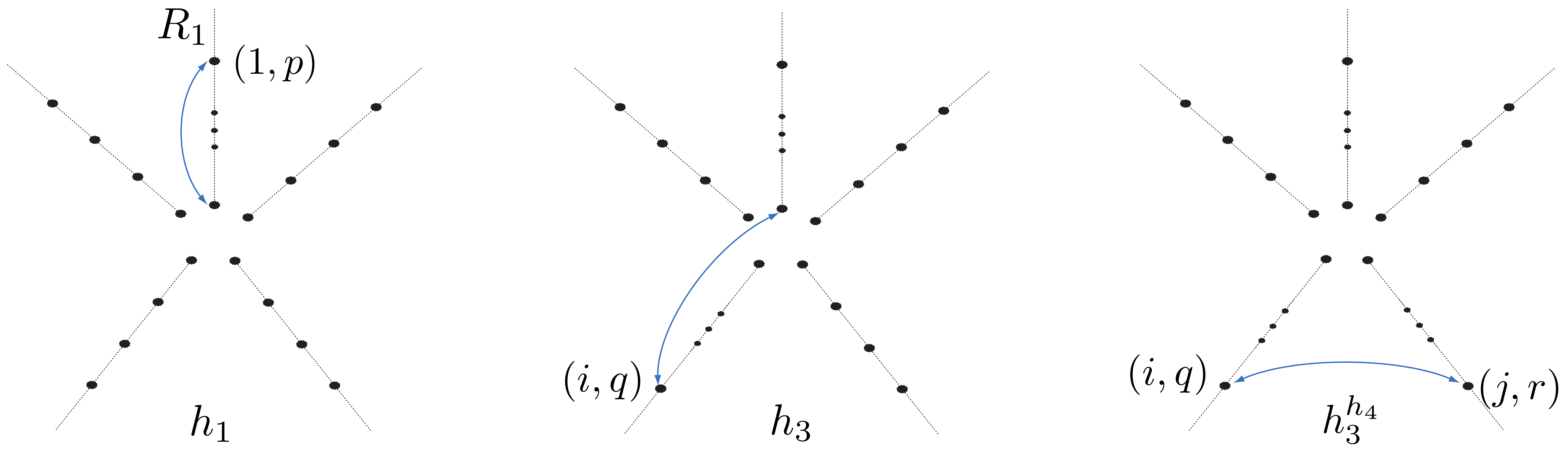}
\caption{Transpositions are conjugations of $[g_i, g_j]$} \label{trans}
\end{center}
\end{figure}

Next we show that transpositions exchanging points of different rays can be expressed in a similar fashion. Set $2\leq i \neq j \leq n$ and $q,r \in \mathds{N}$. The permutations $h_3, h_4$ given by
$$
h_3= \A^{g_j \,g_i^{q}\,\overline{g}_j}\; \text{and}\; h_4=\A^{g_i\, g_j^{r}\,\overline{g}_i}
$$
are the transpositions swapping $(1,1)$ and $(i,q)$ and swapping $(1,1)$ and $(j,r)$ respectively. Thus the conjugation $h_1^{h_3}$ represents the transposition exchanging $(1,p)$ and $(i,q)$. Similarly the conjugation $h_3^{h_4}$ represents the transposition swapping $(i,q)$ and $(j,r)$. Figure \ref{trans} illustrates various transpositions of $Y_n$ given by conjugations of $\A$. Our claim follows since $i, j, p,q$ and $r$ are arbitrary. So we are done because any finite permutation $f \in Perm(K)$ above can be written as a product of transpositions.
\end{proof}

\medskip
\noindent
{\bf Finite presentation for $\ho_3$.}\\ D. L. Johnson (\cite{johnson}) provided a finite presentation for $\ho_3$.
\begin{theorem}\label{thm_johnson}
The Houghton's group $\hsmH_3$ is isomorphic to $H_3$ presented by
\begin{equation}\label{eq_abstract_H_3}
  H_3 =\langle g_1, g_{2},\A\,| \,r_1, \cdots, r_5 \rangle,
\end{equation}
\begin{tabular}{ l l l }
 $r_1$&:&  $\A^2=1$,\\
 $r_2$&:&  $(\A \,\A^{\overline{g}_1})^3 =1$ ,\\
$r_3$&:&  $[\A , \A^{\overline{g}_1^2}]=1$,\\
$r_4$&:&  $[g_1, g_2] =\A$,\\
$r_5$&:& $\A^{\overline{g}_1} = \A^{\overline{g}_2}$.
\end{tabular}
\end{theorem}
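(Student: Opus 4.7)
The plan is to construct a surjection $\psi : H_3 \to \ho_3$ and then show injectivity by modelling inside $H_3$ the extension $1 \to \Sigma_\infty \to \ho_3 \xrightarrow{\varphi} \Z^2 \to 1$ of (\ref{eq_abelianization_Si}) and appealing to Moore's presentation of the infinite symmetric group.

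First I define $\psi$ on generators by $g_1\!\mapsto\!g_1$, $g_2\!\mapsto\!g_2$, $\alpha\!\mapsto\![g_1,g_2]$ and verify each $r_i$ in $\ho_3$. A direct computation (illustrated by the third element of Figure \ref{examples}) shows $[g_1,g_2]$ is the transposition swapping $(1,1)$ and $(1,2)$, so $r_1$ holds. By Remark \ref{rmk_conjugation}, both $\alpha^{\overline{g}_1}$ and $\alpha^{\overline{g}_2}$ equal the transposition swapping $(1,2)$ and $(1,3)$; this immediately gives $r_5$ and, since the resulting transposition overlaps $\alpha$ in exactly one point, the pair generates a copy of $S_3$, yielding $r_2$. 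Similarly $\alpha^{\overline{g}_1^{2}}$ swaps $(1,3)$ and $(1,4)$, which is disjoint from $\alpha$, so $r_3$ holds, and $r_4$ is the definition of $\alpha$. Hence $\psi$ is a well-defined homomorphism, and Lemma \ref{lm_fin_gen_H_n} with $n\!=\!3$ makes it surjective.

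Next I mirror (\ref{eq_abelianization_Si}) inside $H_3$. Let $N\trianglelefteq H_3$ be the normal closure of $\alpha$. By $r_4$, $[g_1,g_2]\!\in\!N$, so $H_3/N$ is generated by the two commuting images of $g_1,g_2$ and is therefore a quotient of $\Z^2$; conversely, since $\psi(\alpha)\!\in\!\Sigma_\infty$, the composition $\varphi\!\circ\!\psi$ kills $N$ and descends to a surjection $H_3/N\twoheadrightarrow\Z^2$. These two maps together force $H_3/N\!\cong\!\Z^2$, producing a short exact sequence $1\to N\to H_3\to\Z^2\to 1$ which, via $\psi$, maps to (\ref{eq_abelianization_Si}) and is the identity on the $\Z^2$ quotient. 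It then suffices to show $\psi|_{N}:N\to\Sigma_\infty$ is an isomorphism, for then the five lemma gives $H_3\cong\ho_3$. Surjectivity of $\psi|_{N}$ is immediate from the Claim inside the proof of Lemma \ref{lm_fin_gen_H_n}: every transposition of $Y_3$ equals $\psi(\alpha^w)$ for an explicit word $w$ in $g_1,g_2$, and such transpositions generate $\Sigma_\infty$.

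For injectivity I would invoke Moore's presentation of the symmetric group on a countable set, with generators $\sigma_k$ satisfying $\sigma_k^{2}=1$, $(\sigma_k\sigma_{k+1})^{3}=1$, and $[\sigma_k,\sigma_\ell]=1$ for $|k-\ell|\geq 2$. Setting $\alpha_k := \alpha^{\overline{g}_1^{k}}$ for $k\geq 0$, the conjugates of $r_1,r_2,r_3$ by powers of $g_1$ are precisely the Moore relations among the $\alpha_k$. It remains to show that every element of $N$ can be written as a product of members of the countable family of conjugates of $\alpha$ produced by the procedure in Lemma \ref{lm_fin_gen_H_n} (the elements $h_1,h_2,h_3,h_4$ there), subject only to Moore-type relations; $r_5$ is exactly what allows one, during such a reduction, to trade each $g_2$-conjugation for a $g_1$-conjugation so that the generating family and its relations match Moore's presentation. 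The main obstacle is this last normal-form step: each individual rewriting is elementary, but organising the combinatorial procedure of Lemma \ref{lm_fin_gen_H_n} abstractly inside $H_3$, while tracking which relators are being invoked and verifying that no non-Moore relation survives, is the delicate part of the argument. Once $\psi|_{N}$ is shown to be an isomorphism, the five lemma applied to the two short exact sequences completes the proof.
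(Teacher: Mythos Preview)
Your overall architecture is exactly the paper's (and Johnson's): build $\psi$, check $r_1$--$r_5$ in $\ho_3$, set up the short exact sequence $1\to N\to H_3\to \Z^2\to 1$ parallel to (\ref{eq_abelianization_Si}), and finish with the Five Lemma. Your verification of the relations and of $H_3/N\cong\Z^2$ is fine, and you correctly isolate the injectivity of $\psi|_N$ as the only nontrivial step.

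The gap is in your sketch of that step. The family $\alpha_k:=\alpha^{\overline g_1^{\,k}}$ for $k\ge 0$ maps under $\psi$ to the adjacent transpositions on the single ray $R_1$, so even in $\ho_3$ these elements generate only the finitary symmetric group on $R_1$, not $\Sigma_\infty$ on all of $Y_3$; the Moore relations among the $\alpha_k$ therefore cannot by themselves present $N$. Relation $r_5$ does not let you ``trade each $g_2$-conjugation for a $g_1$-conjugation'' in general: it is only the identity $\alpha^{\overline g_1}=\alpha^{\overline g_2}$, the $k=1$ case of a whole family of identities (compare Lemma~\ref{lm_key_identity}) such as $\alpha^{\overline g_1^{\,k}}=\alpha^{\overline g_2^{\,k}}$ and $[\alpha,\alpha^{\overline g_1^{\,k+1}}]=1$, which must be \emph{derived} from $r_1$--$r_5$ by simultaneous induction before any rewriting of an arbitrary conjugate $\alpha^{w}$ becomes possible. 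What is actually needed is a generating set for $N$ as a \emph{subgroup} (not merely a normal subgroup) consisting of conjugates of $\alpha$ that hit transpositions on all three rays---for instance $\alpha^{g_1^{\,k}}$ and $\alpha^{g_2^{\,k}}$ for $k\in\Z$, or the $\alpha_p^i,\alpha_0^{j}$ of the paper's presentation (\ref{eq_presen_fin_symm_new})---together with a proof, from $r_1$--$r_5$, that every $\alpha^{w}$ lies in the subgroup they generate and that the only relations among them are Coxeter-type. That derivation is the substance of Johnson's argument (and of the paper's Theorem~\ref{thm_fin_presen_abs_H_n} and Lemma~\ref{lm_key_identity} in the general case); your proposal names the obstacle but does not yet supply the mechanism to overcome it.
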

%%%%%%%%%%%%%%%%%%%%%%%%%%%%%%%%%%%%%%%%%%%%%%%%%%%%%%%%%%%%%%%%%%%%%%%%%%%%%%%%%%%%%%%
Note that there is a map $\psi$ from an abstract group $H_3$ to $\ho_3$ such that
$\psi(g_i)$ is the translation $g_i$ defined in equation (\ref{eq_generator_g_i}) ($i=1,2$) and $\psi(\A)$ is the transposition $\A = [g_1, g_2]$. Observe that relations $r_1, r_2$ and $r_3$ are reminiscent relations of Coxeter systems for finite symmetric groups. It is easy to check that $\ho_3$ satisfies those five relations. Johnson showed that the group $H_3$ fits into the short exact sequence of (\ref{eq_abelianization_Si}) with the normal subgroup generated by $\A$, and that $\psi: H_3 \to \ho_3$ induces isomorphisms between the normal subgroups and the quotients. So the following diagram commute. The Five Lemma completes the proof.

\[\begin{CD}\label{CD_five_lemma}
       1@>>> \langle \langle\,\A\,\rangle \rangle @>>>  H_3 @>>>  H_3 / { \langle\langle\,\A\,\rangle \rangle}@>>>  1 \\
      @VVV @VVV @VVV @VVV @VVV\\
      1 @>>>  [\ho_3 , \ho_3] @>>> \ho_3 @>>> \mathds{Z}^{2} @>>>  1.
\end{CD}\]

\medskip
\noindent
{\bf Finite presentations for $\ho_n$ ($n\geq3$).}\\We extend the previous argument to find finite presentations for $\ho_n$ for all $n\geq3$.
The first step is to establish appropriate presentations for symmetric groups on finite balls of $Y_n$ by applying Tietze transformation to the Coxeter systems for finite symmetric groups.

Fix positive integers $n,r \in  \N$. Let $B_{n,r}$ denote the ball of $Y_n$ centered at the origin with radius $r$, i.e., $B_{n,r} =\{(i,p) \in Y_n :1\leq i \leq n, 1\leq p\leq r\}$. The ball $B_{n,r}$ contains $nr$ points which can be identified to the points of $\{1,2,\cdots, r, \cdots, 2r, \cdots,  nr\} \subset \N$ via $\chi:B_{n,r} \to \N$ defined by
$$
\chi(i,p)= (i-1)r +p.
$$
Recall the following Coxeter system for the symmetric group $S_{nr}$ on $\{1,2, \cdots, nr\}$
\begin{equation}\label{coxeter_sys}
   S_{nr} = \langle \sigma_1, \cdots, \sigma_{nr-1}\,|\, \sigma_k^2,\,(\sigma_k \,\sigma_{k+1})^3, \,[\sigma_k ,\sigma_{k'}] \;\text{for}\; |k-k'|\geq 2 \rangle
\end{equation}

\begin{remark}\label{rmk_coxeter_rel}
One can interpret the above three types of relators respectively as follows;
\begin{itemize}
    \item generators are involutions,
    \item two generators with overlapping support satisfy braid relations,
    \item two generators commute if they have disjoint supports.
\end{itemize}

\noindent Note that every collection of transpositions of $S_{nr}$ satisfies the three types of relations. We shall see in Theorem \ref{thm_iso_presen_symm} that a `reasonable' set of transpositions satisfying the above relations provides a presentation for the symmetric group $S_{nr}$.
\end{remark}

The arrangement of $Y_n$ in the plane is different than the arrangement of $nr$ points $\{1,2, \cdots, nr\}$ in $\mathds{N}$. In particular, the adjacency is different. By a pair of two \emph{adjacent} points in $Y_n$, we mean either a pair of two consecutive points in a ray of $Y_n$ or a pair of $(1,1)$ and $(j+1,1)$ for $1\!\leq \!j\!\leq \!n-1$. We aim to produce a new presentation for the symmetric group $\Sigma_{n,r}$ on $B_{n,r}$ with a generating set which consists of the transpositions exchanging all pairs of adjacent points in $Y_n$. More precisely we need the following $nr\!-\!1$ transpositions
\begin{itemize}
    \item[] $\A^i_p$, exchanging $(i,\,p)$ and $(i,\,p\!+\!1)$ for $1\!\leq \!i\!\leq \!n$ and $1\!\leq \!p \!\leq \!r\!-\!1$
    \item[]$\A^{j+1}_0$, exchanging $(1,1)$ and $(j+1,1)$ for $1\!\leq \!j\!\leq \!n-1$.
\end{itemize}

For an element $\Si \in S_{nr}$ consider the element of $Perm(B_{n,r})$ given by
$$
(\Si)^{\chi^{-1}} = \chi(\Si)\chi^{-1}
$$
which stands for the composition
$ B_{n,r} \xrightarrow {\chi} \{1, \cdots, nr\} \xrightarrow{\Si}
\{1, \cdots, nr\} \xrightarrow{\chi^{-1}} B_{n,r}$. Let us denote the induced element by $\chi^\ast(\Si)$. New generators of the first type are defined by $\chi^\ast$ as follows.

\begin{equation}\label{eq_transform_gen_symm}
\A_p^i:=\chi^\ast(\Si_{(i-1)r + p})
\end{equation} for $1\!\leq \!i\!\leq \!n$ and $1\!\leq \!p \!\leq \!r\!-\!1$. Note $\chi^{\ast}$ defines a bijection
$$
 \{\sigma_1, \,\cdots,\widehat{\sigma_r},\cdots,\widehat{\sigma_{2r}},\cdots,\widehat{\sigma_{(n-1)r}},\cdots, \sigma_{nr-1} \}\leftrightarrow \{ \A_p^i \;| \;1\!\leq \!i\!\leq \!n, \, 1\!\leq \!p \!\leq \!r\!-\!1 \},
$$%where $\A_p^i$ corresponds to $\A_{(i-1)r + p}$ of $S_{nr}$
($\;\widehat{}$
 denotes deletion).

Replace $\Si_{(i-1)r + p}$ by $\A_p^i$ in the Coxeter system (\ref{coxeter_sys}), $1\!\leq \!i\!\leq \!n$ and $1\!\leq \!p \!\leq \!r\!-\!1$. Then (\ref{coxeter_sys}) becomes

\begin{equation}\label{eq_coxeter_sys_new}
S_{nr} = \langle \A_p^i, \,\Si_{jr}\,|\, R \,\rangle\;\,1\!\leq \!i\!\leq \!n\;, 1\!\leq \!p \!\leq \!r\!-\!1,\;\text{and}\; 1\!\leq \!j\!\leq \!n-1,
\end{equation}where $R$ consisting of

\begin{itemize}
    \item[]$R_1$: $(\A_p^i)^2, \,(\Si_{jr})^2$, $\forall \;i, p, j$
    \item[]$R_2$: $[\A_p^i,\A_q^i]$, \,$\forall \; i,\;|p\!-q|\geq 2$
     \item[]$R_3$: $[\A_p^i,\A_p^{i'}]$, \,$\forall \; p,q,\;i \neq i'$
     \item[]$R_4$: $[\A_p^i,\Si_{jr}]$, \,$\forall \; j,\;(i-1)r +p  \neq jr \pm 1$
     \item[] $R_5$: $[\Si_{jr},\Si_{j'r}]$, \, $j \neq j'$
    \item[]$R_6$: $(\A_p^i\,\A_{p+1}^i)^3$, \;$\forall \; i,\;1\leq p \leq r-2$
     \item[] $R_7$: $(\A_p^i\,\Si_{jr})^3 $, \,$\forall \; j,\;(i-1)r +p  = jr \pm 1$.
\end{itemize}

Note that the difference in presentations (\ref{coxeter_sys}) and (\ref{eq_coxeter_sys_new}) is just notation and the set $R$ of relations exhibits the same idea of Remark \ref{rmk_coxeter_rel}. We also want to replace $\Si_{jr}$ by $\A^{j+1}_0$ for $j = 1, \cdots, n-1$ as follows. For each $j$, $\A^{j+1}_0$ corresponds, under $\chi^\ast$, to the transposition $(1\,\, jr\! +\!1)$ of $S_{nr}$. Observe that the transposition $(1\,\, jr\! +\!1)$ of $S_{nr}$ can be expressed as $(\Si_{jr})^{\overline{w_{jr-1}}}$ where $w_k = \sigma_1 \Si_2 \cdots \Si_{k}$.

\begin{figure}[ht]
\begin{center}
\includegraphics[width=1.0\textwidth]{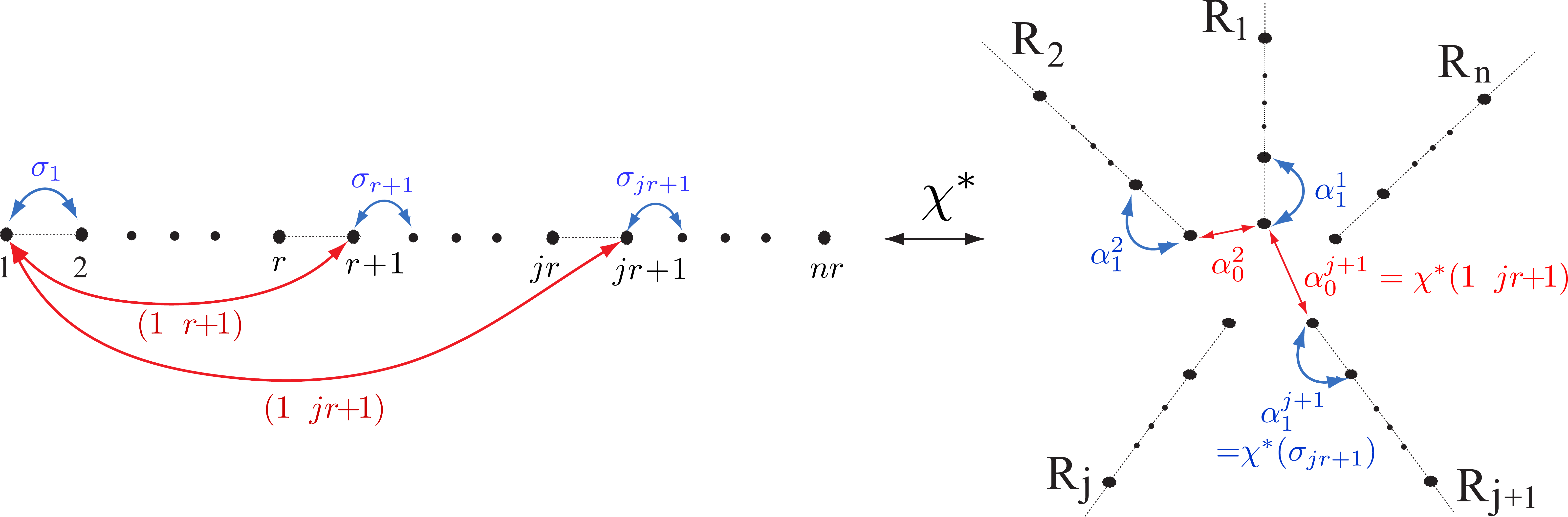}
\caption{Correspondence between two generating sets via $\chi^\ast$ } \label{fig_tietze}
\end{center}
\end{figure}

As in (\ref{eq_transform_gen_symm}), define new generators of the second type by

\begin{equation}\label{eq_transform_gen_symm_2}
\A^{j+1}_0:= \chi^{\ast}((\Si_{jr})^{\overline{w_{jr-1}}})\; \text{for}\; j = 1, \cdots, n-1.
\end{equation}
Figure \ref{fig_tietze} illustrates how one transforms the generating set of $S_{nr}$ to a new generating set.

Consider the group $\Sigma_{n,r}$ presented by
\begin{equation}\label{eq_presen_fin_symm_new}
\Sigma_{n,r} = \langle \A_p^i, \,\A^{j+1}_0\,|\, R \,\rangle\;\,1\!\leq \!i\!\leq \!n\;, 1\!\leq \!p \!\leq \!r\!-\!1,\;\text{and}\; 1\!\leq \!j\!\leq \!n-1,
\end{equation}where $R'$ consists of
\begin{itemize}
    \item[] $R'_1$: $(\A_p^i)^2, \,(\A^j_0)^2$, $\forall \;i, p, j$
    \item[] $R'_2$: $[\A_p^i,\A_q^i]$, \,$\forall \; i,\;|p\!-q|\geq 2$
    \item[]  $R'_3$: $[\A_p^i,\A_p^{i'}]$, \,$\forall \; p,q,\;i \neq i'$
    \item[] $R'_4$: $[\A_p^i,\A^{j+1}_0]$, \,$\forall \; j,\;\A_p^i \neq \A_1^1,\,\A_{1}^{j+1}$
    \item[] $R'_5$: $(\A_p^i\,\A_{p+1}^i)^3$, \;$\forall \; i,\;1\leq p \leq r-2$
     \item[] $R'_6$: $(\A_1^1\,\A^{j+1}_0)^3 $, $(\A_1^{j+1}\,\A^j_0)^3 $.
\end{itemize}

\noindent Note that $R$ and $R'$ share the same idea described in Remark \ref{rmk_coxeter_rel}. Expecting $R'$ to replace the original relators of $R$ is reasonable.

\begin{theorem}\label{thm_iso_presen_symm}
With the above definition, $\Sigma_{n,r} \cong S_{nr}$ for all integers $n, \, r \geq 1$.
\end{theorem}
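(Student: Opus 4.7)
The plan is to show that the presentation (\ref{eq_presen_fin_symm_new}) is Tietze-equivalent to the Coxeter presentation (\ref{eq_coxeter_sys_new}); granted this, $\Sigma_{n,r} \cong S_{nr}$ follows. Both presentations share the generators $\A_p^i$, so the Tietze work consists of trading the $n-1$ old generators $\Si_{jr}$ for the $n-1$ new generators $\A_0^{j+1}$. I would carry this out inductively on $j$: at step $j$, adjoin $\A_0^{j+1}$ with the defining relation $\A_0^{j+1} = \Si_{jr}^{\,\overline{w_{jr-1}}}$, solve for $\Si_{jr} = (\A_0^{j+1})^{w_{jr-1}}$, substitute throughout the relators of $R$, then delete $\Si_{jr}$ from the generating set. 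The conjugating word $w_{jr-1} = \Si_1 \cdots \Si_{jr-1}$ involves the letters $\Si_r, \Si_{2r}, \ldots, \Si_{(j-1)r}$, but by the inductive hypothesis these have already been rewritten in terms of the new generators $\A_0^2, \ldots, \A_0^j$ and the $\A_p^i$'s.

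After all $n-1$ substitutions, it remains to verify that the transformed $R$ and the prescribed $R'$ normally generate the same subgroup of the free group on the final generators. Several translations are immediate: the involution part of $R_1$ conjugates to $(\A_0^{j+1})^2 = 1 \in R'_1$; the $\A_p^i$-only relators $R_2, R_3, R_6$ match $R'_2, R'_3, R'_5$ verbatim; and the $\Si_{jr}$--$\Si_{j'r}$ commutation $R_5$ becomes, after conjugation, a commutation of two disjoint-support transpositions in $S_{nr}$, derivable from the already-verified $R'_1, R'_2, R'_3, R'_4$.

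The main obstacle is the mixed relators $R_4$ and $R_7$, each coupling a generator $\A_p^i$ with a generator $\Si_{jr}$. Substituting, $R_4$ becomes $[\A_p^i, (\A_0^{j+1})^{w_{jr-1}}] = 1$ under the arithmetic hypothesis $(i-1)r + p \neq jr \pm 1$, while $R_7$ becomes the corresponding braid relation when $(i-1)r + p = jr \pm 1$. To derive these from $R'$, one commutes $\A_p^i$ letter-by-letter through $w_{jr-1}$: at each step the letter is either an $\A_q^k$ disjoint from $\A_p^i$ (use $R'_2, R'_3$) or adjacent to it (use $R'_5$ or $R'_6$), and after fully threading through one finally applies $R'_4$ or $R'_6$ against $\A_0^{j+1}$. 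The bookkeeping is simplified by the geometric observation that $\A_p^i$ together with the $\A_0^{j+1}$ are exactly the edge-transpositions of the tree on $B_{n,r}$ formed by the $n$ rays joined at $(1,1)$, and $R'$ is the standard ``tree Coxeter presentation'' associated to this tree.

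Conversely, each relator of $R'$ corresponds, under the inverse substitution, to a relation among transpositions in $S_{nr}$ that is an instance of a Coxeter relation, hence a consequence of $R$. This establishes the Tietze-equivalence and completes the proof.
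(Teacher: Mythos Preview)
Your proposal is correct and follows essentially the same route as the paper: both arguments adjoin the new generators $\A_0^{j+1}$ via the defining relations $\A_0^{j+1} = (\Si_{jr})^{\overline{w_{jr-1}}}$, verify that the old relator set $R$ and the new relator set $R'$ have the same normal closure, and then remove the $\Si_{jr}$'s by Tietze moves. The paper organizes the core computation as a list of conjugation identities (\ref{eq_rel_1})--(\ref{eq_rel_6}) proved by induction on $j$; your ``thread $\A_p^i$ letter-by-letter through $w_{jr-1}$'' is exactly the mechanism that produces those identities, and your easy converse direction ($R'$ follows from $R$ because every $R'$-relator is an instance of a Coxeter relation among transpositions) is the paper's appeal to Remark~\ref{rmk_coxeter_rel}.

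The one genuine difference is packaging: you frame the target presentation as the ``tree Coxeter presentation'' of $S_{nr}$ attached to the spanning tree on $B_{n,r}$ given by the $n$ rays joined at $(1,1)$. This is a clean conceptual shortcut---once one knows (or proves once and for all) that edge-transpositions of any spanning tree on $N$ vertices present $S_N$ with the obvious involution/braid/commutation relations, the theorem is immediate and the inductive bookkeeping of (\ref{eq_rel_1})--(\ref{eq_rel_6}) can be absorbed into that general fact. The paper instead carries out the specific Tietze computation by hand, which is self-contained but longer.
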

\begin{proof}

First we show $ \langle \!\langle R \rangle \!\rangle  \leq \langle \!\langle R' \rangle \!\rangle$. It is clear $ \langle \!\langle R_1 \rangle \!\rangle  \leq \langle \!\langle R'_1 \rangle \!\rangle$. Basic idea is to use appropriate conjugation relations between $\Si_{jr}$ and $\A^{j+1}_0$ given by (\ref{eq_transform_gen_symm_2}).

In order to proceed we need the following identities for $j =1,2, \cdots, n-1$

\begin{equation}\label{eq_rel_1}
    [\Si_{jr}, \A^i_p] = [\A^{j+1}_0\!, \A^i_p]^{w_{jr-1}}\;\text{if}\;(i-1)r+p \geq jr+2,
\end{equation}

\begin{equation}\label{eq_rel_2}
[\Si_{jr}, \A^i_p] = [\A^{j+1}_0\!, \A^i_{p+1}]^{w_{jr-1}}\;\text{if}\;(i-1)r+p \leq jr-2,
\end{equation}

\begin{equation}\label{eq_rel_3}
[\Si_{jr}, \Si_{j'r}] = [\A^{j+1}_0\!, \A^{j'+1}_{1}]^{w_{jr-1}}\;\text{if}\;j' <j+1,
\end{equation}

\begin{equation}\label{eq_rel_4}
(\A^{j+1}_1 \Si_{jr})^3 = ((\A^{j+1}_1 \A_0^{j+1})^3)^{w_{jr-1}}
\end{equation}

\begin{equation}\label{eq_rel_5}
(\A^{j}_{r-1} \Si_{jr})^3 = ((\A^1_1 \A_0^{j+1})^3)^{w_{jr-1} w_{jr-2}}
\end{equation}

\begin{equation}\label{eq_rel_6}
(\A_1^{1})^{w_{jr-1} w_{jr-2}}= \A^{j}_{r-1}.
\end{equation}

First we apply an induction argument on $j$ to verify (\ref{eq_rel_1}), (\ref{eq_rel_2}) and (\ref{eq_rel_4}). The induction assumption for (\ref{eq_rel_1}) together with relators of $R'_3$ yields $\A_p^i = (\A^i_p)^{w_{jr-1}}$ if $(i-1)r+p \geq jr+2$. So (\ref{eq_rel_1}) follows from (\ref{eq_transform_gen_symm_2}). Similarly one can verify (\ref{eq_rel_2}) by using (\ref{eq_rel_1}) together with $R'_2$ and $R'_5$. The identity (\ref{eq_rel_4}) follows from (\ref{eq_rel_1}), $R'_2$ and $R'_3$.

For (\ref{eq_rel_3}), (\ref{eq_rel_5}) and (\ref{eq_rel_6}) we apply simultaneous induction on $j$ together with (\ref{eq_rel_1}), (\ref{eq_rel_2}) and (\ref{eq_rel_4}). The base case of (\ref{eq_rel_3}) holds trivially. The relation (\ref{eq_rel_6}) is clear when $j=1$, which establishes the base case of (\ref{eq_rel_5}). Suppose (\ref{eq_rel_3}), (\ref{eq_rel_5}) and (\ref{eq_rel_6}) hold for $j=k$. Observe that
\begin{align*}
&w_{(k+1)r-1} w_{(k+1)r-2}\\=& (\sigma_1 \cdots  \Si_{kr} \Si_{kr\!+\!1} \cdots \Si_{(k\!+\!1)r\!-\!1})(\sigma_1 \Si_2 \cdots  \Si_{kr} \Si_{kr\!+\!1} \cdots \Si_{(k\!+\!1)r\!-\!2}) \\
 =& (\sigma_1 \cdots  \Si_{kr})( \sigma_1 \Si_2 \cdots \Si_{kr\!-\!1} )(\Si_{kr\!+\!1} \cdots \Si_{(k\!+\!1)r\!-\!1})(\Si_{kr} \Si_{kr\!+\!1} \cdots \Si_{(k\!+\!1)r\!-\!2})  &(\ref{eq_rel_3}), \; R'_2,\; R'_3\\
=& (\sigma_1 \cdots  \Si_{kr\!-\!1})( \sigma_1 \Si_2 \cdots \Si_{kr\!-\!2}\Si_{kr}\Si_{kr\!-\!1} )(\Si_{kr\!+\!1} \cdots \Si_{(k\!+\!1)r\!-\!1})(\Si_{kr} \Si_{kr\!+\!1} \cdots \Si_{(k\!+\!1)r\!-\!2})  &(\ref{eq_rel_2}), \;(\ref{eq_rel_3})
\end{align*}
Thus we have
\begin{align*}
(\A_1^1)^{w_{(k\!+\!1)r\!-\!1} w_{(k\!+\!1)r\!-\!2}}&=(\A_{r-1}^k)^{(\Si_{kr}\Si_{kr\!-\!1} \Si_{kr\!+\!1} \cdots \Si_{(k\!+\!1)r\!-\!1})(\Si_{kr} \Si_{kr\!+\!1} \cdots \Si_{(k\!+\!1)r\!-\!2})} &\; (\ref{eq_rel_5})\\
 =& (\Si_{kr})^{(\Si_{kr\!+\!1} \cdots \Si_{(k\!+\!1)r\!-\!1})(\Si_{kr} \Si_{kr\!+\!1} \cdots \Si_{(k\!+\!1)r\!-\!2})} &\; (\ref{eq_rel_4})\\
=& (\Si_{kr})^{(\Si_{kr\!+\!1} \Si_{kr} \Si_{kr\!+\!2}\cdots \Si_{(k\!+\!1)r\!-\!1})( \Si_{kr\!+\!1} \cdots \Si_{(k\!+\!1)r\!-\!2})} &(\ref{eq_rel_4})\\
=&(\A^{k\!+\!1}_1)^{(\Si_{kr\!+\!2}\cdots \Si_{(k\!+\!1)r\!-\!1})( \Si_{kr\!+\!1} \cdots \Si_{(k\!+\!1)r\!-\!2})} &\; (\ref{eq_rel_4})\\
=& \cdots =\A^{k\!+\!1}_{r\!-\!1} & R'_2, R'_5
\end{align*}

So we verify the statement (\ref{eq_rel_5}) with $j= k+1$ which imply (\ref{eq_rel_4}) for $j= k+1$.
For (\ref{eq_rel_3}) we need to check $(\A^{j'+1}_1)^{w_{(k+1)r-1}} = \Si_{j'r}$ if $j' < k+2$;
\begin{align*}
(\A^{j'+1}_1)^{w_{(k+1)r-1}}&= (\A^{j'+1}_1)^{\sigma_1 \Si_2 \cdots \Si_{j'\!r} \Si_{j'\!r+1} \cdots \Si_{(k+1)r-1}} &\text{definition of}\; \Si_{(k+1)r-1}\\
& = (\A^{j'+1}_1)^{\Si_{j'\!r} \Si_{j'\!r+1} \cdots \Si_{(k+1)r-1}} &(\ref{eq_rel_1}), \; R'_3\\
& = (\Si_{(j'+1)r})^{\Si_{j'\!r+2} \cdots \Si_{(k+1)r-1}} &(\ref{eq_rel_1}), \;(\ref{eq_rel_4})\\
&= \cdots = \Si_{(j'+1)r} &(\ref{eq_rel_1}).
\end{align*}

Now we see identities (\ref{eq_rel_1})-(\ref{eq_rel_5}) imply $\langle \!\langle R \rangle \!\rangle \leq  \langle \!\langle R' \rangle \!\rangle$. On the other hand, Remark \ref{rmk_coxeter_rel} guarantees $ \langle \!\langle R' \rangle \!\rangle  \leq \langle \!\langle R \rangle \!\rangle$. So we have $ \langle \!\langle R' \rangle \!\rangle  = \langle \!\langle R \rangle \!\rangle$

The next step is to apply Tietze transformations to the presentation
(\ref{eq_coxeter_sys_new}) which is isomorphic to Coxeter presentation (\ref{coxeter_sys}) (up to notation for generators). First adjoin extra letters $\A^{j+1}_0$'s to get the following presentation
\begin{equation}\label{eq_new_presen_fin_symm}
 \langle \A_p^i,  \,\Si_{jr},\,\A^{j+1}_0\,|\, R \cup R_\B \,\rangle
\end{equation}
where $R_\B$ consists of words of the form
$$
\overline{(\A^{j+1}_0)}(\Si_{jr})^{\overline{w_{jr-1}}}
$$
corresponding to (\ref{eq_transform_gen_symm_2}). Then replace $R \cup R_\B$ by $R' \cup R_\B$ in (\ref{eq_new_presen_fin_symm}). This is legitimate since two normal closures $\langle \!\langle R \rangle \!\rangle$ and $\langle \!\langle R' \rangle \!\rangle$ are the same.
Finally we remove $\Si_{jr}$'s from the generating set and $R_\B$ from the relators simultaneously to get the presentation $\Sigma_{n,r}$. In all, $\Sigma_{n,r}$ and $S_{nr}$ are isomorphic.
\end{proof}

Let $n$ be a positive integer. From Theorem \ref{thm_iso_presen_symm} we have a sequence of symmetric groups $\Sigma_{n,r} = \langle \,A_{n,r} \, | \,  Q_{n,r}\,\rangle$ on $B_{n,r} \subset Y_n$, where $A_{n,r}$ and $ Q_{n,r}$ denote the generators and relators in (\ref{eq_presen_fin_symm_new}) respectively. The natural inclusion $B_{n,r} \hookrightarrow B_{n, r+1}$ induces an inclusion $i_r: \Sigma_{n,r} \hookrightarrow \Sigma_{n,r+1}$ such that
$$
A_{n,r} \hookrightarrow A_{n,r+1} \; \text{and}\; Q_{n,r} \hookrightarrow Q_{n,r+1}
$$ for $r \in \N$. The direct limit $\underrightarrow{lim}_r\, \Sigma_{n,r}$ with respect to inclusions $i_r$ is nothing but the infinite symmetric group $\Sigma_{n,\infty}$ on $Y_n$ consisting of permutations with finite support.

\begin{lemma}\label{lm_A_n_iso_Sigma}
For each $n \in \N$, $\Sigma_{n, \infty} \cong \langle \,A_n \, | \,  Q_n\,\rangle$, where $A_n = \cup_r A_{n,r}$ and $Q_n = \cup_r Q_{n,r}$.
\end{lemma}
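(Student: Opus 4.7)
The plan is to exhibit $\Sigma_{n,\infty}$ as the direct limit of the groups $\Sigma_{n,r}$ and then invoke the standard fact that a direct limit of groups given by a nested system of presentations is presented by the union of the generators and relators. First I would verify that the chain of inclusions $B_{n,r} \hookrightarrow B_{n,r+1}$ really does induce inclusions of presentations. By Theorem \ref{thm_iso_presen_symm} each $\Sigma_{n,r}$ is generated by the adjacent-point transpositions $\A_p^i$ and $\A_0^{j+1}$ supported in $B_{n,r}$, and passing from $r$ to $r+1$ simply adjoins the new transpositions $\A_r^i$ for $1\le i\le n$ together with the extra Coxeter-type relators in $Q_{n,r+1}\setminus Q_{n,r}$ that involve them. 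In particular $A_{n,r}\subset A_{n,r+1}$ and $Q_{n,r}\subset Q_{n,r+1}$, so the maps $i_r$ really are inclusions of presented groups.

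Next I would identify $\varinjlim_r \Sigma_{n,r}$ with $\Sigma_{n,\infty}$. Every $\sigma\in\Sigma_{n,\infty}$ has finite support, hence that support lies in some ball $B_{n,r}$, so $\sigma$ is in the image of the natural map $\Sigma_{n,r}\to\Sigma_{n,\infty}$; conversely these maps are compatible with the $i_r$. This gives a surjection from the direct limit onto $\Sigma_{n,\infty}$, which is injective because each $\Sigma_{n,r}\hookrightarrow\Sigma_{n,\infty}$ is.

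Now I would invoke (or briefly prove) the following general presentation lemma: if $\{G_r = \langle A_r \mid Q_r \rangle\}$ is a direct system with $A_r\subset A_{r+1}$ and $Q_r\subset Q_{r+1}$, and the bonding maps are induced by the inclusions of generators, then $\varinjlim_r G_r \cong \langle \cup_r A_r \mid \cup_r Q_r \rangle$. The proof is routine: the free group on $\cup_r A_r$ is the direct limit of the free groups on the $A_r$, and taking quotients by the normal closures $\langle\!\langle Q_r\rangle\!\rangle$ commutes with direct limits, so the universal properties match. Applying this to the system $\{\langle A_{n,r}\mid Q_{n,r}\rangle\}$ yields $\Sigma_{n,\infty}\cong \langle A_n\mid Q_n\rangle$.

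The only mild obstacle is checking compatibility: one must confirm that the abstract inclusions $\langle A_{n,r}\mid Q_{n,r}\rangle \hookrightarrow \langle A_{n,r+1}\mid Q_{n,r+1}\rangle$ really agree with the geometric inclusions $i_r$, i.e.\ that the generators $\A_p^i$ and $\A_0^{j+1}$ defined in $\Sigma_{n,r}$ are sent to the like-named generators of $\Sigma_{n,r+1}$. This is immediate from the definitions in \eqref{eq_transform_gen_symm} and \eqref{eq_transform_gen_symm_2}, since the transpositions on $B_{n,r}$ viewed as permutations of $B_{n,r+1}$ fix the added points and therefore correspond to the same labelled transpositions in the larger presentation. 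Everything else is formal.
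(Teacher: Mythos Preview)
Your proposal is correct and takes essentially the same approach as the paper: both arguments use the direct-limit structure of the $\Sigma_{n,r}$ together with the nested presentations $\langle A_{n,r}\mid Q_{n,r}\rangle$ to conclude. The paper's proof is terser---it simply records the commuting square with $\langle\cup_{r\le k}A_{n,r}\mid\cup_{r\le k}Q_{n,r}\rangle\cong\Sigma_{n,k}$ on the left and passes to the limit---while you spell out the general presentation-of-a-direct-limit lemma and the compatibility check explicitly, but the underlying idea is identical.
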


\begin{proof}For each $k \in \N$ we have a commuting diagram

\[\begin{CD}
    \langle \,\cup_{r=1}^k A_{n,r} \, | \, \cup_{r=1}^k Q_{n,r}\,\rangle @>>>  \langle \,A_n \, | \,  Q_n\,\rangle \\
       @VVV @VVV\\
       \Sigma_{n,k} @>>> \Sigma_{n, \infty}.
\end{CD}\]
The isomorphism on the right follows from the fact that the left map is an isomorphism and that the two horizontal maps are inclusions.
\end{proof}

The previous Lemma implies that one can get a presentation for $\Sigma_{n,\infty}$ by allowing the subscript $p$ to be any positive integer in (\ref{eq_presen_fin_symm_new}). We keep the same notation $R'_1$-$R'_6$ to indicate relators of the same type in $Q_n$.

Recall elements $g_1, \cdots, g_{n-1}$ of $\ho_n$ defined in (\ref{eq_generator_g_i}), and it is easy to see that
\begin{equation*}
[g_i, g_j]=\A^1_1
\end{equation*} for $i \neq j$. Now consider the action of $g_1, \cdots, g_{n-1}$ on $A_n$.

\begin{equation*}
(\A_p^1)^{g_k^{-m}}\! =\! \A^1_{p+m},\;
\;(\A_1^1)^{g_k}\!\! =\!\A^k_0,\;\,\text{for} \;\,p,m \in \N, 1\leq k\leq n-1\;
\end{equation*}
\begin{equation*}
(\A^k_0)^{g_k}\! \!=\! \A_{1}^{k+1},\;\text{for} \;\, 1\leq k\leq n-1\;
\end{equation*}
\begin{equation*}
(\A^k_0)^{g_{k'}^{-1}}\! =\! (\A^k_0)^{\A_1^1},\;
(\A^k_0)^{g_{k'}}\!\! =\! (\A^k_0)^{\A^{k'}_0},\;\text{for} \;\, 1\leq k \neq k'\leq n-1\;
\end{equation*}
\begin{equation*}
(\A_p^i)^{g_{i-1}^{m}}\!\! =\!(\A^i_{p+m}),\;
(\A^i_p)^{g_j^{-m}}\!\! =\!(\A^i_p)^{g_j^{m}}\!=\!(\A_{p}^i), \;\,\text{for} \;\,p,m \in \N,\;\,2\leq i\leq n\;\, \text{and}\;\, j \neq i-1\
\end{equation*}
Let $Q'_n$ be the relators corresponding to the above action. For $n\geq 3$, consider the group $H_n$ defined by
\begin{equation}\label{presen_for_abs_H_n}
H_n = \langle \, g_1, \cdots, g_{n-1},A_n \, | \,  Q_n, Q'_n, (\A^1_1)^{-1} [g_i, g_j]\,\rangle,  \; 1 \leq i < j \leq n-1
\end{equation}

\begin{lemma}\label{lm_quot_H_n_by_A_n}
We have a short exact sequence for each $n$;
\begin{equation}\label{eq_abelianization}
1 \to \langle\,\mathcal{A}_n\,\rangle \to H_n \rightarrow \mathds{Z}^{n-1} \to 1.
\end{equation}
\end{lemma}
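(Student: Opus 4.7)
The plan is to construct a surjection $\varphi\colon H_n\twoheadrightarrow \mathds{Z}^{n-1}$ directly from the presentation (\ref{presen_for_abs_H_n}) and then identify its kernel with $\langle A_n\rangle$ by computing the quotient $H_n/\langle A_n\rangle$ using the presentation itself.

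First I would define $\varphi$ on the free group on $\{g_1,\ldots,g_{n-1}\}\cup A_n$ by $g_i\mapsto e_i$ and $\A\mapsto 0$ for every $\A\in A_n$, where $\{e_1,\ldots,e_{n-1}\}$ is the standard basis of $\mathds{Z}^{n-1}$, and verify that every relator is killed. The relators in $Q_n$ are words in $A_n$ alone and so map to $0$. Each relator in $Q'_n$ is a conjugation identity of the shape $a^{g_k^{\pm m}}\cdot b^{-1}$ with $a,b\in\langle A_n\rangle$; written out as an equal-to-identity word, every occurrence of $g_k$ is matched by an inverse occurrence and the $A_n$-letters contribute $0$, hence the image is $0$. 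Finally each $(\A^1_1)^{-1}[g_i,g_j]$ maps to $-0+e_i+e_j-e_i-e_j=0$. Thus $\varphi$ descends to a well-defined homomorphism $H_n\to\mathds{Z}^{n-1}$ which is surjective because $g_i\mapsto e_i$.

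Next I would show $\ker\varphi=\langle A_n\rangle$. The inclusion $\langle A_n\rangle\subseteq\ker\varphi$ is immediate. For the reverse inclusion, observe that the relators of $Q'_n$ say that every conjugate $g_k^{\pm 1}\A g_k^{\mp 1}$ with $\A\in A_n$ lies in $\langle A_n\rangle$, so $\langle A_n\rangle$ is normal in $H_n$. Therefore $H_n/\langle A_n\rangle$ admits the presentation obtained from (\ref{presen_for_abs_H_n}) by adjoining $\A=1$ for every $\A\in A_n$; under this reduction the relators of $Q_n$ and $Q'_n$ are trivially satisfied, and each $(\A^1_1)^{-1}[g_i,g_j]$ collapses to $[g_i,g_j]$. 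Consequently,
$$H_n/\langle A_n\rangle\;\cong\;\langle g_1,\ldots,g_{n-1}\mid [g_i,g_j],\ 1\le i<j\le n-1\rangle\;\cong\;\mathds{Z}^{n-1},$$
which, combined with the computation of $\ker\varphi$, yields the desired short exact sequence.

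The argument is essentially bookkeeping, and the only mild obstacle is the uniform verification that each relator in $Q'_n$ is annihilated by $\varphi$. The conjugation formulas listed just before (\ref{presen_for_abs_H_n}) all have the form ``conjugate of one $A_n$-letter equals an $A_n$-word,'' so this inspection is routine and no deeper structural input is required.
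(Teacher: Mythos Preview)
Your proof is correct and follows essentially the same approach as the paper: use $Q'_n$ to see that $\langle A_n\rangle$ is normal, then read off the quotient presentation to obtain $\mathds{Z}^{n-1}$. Your explicit construction of $\varphi$ is a slight redundancy---once you have computed $H_n/\langle A_n\rangle\cong\mathds{Z}^{n-1}$ via the presentation, the quotient map already furnishes the surjection with kernel exactly $\langle A_n\rangle$, so the separate verification that $\varphi$ kills all relators is not needed (though it is not wrong).
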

\begin{proof}
Note that the relations $Q'_n$ ensure that $\langle A_n\rangle$ is a normal subgroup of $H_n$. Clearly the set of relations $Q_n$ becomes trivial in the quotient $H_n/\langle A_n \rangle$. Likewise, the relations in $Q'_n$ all become trivial in the quotient $H_n/\langle A_n \rangle$. The extra relator $(\A^1_1)^{-1} [g_i, g_j]$ implies that $g_i$ and $g_j$ commute in ${H_n/\langle\,\mathcal{A}_n\,\rangle\!}$ ($i \neq j$), so the quotient is the free abelian group generated by $g_1, \cdots, g_{n-1}$.
\end{proof}

\begin{lemma}\label{lm_iso_abs_H_n_Hough}
For $n\geq 3$, $H_n \cong \hsmH_n$.
\end{lemma}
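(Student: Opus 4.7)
The plan is to imitate Johnson's proof of Theorem~\ref{thm_johnson} by constructing a homomorphism $\psi: H_n \to \ho_n$ and then invoking the Five Lemma on the diagram
\[
\begin{CD}
1 @>>> \langle A_n\rangle @>>> H_n @>>> \mathds{Z}^{n-1} @>>> 1 \\
@. @VV{\psi_1}V @VV{\psi}V @VV{\psi_2}V @. \\
1 @>>> \Sigma_{n,\infty} @>>> \ho_n @>{\varphi}>> \mathds{Z}^{n-1} @>>> 1,
\end{CD}
\]
whose top row is the short exact sequence of Lemma~\ref{lm_quot_H_n_by_A_n} and whose bottom row is (\ref{eq_abelianization_Si}).

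First I will define $\psi$ on generators by sending each $g_i$ to the translation $g_i \in \ho_n$ of equation (\ref{eq_generator_g_i}) and each element of $A_n$ to the corresponding transposition in $\Sigma_{n,\infty}$ (that is, $\A^i_p$ to the transposition swapping $(i,p)$ and $(i,p+1)$, and $\A^{j+1}_0$ to the transposition swapping $(1,1)$ and $(j+1,1)$). To see that $\psi$ is well-defined I need to verify that each relator of (\ref{presen_for_abs_H_n}) holds in $\ho_n$. The relators in $Q_n$ hold because the images are transpositions of $Y_n$ satisfying the three Coxeter-type conditions highlighted in Remark~\ref{rmk_coxeter_rel}. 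The relators in $Q'_n$ follow by direct inspection from Remark~\ref{rmk_conjugation}: conjugating the transposition $(x\,y)$ by any $g_k$ yields the transposition of $(x)g_k$ and $(y)g_k$, and the formulas listed just above (\ref{presen_for_abs_H_n}) are precisely these conjugation identities. The identity $[g_i,g_j] = \A^1_1$ in $\ho_n$ was observed preceding (\ref{presen_for_abs_H_n}).

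Next I will check that the outer vertical arrows $\psi_1$ and $\psi_2$ are isomorphisms. For $\psi_2$, both quotients are identified with $\mathds{Z}^{n-1}$ via the cosets of $g_1,\ldots,g_{n-1}$ (using Lemma~\ref{lm_quot_H_n_by_A_n} for the top row and the calculation following (\ref{eq_generator_g_i}) for the bottom row), and $\psi_2$ matches these bases. For $\psi_1$, note that all relators of $Q_n$ hold inside the subgroup $\langle A_n\rangle$ of $H_n$, so there is a canonical surjection
\[
\pi : \langle\, A_n \mid Q_n \,\rangle \twoheadrightarrow \langle A_n\rangle \leq H_n.
\]
Composing $\pi$ with $\psi_1$ yields the isomorphism $\langle A_n \mid Q_n\rangle \cong \Sigma_{n,\infty}$ of Lemma~\ref{lm_A_n_iso_Sigma}, which forces $\pi$ to be injective; hence $\psi_1$ is an isomorphism. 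The Five Lemma then implies $\psi$ is an isomorphism, proving $H_n \cong \ho_n$.

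The main obstacle is verifying that no extra relations among the elements of $A_n$ are secretly imposed inside $H_n$ by the relators of $Q'_n$ or by $(\A_1^1)^{-1}[g_i,g_j]$. Rather than analysing the kernel of $\pi$ directly, I sidestep this by factoring through the presentation $\langle A_n\mid Q_n\rangle$ and exploiting the isomorphism supplied by Lemma~\ref{lm_A_n_iso_Sigma}, which converts the problem into the already-established calculation of $\Sigma_{n,\infty}$.
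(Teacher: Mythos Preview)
Your proof is correct and follows essentially the same Five Lemma strategy as the paper: define $\psi$ on generators, verify the relators hold in $\ho_n$, and show the outer vertical maps are isomorphisms by invoking Lemma~\ref{lm_quot_H_n_by_A_n}, Lemma~\ref{lm_A_n_iso_Sigma}, and the short exact sequence~(\ref{eq_abelianization_Si}). Your factorization through $\pi:\langle A_n\mid Q_n\rangle\twoheadrightarrow\langle A_n\rangle$ to conclude that $\psi_1$ is an isomorphism is in fact a bit more explicit than the paper's terse appeal to Lemmas~\ref{lm_Si=ker} and~\ref{lm_A_n_iso_Sigma}, but the underlying idea is identical.
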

\begin{proof}As in Johnson's argument for $n=3$, there exists a homomorphism $\psi :H_n \to \ho_n$ defined by
$$
\psi(g_i) = g_i \; \text{and} \; \psi(\A)= \A.
$$
So we have a commutative diagram with exact rows.
\[\begin{CD}
       1@>>>  \langle\,\mathcal{A}_n\,\rangle @>>>  H_n @>>>  H_n / { \langle\,\mathcal{A}_n\,\rangle }@>>>  1 \\
      @VVV @VVV @VVV @VVV @VVV \\
      1 @>>>  [\ho_n , \ho_n] @>>> \ho_n @>>> \mathds{Z}^{n-1} @>>>  1
\end{CD}\]
The existence of the top row and the isomorphism $H_n/\langle A_n\rangle \simeq \Z^{n-1}$ are the result of Lemma \ref{lm_quot_H_n_by_A_n}. The bottom row is given by Corollary \ref{coro_abelianization}, and the isomorphism $\langle A_n \rangle \simeq [H_n, H_n]$ follows from Lemma \ref{lm_Si=ker} and Lemma \ref{lm_A_n_iso_Sigma}. As the outer four vertical maps are isomorphisms, so is the inner one by the Five lemma.
\end{proof}

\begin{theorem}\label{thm_fin_presen_abs_H_n}
For $n \geq 3$, $H_n$ has a finite presentation
    \begin{equation}\label{eq_fin_presen_abs_H_n}
        H_n \cong \langle \,g_1, \cdots, g_{n-1}, \A\,|\,\,P_n \,\rangle
    \end{equation} where $P_n$ consists of\\

\begin{tabular}{ l l l }
 $r_1'$&:& $\A^2\!=\!1$,\\
 $r_2 '$&:& $(\A \,\A^{g_1})^3 =1$,\\
$r_3 '$&:& $\A \sim \A^{\overline{g}_1^2}$,\\
$r_4 '$&:& $[g_i, g_j] \!=\!\A$, for $1\!\leq \!i\!< \!j \leq \!n\!-\!1$,\\
$r_5 '$&:& $\A^{\overline{g}_i} = \A^{\overline{g}_j}$, for $1\!\leq \!i\!< \!j \leq \!n\!-\!1$
\end{tabular}\\

($\sim$ denotes commutation).

\end{theorem}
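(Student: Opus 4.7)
The plan is to apply Tietze transformations to the presentation (\ref{presen_for_abs_H_n}) of $H_n$, eliminating the infinite generating set $A_n$ in favor of the single element $\alpha := \A^1_1$. The guiding observation is that each of the conjugation relations listed in $Q'_n$ expresses some generator $\A^i_p$ as a conjugate of another $A_n$-generator by a single $g_k$, so by iterating these relations every $\A^i_p$ can be written as a conjugate of $\alpha$ by a word in $g_1,\ldots,g_{n-1}$. For instance, one can take $\A^1_p = \alpha^{g_1^{-(p-1)}}$, $\A^{k+1}_0 = \alpha^{g_k}$, and $\A^i_p = \alpha^{g_{i-1}\,g_1^{-(p-1)}}$ (up to indexing conventions).

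Performing these Tietze eliminations removes every $\A^i_p$ with $(i,p)\neq(1,1)$ from the generating list and simultaneously drops the corresponding defining relation in $Q'_n$. Most of the remaining relations in $Q'_n$ become trivial tautologies after substitution; the only non-trivial ones assert that two a priori different words give the same conjugate of $\alpha$, and these are captured precisely by $r_5'$. The defining relation $(\A^1_1)^{-1}[g_i,g_j]=1$ from (\ref{presen_for_abs_H_n}) immediately becomes $r_4'$.

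What remains is to verify that the (infinite) Coxeter-type relations $Q_n$, namely $R'_1$--$R'_6$ extended to all $p\in\N$, are consequences of $r_1',r_2',r_3',r_5'$ together with $r_4'$. The key point, echoing Remark \ref{rmk_coxeter_rel}, is that any two transpositions in $A_n$ are either equal, disjoint in support, or overlap in a single point; the three types of Coxeter relations correspond to these three cases. The involution relations $R'_1$ follow from $r_1'$ by conjugation; the braid relations $R'_5$, $R'_6$ descend from $r_2'$ by conjugating through an appropriate word in $g_1,\ldots,g_{n-1}$; and the commutators $R'_2$, $R'_3$, $R'_4$ descend from $r_3'$ in the same way. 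The relation $r_5'$ is what makes this conjugation argument coherent across the different rays, ensuring that a transposition can be produced from $\alpha$ using $g_i$'s or $g_j$'s interchangeably.

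The main obstacle is the bookkeeping of this last step: for each family in $R'_1$--$R'_6$ one must exhibit an explicit conjugator $w\in\langle g_1,\ldots,g_{n-1}\rangle$ that carries a chosen template relation ($r_1'$, $r_2'$, or $r_3'$) to the desired instance, uniformly in the parameter $p$. The corner braid relations of $R'_6$ (involving $\A^{j+1}_0$) and the cross-ray commutations in $R'_3$, $R'_4$ rely on combining $r_5'$ with several applications of $r_2'$ or $r_3'$, so these deserve the most attention; the derivations will have the same flavor as the identities (\ref{eq_rel_1})--(\ref{eq_rel_6}) used in the proof of Theorem \ref{thm_iso_presen_symm}, but in the reverse direction. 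Once these derivations are laid out, the Tietze reduction yields the presentation (\ref{eq_fin_presen_abs_H_n}).
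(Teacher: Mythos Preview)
Your overall plan is sound, but it differs from the paper's approach and contains one oversimplification worth flagging.

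The paper proceeds by \emph{induction on $n$}, taking Johnson's Theorem \ref{thm_johnson} as the base case $n=3$ and, in the inductive step, only having to verify the new relations that involve the extra generator $g_n$ (identities (\ref{base1})--(\ref{base5})). The heavy lifting is isolated in Lemma \ref{lm_key_identity}, which derives from $P_n$ the two infinite families $\A^{\overline g_i^{\,k}}=\A^{\overline g_{n-1}^{\,k}}$ and $\A\sim\A^{\overline g_i^{\,k+1}}$ by a simultaneous induction on $k$. Your direct Tietze approach for general $n$ is perfectly legitimate, but it does not buy you a shortcut around this lemma: the ``bookkeeping'' you allude to for $R'_2$--$R'_4$ is exactly the content of Lemma \ref{lm_key_identity}, and you will have to reproduce that simultaneous induction in any case. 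The paper's inductive organization just lets it quote Johnson for $n=3$ and then handle one new ray at a time.

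The concrete gap in your sketch is the claim that, after substitution, the $Q'_n$ relations ``become trivial tautologies'' except for those captured by $r_5'$. This is not right. Relations of the type $(\A^i_p)^{g_j^{\pm 1}}=\A^i_p$ (for $j\neq i-1$, $i\geq 2$) and $(\A^{n+1}_p)^{g_i}=\A^{n+1}_p$ become, after writing $\A^i_p$ as a conjugate $\alpha^{w}$, genuine commutation statements $[\alpha^{w},g_j]=1$; these are not instances of $r_5'$ and are not tautological. In the paper they appear as (\ref{base2}), (\ref{base5}) and are reduced to (\ref{Base2}), which in turn is proved by induction from (\ref{stone4}) and (\ref{stone5}) (i.e.\ Lemma \ref{lm_key_identity}). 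So the $Q'_n$ block already forces you into the same inductive derivations you anticipated only for $Q_n$; you should budget for this when carrying out the plan.
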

\begin{proof}$P_{n}$ already contains $(\A^1_1)^{-1} [g_i, g_j]$ for $1 \leq i < j \leq n-1$. We show $\langle\!\langle P_{n} \rangle\! \rangle$ contains $\langle\!\langle \mathcal{Q}'_{n} \cup \mathcal{Q}_{n} \rangle\!\rangle$ for all $n\geq3$ by induction on $n$. The base case is established by Theorem \ref{thm_johnson}. Assume $H_n$ has the presentation as in \ref{eq_fin_presen_abs_H_n} for $n\geq 3$. Obviously $g_1, \cdots, g_n$ and $\A$ generate $\ho_{n+1}$. Consider the natural inclusion of $\iota: Y_{n} \rightarrow Y_{n+1}$ such that $\iota(R_i)$ is the $i^{th}$ ray of $Y_{n+1}$. This induces an embedding $\iota:H_n \to H_{n+1}$, and $\mathcal{Q'}_{n+1}$ contains relations corresponding the action of $g_{n}$ on $\A^{n}_0$ and $\A_p^{n+1}$'s:
\begin{equation}\label{n}
(\A^{n}_0)^{{g}_{n}}\!=\!\A_1^{n+1},\;(\A^{n}_0)^{\overline{g}_{n}}\!=\!\A_1^{1},\;
(\A_{p}^{n+1})^{g_n}\!=\!\A_{p+1}^{n+1}
\end{equation}
for $p \in \mathds{N}$. This implies $\langle\!\langle \iota(\mathcal{Q}_n)\rangle\!\rangle = \langle\!\langle \mathcal{Q}_{n+1} \rangle\!\rangle$ in $H_{n+1}$. So the normal closure of $ P_{n+1}$ contains $\langle\!\langle \mathcal{Q}_{n+1} \rangle\!\rangle$ by the induction assumption.

We must examine the action of $g_1, \cdots, g_{n}$ on $\mathcal{A}_n$'s:
\begin{equation}\label{base1}
(\A^1_p)^{\overline{g}_n}=(\A^1_p)^{\overline{g}_i}, \;k \in \N,\,1\leq i \leq n-1,
\end{equation}
\begin{equation}\label{base2}
(\A^i_p)^{g_n}= \A^i_p, \;p \in \N,\,2\leq i \leq n-1,
\end{equation}
\begin{equation}\label{base3}
(\A^i_0)^{g_n}=(\A^i_0)^{\A^{n}_0},\;(\A^i_0)^{\overline{g}_n}=(\A^i_0)^{\A_1^1},\;1\leq i \leq n-1,
\end{equation}
\begin{equation}\label{base4}
(\A^{n}_0)^{\overline{g}_i}=(\A^{n}_0)^{\A_1^1}, \;(\A^{n}_0)^{g_i}=(\A^{n+1}_0)^{\A^{i}_0},
\;1\leq i \leq n-1,
\end{equation}
\begin{equation}\label{base5}
(\A^{n+1}_p)^{g_i}={\A^{n+1}_p}, \;k \in \N,\,1\leq i \leq n-1,
\end{equation} together with (\ref{n}) and $\iota(Q'_n)$. From Lemma \ref{lm_key_identity} we see that $P_{n+1}$ implies
\begin{equation}\label{stone4}
\A \sim (\overline{g}_{n})^{\,\overline{g}_i^{2}},\; \A \sim (\overline{g}_i)^{\,\overline{g}_{n}^{2}}\!,\;1\leq i \leq n-1,
\end{equation}
\begin{equation}\label{stone5}
\A^{\,\overline{g}_i^{k}}=\A^{\,\overline{g}_{n}^{k}}\!, \; \A \sim \A^{\overline{g}_i^{(k+1)}}, \;k \in \N,
\;1\leq i \leq n-1
\end{equation}
We want to establish identities (\ref{base1})-(\ref{base5}) from (\ref{stone4}),  (\ref{stone5}) and $P_{n+1}$. The identity (\ref{base1}) is a part of (\ref{stone5}). Observe that, by using (\ref{n}), one can reduce (\ref{base2}) and (\ref{base5}) respectively, as follows
\begin{equation}\label{Base2}
\A^{g_i^{k+1}} \!\sim g_n, \; \A^{g_n^{k+1}}\!\sim g_i,\; 2\leq i \leq n-1, \,k \in \N,
\end{equation}

One can verify (\ref{Base2}) by inductions based on (\ref{stone4}) as follows. The identity (\ref{stone4}) provides the base cases. Assume $\A^{g_i^{k+1}} \!\sim g_n$ for $k \geq 1$. Then $r_1$ and $r_2$ of $P_{n+1}$ imply
$$
\A^{g_i^{k+2}}\!\sim \overline{g}_i g_n g_i = \overline{g}_i \A g_i  g_n = \A^{g_i} g_n
$$On the other hand, $\A^{g_i^{k+2}}\!\sim \A^{g_i}$ by (\ref{stone5}). Thus $\A^{g_i^{k+2}}\!\sim g_n$. An analogous argument verifies the second identity of (\ref{Base2}).

Note that (\ref{base3}) and (\ref{base4}) are equivalent to
$$
\A^{g_i}\! \sim \overline{g}_n \A, \; \A^{g_n}\! \sim  \A g_i,\; 1\leq i \leq n-1,
$$
which are immediate consequence of (\ref{stone4}).

In all, $\langle\!\langle P_{n+1} \rangle\! \rangle$ also contains $\langle\!\langle \mathcal{Q}'_{n+1} \rangle\!\rangle$. So $P_{n+1}$ is enough for relators in $H_{n+1}$, and hence $H_n$ has the presentation (\ref{eq_fin_presen_abs_H_n}) for $n \geq 3$.
\end{proof}

\begin{lemma}\label{lm_key_identity}
$P_n$ implies the following identities
\begin{equation}\label{stone1}
\A \sim (\overline{g}_{n-1})^{\,\overline{g}_i^{2}},\; \A \sim (\overline{g}_i)^{\,\overline{g}_{n-1}^{2}}\!,\;1\leq i \leq n-2,
\end{equation}
\begin{equation}\label{stone2}
\A^{\,\overline{g}_i^{k}}=\A^{\,\overline{g}_{n-1}^{k}}\!, \; \A \sim \A^{\overline{g}_i^{(k+1)}}, \;k \in \N,
\;1\leq i \leq n-2
\end{equation}
\end{lemma}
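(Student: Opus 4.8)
The plan is to derive all of (\ref{stone1}) and (\ref{stone2}) from the five relators by repeatedly exploiting one elementary fact about the centralizer $C(\A)$: if $c\in C(\A)$ then a word $w$ commutes with $\A$ if and only if $cw$ does. Throughout I write $x\sim y$ for ``$x$ commutes with $y$''. First I would restate $r_5'$ in the equivalent form $\A\sim\overline{g}_a g_b$ for $a\neq b$ (call this $(\ast)$); this says precisely that $\A^{\overline{g}_a}=:b_1$ is independent of $a$, and conjugating $r_2'$ by $\overline{g}_1$ produces the braid relation $\A\,b_1\,\A=b_1\,\A\,b_1$ between $\A$ and $b_1$. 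These two facts, together with $r_4'$, are the engine.

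For (\ref{stone1}) I would first record the conjugation identity
\[
g_b^{\,\overline{g}_a^{2}}=\A^{\overline{g}_a}\,\A\,g_b\qquad(a\neq b),
\]
which follows in two substitutions from $r_4'$ (rewrite $[g_a,g_b]=\A$ as $g_b\overline{g}_a=\overline{g}_a\A g_b$ and apply it twice). Since $(\overline{g}_{n-1})^{\overline{g}_i^{2}}=\bigl(g_{n-1}^{\,\overline{g}_i^{2}}\bigr)^{-1}$ and $\A$ commutes with an element iff with its inverse, (\ref{stone1}) reduces to showing $\A\sim b_1\A g_b$. This is a three-line check: $\A(b_1\A g_b)=(\A b_1\A)g_b=(b_1\A b_1)g_b$ by the braid relation, while $(b_1\A g_b)\A=b_1\A(g_b\A)=b_1\A b_1 g_b$ using $g_b\A=\A^{\overline{g}_b}g_b=b_1 g_b$ (again $r_5'$). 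Taking $(a,b)=(i,n-1)$ and $(n-1,i)$ gives the two halves of (\ref{stone1}). I note the equivalent reading $\A^{g_a^{2}}\sim g_b$, which is used below.

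For (\ref{stone2}) the core is a simultaneous induction on the exponent $m\ge 2$ of the two statements $X(m):\ \A\sim\A^{g_1^{m}}$ and $Y(m):\ \A^{g_1^{m}}\sim g_b\ (b\neq 1)$. The base cases are $r_3'$ conjugated by $g_1^{2}$ and the identity $\A^{g_1^{2}}\sim g_b$ just proved. For the step I would use the factorisation $g_1=g_b(\overline{g}_b g_1)$ with $g_b\in C(\A^{g_1^m})$ by $Y(m)$ and $\overline{g}_b g_1\in C(\A)$ by $(\ast)$: conjugating $\A^{g_1^m}$ by $g_1$ then equals conjugating it by $\overline{g}_b g_1\in C(\A)$, which preserves commutation with $\A$ and yields $X(m+1)$; and $Y(m+1)$ follows from $X(m)$, $Y(m)$ and the $r_4'$--computation $g_c^{\overline{g}_1}=\A g_c$. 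Because $X(m)$ is literally $\A\sim\A^{\overline{g}_1^{m}}$ (conjugate the relation by $g_1^{m}$), this proves the second assertion of (\ref{stone2}) for $i=1$.

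Finally I would establish index-independence $\A^{\overline{g}_1^{k}}\sim\overline{g}_1 g_m$ by induction on $k$ (base $(\ast)$): reduce it to $\A\sim \overline{g}_1^{\,k+1}g_m g_1^{k}=:x_k$, absorb the commuting element $c=\A^{g_1^{k+1}}$ (available from $X(k+1)$), and observe that a single cancellation with $r_4'$ gives $c\,x_k=x_{k-1}$, so $x_k\in C(\A)$ by hypothesis. The equalities $\A^{\overline{g}_i^{k}}=\A^{\overline{g}_{n-1}^{k}}$ and the extension of the commutation $\A\sim\A^{\overline{g}_i^{k+1}}$ to all $i\le n-2$ then follow formally. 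The hard part — and the reason one cannot argue inside the subgroup generated by $\A$ and its $g_1$--conjugates — is exactly the long-range commutation $\A\sim\A^{\overline{g}_1^{m}}$ for $m\ge 3$: the relators $r_1',r_2',r_3'$ alone present a Coxeter group of type $A$ in which such commutations fail, so one is forced to transport a transposition onto another ray through some $g_b$ (where disjointness is visible via $r_4'$ and (\ref{stone1})) and back along $C(\A)$ using $(\ast)$. Making this transport precise, via the inductions $X$, $Y$ and the absorption identity $c\,x_k=x_{k-1}$, is what makes the whole argument close.
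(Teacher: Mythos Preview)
Your argument is correct; all the manipulations check out, including the absorption identity $c\,x_k=x_{k-1}$ (which indeed reduces, via $\A g_m=g_1 g_m\overline{g}_1$ from $r_4'$, to $\overline{g}_1^{\,k}g_m g_1^{k-1}=x_{k-1}$), and the ``formally'' at the end is justified: once $\A^{\overline{g}_1^{\,k}}\sim\overline{g}_1 g_m$ is known for all $k$, a one-line induction gives $\A^{\overline{g}_1^{\,k}}=\A^{\overline{g}_m^{\,k}}$ for every $m$, and the rest of (\ref{stone2}) follows from this together with $X$.

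The paper's proof rests on the same two ingredients --- the identity $\A^{g_a^{2}}\sim g_b$ (your reading of (\ref{stone1})) and a simultaneous induction --- but organises the induction differently. The paper runs a single induction on $k$ that proves both halves of (\ref{stone2}) for a general index $i$ at once: assuming the equality $\A^{\overline{g}_i^{\,k-1}}=\A^{\overline{g}_{n-1}^{\,k-1}}$ at two consecutive levels and the commutation $\A\sim\A^{\overline{g}_i^{\,k}}$, it swaps $\overline{g}_i$ for $\overline{g}_{n-1}$ inside the conjugating word to push the equality up one step, and uses the fact that $\A^{g_i^{2}}$ commutes with both $\A$ and $\overline{g}_{n-1}$ (your $Y(2)$) to push the commutation. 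Your version instead fixes $i=1$, runs a cleaner two-statement induction $X(m),Y(m)$ entirely along the first ray, and then handles index-independence separately via the $x_k$--recursion. The paper's scheme is shorter on the page; yours makes the role of the centraliser $C(\A)$ and the ``transport to another ray and back'' mechanism (which you correctly identify as the point where $r_4'$ is essential) more transparent. Either organisation is fine.
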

\begin{proof}
From $r_1$, $r_2$ and $r_5$ we have
$$
\A \A^{\overline{g}_i} \A =  \A^{\overline{g}_i} \A \A^{\overline{g}_i} \Rightarrow \A^{\overline{g}_i \A}=\A^{g_i \A \overline{g}_i }\Rightarrow \A^{g_i \A}= \A^{\overline{g}_i \A g_i} = \A^{g_{n-1}^{-1} \A g_i}=\A^{g_i \overline{g}_{n-1}}.
$$
Thus,
$$
\A^{g_i}  \sim \overline{g}_{n-1} \overline{\A} = g_i \overline{g}_{n-1} \overline{g}_i \Rightarrow  \A \sim (\overline{g}_{n-1})^{\overline{g}_i^{2}}.
$$
One obtains $\A \sim (\overline{g}_i)^{\,\overline{g}_{n-1}^{2}}$ by using an analogous argument. Next, to establish (\ref{stone2}), we use simultaneous induction on $k$ together with (\ref{stone1}). Observe that $r_4$ and $r_5$  provide the base case $k=1$ and that $\A^{\,\overline{g}_i^{k}}=\A^{\,\overline{g}_{n-1}^{k}}$ holds trivially when $k=0$. Now assume
\begin{equation}\label{stone3}
\A^{\,\overline{g}_i^{(k-2)}}=\A^{\,\overline{g}_{n-1}^{(k-2)}},\; \A^{\,\overline{g}_i^{(k-1)}}=\A^{\,\overline{g}_{n-1}^{(k-1)}},\; \A \sim \A^{\overline{g}_i^{k}}
\end{equation}for $k \geq 2$. From $r_1$ and (\ref{stone2}),
$$
\A^{\,\overline{g}_i^{k}} =\A^{\,\overline{g}_i^{k}\A}= \A^{\,\overline{g}_{n-1}^{(k-1)} \overline{g}_i \A} = \A^{\,\overline{g}_{n-1}^{(k-2)} \overline{g}_i \overline{g}_{n-1}} =\A^{\,\overline{g}_i^{(k-1)} \overline{g}_i \overline{g}_{n-1}} =\A^{\,\overline{g}_{n-1}^{k}}.
$$ For the second assertion, note that $r_4$ and the first identity of (\ref{stone1}) imply that $\A^{g_i^2}$ commute with both $\A$ and $g_{n-1}^{-1}$. So we have
$$
\A ^{g_i^2} \sim \A^{\overline{g}_{n-1}^{(k-1)}}= \A^{\overline{g}_i^{(k-1)}},
$$or equivalently,
$$
\A \sim \A^{\overline{g}_i^{(k+1)}}.
$$

\end{proof}

As a consequence of Lemma \ref{lm_iso_abs_H_n_Hough} and Theorem \ref{thm_fin_presen_abs_H_n} we have

\medskip
{\bf Theorem \ref{thm_fin_presen_Hough}.}\;
{\sl For $n \geq 3$, $\ho_n$ has a finite presentation
    \begin{equation}\label{fin_ho_n}
        \ho_n \cong \langle \,g_1, \cdots, g_{n-1}, \A\,|\,\,P_n \,\rangle
    \end{equation} where $P_n$ is the same as in the presentation (\ref{eq_fin_presen_abs_H_n}) of Theorem \ref{thm_fin_presen_abs_H_n}.}
\medskip

%%%%%%%%%%%%%%%%%%%%%%%%%%%%%%%%%%%%%%%%%%%%%%%%%%%%%%%%%%%%%%%%%%%%%%%%%%%%%%%%%%%%%%%%%%%
%%%%%%%%%%%%%%%%%%%%%%%%%%%%%%%%%%%%%%%%%%%%%%%%%%%%%%%%%%%%%%%%%%%%%%%%%%%%%%%%%%%%%%%%%%%
%%%%%%%%%%%%%%%%%%%%%%%%%%%%%%%%%%%%%%%%%%%%%%%%%%%%%%%%%%%%%%%%%%%%%%%%%%%%%%%%%%%%%%%%%%%
\subsection{Further properties of $\ho_n$}
%%%%%%%%%%%%%%%%%%%%%%%%%%%%%%%%%%%%%%%%%%%%%%%%%%%%%%%%%%%%%%%%%%%%%%%%%%%%%%%%%%%%%%%%%%%
%%%%%%%%%%%%%%%%%%%%%%%%%%%%%%%%%%%%%%%%%%%%%%%%%%%%%%%%%%%%%%%%%%%%%%%%%%%%%%%%%%%%%%%%%%%
%%%%%%%%%%%%%%%%%%%%%%%%%%%%%%%%%%%%%%%%%%%%%%%%%%%%%%%%%%%%%%%%%%%%%%%%%%%%%%%%%%%%%%%%%%%

\begin{definition}[\textbf{Amenable and a-T-menable groups}]
A (discrete) group $G$ is called $amenable$ if it has a $F{\o}lner$ sequence, i.e.,
 there exists a sequence $\{F_i\}_{i\in \N}$ of finite subsets of $G$ such that
$$
| g \cdot F_i \,\triangle \,F_i| \over |F_i|
$$tends to $0$ for each $g \in G$.

A group $G$ is called \emph{a-T-menable} if there is a proper continuous affine action of $G$ on a Hilbert space.
\end{definition}

Recall the following known fact about amenable groups (\cite{Runde}).
\begin{theorem}\label{facts}The class of amenable group contains abelian groups, and locally finite groups. An extension of an amenable group by another amenable group is again amenable.
\end{theorem}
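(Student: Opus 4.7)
The plan is to verify each of the three assertions in turn using the invariant mean characterization of amenability. For the abelian case, I would first handle the finitely generated subcase by writing $G \cong \Z^d \oplus T$ with $T$ finite and exhibiting the explicit F\o lner sets $C_n = \{-n, \ldots, n\}^d \times T$, whose boundary-to-volume ratio is of order $1/n$ for any fixed finite symmetric generating set. For a general abelian group $G$, I would invoke the Markov--Kakutani fixed point theorem applied to the action of $G$ by left translation on the weak-$*$ compact convex set of means on $\ell^\infty(G)$: the translation operators are continuous, affine, and pairwise commuting (since $G$ is abelian), so they admit a common fixed point, which is precisely a $G$-invariant mean.

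For locally finite groups, the base case is that any finite group $F$ is amenable via the normalized counting measure $m_F(A) = |A|/|F|$, which is manifestly translation invariant. Writing a locally finite group $G$ as the directed union $\bigcup_\alpha F_\alpha$ of its finite subgroups, I would transport each $m_{F_\alpha}$ to a mean $\widehat m_\alpha$ on $\ell^\infty(G)$ supported on $F_\alpha$, and then extract a weak-$*$ cluster point $m$ of the net $\{\widehat m_\alpha\}$ using the Banach--Alaoglu theorem; invariance under an arbitrary $g \in G$ holds because $g \in F_\alpha$ for all sufficiently large $\alpha$, which forces the defect $\widehat m_\alpha(g \cdot f) - \widehat m_\alpha(f)$ to vanish eventually.

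The extension property is the step requiring the most care, and it is the one I would write out in detail. Given a short exact sequence $1 \to N \to G \to Q \to 1$ with invariant means $m_N$ on $\ell^\infty(N)$ and $m_Q$ on $\ell^\infty(Q)$, the idea is to define an invariant mean on $\ell^\infty(G)$ by averaging first over $N$-cosets and then over $Q$. For $f \in \ell^\infty(G)$ and a coset $gN$, set $\tilde f(gN) := m_N(n \mapsto f(gn))$, which is independent of the representative $g$ by $N$-invariance of $m_N$, so $\tilde f \in \ell^\infty(Q)$ is well defined; then put $m_G(f) := m_Q(\tilde f)$. The one nontrivial verification, and the main obstacle in this part, is that left translation of $f$ by an arbitrary element of $G$ descends, after the inner averaging, to translation of $\tilde f$ by the image of that element in $Q$, at which point $G$-invariance of $m_G$ follows by combining $N$-invariance of $m_N$ with $Q$-invariance of $m_Q$. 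A self-contained exposition of all three assertions appears in Runde's monograph cited alongside the statement.
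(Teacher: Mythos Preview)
Your proposal is correct and follows the standard textbook route via invariant means. However, the paper does not actually prove this statement: it is stated as a ``known fact'' with a citation to Runde and is used only as a black box to deduce amenability of $\mathcal{H}_n$ from the short exact sequence $1 \to \Sigma_{n,\infty} \to \mathcal{H}_n \to \mathds{Z}^{n-1} \to 1$. So there is nothing to compare against beyond noting that you have supplied what the paper deliberately outsources; your closing remark that a self-contained exposition appears in Runde is exactly the paper's position as well.
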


The above facts together with the short exact sequence (\ref{eq_abelianization_Si}) implies amenability of $\ho_n$.
\begin{equation*}
1 \to \Sigma_{n,\infty} \to \hsmH_n \xrightarrow{\varphi} \mathds{Z}^{n-1} \to 1,
\end{equation*}

\begin{theorem}$\hsmH_n$ is amenable for all $n$.
\end{theorem}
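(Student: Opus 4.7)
The proof should follow almost mechanically from the structural information already assembled in this section, together with Theorem \ref{facts}. My plan is to combine the short exact sequence (\ref{eq_abelianization_Si}) with the three closure properties recorded in that theorem, after checking the two inputs (abelian quotient, locally finite kernel) are amenable.

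First I would dispose of $n=1$ separately: the opening of Section \ref{sec_presentation} observes that $\ho_1 = \Sigma_{1,\infty}$, so this case reduces immediately to showing that the infinite symmetric group with finite support is amenable, which is a special instance of the locally finite argument below. For $n \geq 2$, I would invoke the short exact sequence
\begin{equation*}
1 \to \Sigma_{n,\infty} \to \hsmH_n \xrightarrow{\varphi} \mathds{Z}^{n-1} \to 1
\end{equation*}
(established in Corollary \ref{coro_abelianization} and the remark following it for $n \geq 3$, and obtained by the same computation of $\ker\varphi$ for $n=2$).

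The quotient $\mathds{Z}^{n-1}$ is abelian, hence amenable by the first clause of Theorem \ref{facts}. For the kernel, I would argue that $\Sigma_{n,\infty}$ is locally finite: any finitely generated subgroup is generated by finitely many permutations with finite support, whose supports are all contained in some ball $B_{n,r} \subset Y_n$, and therefore the subgroup embeds in the finite symmetric group $\Sigma_{n,r}$ of Theorem \ref{thm_iso_presen_symm}. Thus $\Sigma_{n,\infty}$ is a directed union of finite groups, hence locally finite, hence amenable by the second clause of Theorem \ref{facts}.

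Having amenable kernel and amenable quotient, the extension clause of Theorem \ref{facts} applies to the above short exact sequence and yields amenability of $\ho_n$. There is no real obstacle here; the only conceptual check is the local finiteness of $\Sigma_{n,\infty}$, and even that is essentially tautological from the definition ``permutations with finite support.'' I would keep the write-up short: one sentence for $n=1$, one sentence verifying local finiteness of $\Sigma_{n,\infty}$, and one sentence invoking Theorem \ref{facts} on the short exact sequence.
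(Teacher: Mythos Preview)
Your proposal is correct and follows essentially the same route as the paper: invoke the short exact sequence (\ref{eq_abelianization_Si}), apply Theorem \ref{facts} to the abelian quotient and the locally finite kernel, and conclude by the extension clause. If anything, your write-up is more careful than the paper's, which simply cites the sequence and Theorem \ref{facts} without separately addressing $n=1,2$ or spelling out the local finiteness of $\Sigma_{n,\infty}$.
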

The class of amenable groups contains the class of a-T-menable groups.
\begin{corollary}
$\hsmH_n$ is a-T-amenable for all $n$. As a consequence, Houghton's groups satisfy the Baum-Connes conjecture.
\end{corollary}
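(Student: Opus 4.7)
The plan is to deduce both assertions from the preceding theorem (amenability of $\ho_n$) together with two classical facts from the literature, so no new group-theoretic work on $\ho_n$ itself is required. The corollary is essentially a formal consequence.

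First I would establish that every amenable group is a-T-menable. The standard argument goes through the Haagerup property: a second countable locally compact group has the Haagerup property if and only if it admits a proper continuous affine isometric action on a (real) Hilbert space, which is precisely the definition of a-T-menability used here. Amenability implies the Haagerup property, since for an amenable group one can use a F\o lner sequence (guaranteed by the definition recalled just above the corollary) to build a sequence of positive definite functions on $G$ vanishing at infinity and converging to $1$ uniformly on compacta; by the GNS construction and the standard correspondence between such functions and conditionally negative definite functions, one obtains a proper conditionally negative definite function on $G$, and hence, via its associated cocycle, a proper affine isometric action of $G$ on a Hilbert space. Applying this to $G=\ho_n$, which is amenable by the preceding theorem, gives the first half of the corollary.

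Second I would invoke the Higson--Kasparov theorem, which asserts that any second countable locally compact group with the Haagerup property satisfies the Baum--Connes conjecture with coefficients. Since $\ho_n$ is countable (it is even finitely generated for $n \geq 2$ by Lemma \ref{lm_fin_gen_H_n}, and for $n=1$ it equals $\Sigma_{1,\infty}$, which is countable) and discrete, it is certainly second countable, so the theorem applies and yields the Baum--Connes conjecture for $\ho_n$.

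The main (and only) obstacle is really bookkeeping: one must ensure the hypotheses of the Higson--Kasparov theorem are met for every $\ho_n$, including the small case $n=1$ where $\ho_1=\Sigma_{1,\infty}$ is locally finite (hence trivially amenable by Theorem \ref{facts}, so the argument still applies). Beyond that, both implications ``amenable $\Rightarrow$ a-T-menable'' and ``a-T-menable $\Rightarrow$ Baum--Connes'' are off-the-shelf results, so the corollary follows by citation rather than by independent computation.
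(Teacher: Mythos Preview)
Your proposal is correct and follows exactly the same route as the paper: the corollary is stated immediately after the theorem on amenability and the sentence ``The class of amenable groups contains the class of a-T-menable groups'' (which, incidentally, is written backwards in the paper---the containment should go the other way), with no further proof given. You have simply spelled out the two standard citations (amenable $\Rightarrow$ Haagerup/a-T-menable, and Higson--Kasparov for Baum--Connes) that the paper leaves implicit.
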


%other obvious algebraic properties...

%%%%%%%%%%%%%%%%%%%%%%%%%%%%%%%%%%%%%%%%%%%%%%%%%%%%%%%%%%%%%%%%%%%%%%%%%%%%5
%%%%%%%%%%%%%%%%%%%%%%%%%%%%%%%%%%%%%%%%%%%%%%%%%%%%%%%%%%%%%%%%%%%%%%%%%%%%5
%%%%%%%%%%%%%%%%%%%%%%%%%%%%%%%%%%%%%%%%%%%%%%%%%%%%%%%%%%%%%%%%%%%%%%%%%%%%5

\section{A cubing  for $\ho_n$}\label{cubical}
\subsection{Definition of the cubing $X_n$}\label{construction}
%%%%%%%%%%%%%%%%%%%%%%%%%%%%%%%%%%%%%%%%%%%%%%%%%%%%%%%%%%%%%%%%%%%%%%%%%%%%%%%%%%%%%%%%%%%
%%%%%%%%%%%%%%%%%%%%%%%%%%%%%%%%%%%%%%%%%%%%%%%%%%%%%%%%%%%%%%%%%%%%%%%%%%%%%%%%%%%%%%%%%%%
%%%%%%%%%%%%%%%%%%%%%%%%%%%%%%%%%%%%%%%%%%%%%%%%%%%%%%%%%%%%%%%%%%%%%%%%%%%%%%%%%%%%%%%%%%%

In \cite{Brown}, Ken Brown constructed an infinite dimensional cell complex on which $\ho_n$ acts by taking the geometric realization of a poset, which is defined using factorizations in a monoid $\M_n$ containing $\ho_n$. In this section we modify his idea to construct an $n$-dimensional cubical complex $X_n$ on which $\ho_n$ acts. It turns out that $X_n$ is a cubing for all $n \in \N$. We discuss this in detail in Section \ref{section_CC}.

Two monoids $\M_n$ and $\TT_n$ are important for the construction. Fix a positive integer $n \in \N$. Let $\M_n$ be the monoid of injective maps $Y_n \to Y_n$ which behave as eventual translations, i.e., each $\A \in \M_n$ satisfies
\begin{itemize}
    \item [$\star$] There is an $n$-tuple $(m_1 ,\cdots, m_n) \in \mathds{Z}^n$ and a finite set $K \subset Y_n$ such that $ \A(k,p)=(k, p\!+\!m_k)$ for all $(k,p) \in Y_n - K$.
\end{itemize}
The group homomorphism $\varphi :\ho_n \to \Z ^n$ naturally extends to a monoid homomorphism $\varphi :\M_n \to \Z^n$ and
$\varphi(\A) =  (m_1 ,\cdots, m_n)$. A monoid homomorphism $h:\M_n \to \Z_{\geq 0}$ is defined by $h(\A)= \sum m_i$. This map $h$ is called the \emph{ height function} on $\M_n$. For convenience let $S(\A)$ denote the discrete set $Y_n -(Y_n)\A$. One can readily check that $h(\A)= |S(\A)|$ and that $\ho_n = h^{-1}(0)$.

Consider elements $t_1, \cdots, t_n $ of $\M_n$ where $t_i$ is the \emph{translation} by $1$ on the $i^{th}$ ray, i.e.,
\begin{equation*}
(j,p) t_i=
    \begin{cases}
        (j,p+1) & \text{if } j=i \\
        (j,p) & \text{if } j \neq i.
    \end{cases}
\end{equation*}for all $p \in \mathds{N}$.
Let $\TT_n \subset \M_n$ be the commutative submonoid generated by $t_1, \cdots, t_n $.

Figure \ref{example of M_n} illustrates some examples of $\M_n$, as before, where points which do not involve arrows are meant to be fixed, points of each finite set $K$ are indicated by circles. The left most one is a generator $t_i$ of $\TT_n$ which behaves as the translation on $R_i$ by $1$ and fixes $Y_n - {R_i}$ pointwise. The next example shows the commutativity of $\TT_n$; $t_i t_j = t_j t_i$. In the third figure, $\A = [g_1, g_2]$ as before. The last element $g$ is rather generic one with $h(g)=1$.
\begin{figure}[ht]
\begin{center}
\includegraphics[width=1.0\textwidth]{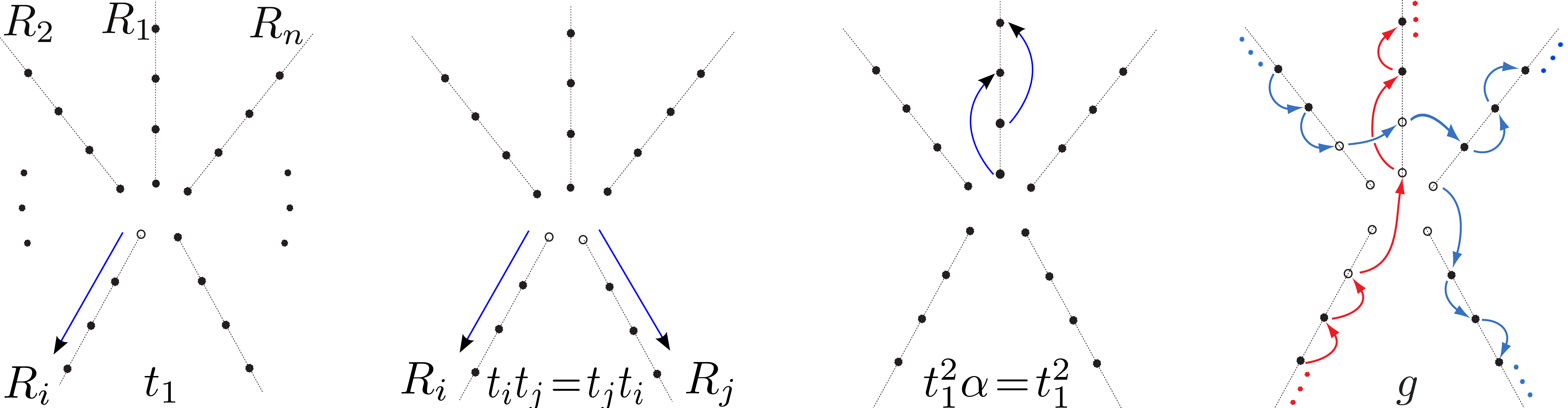}
\caption{Some elements of $\M_n$} \label{example of M_n}
\end{center}
\end{figure}

One crucial feature of $\TT_n$ is that it is commutative. $\TT_n$ acts on $\M_n$ by left multiplication (pre-composition). This action induces a relation $\geq$ on $\M_n$, i.e.,
\begin{equation}\label{porder}
\A_1 \geq \A_2 \;\; \textrm{if} \;\; \A_1 =t\A_2  \;\; \textrm{for some} \;\;t\in \TT_n.
\end{equation}
for all $\alpha_1, \alpha_2, \alpha_3 \in \mathcal{M}_n$

\begin{proposition}\label{prop_porder}
The relation $\geq $ on $\M_n$ is a partial order, i.e., it satisfies for all $\A_1, \A_2, \A_3 \in \M_n$,
\begin{itemize}
    \item $\A_1 \geq \A_1$,
    \item if $\A_1 \geq \A_2$ and $\A_2 \geq \A_1$ then $\A_1 =\A_2$,
    \item if $\A_1 \geq \A_2$ and $\A_2 \geq \A_3$ then $\A_1 \geq \A_3$.
\end{itemize}
\end{proposition}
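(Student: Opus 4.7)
The plan is to verify the three defining axioms of a partial order in turn. Reflexivity and transitivity will follow at once from the fact that $\TT_n$ contains the identity and is closed under composition; the substance of the proposition lies in antisymmetry, which I expect to be the only step requiring real argument and which will use the injectivity built into the definition of $\M_n$.

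For reflexivity, the identity $e \in \M_n$ is the empty product of the generators $t_i$ and therefore lies in $\TT_n$, so $\A_1 = e\,\A_1$ and hence $\A_1 \geq \A_1$. Transitivity follows by composing the two witnesses: if $\A_1 = t\A_2$ and $\A_2 = t'\A_3$ with $t,t' \in \TT_n$, then $\A_1 = (tt')\A_3$, and $tt' \in \TT_n$ since $\TT_n$ is a submonoid of $\M_n$.

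For antisymmetry I would take $\A_1 = t\A_2$ and $\A_2 = t'\A_1$, substitute to obtain $\A_1 = (tt')\A_1$ in $\M_n$, and then cancel $\A_1$ on the right using injectivity. Concretely, this equation of maps $Y_n \to Y_n$ says $(y)\A_1 = ((y)(tt'))\A_1$ for every $y \in Y_n$, and since every element of $\M_n$ is an injective map, this forces $(y)(tt') = y$ for all $y$. Writing the normal form $tt' = t_1^{c_1}\cdots t_n^{c_n}$ with $c_k \geq 0$, the action on $Y_n$ sends $(k,p)$ to $(k,p+c_k)$, so all $c_k$ must vanish and $tt' = e$ in $\TT_n$. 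Since each $c_k$ is the sum of the nonnegative exponents contributed by the normal forms of $t$ and $t'$, both $t$ and $t'$ must themselves be trivial, and therefore $\A_1 = \A_2$.

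The one step requiring genuine attention is this cancellation. It is precisely here that the injectivity clause in the definition of $\M_n$ is essential, together with the observation that $\TT_n$ is freely generated as a commutative monoid by the $t_i$, each translating its own ray nontrivially — so that no nonidentity element of $\TT_n$ can fix $Y_n$ pointwise. With those two ingredients in place, the remainder is just bookkeeping in the normal form.
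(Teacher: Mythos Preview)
Your proof is correct. Reflexivity and transitivity are handled exactly as in the paper. The only genuine difference is in the antisymmetry step: the paper argues via the height function $h:\M_n\to\Z_{\geq 0}$, observing that $\A_1\geq\A_2$ and $\A_2\geq\A_1$ force $h(\A_1)=h(\A_2)$, so the witnessing $t\in\TT_n$ must have $h(t)=0$ and hence $t=1$. You instead cancel $\A_1$ directly from $\A_1=(tt')\A_1$ using injectivity of maps in $\M_n$, and then use the free commutative monoid structure of $\TT_n$ to conclude $t=t'=e$. Both arguments are short and valid; the paper's is marginally quicker once $h$ is in hand, while yours is more self-contained in that it does not invoke the height function and makes explicit where the injectivity hypothesis on elements of $\M_n$ is actually used.
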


\begin{proof}Reflexivity can by verified by using $t=1\in \TT_n$ in equation \ref{porder}. The condition $\A_1 \geq \A_2$ and $\A_2 \geq \A_1$ implies that $h(\A_1)=h(\A_2)$ and so $t=1$ is the only candidate for $t$ in those inequalities. So we have antisymmetry. One can check transitivity by taking an obvious composition of $t$'s in the two inequalities.
\end{proof}

Being a submonoid, $\ho_n$ acts on $\M_n$. In this paper, we focus on the action of $\ho_n$ on $\M_n$ by right multiplication (post-composition).

\begin{remark}
The two actions of $\ho_n$ and $\TT_n$ on $\M_n$ commute.
\end{remark}

A chain $\A_0\leq \A_1 \leq \cdots \leq \A_k$ corresponds to a $k$-simplex in the the geometric realization $ \|\M_n\|$. The length of chains is not bounded and hence the dimension of $\| \M_n\|$ is infinite. To show the finiteness properties of $\ho_n$ (Corollary \ref{coroll_fin_prop_H_n}), Brown studied the action of $\ho_n$ on $\| \M_n\|$ equipped with a filtration satisfying the condition of Theorem \ref{KBT}.

Instead, for the construction of desired complex $X_n$, we want to start with Cayley graph of $\M_n$ with respect to the generating set $\{t_1, \cdots, t_n\}$ for $\TT_n$. Recall the \emph{Cayley graph} $\C_{\mathcal{A}}(\Gamma)$ of a group $\Gamma$ with respect to a generating set $\mathcal{A}$ is the metric graph whose vertices are in $1-1$ correspondence with the elements of $\Gamma$ and which has an edge (labelled by $a$) of length $1$ joining $\gamma$ to $\gamma a$ for each $\gamma\in \Gamma$ and $a \in \mathcal{A}$.

\begin{definition} [\textbf{Cayley graph $\C_n$}] Associated to the monoid $\M_n$, a directed graph $\C_n$ is defined by
\begin{itemize}
    \item The vertex set $\mathcal{C}^{(0)}_n$ is in $1-1$ correspondence with the elements of $\M_n$. By abusing notation, let $\A$ denote the vertex of $\C_n$ corresponding the same element of $\M_n$.
    \item A vertex $\A$ is joined to another vertex $\B$ by an edge $e$ (of length $1$) labelled by $t_i$ if $\B = t_i\A$ for some $t_i$; we assume all edges are directed, i.e., directed from $\A$ to $\B$.
\end{itemize}
\end{definition}

The monoid homomorphism $h : \M_n \to \Z_{\geq 0}$ can be extended linearly to a map $h :\C_n \to \R_{\geq0}$. Note that $h(\B) =h(\A)+1$ if $\B =t_i\A$ for some $t_i$. In this situation, we say that the edge $e_{t_i}$ is directed ``upward" with respect to the height function $h$.

\begin{remark}\label{rmk:unique_asc_edge}
For a vertex $\B$ and a fixed $i$ ($1\leq i \leq n$), there is unique edge labelled by $t_i$ whose initial vertex is $\B$; namely, the edge which ends in the uniquely determined vertex $t_i\beta$.
\end{remark}

\medskip
\noindent
{\bf Least upper bound in $\M_n$.}

\begin{lemma}\label{lm:lub}
For $\A_1, \A_2 \in \M_n$, there exists a unique element $\B$ such that
\begin{itemize}
    \item $\B\geq \A_1$ and $\B \geq \A_2$, and
    \item if $\B' \geq \A_1$ and $\B' \geq \A_2$, then $\B' \geq \B$.
\end{itemize}
\end{lemma}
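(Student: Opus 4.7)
The plan is to construct $\beta$ explicitly from the eventual-translation data of $\A_1$ and $\A_2$ and then verify both properties directly. For each ray $R_i$ and each $j\in\{1,2\}$, let $m^{(i)}_j\in\Z$ denote the eventual translation of $\A_j$ along $R_i$, so that $(i,q)\A_j=(i,q+m^{(i)}_j)$ for all sufficiently large $q$. Set $d_i:=m^{(i)}_1-m^{(i)}_2$. Since both $\A_1$ and $\A_2$ act as genuine translations outside a finite set, for all large $q$ one has $(i,q)\A_1 = (i,q+d_i)\A_2$. Hence there is a smallest integer $K_i\geq \max(0,-d_i)$ such that $(i,q)\A_1=(i,q+d_i)\A_2$ for every $q>K_i$. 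Put $L_i:=K_i+d_i\geq 0$, $t:=\prod_{i=1}^n t_i^{K_i}$, $s:=\prod_{i=1}^n t_i^{L_i}$, and $\beta:=t\A_1\in\M_n$.

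The inequality $\beta\geq\A_1$ is immediate from the construction. For $\beta\geq\A_2$, evaluating at each $(i,p)\in Y_n$ gives
\[
(i,p)\beta \;=\; (i,p+K_i)\A_1 \;=\; (i,p+L_i)\A_2 \;=\; (i,p)s\A_2,
\]
where the middle equality is the defining property of $K_i$; hence $\beta=s\A_2$, so $\beta$ is an upper bound.

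For the least-upper-bound property, suppose $\beta'\geq\A_1,\A_2$, and write $\beta'=t'\A_1=s'\A_2$ with $t'=\prod t_i^{k'_i}$ and $s'=\prod t_i^{l'_i}$. Matching the two factorisations of $\beta'$ on the eventually-translated tail of $R_i$ forces $k'_i-l'_i=-d_i$; matching them on all of $R_i$ forces $(i,q)\A_1=(i,q+d_i)\A_2$ for every $q>k'_i$; and $l'_i\geq 0$ combined with $k'_i\geq 0$ yields $k'_i\geq \max(0,-d_i)$. These are precisely the conditions defining $K_i$, so minimality gives $k'_i\geq K_i$ for each $i$. Setting $r:=\prod t_i^{k'_i-K_i}\in\TT_n$, one obtains $\beta'=r\beta$, and therefore $\beta'\geq\beta$. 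Uniqueness follows from the antisymmetry established in Proposition~\ref{prop_porder}.

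The main obstacle lies in the minimality step. One might na\"ively hope to take $\beta$ with $(Y_n)\beta = (Y_n)\A_1\cap(Y_n)\A_2$, but since $(Y_n)t$ is always the complement of initial segments of the rays, this intersection need not be attainable, and the true join strictly overshoots it whenever $\A_1$ or $\A_2$ has ``defects'' near the origin. Carefully tracking both the non-negativity constraint $L_i\geq 0$ and the ray-by-ray functional equation $(i,q)\A_1=(i,q+d_i)\A_2$ simultaneously is what makes the minimality argument tight.
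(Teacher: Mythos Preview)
Your proof is correct and follows essentially the same approach as the paper's. Both arguments construct $\beta$ ray by ray by finding, for each $R_i$, the minimal non-negative powers of $t_i$ needed to make $t_i^{K_i}\A_1|_{R_i}=t_i^{L_i}\A_2|_{R_i}$, and then appeal to antisymmetry for uniqueness. Your version is slightly more streamlined in that it handles all sign cases of $d_i$ uniformly via the constraint $K_i\geq\max(0,-d_i)$ rather than splitting into cases $m_i>m'_i$, $m_i<m'_i$, $m_i=m'_i$, and you spell out the minimality step (deriving $k'_i\geq K_i$ from an arbitrary upper bound $\beta'$) more explicitly than the paper does.
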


\begin{proof}Suppose $\varphi(\A_1)= (m_1, \cdots, m_n)$ and $\varphi(\A_2)= (m'_1, \cdots, m'_n)$. Consider the restrictions $\A_1{_{|R_i}}$ and $\A_2{_{|R_i}}$ for $i=1, \cdots, n$. One can find smallest non-negative integers $k_i, k'_i$ so that
\begin{equation}\label{finite_0}
t_i^{k_i}\A_1{_{|R_i}}= t_i^{k'_i}\A_2{_{|R_i}}
\end{equation}
for each $i$ as follows. If $m_i > m'_i$ then there exists smallest $p_0 \in \N \cup \{0\}$ such that
\begin{equation*}\label{finite}
(i, p)\A_1= (i, p)t_i^{m_i -m'_i}\A_2
\end{equation*}
for all $p > p_0$. The existence of $p_0$ is obvious. On $R_i$, $\A_1$ and $t_i^{m_i -m'_i}\A_2$ agree outside a finite set $L$ (this set $L$ can be smaller than the finite set in the definition of $\M_n$). The integer $k_0$ is determined by the point in $L \cap R_i$ with the largest distance from the origin (if the intersection is trivial then $p_0 =0$). Set $k_i = p_0$ and $k'_i = m_i - m'_i +p_0$. The integers $k_i$ and $k'_i$ are desired powers for $t_i$ satisfying (\ref{finite_0}). One can find appropriate $k_i$ and $k'_i$ in a similar way in case $m_i < m'_i$. If $m_i = m'_i$ then set $k_i = k'_i= {p_0}$. Apply this process to get $k_i$'s and $k'_i$'s for all $i=1, \cdots, n$.

Now set $\B = t_1^{k_1} \cdots t_n^{k_n} \A_1 = t_1^{k'_1} \cdots t_n^{k'_n}\A_2 $. The first condition clearly holds. The definition of $\geq$ together with the minimality of the $k_i$ and $k'_i$ ensure that if $\beta' \geq \alpha$ and
$\beta' \geq \alpha'$, then $\beta' \geq \beta$, and hence condition two holds. Finally, the uniqueness of $\beta$ follows from the second condition together with the antisymmetry of $\geq$. (Proposition \ref{prop_porder}).
\end{proof}

\begin{definition}[\textbf{Least upper bound}]
An element $\B$ is called an \emph{upper bound} of $\A_1$ and $\A_2$ if it satisfies the first condition of Lemma \ref{lm:lub}. If $\beta$ is the unique element satisfying both conditions of Lemma \ref{lm:lub}, then it is called the \emph{least upper bound} (simply $lub$) of $\A_1$ and $\A_2$ and is denoted by $\A_1 \wedge \A_2$.
\end{definition}

\begin{figure}[ht]
\begin{center}
\includegraphics[width=1.0\textwidth]{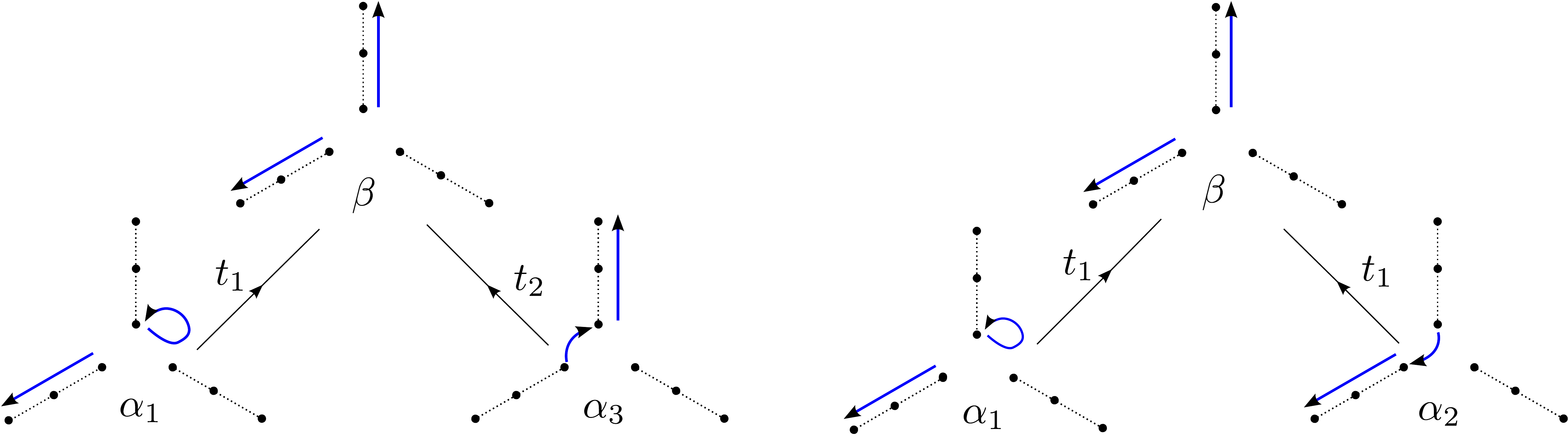}
\caption{$\B$ is an upper bound of $\A_1, \A_2$ and $\A_3$}\label{fig_exmaple_lub}
\end{center}
\end{figure}

Figure \ref{fig_exmaple_lub} shows some elements of $\M_n$ with their upper bound; $\B$ is an upper bound of $\A_1, \A_2$ and $\A_3$. Note that, in the right figure, edges directed into the vertex $\B$ are not necessarily labelled by distinct generators of $\TT_n$. The reader should contrast this with the situation of edges directed away from a vertex as described in Remark \ref{rmk:unique_asc_edge}.

%.edges with the same label directed into the vertex $\beta$ are not necessarily

The following proposition implies that the notion of $lub$ for a finite collection of $\M_n$ is well-defined.
\begin{proposition}\label{prop:lub_multi}
Suppose $\{\A_1 , \cdots, \A_\ell\} \subset \M_n$. There exists $lub$ of those finite elements, i.e. there exists a unique element $\B$ satisfying
\begin{itemize}
    \item $\B\geq \A_j$, $j=1, \cdots, \ell$,
    \item if $\B' \geq \A_j$ for all $j$ then $\B' \geq \B$.
\end{itemize}
\end{proposition}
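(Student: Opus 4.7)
The plan is to prove Proposition \ref{prop:lub_multi} by induction on $\ell$, bootstrapping from the binary case already established in Lemma \ref{lm:lub}.

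The base case $\ell = 2$ is exactly Lemma \ref{lm:lub}, so there is nothing to prove there. (The case $\ell = 1$ is trivial: take $\beta = \alpha_1$.) For the inductive step, assume the proposition holds for any collection of $\ell - 1$ elements in $\M_n$. Given $\{\A_1, \ldots, \A_\ell\} \subset \M_n$, let $\gamma = \A_1 \wedge \cdots \wedge \A_{\ell-1}$ be the lub of the first $\ell - 1$ elements (which exists by the induction hypothesis), and define
\[
\B := \gamma \wedge \A_\ell,
\]
where this binary lub exists by Lemma \ref{lm:lub}.

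First I would verify the upper bound condition: by the induction hypothesis $\gamma \geq \A_j$ for $1 \leq j \leq \ell - 1$, and by construction $\B \geq \gamma$ and $\B \geq \A_\ell$, so transitivity (Proposition \ref{prop_porder}) yields $\B \geq \A_j$ for all $j = 1, \ldots, \ell$. Next I would check the universal property: suppose $\B' \in \M_n$ satisfies $\B' \geq \A_j$ for every $j$. Applying the induction hypothesis to $\A_1, \ldots, \A_{\ell-1}$ gives $\B' \geq \gamma$, and combining this with $\B' \geq \A_\ell$, the defining property of $\gamma \wedge \A_\ell$ from Lemma \ref{lm:lub} forces $\B' \geq \B$. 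Uniqueness of $\B$ then follows directly from the antisymmetry of $\geq$ established in Proposition \ref{prop_porder}: any two elements satisfying both conditions must dominate one another, hence be equal.

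I do not expect any real obstacle here, since the argument is a standard lattice-theoretic induction: once binary lubs exist and $\geq$ is a partial order, finite lubs exist automatically. The only small subtlety worth remarking on is that the construction appears to depend on the order in which the $\A_j$ are combined, but the universal property immediately shows the result is independent of this order (and of the bracketing), so the notation $\A_1 \wedge \cdots \wedge \A_\ell$ is unambiguous.
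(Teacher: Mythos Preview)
Your proof is correct and complete. It is, however, a genuinely different route from the paper's. The paper does not induct on $\ell$; instead it reruns the explicit ray-by-ray construction of Lemma \ref{lm:lub} for all $\ell$ elements simultaneously, finding on each ray $R_i$ the minimal nonnegative exponents $k_{i,1},\ldots,k_{i,\ell}$ so that $t_i^{k_{i,j}}\A_j|_{R_i}$ all coincide, and then sets $\B = t_1^{k_{1,j}}\cdots t_n^{k_{n,j}}\A_j$. Your inductive argument is shorter and is the standard order-theoretic one (binary joins plus transitivity give all finite joins); the paper's argument is more hands-on but has the advantage of producing an explicit formula for $\B$ in one step, which is occasionally useful later when one wants to read off the exact powers of the $t_i$ involved.
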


\begin{proof}
We extend the idea in the proof of Lemma \ref{lm:lub} to find $\B$ with desired properties. Fix $i$ ($1\leq i \leq n$). Suppose the translation lengths of $\A_j$ `at infinity' is $m_{i,j}$ (i.e., $\A_j$ has image $m_{i,j}$ under the composition $ \M_n \xrightarrow {\varphi} \Z^n  \xrightarrow{\pi_i}
\Z$ where $\pi_i$ is the projection to the $i^{th}$ component). We want to find smallest non-negative integers $k_{i,1}, \cdots, k_{i,\ell}$ so that
\begin{equation*}\label{finite_1}
t_i^{k_{i,j}}\A_j{_{|R_i}}= t_i^{k_{i,j'}}\A_{j'}{_{|R_i}}
\end{equation*}for all $j,j' \in \{1, \cdots, \ell\}$. Let $M_i = max\{m_{i,1}, \cdots, m_{i,\ell}\}$. Observe that, for $j=1, \cdots, \ell$, elements $t_i^{M_i -m_{i,j}} \A_j$ act as a translation on $R_i$ by $M_i$ outside a finite set $K$. They agree on $R_i$ outside a finite set $L$ (as in the proof Lemma \ref{lm:lub}, $L$ can be smaller than $K$). So there exists smallest $p_0 \in \N\cup\{0\}$ such that
\begin{equation*}\label{finite_3}
(i, p)\A_j=(i, p)\A_{j'}
\end{equation*}for all $p > p_0$ and $j,j' \in \{1, \cdots, \ell\}$. Now set $k_{i,j} = M_i -m_j + p_0$ for $j=1, \cdots , \ell$. Apply this process to find $k_{i,j}$ for all $i=1, \cdots, n$. As before, we define
\begin{equation}\label{finite_3}
\B =t_1^{k_{1,j}} \,t_2^{k_{2,j}} \cdots t_n^{k_{n,j}}\,\A_j
\end{equation}
for some $j \in \{1, \cdots, \ell\}$. Note it is well-defined because $t_1^{k_{1,j}}  \cdots t_n^{k_{n,j}}\,\A_j= t_1^{k_{1,j'}} \cdots t_n^{k_{n,j'}}\,\A_{j'}$ for any $j' \in \{1, \cdots, \ell\}$. The rest of the argument is analogous to the proof of Lemma \ref{lm:lub}. By the definition (\ref{finite_3}) of $\B$ it satisfies the first condition. The second condition is also satisfied by the minimality of powers $k_{i,j}$. Finally, the uniqueness follows from the antisymmetry of $\geq$.
\end{proof}

\begin{remark}\label{rmk:lub_inclusion}
The operator $lub$ preserves the inclusion between finite sets of $\M_n$, i.e.,
$$
lub(C_1) \leq lub(C_2)
$$
if $C_1 \subset C_2 \subset \M_n$.
\end{remark}

\medskip
\noindent
{\bf Cubical structure of $\C_n$ and the definition of $X_n$.}\\
Commutativity of $\TT_n$ plays an important role in the construction of $X_n$; any permutation in the product $t_1 t_2 \cdots t_k$, $k \leq n$, represents the same element in $\TT_n$. Yet each variation in the expression $t_1 t_2 \cdots t_k$ determines a path from $\A_1$ to $ t_1 t_2 \cdots t_k \A_1$. These $k!$ many paths form the $1$-skeleton of a $k$-cube. Figure \ref{fig_boundary_3_cube} illustrates $3!$ paths from to which form $1$-skeleton of a $3$-cube having $\B$ and $t_i t_j t_k \B$ as its bottom and top vertex respectively.

\begin{figure}[ht]
\begin{center}
\includegraphics[width=0.3\textwidth]{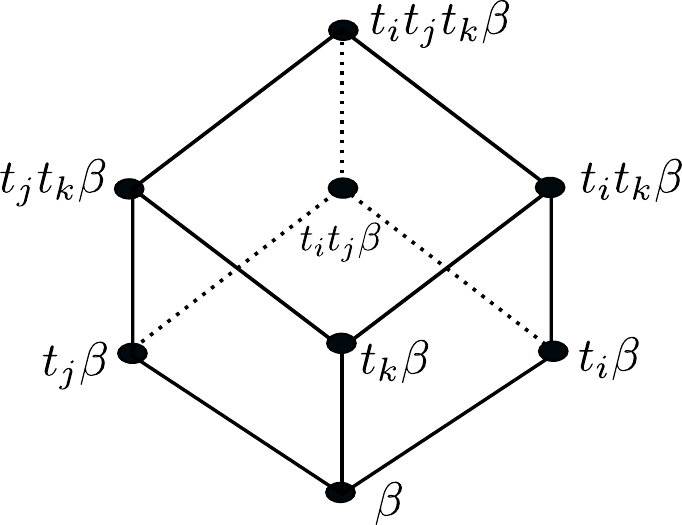}
\caption{Three distinct generators $t_i, t_j, t_k$ determine a $3$-cube in $X_n$.} \label{fig_boundary_3_cube}
\end{center}
\end{figure}

\begin{definition}[\textbf{Cubical complex $X_n$}]\label{dfn:X_n}
For each $n\in \N$ $X_n$ is defined inductively as follows
\begin{itemize}
    \item[(1)] $X_n^{(1)} := \mathcal{C}_n$,
    \item [(2)] for each $k \geq 2$, $X_n^{(k)}$ is obtained from $X_n^{(k-1)}$ by attaching a $k$-cube along every copy of the boundary of a $k$-cube in  $X_n^{(k-1)}$.
\end{itemize}
\end{definition}

The height function $h:\C_n \to \R_{\geq 0}$ extends linearly to a Morse function $h:X_n \to  \R_{\geq 0}$. In our study, a Morse function $h$ is roughly a height function which restricts to each $k$-cube to give the standard height function. (See Definition \ref{dfn:Morse_function} for a Morse function) Let $\Box^k$ denote the standard $k$-cube, i.e., $\Box^k = \{(x_1, \cdots, x_k) \in \R^k\,|\, 0 \leq x_i \leq 1\}$. The standard height function $\overline{h} : \Box^k \to \R$ is defined by $\overline{h}(x_1,\cdots x_k) = \sum x_i$. Our Morse function $h$ can be described as follows. Let $\Si^k_j \subset X_n$ be a $k$-cube and let $\varphi_j : \Box^k \to {\Si^k} \subset X_n$ denote the attaching map (isometric embedding) for $\Si^k_j$ used in the construction of $X_n$. The image of $\Si^k_j = \varphi_j(\Box^k)$ under $h$ is the translation of $\overline{h}(\Box^k)$ by $h\varphi_j (0)$. In other words, the following diagram commutes.

\begin{figure}[ht]
\begin{center}
\includegraphics[width=0.35\textwidth]{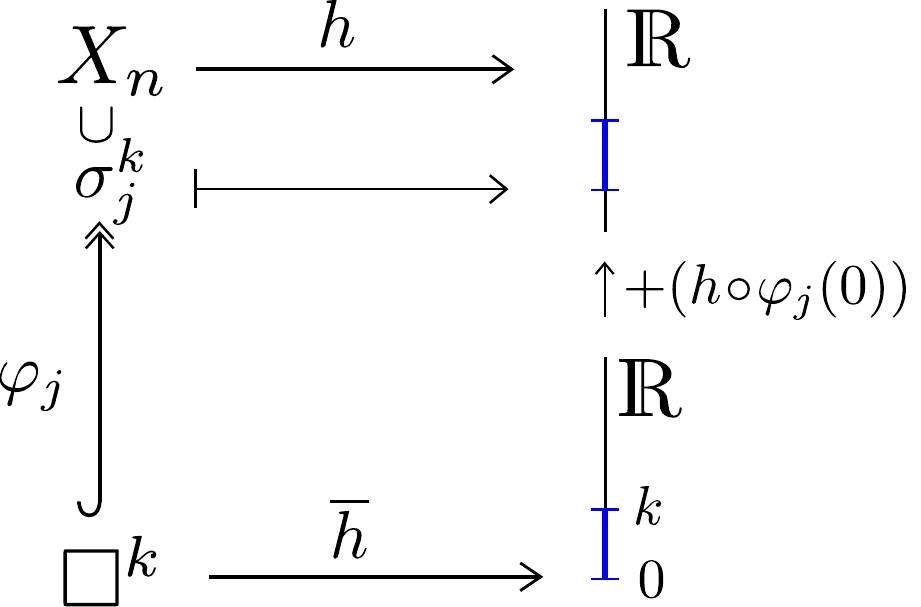}
\caption{A Morse function $h$ restricted to a cube is the standard height function up to a translation.}\label{fig_diagram_morse}
\end{center}
\end{figure}

Two actions of $\ho_n$ and $\TT_n$ on $\M_n$ extend to actions on $X_n$. Note that cell(cube) structure of $X_n$ is completely determined by $\TT_n$ whose action commutes with the action of $\ho_n$. So the action of $\ho_n$ on $X_n$ is cellular, i.e., it preserves cell structure. If a cube $\sigma$ is spanned by a collection of vertices $\{\A_1, \cdots, \A_m\}$, then the cube $\sigma \cdot g$ is the same dimensional cube spanned by $\{\A_1 g, \cdots, \A_m g\}$.

Intuitively the action of $\ho_n$ can be considered as a `horizontal' action. Each $g\in \ho_n$ preserves the height. By contrast, the action of $\TT_n$ is `vertical'. Each non trivial element of $\TT_n$ increases the height under the action. We discuss these actions on $X_n$ further in section \ref{sec_action_of_H_n_on_X_n}.

\begin{lemma}\label{lm:C_n_conn}
The graph $\C_n$ is simplicial and connected for all $n \in \N$.
\end{lemma}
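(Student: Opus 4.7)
The plan is to verify the two claims separately, using the height function $h$ for the simplicial property and Lemma \ref{lm:lub} for connectedness.

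\textbf{Simpliciality.} I need to show $\C_n$ has no loops and at most one edge between any pair of vertices (ignoring direction). If $\A = t_i \A$ for some $i$, then $h(\A) = h(t_i \A) = h(\A) + 1$, which is absurd; hence there are no loops. For multiple edges in the same direction, suppose $\B = t_i \A = t_j \A$ for some $i \neq j$. Since $\A \in \M_n$ is injective, its image $(Y_n)\A$ is cofinite in $Y_n$, so its intersection with the ray $R_i$ is a cofinite subset of $R_i$. On this cofinite subset, $t_i$ shifts by $1$ while $t_j$ is the identity, contradicting $t_i \A = t_j \A$. For edges in opposite directions, an edge from $\A$ to $\B$ and one from $\B$ to $\A$ would give $\B = t_i \A$ and $\A = t_j \B$, whence $h(\A) = h(\A) + 2$, a contradiction.

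\textbf{Connectedness.} Fix $\A_1, \A_2 \in \M_n$, viewed as vertices of $\C_n$. By Lemma \ref{lm:lub}, they admit a least upper bound $\B = \A_1 \wedge \A_2 \in \M_n$. By definition of $\geq$, there exist $s, s' \in \TT_n$ with $\B = s \A_1 = s' \A_2$. Writing $s = t_{i_1} t_{i_2} \cdots t_{i_k}$ as a word in the generators of $\TT_n$, successive left multiplications produce a sequence of vertices
\[
\A_1,\; t_{i_k} \A_1,\; t_{i_{k-1}} t_{i_k} \A_1,\; \ldots,\; s\A_1 = \B,
\]
and each consecutive pair is joined by an edge of $\C_n$ by the definition of the edges. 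Likewise, writing $s'$ as a word in the $t_j$'s yields a sequence of edges connecting $\A_2$ to $\B$. Concatenating these two edge paths (the second traversed in the reverse direction of the edge orientation) produces a path in the underlying undirected graph from $\A_1$ to $\A_2$.

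The only subtle point is the argument ruling out parallel edges, where care is needed with the fact that $\A$ is merely injective (not surjective) and that composition is on the left; however, cofiniteness of $(Y_n)\A$ combined with the fact that distinct generators $t_i, t_j$ disagree on $R_i$ (equivalently, on $R_j$) makes this immediate. Everything else is a direct unwinding of the definitions and of Lemma \ref{lm:lub}.
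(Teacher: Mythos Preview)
Your approach matches the paper's: height rules out loops and opposite-direction bigons, injectivity rules out same-direction parallel edges, and Lemma~\ref{lm:lub} gives connectedness via paths through an upper bound.

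One small slip in your parallel-edge argument: you reason about $t_i,t_j$ acting on the \emph{image} $(Y_n)\A$, which would be the right picture if the $t$'s were post-composed. In the paper's convention $t_i\A$ is pre-composition (apply $t_i$ first, then $\A$), so the image of $\A$ plays no role. With the correct order the argument is in fact cleaner: $t_i\A=t_j\A$ together with injectivity of $\A$ immediately yields $t_i=t_j$ as maps on $Y_n$, a contradiction. The paper phrases this concretely by evaluating at $(i,1)$: $(i,1)t_i\A=(i,2)\A$ while $(i,1)t_j\A=(i,1)\A$, contradicting injectivity of $\A$. Either way your conclusion stands; just adjust the justification to match the composition convention.
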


\begin{proof}Connectedness is an immediate consequence of the existence of an upper bound for any pair of vertices in $\M_n$ (Lemma \ref{lm:lub}). Suppose $\A_1$ and $\A_2$ are vertices of $\C_n$. Let $\B$ be a vertex with
$$
\B = \T_1 \A_1 \;\text{and}\; \B = \T_2 \A_2
$$ for some $\T_1, \T_2 \in \TT_n$. Observe that $\T_1$ and $\T_2$ define paths $p_1$ and $p_2$ where $p_i$ joins $\A_i$ to $\B$, $i=1,2$. So $\A_1$ and $\A_2$ are connected by the concatenation of the two paths.

If an edge connects two vertices $\B_1$ and $\B_2$ then their heights differ by $1$. So no edge in $\C_n$ starts and ends at the same vertex. Finally we want to show there is no bigon in $C_n$. Suppose two distinct edges share two vertices $\B_1$ and $\B_2$. Again,those two vertices can not have the same height and so we may assume  $\B_1 \geq \B_2$. Our claim is that those two edges are labelled by one generater of $\TT_n$. If two edges were labelled by two distinct generators $t_i$ and $t_j$, then we would have $\B_1 = t_i \B_2$ and $\B_1 = t_j \B_2$. This contradicts the fact that $\B_1$ is injective because
$$
(i, 1)\B_1 = (i, 1)t_i \B_2 = (i, 2)\B_2 \;\text{but} \; (i, 1)\B_1 = (i, 1)t_j \B_2 = (i, 1)\B_2.
$$ Therefore the two edges are labelled by a generater of $\TT_n$. This means
\begin{equation}\label{finite_4}
\B_1 = t_i \B_2
\end{equation} for some $t_i$. By Remark \ref{rmk:unique_asc_edge}, there is only one edge connecting $\B_1$ and $\B_2$ satisfying (\ref{finite_4}). So no two distinct edges share two vertices in $\C_n$.
\end{proof}

\begin{lemma}\label{lm_X_n_1_1-conn}
The graph $X_1$ is simply connected.
\end{lemma}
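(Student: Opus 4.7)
My plan is first to observe that $X_1$ coincides with its $1$-skeleton $\C_1$, and then to verify that $\C_1$ is a tree. Since $\TT_1$ is generated by the single element $t_1$, the inductive construction in Definition~\ref{dfn:X_n} attaches no $k$-cube for any $k\geq 2$: the $1$-skeleton of a $k$-cube requires $k$ distinct generators of $\TT_n$ as edge labels meeting at each corner. Consequently $X_1 = X_1^{(1)} = \C_1$ as a CW-complex, and simple connectivity of $X_1$ reduces to showing that the graph $\C_1$ is a tree. Connectedness was already established in Lemma~\ref{lm:C_n_conn}, so what remains is to rule out cycles.

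The main tool will be the height function $h$ together with Remark~\ref{rmk:unique_asc_edge}: every vertex $\beta\in\M_1$ has a unique outgoing (upward) edge, namely the one to $t_1\beta$, while all other edges at $\beta$ must descend by one unit of $h$. I anticipate that the essential obstacle is not in the combinatorics but in keeping the orientation conventions straight; the asymmetry ``one edge up, many edges down'' is precisely what should force a contradiction at any local minimum of a hypothetical cycle.

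Accordingly, suppose for contradiction that $\C_1$ contains a simple cycle $v_0,v_1,\dots,v_k=v_0$ with $k\geq 3$; bigons are excluded by the simplicial conclusion of Lemma~\ref{lm:C_n_conn}. The cyclic sequence of heights $h(v_0),\dots,h(v_k)$ is non-constant and therefore attains a minimum at some index $i$; since every edge of $\C_1$ changes $h$ by exactly $1$, both cycle-neighbors $v_{i-1}$ and $v_{i+1}$ must satisfy $h(v_{i\pm 1}) = h(v_i)+1$, so each of the edges $\{v_i,v_{i-1}\}$ and $\{v_i,v_{i+1}\}$ is an upward-directed edge out of $v_i$. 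The uniqueness part of Remark~\ref{rmk:unique_asc_edge} then forces $v_{i-1}=v_{i+1}=t_1 v_i$, contradicting the simplicity of the cycle. Hence $\C_1$ is acyclic, and being connected it is a tree, which completes the proof.
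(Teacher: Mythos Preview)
Your proof is correct and rests on the same key observation as the paper's---Remark~\ref{rmk:unique_asc_edge}, that each vertex has a unique ascending edge---applied at a vertex of minimal height. The paper phrases this as a homotopy argument (iteratively retracting the backtrackings at the lowest level of a loop), whereas you package it more directly as ``$X_1=\C_1$ is connected and acyclic, hence a tree''; the mathematical content is essentially identical.
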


\begin{proof}Suppose $\ell$ is a loop
in $X_1$. Consider the image of $\ell$ under a Morse function $h$. If $h(\ell) = [a,b]$ and $b-a <1$ then $\ell$ lies entirely on a ball of a vertex with radius $<1$. This ball is contractible and so $\ell$ is null homotopic. Assume $b-a \geq 1$. We may further assume that $a$ and $b$ are integers. Let $\ell\cap h^{-1}(a)=\{\A_1, \cdots, \A_k$. By Remark \ref{rmk:unique_asc_edge}, there exists an unique edge $e_j$ emanating from each $\A_j$ ($1\leq j \leq k$). Observe that the loop $\ell$ contains those edges and that $\ell$ is homotopy equivalent to $\ell - \cup_j e_j$ (Indeed, what we remove is not a whole edge but an edge minus the top vertex). Under this homotopy equivalence one can homotope $\ell$ to another loop $\ell'$ such that $h(\ell') = [a+1, b]$. Essentially, in the passage from $\ell$ to $\ell'$, we removed back-trackings involving vertices $\A_1, \cdots, \A_k$. Apply the same process to establish homotopy equivalence between $\ell$ and a vertex with height $b$. It can be shown that $h^{-1}(b)$ is a singleton set even if it is not clear a priori. After applying the above homotopy equivalences up to $b-a$ steps one obtains homotopy equivalence between the loop $\ell$ and $\ell\cap h^{-1}(b)$. If the preimage $h^{-1}(b)$ consisted of multiple number of vertices then $\ell$ was homotopic to a discrete set $h^{-1}(b)$. We have shown that $\ell$ is null homotopic.
\end{proof}

\medskip
\noindent
{\bf Lifting across a square.}
Fix $n \geq 2$. Suppose $\B$ is a vertex of $X_n$. For a pair of generators $t_i$ and $t_j$ ($i\neq j$), there exists a unique square $\Si$ containing four vertices $\B, t_i \B, t_j \B$ and $t_i t_j \B$. Consider the path $p$ with length $2$, joining two vertices $t_i\B$ and $t_j\B$, consisting of two lower edges of the square $\Si$. We say a path $p$ has a $turn$ at $\B$. This path $p$ is homotopic to a path $\tilde{p}$ rel two ends vertices which consists of two upper edges of $\Si$. Figure \ref{fig_lifting_square} illustrates this idea. We say $\tilde{p}$ is a $lifting$ of $p$ (across a square $\Si$). The notion of lifting is useful in proving Lemma \ref{lm_X_n_1-conn} as well as amenability of $\C_n$ in Section \ref{sec_median}.

\begin{figure}[ht]
\begin{center}
\includegraphics[width=0.30\textwidth]{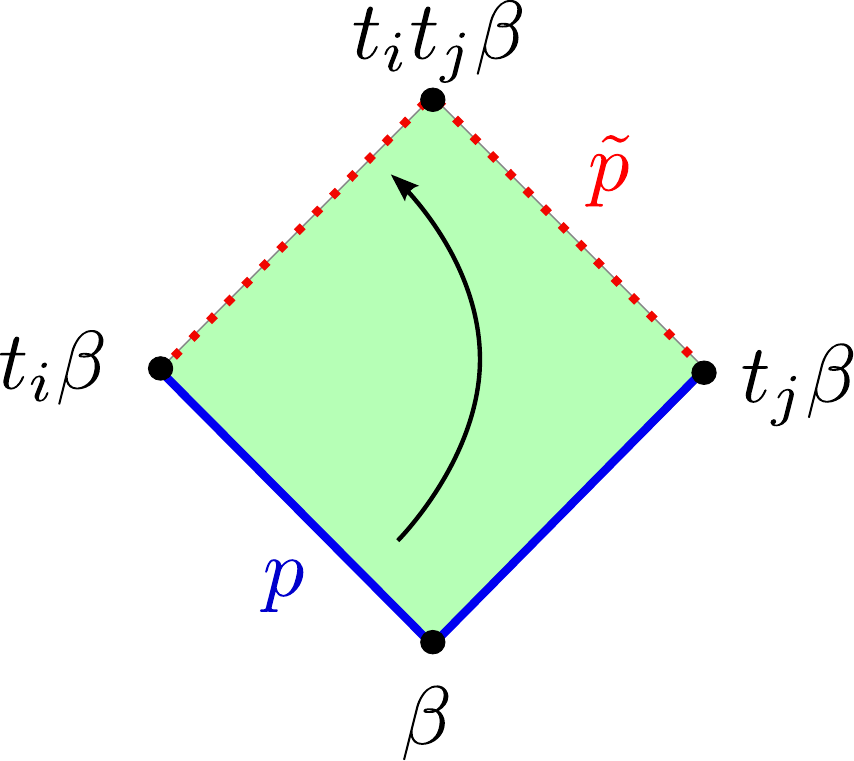}
\caption{A path with a turn at $\B$ lifts to a path $\tilde{p}$.}\label{fig_lifting_square}
\end{center}
\end{figure}

\begin{lemma}\label{lm_X_n_1-conn}
For all $n \geq 2$, $X_n$ is simply connected.
\end{lemma}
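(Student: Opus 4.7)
My plan is to extend the height-induction argument of Lemma \ref{lm_X_n_1_1-conn} to $n \geq 2$, now exploiting the extra freedom provided by lifting across squares. First, by cellular approximation, it suffices to null-homotope an arbitrary loop $\ell$ contained in the $1$-skeleton $\C_n$. Set $a = \min h(V(\ell))$ and $B = h(\B^*)$, where $\B^* = \mathrm{lub}\,V(\ell)$ is furnished by Proposition \ref{prop:lub_multi}, and I will induct on the integer $B - a \geq 0$.

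The base case $B - a = 0$ is immediate: every vertex of $\ell$ has height $B$, but every edge of $\C_n$ changes height by exactly one, so $\ell$ has no edges and is a single vertex. For the inductive step, I will eliminate visits of $\ell$ to vertices at height $a$ one at a time. At such a visit to a vertex $\B$, both incident edges of $\ell$ must ascend from $\B$ since $\B$ lies at the minimum height; say they carry labels $t_i$ and $t_j$. If $i = j$, Remark \ref{rmk:unique_asc_edge} forces the two edges to coincide, so $\ell$ backtracks at $\B$ and the backtracking is removed by an obvious homotopy. If $i \neq j$, the subpath $t_i\B \to \B \to t_j\B$ is lifted across the unique square spanned by $\{\B, t_i\B, t_j\B, t_i t_j\B\}$ to the subpath $t_i\B \to t_i t_j\B \to t_j\B$, a homotopy supported in a single $2$-cell. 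In either case the visit at $\B$ is eliminated without creating any new vertex at height $a$.

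The main obstacle is controlling the invariant $B - a$ under these moves, since the maximum height naively could grow each time we perform a lift. The key observation that resolves this is that $t_i t_j \B = \mathrm{lub}(t_i\B, t_j\B) \leq \B^*$ whenever $i \neq j$, because $\B^*$ is already an upper bound for both $t_i\B$ and $t_j\B$; combined with Remark \ref{rmk:lub_inclusion} this yields $h(\mathrm{lub}\,V(\ell')) \leq B$ for the homotoped loop $\ell'$. Since the finitely many visits at height $a$ can all be processed in this fashion, the resulting loop has minimum height at least $a+1$ while its $\mathrm{lub}$ height stays bounded by $B$, so $B - a$ strictly decreases. The induction then closes and $\ell$ is null-homotopic in $X_n$.
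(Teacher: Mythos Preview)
Your proof is correct and follows essentially the same approach as the paper: reduce to an edge-loop in $\C_n$, take the least upper bound $\B^* = \mathrm{lub}\,V(\ell)$ via Proposition~\ref{prop:lub_multi}, and repeatedly lift turns at minimum-height vertices across squares, using the fact that the new top vertex $t_it_j\B$ remains bounded above by $\B^*$ so the process terminates at $\B^*$. The only cosmetic differences are that you handle the $i=j$ backtracking case explicitly (the paper simply assumes backtrackings have been removed at the outset) and you phrase the termination as an explicit induction on $B-a$ rather than as an iterated procedure.
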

\begin{proof}
Suppose $\ell:\mathds{S}^1 \to X_n$ is a loop in $X_n$ for $n \geq 2$. We may assume that $\ell$ lies in $1$-skeleton of $X_n$ and that $h(\ell)=[a,b]$ for some non negative integers $a<b$. We may further assume that $\ell$ does not contain back-trackings.
As in the proof of Lemma \ref{lm_X_n_1_1-conn}, one can eliminate the case $b-a <1$ by using rather trivial homotopy equivalence for a ball of a vertex with radius $<1$. Let $\alpha_1, \ldots , \alpha_k$ be the vertices of $\ell$. By Proposition \ref{prop:lub_multi}, there exists a unique $lub$ $\beta = lub\{\alpha_1, \cdots , \alpha_k\}$. We want to establish a homotopy equivalence between $\ell$ and the trivial loop $\B$ by applying liftings across squares. We start with vertices in $\ell\cap h^{-1}(a)=\{\B_1, \cdots, \B_{k'}\}$. The loop $\ell$ may intersect multiple number of squares even for a single vertex $\B_j \in h^{-1}(a)$. Let $\ell^{-1}(\B_j) = \{v_{j,1},\cdots v_{j,r_j}\} \subset\mathds{S}^1  $ for $j = 1, \cdots, k'$. From the fact that $\ell$ does not contain back-tracking, we have the following crucial observation for each $j$.  \\
(1) Wach preimage $v_{j,m}$ belongs to an interval $I_{j,m}$ such that $\ell (I_{j,m})$ is a path having a turn at $\B_j$, \\
(2) the two end vertices of $\ell (I_{j,m})$ are given by $t_i \B_j$ and $t_{i'} \B_j$ for distinct generators $t_i$ and $t_{i'}$ of $\TT_n$.

So each interval $I_{j,m}$ determines a square $\Si_{j,m}$ whose bottom vertex is $\B_j$. Now take liftings of $\ell (I_{j,m})$ for all $j$ and $m$ ($ 1 \leq j \leq k'$, $1 \leq m \leq r$). The resulting loop $\ell'$ is homotopy equivalent to $\ell$. Another important observation is that the top vertex $w_{j,m}$ of a square $\Si_{j,m}$ in which lifting occured satisfies
\begin{equation}\label{eq_ineq_1}
\B \geq w_{j,m}.
\end{equation} Indeed, three vertices on $\ell (I_{j,m})$ were already vertices of the loop $\ell$. The top vertex $w_{j,m}$ is the $lub$ of those three vertices. By Remark \ref{rmk:lub_inclusion}, each vertex $w_{j,m}$ satisfies $\B \geq w_{j,m}$. This means that in the passage from $\ell$ to $\ell'$ vertices of $\ell\cap h^{-1}(a)$ were replaced by some vertices satisfying equation (\ref{eq_ineq_1}). So $\B$ is still an upper bound of the vertices of $\ell'$. Observe that $h(\ell')= [a+1,b]$.

Apply the same process consecutively to vertices with smallest height in each step. The property given by equation (\ref{eq_ineq_1}) ensures that this procedure stops after finitely many ($h(\B)-a$) steps. In all, the given loop $\ell$ converges to $\B$.
\end{proof}

%%%%%%%%%%%%%%%%%%%%%%%%%%%%%%%%%%%%%%%%%%%%%%%%%%%%%%%%%%%%%%%%%%%%%%%%%%%%%%%%%%%%%%%55
%%%%%%%%%%%%%%%%%%%%%%%%%%%%%%%%%%%%%%%%%%%%%%%%%%%%%%%%%%%%%%%%%%%%%%%%%%%%%%%%%%%%%%%%%
%%%%%%%%%%%%%%%%%%%%%%%%%%%%%%%%%%%%%%%%%%%%%%%%%%%%%%%%%%%%%%%%%%%%%%%%%%%%%%%%%%%%%%%%%%%

\subsection{Properties of the monoid $\M_n$}
%%%%%%%%%%%%%%%%%%%%%%%%%%%%%%%%%%%%%%%%%%%%%%%%%%%%%%%%%%%%%%%%%%%%%%%%%%%%%%%%%%%%%%%%%%%
%%%%%%%%%%%%%%%%%%%%%%%%%%%%%%%%%%%%%%%%%%%%%%%%%%%%%%%%%%%%%%%%%%%%%%%%%%%%%%%%%%%%%%%%%%%
%%%%%%%%%%%%%%%%%%%%%%%%%%%%%%%%%%%%%%%%%%%%%%%%%%%%%%%%%%%%%%%%%%%%%%%%%%%%%%%%%%%%%%%%%%%
In this section we study some algebraic properties which are useful for us to study geometry of $X_n$ in the sequel.

\begin{lemma}\label{genM_n}
$\M_n = \TT_n \ho_n =\{t g \mid t\in \TT_n, \,g\in \ho_n \}$
\end{lemma}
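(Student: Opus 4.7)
The plan is to show that any $\A \in \M_n$ factors as $\A = tg$ with $t \in \TT_n$ and $g \in \ho_n$; the reverse containment $\TT_n \ho_n \subseteq \M_n$ is immediate since both $\TT_n$ and $\ho_n$ lie in $\M_n$. The idea is to pick $t$ so that pre-composing by $t$ shifts each ray by exactly enough to absorb the deficit $S(\A) = Y_n - (Y_n)\A$ of $\A$ on that ray, leaving behind a bijection.

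Given $\A \in \M_n$ with $\varphi(\A) = (m_1, \ldots, m_n)$, I would set $k_i := |S(\A) \cap R_i|$ for $i = 1, \ldots, n$, so that $\sum_i k_i = |S(\A)| = h(\A) = \sum_i m_i$, and let $t := t_1^{k_1} \cdots t_n^{k_n} \in \TT_n$, which acts by $(i, p)t = (i, p + k_i)$. Define $g: Y_n \to Y_n$ on the "tail" set $\{(i, q) : q > k_i\}$ by the forced rule $(i, q)g := (i, q - k_i)\A$, and on the remaining finite set $F := \{(i, q) : 1 \leq q \leq k_i,\; 1 \leq i \leq n\}$ by any bijection $F \to S(\A)$; such a bijection exists since $|F| = \sum_i k_i = |S(\A)|$. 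The identity $\A = tg$ is then immediate: for any $(i, p) \in Y_n$, $(i, p)(tg) = (i, p + k_i)g = (i, p)\A$ by the tail definition of $g$.

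It remains only to check that $g$ actually lies in $\ho_n$. Injectivity of $g$ on the tail follows from injectivity of $\A$; the tail image equals $(Y_n)\A = Y_n - S(\A)$, which is disjoint from the image $S(\A)$ of the finite part, so $g$ is a bijection of $Y_n$. Since $\A$ acts on $R_i$ as translation by $m_i$ outside a finite set, $g$ acts on $R_i$ as translation by $m_i - k_i$ outside a finite set, and $\sum_i (m_i - k_i) = 0$, confirming that $g$ is an eventual translation bijection, i.e., $g \in \ho_n$. The only subtle ingredient is the counting identity $\sum_i k_i = |S(\A)| = h(\A)$, which is precisely the equality between the two descriptions of the height function noted immediately after its definition; this is really what makes the factorization possible.
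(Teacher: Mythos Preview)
Your proof is correct. Both you and the paper follow the same broad outline---match the asymptotic translation data first, then handle a finite discrepancy---but the decompositions differ. The paper chooses $t\in\TT_n$ and a preliminary $g\in\ho_n$ rather loosely (any pair with $h(t)=h(\B)$ and $\varphi(tg)=\varphi(\B)$), so that $\B$ and $tg$ agree off a finite set $K$; it then constructs a finite-support permutation $f$ (built from one-sided inverses of $\B$ and $tg$ on $(K)\B$ and $(K)tg$) with $\B=tg f$, and takes $gf$ as the $\ho_n$-factor. Your choice of $t$ is sharper: setting $k_i=|S(\A)\cap R_i|$ makes the tail map $(i,q)\mapsto(i,q-k_i)\A$ already hit exactly $(Y_n)\A$, so the remaining finite piece $F\to S(\A)$ lands in the complement automatically and no separate correction step is needed. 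This buys you a cleaner, one-shot construction of $g$; the paper's version is slightly more flexible (any $t$ of the right height works) at the cost of the extra patching argument. One minor remark: the sentence about $\sum_i(m_i-k_i)=0$ is true but superfluous---once $g$ is a bijection and an eventual translation it is in $\ho_n$ by definition, and the vanishing sum is then a consequence rather than a hypothesis.
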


\begin{proof}Suppose $\B \in \M_n$ and $\varphi(\B) =(m_1 ,\cdots, m_n)$. There exist $t \in \TT_n$ and $g \in \ho_n$ such that $h(t)= h(\A)$ and $\varphi(\B)= \varphi(t\,g)$. Thus $\B$ and $tg$ agree on $Y_n - K$ for some finite set $K$.  We want to find $f\in \ho_n$ such that $\B=tgf$. Existence of such $f$ comes from the fact that being injections, $\B$ and $tg$ have right inverses $f_1$ and $f_2$ respectively (i.e., left inverses in the composition of functions). Consider those right inverses $f_1$ and $f_2$
$$
f_1: (K)\B \to K\;\text{and}\; f_2 : (K)tg \to K.
$$
By definition, $\B f_1 = tg f_2$ on $K$. Observe that one can turn $f_2 f_1^{-1}$ into an element $f \in \ho_n$ with $supp(f)\subset (K) tg \cup (K)\B$. For example, one can extend $f_2 f_1^{-1}: (K) tg \to (K) \B$ to $f:(K) tg \cup (K)\B \to (K) tg \cup (K)\B $ by a bijection $f' : (K)\B - (K)tg \to (K)tg - (K)\B$. More precisely, $f$ is defined by
\begin{equation*}
f =
    \begin{cases}
       f_2 f_1^{-1} & \text{on  } (K) tg \\
       f' & \text{on  } (K)\B - (K)tg
    \end{cases}
\end{equation*}where $f'$ is a bijection between congruent finite sets $(K)\B - (K)tg $ and $ (K)tg - (K)\B$.
\end{proof}

\begin{definition}[\textbf{Greatest lower bound}]\label{def_glb}
An element $\B$ is called a \emph{lower bound} of $\A_1$ and $\A_2$ if it satisfies the first condition below, and is called the \emph{greatest lower bound} (simply $glb$) if it satisfies the second condition as well.
\end{definition}
\begin{itemize}
    \item $\B \leq \A_1$ and $\B \leq \A_2$,
    \item If $\B' \leq \A_1$ and $\B' \leq \A_2$ then $\B' \leq \B$.
\end{itemize}
The $glb$ of $\A_1$ and $\A_2$ is denoted by $\A_1 \vee \A_2$.

Note that not every pair of vertices admits a lower bound (and in particular greatest lower bound). For example, the right figure in Figure \ref{fig_glb_square} describes $\alpha_1$ and $\alpha_3$ which do not have a lower bound. If there was a common lower bound $\gamma'$ then we would be forced to have $(2,1)\CC' = (1,1) \CC'$, contradicting the injectivity of $\gamma'$. The geometric interpretation of this is that there is no square containing $\A_1$ and $\A_3$. However, if one replaces $\A_1$ by $\A_2$ then there exists a common lower bound (indeed $\A_2 \vee \A_3$) for $\A_2$ and $\A_3$ as illustrated in the left figure. Note that $(1,1)\A_2 \neq (2, 1)\A_3$ but $(1,1)\A_1 =(1,1)= (2, 1)\A_3$. Lemma \ref{coro_square} states that such equation determines the existence of $glb$ of those pairs. In general, existence of a common lower bound guarantees the existence of $glb$.

\begin{figure}[ht]
\begin{center}
\includegraphics[width=1.0\textwidth]{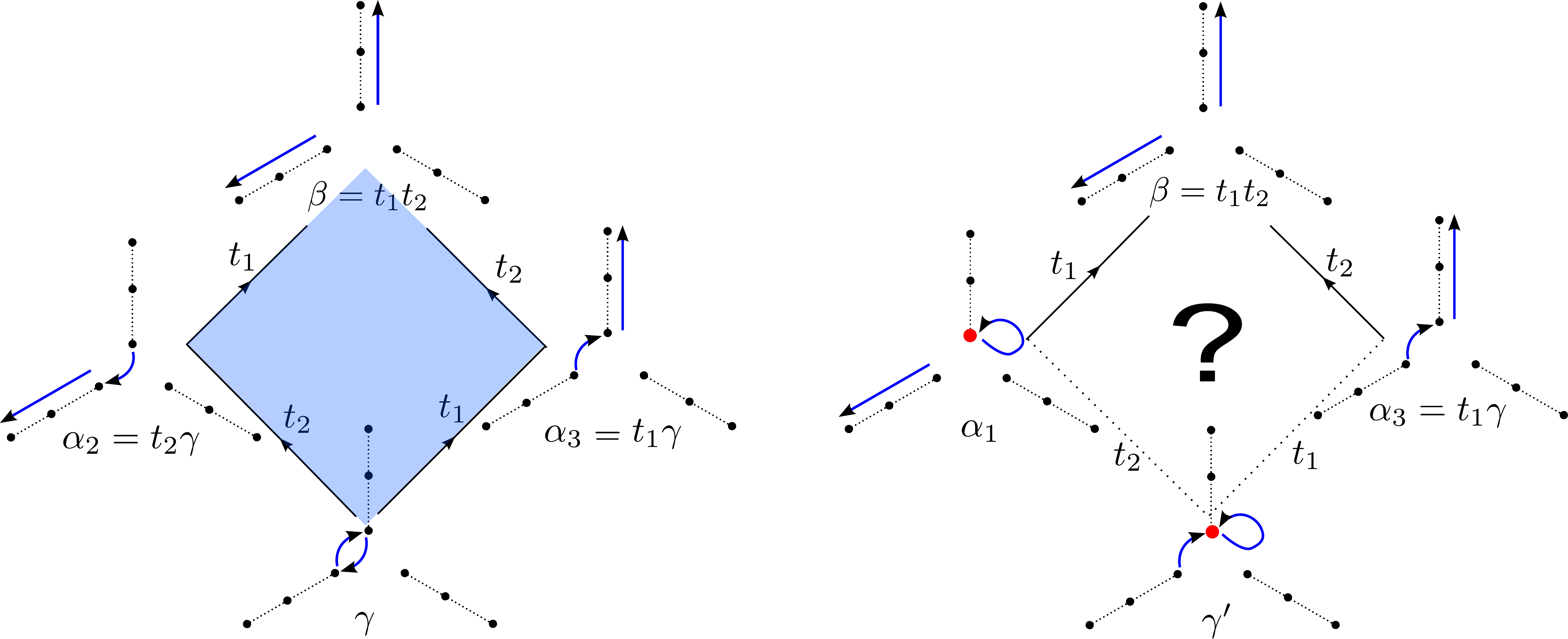}
\caption{Examples $\alpha_1, \alpha_2, \alpha_3$ where $\alpha_2 \vee \alpha_3 = \gamma$ exists, but $\alpha_1 \vee \alpha_3$ does not exist.} \label{fig_glb_square}
\end{center}
\end{figure}

%\begin{lemma}\label{lm:action_distribute_lub}
%Suppose $\A$ and $ \B$ are vertices of $X_n$. Then $(\A \wedge \B )g = (\A g)\wedge (\B g)$ and $(\A \vee \B )g = (\A g)\vee (\B g)$  for any $g \in \ho_n$.
%\end{lemma}

%\begin{proof}The relation $\leq$ on $\M_n$ is completely determined by the action of $\TT_n$ which commute with the action of $\ho_n$. So we have the above distributive laws.
%\end{proof}

\begin{lemma}\label{lm_glb}
For any $\A_1, \A_2 \in \M_n$, $\A_1 \vee \A_2$ exists and unique if there is a lower bound of $\A_1$ and $\A_2$.
\end{lemma}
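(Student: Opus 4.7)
The plan is to prove existence and uniqueness of $\alpha_1 \vee \alpha_2$ using Lemma \ref{lm:lub} as the main tool, via a ``maximum-height lower bound'' argument. Uniqueness is immediate from the antisymmetry of $\geq$ established in Proposition \ref{prop_porder}: if $\beta_1, \beta_2$ both satisfy the defining properties of the $glb$, then each is in particular a lower bound of $\alpha_1, \alpha_2$, so $\beta_1 \leq \beta_2$ and $\beta_2 \leq \beta_1$, forcing $\beta_1 = \beta_2$. The real content is in existence.

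For existence, let $L$ denote the set of lower bounds of $\alpha_1$ and $\alpha_2$; by hypothesis $L \neq \emptyset$. The first key observation is that $L$ is closed under the binary $lub$ operation of Lemma \ref{lm:lub}. Indeed, if $\gamma_1, \gamma_2 \in L$, then both $\alpha_1$ and $\alpha_2$ are upper bounds of the pair $\{\gamma_1, \gamma_2\}$, so by the defining property of $\gamma_1 \wedge \gamma_2$ we get $\alpha_j \geq \gamma_1 \wedge \gamma_2$ for $j=1,2$, which places $\gamma_1 \wedge \gamma_2$ back in $L$. The second observation is that $h$ is bounded on $L$: if $\gamma \in L$, then $\alpha_1 = s\gamma$ for some $s \in \mathcal{T}_n$, and additivity of $h$ yields $h(\gamma) = h(\alpha_1) - h(s) \leq h(\alpha_1)$.

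Since $h$ takes values in $\mathbb{Z}_{\geq 0}$, the set $\{h(\gamma) : \gamma \in L\}$ attains a maximum, realized by some $\beta \in L$. I claim $\beta$ is the desired $glb$. For any $\gamma \in L$, closure of $L$ under $lub$ gives $\beta \wedge \gamma \in L$, with $\beta \wedge \gamma \geq \beta$. Writing $\beta \wedge \gamma = t\beta$ for some $t \in \mathcal{T}_n$, additivity of $h$ and maximality of $h(\beta)$ force $h(t)=0$, so $t=1$ and $\beta \wedge \gamma = \beta$. Since $\beta \wedge \gamma \geq \gamma$ by definition, this gives $\beta \geq \gamma$ as required.

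No step presents a serious obstacle, but a small subtlety deserves care: one must invoke the $lub$-closure of $L$ together with the integer-valuedness of $h$ to guarantee that the supremum over $L$ is actually attained, and then use the fact that the only element of $\mathcal{T}_n$ of height $0$ is the identity to deduce $t=1$. If preferred, one can give a more concrete realization of $\beta$ by starting from any fixed lower bound $\gamma_0$ with $\alpha_j = t_1^{a_{j,1}}\cdots t_n^{a_{j,n}}\gamma_0$ and setting $\beta = t_1^{c_1}\cdots t_n^{c_n}\gamma_0$ with $c_i = \min(a_{1,i}, a_{2,i})$, then checking via the abstract argument above that every lower bound (not a priori above $\gamma_0$) satisfies $\gamma \leq \beta$.
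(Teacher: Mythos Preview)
Your proof is correct and takes a genuinely different route from the paper's. The paper proceeds constructively: starting from a given lower bound $\gamma$ with $\alpha_1 = t_1^{k_1}\cdots t_n^{k_n}\gamma$ and $\alpha_2 = t_1^{k'_1}\cdots t_n^{k'_n}\gamma$, it sets $\beta = t_1^{m_1}\cdots t_n^{m_n}\gamma$ with $m_i = \min\{k_i,k'_i\}$ (exactly the alternative you sketch at the end), and then argues ray-by-ray that $\beta|_{R_i}$ coincides with one of $\alpha_1|_{R_i}$ or $\alpha_2|_{R_i}$, which forces any other lower bound $\beta'$ to satisfy $\beta' \leq \beta$. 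Your argument is lattice-theoretic: you never touch the rays, relying only on closure of the set of lower bounds under $\wedge$ (Lemma~\ref{lm:lub}), boundedness and integrality of $h$, and the fact that the identity is the unique height-zero element of $\mathcal{T}_n$. The paper's approach yields an explicit formula for $\beta$, which is useful later (e.g.\ in Proposition~\ref{prop_big_square}); your approach is cleaner and in fact subsumes part~(2) of the paper's subsequent Lemma~\ref{lm_lub,glb}, since you directly characterize the $glb$ as a lower bound of maximal height.
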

\begin{proof}Suppose $\gamma $ is a lower bound of $\A_1$ and $\A_2$. Then there exist $\T =t_1^{k_1}, \cdots, t_n^{k_n}$ and $\T'=t_1^{k'_1}, \cdots, t_n^{k'_n}$ satisfying
$$\A_1=\T \gamma \;\, \text{and}\;\, \A_2 = \T' \gamma.
$$

\textbf{Claim:} $\B:=t_1^{m_1}\cdots t_n^{m_n}\gamma$ is the \emph{glb}, where $m_i=min\{k_i, k'_i\}$. \\
The commutativity of $\TT_n$ implies that
$$
\A_1= t_1^{l_1}, \cdots, t_n^{l_n} \B\;\, \text{and}\;\, \A_2= t_1^{l'_1}, \cdots, t_n^{l'_n}\B
$$
where $l_i = max\{k_i - k'_i, 0\}$ and $l'_i = max\{k'_i - k_i, 0\}$. So $\B$ satisfies the first condition. Observe that $l_i l'_i=0$ for each $i$. This means that either
$$
\A_1{_{|R_i}} = \B_{|R_i} \;\, \text{or}\;\, \A_2{_{|R_i}} = \B_{|R_i}
$$
for each $i$. Suppose $\B'$ is another element with $\A_1 = \T_1\B'$ and $\A_2 = \T_2\B'$ for some $\T_1, \T_2 \in \TT_n$. Then we have
$$
(\T_1\B')_{|R_i} = \B_{|R_i} \;\,\text{or}\;\,(\T_2\B')_{|R_i}= \B_{|R_i}
$$
for each $i$. So $\B' \leq \B$. Uniqueness of the glb follows from the second condition in Definition \ref{def_glb} together with antisymmetry of $\geq$ (Proposition \ref{prop_porder}).
\end{proof}

\begin{corollary}\label{coro_glb_T_n}
If $\A_1, \A_2 \in \TT_n$ then $\A_1 \vee \A_2$ always exists.
\end{corollary}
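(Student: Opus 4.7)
The plan is to deduce this immediately from Lemma \ref{lm_glb}, which guarantees the existence (and uniqueness) of the $glb$ whenever a common lower bound exists. So the entire content of the corollary reduces to exhibiting a common lower bound for any pair $\A_1, \A_2 \in \TT_n$.

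First I would observe that $\TT_n$ is the commutative submonoid of $\M_n$ generated by $t_1,\dots,t_n$, and hence contains the identity map $1\!:\!Y_n\to Y_n$. For any $\A \in \TT_n$, we can write $\A = t_1^{a_1}\cdots t_n^{a_n}$ with $a_i \in \Z_{\geq 0}$, which displays $\A$ as $\T \cdot 1$ for $\T = t_1^{a_1}\cdots t_n^{a_n}\in \TT_n$. By the definition of the partial order $\geq$ in equation (\ref{porder}), this means $\A \geq 1$. In particular, $1$ is a lower bound for both $\A_1$ and $\A_2$, so Lemma \ref{lm_glb} applies and $\A_1 \vee \A_2$ exists and is unique.

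For concreteness, following the recipe in the proof of Lemma \ref{lm_glb} with $\gamma = 1$, if $\A_1 = t_1^{a_1}\cdots t_n^{a_n}$ and $\A_2 = t_1^{b_1}\cdots t_n^{b_n}$, then the explicit $glb$ is
\[
\A_1 \vee \A_2 \;=\; t_1^{\min(a_1,b_1)}\cdots t_n^{\min(a_n,b_n)}.
\]
I would mention this explicit description, since it will likely be useful later for reasoning about the cubical structure of $X_n$, but the existence statement itself requires no new argument beyond the observation that $1 \in \TT_n$. There is no real obstacle here; the corollary is essentially a bookkeeping remark ensuring that within the submonoid $\TT_n$ one does not need to worry about the subtlety (illustrated in Figure \ref{fig_glb_square}) that some pairs in $\M_n$ fail to have a common lower bound due to injectivity constraints.
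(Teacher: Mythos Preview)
Your proof is correct and takes essentially the same approach as the paper: both observe that the identity map $1_{\M_n}$ is a common lower bound for any pair in $\TT_n$ and then invoke Lemma~\ref{lm_glb}. Your added explicit formula $\A_1 \vee \A_2 = t_1^{\min(a_1,b_1)}\cdots t_n^{\min(a_n,b_n)}$ is a correct bonus observation not stated in the paper's proof.
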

\begin{proof}
If two elements $\A_1$ and $\A_2$ belong to $\TT_n$, then they have a common lower bound $1_{\M_n}$, namely the identity map on $Y_n$. By Lemma \ref{lm_glb}, $\A_1 \vee \A_2$ exists. \end{proof}

\begin{lemma}\label{lm_lub,glb}
For a pair of elements $\A_1 , \A_2 \in \M_n$, the $lub$ and $glb$ of the pair can be characterized by height function as follows.
 \begin{itemize}
    \item [(1)]$\B$ is the $lub$ of $\A_1, \A_2$ if $\B$ is an upper bound and $\B$ has the smallest height among the upper bounds;
    \item [(2)]$\B$ is the $glb$ of $\A_1, \A_2$ if $\B$ is a lower bound and $\B$ has the largest height among the lower bounds.
 \end{itemize}
\end{lemma}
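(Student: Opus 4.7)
My plan is to exploit the fact that $h:\M_n \to \Z_{\geq 0}$ is a monoid homomorphism with $h(t_i)=1$, so that whenever $\A \geq \A'$ we have $\A = t\A'$ for a unique $t \in \TT_n$ and $h(\A) = h(\A') + h(t)$. In particular, $h$ weakly refines the partial order on comparable pairs, and $h(\A) = h(\A')$ combined with $\A \geq \A'$ forces $h(t)=0$, hence $t = 1_{\M_n}$ and $\A = \A'$ (since $\TT_n$ is freely generated as a commutative monoid by $t_1,\dots,t_n$, each of height one). The rest of the argument is just comparing an arbitrary upper (respectively lower) bound with the $lub$ (respectively $glb$) supplied by Lemma \ref{lm:lub} (respectively Lemma \ref{lm_glb}).

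For part (1), suppose first that $\B = \A_1 \wedge \A_2$. By definition of the $lub$, every upper bound $\B'$ satisfies $\B' \geq \B$, hence $h(\B') \geq h(\B)$, and so $\B$ realizes the minimum height among upper bounds. Conversely, given an upper bound $\B$ of minimum height, set $\B^* = \A_1 \wedge \A_2$; by definition $\B \geq \B^*$, so $h(\B) \geq h(\B^*)$, while $\B^*$ is itself an upper bound and hence $h(\B) \leq h(\B^*)$ by minimality. Thus $h(\B) = h(\B^*)$, and the observation above forces $\B = \B^*$.

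Part (2) is symmetric, with the existence of the $glb$ supplied by Lemma \ref{lm_glb}: the hypothesis that $\B$ is a lower bound guarantees a common lower bound exists, which is precisely what that lemma needs. Any lower bound $\B'$ satisfies $\A_1 \vee \A_2 \geq \B'$ by definition of $glb$, so $h(\A_1 \vee \A_2) \geq h(\B')$, proving one direction. For the converse, a lower bound $\B$ of maximum height satisfies $\A_1 \vee \A_2 \geq \B$, so $h(\A_1 \vee \A_2) \geq h(\B)$, and maximality of $h(\B)$ then forces equality of heights, hence $\B = \A_1 \vee \A_2$.

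There is no serious obstacle: the whole lemma rests on the single observation that $h$ strictly increases under the strict order $>$, which is immediate from the free commutative monoid structure on $\TT_n$. The only point that requires any care is making sure to invoke the previous existence results for the $lub$ and $glb$ in the converse directions, so that the candidate element can be compared against an object that is already known to satisfy the defining universal property.
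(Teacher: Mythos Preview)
Your proof is correct and, in fact, cleaner than the paper's. Both arguments begin the same way: given an upper bound $\B$ of minimum height, one compares it with $\B^* = \A_1 \wedge \A_2$ and observes $h(\B) = h(\B^*)$. At this point you simply use that $\B \geq \B^*$ (from the universal property of the $lub$) together with your observation that comparable elements of equal height coincide, and conclude $\B = \B^*$ in one line. The paper instead takes a detour: it invokes Lemma~\ref{lm_glb} to form the auxiliary element $\B \vee \B^*$, checks that this is again an upper bound of $\A_1,\A_2$, and derives a contradiction from $h(\B \vee \B^*) < h(\B)$ if $\B \neq \B^*$. Your route avoids introducing the $glb$ machinery into the $lub$ argument and isolates the single fact that does all the work, namely that $h$ is strictly increasing along the strict order on $\M_n$. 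The paper's route, on the other hand, foreshadows the interplay between $lub$ and $glb$ that is used heavily later (e.g.\ in Proposition~\ref{prop_big_square}), but for this lemma alone your argument is the more economical one.
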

\begin{proof}Suppose $\B$ is an element satisfying the condition ($1$) above. We want to show $\B = \A_1 \wedge \A_2$. By the definition of $\A_1 \wedge \A_2$ it is obvious that $h(\B) = h(\A_1 \wedge \A_2)$. By Lemma \ref{lm_glb}, $\B \vee (\A_1 \wedge \A_2)$ exists since $\B $ and $ \A_1 \wedge \A_2$ have common lower bounds $\A_1$ and $\A_2$. Note that $\B \vee (\A_1 \wedge \A_2)$ is an upper bound of $\A_1$ and $\A_2$. If $\B \neq \A_1 \wedge \A_2$ then $h(\B \vee (\A_1 \wedge \A_2)) < h(\B)$, contradicting to our choice of $\B$. So $\B = \A_1 \wedge \A_2$.
Analogous argument can be applied to the case of $glb$. One can draw a contradiction by assuming $\B \neq (\A_1 \vee\A_2 )$ for an element $\B$ satisfying the condition of ($2$) above.
\end{proof}

The following specifies the relationship between $lub$ and $glb$ under assumption on existence of $glb$.
\begin{proposition}\label{prop_big_square}
Suppose, for $\A_1, \A_2 \in \M_n$, $\T \A_1 = \A_1 \wedge \A_2 = \T' \A_2$ where $\T = t_1^{k_1} \cdots t_n^{k_n}$ and $\T' = t_1^{k'_1} \cdots t_n^{k'_n}$. Then $\alpha_1 \vee \alpha_2$ exists if and only if the following conditions are satisfied for all $i = 1, 2, \cdots, n$
\begin{itemize}
    \item there is no common generater $t_i$ in $\T$ and $\T'$, i.e., $k_i k'_i= 0$,
    \item $\displaystyle{(\bigcup_{p \leq k_i}(i,p)\A_1 ) \cap  (\bigcup_{ p \leq k'_i}(i,p)\A_2) = \emptyset}$.
\end{itemize}Moreover, we have
\begin{equation}\label{eq_glb}
\T' (\A_1 \vee \A_2)= \A_1 \,\;\text{and}\;\, \T (\A_1 \vee \A_2)= \A_2.
\end{equation}
\end{proposition}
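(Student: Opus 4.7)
The plan is to prove both implications using the construction in Lemma \ref{lm_glb} together with cancellation in $\M_n$: since every element of $\M_n$ is injective, $\T_a \A = \T_b \A$ with $\T_a, \T_b \in \TT_n$ and $\A \in \M_n$ forces $\T_a = \T_b$.

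For the forward direction I suppose $\CC = \A_1 \vee \A_2$ exists and write $\A_1 = \T_1 \CC$, $\A_2 = \T_2 \CC$ with $\T_1 = t_1^{l_1}\cdots t_n^{l_n}$ and $\T_2 = t_1^{l'_1}\cdots t_n^{l'_n}$. Since $\CC$ is itself the glb of $\A_1$ and $\A_2$, the construction in Lemma \ref{lm_glb} forces $l_i l'_i = 0$ for each $i$. I next show that $\A_1 \wedge \A_2 = \T_1 \T_2 \CC = \T_2 \T_1 \CC$: this element is an upper bound because $\T_1 \T_2 \CC = \T_2 \A_1 = \T_1 \A_2$, and for any other upper bound $\B' = \mathcal{S}_1 \A_1 = \mathcal{S}_2 \A_2$, cancellation gives $\mathcal{S}_1 \T_1 = \mathcal{S}_2 \T_2$ in $\TT_n$; a power-by-power comparison that uses $l_i l'_i = 0$ then shows $\mathcal{S}_1 \T_1$ dominates $\T_1 \T_2$ in the partial order on $\TT_n$, hence $\B' \geq \T_1 \T_2 \CC$. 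Uniqueness of the lub and one more application of cancellation identify $\T = \T_2$ and $\T' = \T_1$, so $k_i = l'_i$ and $k'_i = l_i$. This immediately yields the first condition $k_i k'_i = 0$ and the ``moreover'' identities $\T' (\A_1 \vee \A_2) = \T_1 \CC = \A_1$ and $\T (\A_1 \vee \A_2) = \T_2 \CC = \A_2$; the second condition then holds because $k_i k'_i = 0$ forces one of the two unions to be empty for each $i$.

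For the reverse direction I construct $\CC$ directly. For each $i$ the hypothesis $k_i k'_i = 0$ lets me set $\CC|_{R_i} := \A_1|_{R_i}$ when $k'_i = 0$ and $\CC|_{R_i} := \A_2|_{R_i}$ when $k_i = 0$; if both vanish the two choices agree because $\T \A_1 = \T' \A_2$. Each restriction is an injective eventual translation, so the only nontrivial part of verifying $\CC \in \M_n$ is cross-ray injectivity: if $i \neq j$ and $\CC(i,p) = \CC(j,q)$, I split on whether $p \leq k_i$ and whether $q \leq k'_j$. In the three cases where at least one of $p, q$ exceeds its threshold, the identity $\A_1 \wedge \A_2 = \T \A_1 = \T' \A_2$ reduces the equation to a violation of the injectivity of $\A_1$, $\A_2$, or $\A_1 \wedge \A_2$; the remaining case with $p \leq k_i$ and $q \leq k'_j$ is precisely the one ruled out by the second hypothesis (read across the indices $i$ and $j$). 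Once $\CC \in \M_n$ is in hand, the construction gives $\T'\CC = \A_1$ and $\T \CC = \A_2$, so $\CC$ is a lower bound of $\A_1$ and $\A_2$, and Lemma \ref{lm_glb} produces $\A_1 \vee \A_2$; the ``moreover'' identities then fall out of these same relations.

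The main obstacle is the cross-ray injectivity check in the reverse direction, which is the only place the second hypothesis is genuinely used and which requires translating statements about $\A_1$ and $\A_2$ into statements about $\A_1 \wedge \A_2$ via the given factorizations. Everything else is a combination of cancellation in $\M_n$ and the arithmetic of the free commutative monoid $\TT_n$.
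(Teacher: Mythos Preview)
Your argument tracks the paper's closely: both directions hinge on constructing or analyzing the candidate $\CC$ and exploiting cancellation in $\M_n$ together with the arithmetic of $\TT_n$. Your identification $\T=\T_2$, $\T'=\T_1$ via cancellation is correct and in fact a bit cleaner than the paper's treatment of the ``moreover'' clause.

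There is, however, a genuine inconsistency in how you handle the second bullet. In the reverse direction you (correctly) invoke it as a \emph{cross-ray} condition: the problematic case $(i,p)\A_1=(j,q)\A_2$ with $p\le k_i$, $q\le k'_j$ and $i\neq j$ is exactly what it must exclude. This is the only reading under which the proposition is true; the literal per-$i$ reading is vacuous once $k_ik'_i=0$, and the example of $\A_1,\A_3$ in Figure~\ref{fig_glb_square} shows that $k_ik'_i=0$ alone does not force a glb to exist. But in the forward direction you dismiss the second bullet as trivially implied by the first ``for each $i$''. Under the cross-ray reading that your reverse direction needs, this is a gap: you must actually show that $\bigcup_{i,\,p\le k_i}(i,p)\A_1$ and $\bigcup_{j,\,q\le k'_j}(j,q)\A_2$ are disjoint. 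The fix is short. From your identities $\A_1=\T'\CC$ and $\A_2=\T\CC$ and $k_ik'_i=0$, one has $(i,p)\A_1=(i,p)\CC$ whenever $k_i>0$ and $(j,q)\A_2=(j,q)\CC$ whenever $k'_j>0$; the two unions are therefore $\CC$-images of subsets of $Y_n$ lying on disjoint collections of rays, hence disjoint by injectivity of $\CC$. This is precisely how the paper closes the forward direction (``injectivity of $\CC$ together with equation~(\ref{eq_glb_1}) guarantees the second condition'').
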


\begin{proof}
First we construct $\CC=\A_1 \vee \A_2$ by using the conditions. Define $\CC$ by \begin{equation*}
(i,p)\CC =
    \begin{cases}
     (i,p)\A_1   & \text{if  } k'_i=0 \\
      (i,p)\A_2& \text{if  } k_i=0
    \end{cases}
\end{equation*}for each $i=1, \cdots, n$. This is well defined since $k_i k'_i= 0$, and if $k_1 = k'_i=0$ then we have $(i,p)\A_1 = (i,p)\T \A_1 =(i,p)\T' \A_2=(i,p)\A_2$ for all $p \in N$. One can verify $\CC$ is the desired element as follows.

\textbf{Claim 1:} $\T' \CC = \A_1$ and $\T \CC = \A_1$.\\
Fix $i$. Suppose $k'_i = 0$. For all $p \in N$, we have $(i,p) \T' \CC = (i,p) t_i^{k'_i} \CC =(i,p)  \CC = (i,p) \A_1$. Suppose $k'_i \neq 0$. This means $k_i = 0$ and so we have
\begin{align*}
(i,p) \T' \CC &=  (i,p) t_i^{k'_i} \CC =(i,p+k'_i)  \CC = (i,p+k'_i) \A_2 =  (i,p) t_i^{k'_i} \A_2\\
& = (i,p) \T' \A_2= (i,p) \T \A_1 =(i,p) t_i^{k_i} \A_1 = (i,p)  \A_1,
\end{align*} for all $p \in N$. Since the above identities hold for every $i$, we have verified $\T' \CC = \A_1$. A similar argument shows that $\T \CC = \A_1$.

\textbf{Claim 2:} $\CC \in \M_n$. We need to verify injectivity. The image $(Y_n) \CC$ coincides with $(Y_n) \A_1$ up to a finite set. We already know that $(i,p)  \CC = (i,p) \A_1$ if $k'_i= 0$ ($p \in \N$). If $k'_i \neq 0$ then $(i,p+k'_i)  \CC = (i,p)  \A_1$ ($p \in \N$). If suffices to show, for $i$ with $k'_i \neq 0$, that $(i,p)\CC \notin (Y_n) \A_1$ for all $p \leq k'_i$. This is clear from the second condition of the Lemma because $(i,p) \CC\in \bigcup_{ p \leq k'_i}(i,p)\A_2$.

\begin{figure}[ht]
\begin{center}
\includegraphics[width=.3\textwidth]{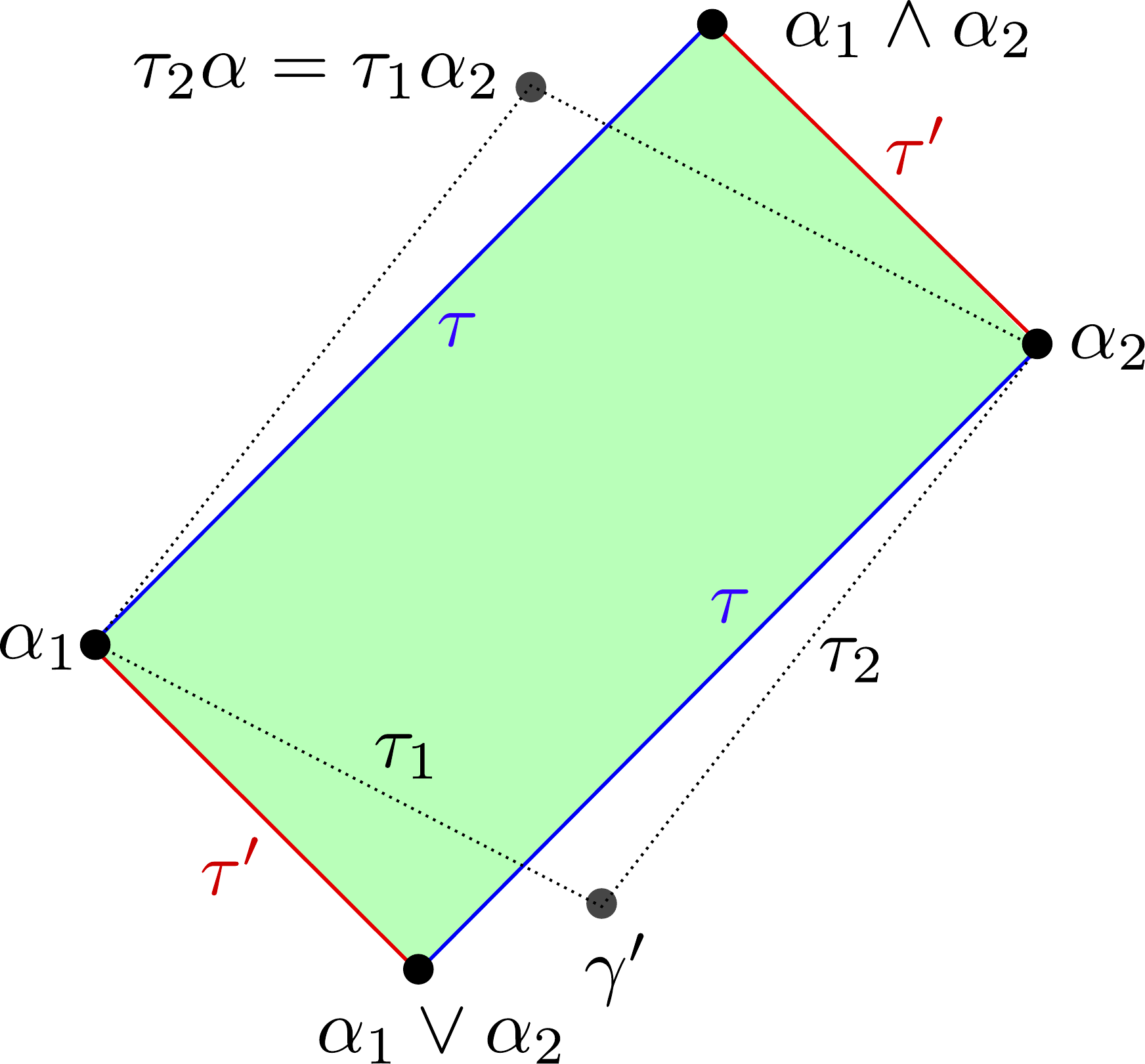}
\caption{`Big rectangle': if $\A_1 \vee \A_2$ exists $\A_1 \wedge \A_2$ determines $\A_1 \vee \A_2$, vice versa.} \label{fig:big_rectangle}
\end{center}
\end{figure}

\textbf{Claim 3:} $\CC = \A_1 \vee \A_2$. The lower bound $\CC$ has the largest height among lower bounds of $\A_1$ and $\A_2$. If there was another lower bound $\CC'$ of $\A_1$ and $\A_2$ with $h(\CC') > h(\CC)$ then there would be $\T_1, \T_2 \in \TT_n$ such that
$$
\T_1 \CC' = \A_1,\; \T_2 \CC' = \A_2\; \text{and}\; |\T_1|< |\T'|.
$$ This implies $\A_1$ and $\A_2$ have an upper bound $\T_2\A_1 = \T_1\A_2$ such that $h(\T_1\A_2)=(\T_1)+h(\A_2)$ is strictly smaller than $h(\A_1 \wedge \A_2) = h(\T') +h(\A_2)$ (see Figure \ref{fig:big_rectangle}). This contradicts the fact that $\A_1 \wedge \A_2$ has the smallest height among upper bounds of $\A_1$ and $\A_2$ (Lemma \ref{lm_lub,glb}). So the lower bound $\CC$ attains maximum height among lower bounds of $\A_1 $ and $ \A_2$. By Lemma \ref{lm_lub,glb}, $\CC = \A_1 \vee \A_2$.

The converse statement is rather easy to check. Suppose that $\A_1 \vee \A_2$ exists and that
\begin{equation}\label{eq_glb_1}
\T_1 (\A_1 \vee \A_2)= \A_1 \,\;\text{and}\;\, \T_2 (\A_1 \vee \A_2)= \A_2.
\end{equation} for some $\T_1, \T_2\in \TT_n$. As in the proof of Lemma \ref{lm_glb}, we see that there is no common generator in $\T_1$ and $\T_2$. So the first condition is satisfied. Injectivity of $\CC$ together with equation (\ref{eq_glb_1}) guarantees the second condition.

Finally, the argument in Claim $3$ above says that $\A_1 \vee \A_2$ determines $\A_1 \wedge \A_2$. If $\A_1 \vee \A_2$ satisfies equation (\ref{eq_glb_1}) then we have
$$
\T_1 \T = \T_2 \T'.
$$ Now the condition that there is no common generator in pairs $\T$ and $\T'$, and $\T_1$ and $\T_2$ enables one to conclude that $\T_1 = \T' $ and $\T_2 = \T$. Hence equation (\ref{eq_glb}) follows. \end{proof}

\begin{definition}[\textbf{Maximal elements}]
For $\B \in \M_n$, $\A$ is called a \emph{maximal element} of $\B$ if it satisfies
\begin{equation}\label{eq_identity_down}
\B = t_i\A
\end{equation}
for some generator $t_i $ of $\TT_n$.
\end{definition}

The following Lemma says the number of maximal elements of $\B$ varies depending on the height of $\B$.

\begin{lemma}\label{lm_down_for_each_i}
Suppose $h(\B)=  h$. There exists $n \times h$ many maximal elements of $\B$.
\end{lemma}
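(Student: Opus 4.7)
The plan is to establish a bijection between the maximal elements of $\B$ and the set of pairs $(i, y)$ with $i \in \{1, \dots, n\}$ and $y \in S(\B)$. Since $|S(\B)| = h(\B) = h$, this will immediately yield the desired count $n \times h$.

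First I would unpack the defining equation $\B = t_i \A$ coordinate-wise. Because $\TT_n$ acts on $\M_n$ by pre-composition, this equation translates to $(j,p)\B = ((j,p)\,t_i)\,\A$ for every $(j,p) \in Y_n$. Examining the two cases, I obtain $(j,p)\A = (j,p)\B$ whenever $j \neq i$, together with $(i,q)\A = (i,q-1)\B$ for all $q \geq 2$. Thus the entire map $\A$ is forced on $Y_n - \{(i,1)\}$ by the pair $(\B, i)$, and the only free parameter is the single value $y := (i,1)\A$.

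Next I would determine which values of $y$ yield a legitimate element of $\M_n$. A direct computation gives $(Y_n - \{(i,1)\})\A = (Y_n - R_i)\B \cup (R_i)\B = (Y_n)\B$, so the injectivity of $\A$ is equivalent to $y \in Y_n - (Y_n)\B = S(\B)$. The eventual-translation condition comes for free: on $R_j$ with $j \neq i$ the map $\A$ equals $\B$, and on $R_i$ it is the one-step downward shift of $\B$, so it translates by $m_i - 1$ outside a finite set. Hence every pair $(i, y)$ with $y \in S(\B)$ determines a well-defined maximal element of $\B$.

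Finally, distinctness: for fixed $i$ the pair is recovered from $\A$ via $y = (i,1)\A$, so different $y$ give different $\A$. For different indices $i \neq i'$, comparing the two resulting maximal elements at $(i,2)$ yields $(i,2)\A = (i,1)\B$ against $(i,2)\A' = (i,2)\B$, and injectivity of $\B$ forces $\A \neq \A'$. Combining these steps produces exactly $n \cdot |S(\B)| = n \cdot h$ distinct maximal elements. I do not anticipate any genuine obstacle here --- the only real care needed is in correctly tracking the pre-composition convention and verifying that the image of $\A$ exhausts $(Y_n)\B$ as soon as the point $(i,1)$ is removed from its domain.
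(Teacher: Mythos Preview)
Your proposal is correct and follows essentially the same approach as the paper: both arguments fix an index $i$, observe that $\B = t_i\A$ determines $\A$ everywhere except at $(i,1)$, and then identify the admissible values of $(i,1)\A$ with $S(\B)$. Your treatment is in fact slightly more thorough than the paper's, since you explicitly verify that maximal elements arising from distinct indices $i \neq i'$ are distinct, a point the paper leaves implicit (it is covered elsewhere by the no-bigon argument in Lemma~\ref{lm:C_n_conn}).
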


\begin{proof}
Fix $i$. The identity (\ref{eq_identity_down}) determines $\A$ completely except $(i,1)\A$. It also implies $(Y_n -(i,1))\A = (Y_n)\B$. So there are precisely $|S(\B)|=|Y_n - (Y_n )\B| =h$ many choices for $(i,1)\A$. It turns out any of these choice defines an injective map $\A$. Consider elements $\A_1, \cdots, \A_h$ defined by
\begin{equation*}
(j,p)\A_k =
    \begin{cases}
        (j,p)\B& \text{if } j\neq i, \;p \in \mathds{N} \\
        (i,p+1)\B & \text{if } j =i,\;p \in \mathds{N},
    \end{cases}
\end{equation*}and $(i,1)\A_k \in S(\B)$. They are all desired $h$ many injective maps.

\end{proof}

\begin{remark}\label{rmk_maxima_1-1_corres} The argument of Lemma \ref{lm_down_for_each_i} implies that, each maximal element $\A_k$ of $\B$ is labelled by $t_i$ as well as a point of $S(\B)$. Figures \ref{fig_exmaple_lub} and \ref{fig_glb_square} illustrate this idea for $\B = t_1 t_2$. Note that $S(\B) = \{(1,1),(1,2)\}$. In those figures maximal elements $\A_1$, $\A_2$ and $\A_3$ are labelled by
$$
\A_1 \leftrightarrow (t_1,\,(1,1)), \;\A_2 \leftrightarrow (t_1,\,(1,2)) , \;\A_3 \leftrightarrow (t_2,\,(1,1)).
$$
In general, for $\B \in \M_n$, the set of maximal elements of $\B$ is in $1$-to-$1$ correspondence with $\{1, \cdots, n\} \times S(\B)$.
\end{remark}

The corollaries below follows from Proposition \ref{prop_big_square}. They provide criterions when a collection of maximal elements form squares and $k$-cubes respectively.
\begin{corollary}\label{coro_square}
Suppose $t_i\A_1 = t_j\A_2$ ($i \neq j$). There exists $\A_1 \vee \A_2$ if and only if the two first coordinates are distinct and the second coordinates are distinct (using the ordered pair notation introduced in Remark \ref{rmk_maxima_1-1_corres}). %$(i,1)\A_1 \neq (j,1)\A_2$.
\end{corollary}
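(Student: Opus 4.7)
The plan is to derive this corollary from Proposition \ref{prop_big_square} by first identifying $\alpha_1 \wedge \alpha_2$ and then translating the two conditions of that proposition into the labeling notation of Remark \ref{rmk_maxima_1-1_corres}. First I would show that $\alpha_1 \wedge \alpha_2 = \beta$, where $\beta := t_i\alpha_1 = t_j\alpha_2$. Since $i \neq j$, the two labels already differ, so $\alpha_1 \neq \alpha_2$; the element $\beta$ lies above both and has height $h(\alpha_1) + 1$. Any strict upper bound $\beta'$ of the pair satisfies $\beta' > \alpha_1$ and thus $h(\beta') \geq h(\alpha_1) + 1$, so $\beta$ attains the minimum height among upper bounds. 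By Lemma \ref{lm_lub,glb}, $\alpha_1 \wedge \alpha_2 = \beta$, and in the notation of Proposition \ref{prop_big_square} we have $T = t_i$ and $T' = t_j$ (each a single generator).

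With this identification I would then apply Proposition \ref{prop_big_square}. The ``no common generator'' condition is immediate from $i \neq j$. The image-disjointness condition---which in the proof of that proposition is precisely the injectivity of the candidate glb $\gamma$---becomes relevant only on the rays $R_i$ and $R_j$ and reduces to the two statements $(i, 1)\alpha_1 \notin (Y_n)\alpha_2$ and $(j, 1)\alpha_2 \notin (Y_n)\alpha_1$. Writing $q_1 := (i, 1)\alpha_1$ and $q_2 := (j, 1)\alpha_2$, these are exactly the label second coordinates attached to $\alpha_1$ and $\alpha_2$ by Remark \ref{rmk_maxima_1-1_corres}, and both lie in $S(\beta)$. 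Combined with the identity $(Y_n)\alpha_k = (Y_n)\beta \cup \{q_k\}$ and $S(\beta) \cap (Y_n)\beta = \emptyset$, each disjointness requirement collapses to the single condition $q_1 \neq q_2$; together with the hypothesis $i \neq j$, this is exactly the stated equivalence.

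The main obstacle I anticipate is the careful translation of the second condition in Proposition \ref{prop_big_square} in this single-generator setting: the literal disjoint-union statement has one empty side on every ray and therefore imposes no constraint, so the argument must return to the injectivity step inside the proof of Proposition \ref{prop_big_square} to extract the correct image-disjointness requirement. The example in Figure \ref{fig_glb_square}, where $\alpha_1$ and $\alpha_3$ both carry label second coordinate $(1,1)$ and admit no common lower bound because any candidate $\gamma$ would force $(1,1)\gamma = (2,1)\gamma$, serves as a concrete sanity check that the reduction to $q_1 \neq q_2$ captures the correct obstruction.
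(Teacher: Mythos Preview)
Your proposal is correct and follows the same route as the paper: the paper's proof is a one-line appeal to Proposition \ref{prop_big_square}, and you carry out exactly that specialization, first verifying that $\beta=t_i\alpha_1=t_j\alpha_2$ is the least upper bound (so $\tau=t_i$, $\tau'=t_j$) and then reading off the two conditions.

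Your caution about the second condition is well placed. As literally written in Proposition \ref{prop_big_square} (one disjointness statement \emph{per ray index}), the condition is vacuous once $k_\ell k'_\ell=0$, since for every $\ell$ one of the two sets is empty. The condition that the proof of that proposition actually uses (Claim 2, injectivity of $\gamma$) is the \emph{global} disjointness
\[
\Bigl(\bigcup_{\ell}\bigcup_{p\le k_\ell}(\ell,p)\alpha_1\Bigr)\ \cap\ \Bigl(\bigcup_{\ell}\bigcup_{p\le k'_\ell}(\ell,p)\alpha_2\Bigr)=\emptyset,
\]
and with $\tau=t_i$, $\tau'=t_j$ this reduces immediately to $(i,1)\alpha_1\neq(j,1)\alpha_2$, i.e.\ $q_1\neq q_2$. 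So you do not really need to re-enter the proof of Proposition \ref{prop_big_square}; it suffices to read its second hypothesis in this corrected global form. Your computation $(Y_n)\alpha_k=(Y_n)\beta\cup\{q_k\}$ with $q_k\in S(\beta)$ then gives the equivalence directly, and the Figure \ref{fig_glb_square} example you cite is exactly the right sanity check.
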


\begin{proof}
This is a special case of Proposition \ref{prop_big_square}, with the two conditions of that proposition rephrased in terms of the coordinates introduced in Remark \ref{rmk_maxima_1-1_corres}.
\end{proof}

\begin{corollary}\label{coro_cube}
Suppose $\{\A_1, \A_2, \cdots, \A_k\}$ is a collection of maximal elements of $\B$. There exists $glb(\A_1, \cdots, \A_k)$ if and only if the first coordinates are all distinct and the second coordinates are all distinct (using the ordered pair notation introduced in Remark \ref{rmk_maxima_1-1_corres}).
\end{corollary}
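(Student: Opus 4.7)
The plan is to treat the two implications separately. The forward direction will reduce to the pairwise version in Corollary \ref{coro_square}, while the reverse direction will require an explicit construction of the greatest lower bound together with a careful verification that it is indeed greatest. Throughout, I use the labeling $\A_j \leftrightarrow (t_{i_j}, y_j)$ from Remark \ref{rmk_maxima_1-1_corres}, where $y_j = (i_j, 1)\A_j \in S(\B)$.

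For the forward direction, if $\CC := \mathrm{glb}(\A_1, \ldots, \A_k)$ exists then it is, in particular, a common lower bound of each pair $\A_j, \A_l$. Lemma \ref{lm_glb} then yields the pairwise greatest lower bound $\A_j \vee \A_l$, and Corollary \ref{coro_square} forces $i_j \neq i_l$ and $y_j \neq y_l$. Running this over all pairs gives the desired distinctness of both tuples.

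For the reverse direction, assume the $i_j$'s are pairwise distinct and the $y_j$'s are pairwise distinct. I will define $\CC \colon Y_n \to Y_n$ ray by ray by setting $(l,p)\CC = (l,p)\A_j$ when $l = i_j$ and $(l,p)\CC = (l,p)\B$ otherwise. Distinctness of the $i_j$'s makes this well-defined, and a direct image computation gives $(Y_n)\CC = (Y_n)\B \sqcup \{y_1, \ldots, y_k\}$, which is a disjoint union because each $y_j \in S(\B)$ misses $(Y_n)\B$ and the $y_j$'s are pairwise distinct; thus $\CC \in \M_n$. A ray-by-ray check then shows $\A_j = \bigl(\prod_{m \neq j} t_{i_m}\bigr)\CC$, so $\CC \leq \A_j$ for every $j$ and $\CC$ is a common lower bound.

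The main obstacle is showing that this $\CC$ is the \emph{greatest} lower bound, since the height-maximality criterion of Lemma \ref{lm_lub,glb} is stated for pairs and does not obviously generalize to arbitrary finite collections. My plan is to take any common lower bound $\CC'$, write $\A_j = \T'_j \CC'$ with $\T'_j \in \TT_n$, and observe that $\T'' := t_{i_j}\T'_j$ is independent of $j$ (both sides are the unique element of $\TT_n$ carrying $\CC'$ to $\B$, unique by injectivity of $\CC'$). Since each $\T'_j = \T''/t_{i_j}$ must lie in $\TT_n$ and the $i_j$'s are distinct, this forces $\T'' = \bigl(\prod_m t_{i_m}\bigr)S$ for some $S \in \TT_n$. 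Substituting into $\A_j = \bigl(\prod_{m \neq j} t_{i_m}\bigr)\CC$ yields $\bigl(\prod_{m \neq j} t_{i_m}\bigr)S\CC' = \bigl(\prod_{m \neq j} t_{i_m}\bigr)\CC$, so $S\CC'$ and $\CC$ agree on the image of $\prod_{m \neq j} t_{i_m}$, which omits only the points $\{(i_m, 1) : m \neq j\}$. Intersecting these agreement sets as $j$ ranges over $\{1, \ldots, k\}$ leaves no exceptional point at all, so $S\CC' = \CC$ on all of $Y_n$ and hence $\CC' \leq \CC$. The delicate step is precisely this combinatorial intersection, which crucially combines the relations from all $k$ choices of $j$ simultaneously rather than any one in isolation.
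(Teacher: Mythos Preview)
Your proof is correct and takes a somewhat different route from the paper's. For the forward direction you reduce to the pairwise case via Lemma \ref{lm_glb} and Corollary \ref{coro_square}, which is cleaner than the paper's terse remark that a glb produces a cube with top vertex $\B$. For the reverse direction the paper argues by induction on $k$: assuming the glb $\CC$ of $\A_1,\ldots,\A_k$ exists, it applies Proposition \ref{prop_big_square} to the pair $(\CC,\A_{k+1})$ (whose lub is $\B$) to produce $\CC \vee \A_{k+1}$, and then observes that this is automatically the glb of all $k+1$ elements. Your approach instead builds the candidate $\CC$ directly ray by ray and verifies maximality by hand. The inductive argument is shorter and reuses the existing machinery of Proposition \ref{prop_big_square}; your explicit construction is more self-contained and makes the structure of $\CC$ (and in particular its image $(Y_n)\B \sqcup \{y_1,\ldots,y_k\}$) transparent.

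One small wording slip in your final step: you should take the \emph{union} of the agreement sets (equivalently, observe that the intersection of the \emph{exceptional} sets $\{(i_m,1): m\neq j\}$ over all $j$ is empty), not the intersection of the agreement sets. The underlying logic is sound---each point $(i_l,1)$ lies in the agreement set for $j=l$, so every point of $Y_n$ is covered and $S\CC'=\CC$ globally---but the sentence as written has the set operation reversed.
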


\begin{proof}We construct $glb$ inductively with the base case $k=2$, which is done in Corollary \ref{coro_square}. Assume the statement holds for $k$ maximal vertices of $\B$. Say the glb of those vertices is $\CC$. Note that the information of $k$ vertices encoded in the coordinates completely determine $\CC$. Now apply Proposition \ref{prop_big_square} to $\CC $ and $ \A_{k+1}$ to check the existence of $\CC \vee \A_{k+1}$. Note that $\CC \wedge \A_{k+1} = \B$. The condition on distinct first coordinates implies there is no common generater $t_i$ in the two paths. The condition on the other coordinates implies the second condition of Proposition \ref{prop_big_square}. So $\CC \vee \A_{k+1}$ exists.

Conversely, the existence of glb directly implies there exists a cube whose top vertex is $\B$. So the condition on coordinates is satisfied.
\end{proof}

%%%%%%%%%%%%%%%%%%%%%%%%%%%%%%%%%%%%%%%%%%%%%%%%%%%%%%%%%%%%%%%%%%%%%%%%%%%%%%%%%%%%%%%%%%%
%%%%%%%%%%%%%%%%%%%%%%%%%%%%%%%%%%%%%%%%%%%%%%%%%%%%%%%%%%%%%%%%%%%%%%%%%%%%%%%%%%%%%%%%%%%
%%%%%%%%%%%%%%%%%%%%%%%%%%%%%%%%%%%%%%%%%%%%%%%%%%%%%%%%%%%%%%%%%%%%%%%%%%%%%%%%%%%%%%%%%%%
\subsection{$X_n$ is CAT(0)}\label{section_CC}
%%%%%%%%%%%%%%%%%%%%%%%%%%%%%%%%%%%%%%%%%%%%%%%%%%%%%%%%%%%%%%%%%%%%%%%%%%%%%%%%%%%%%%%%%%%
%%%%%%%%%%%%%%%%%%%%%%%%%%%%%%%%%%%%%%%%%%%%%%%%%%%%%%%%%%%%%%%%%%%%%%%%%%%%%%%%%%%%%%%%%%%
%%%%%%%%%%%%%%%%%%%%%%%%%%%%%%%%%%%%%%%%%%%%%%%%%%%%%%%%%%%%%%%%%%%%%%%%%%%%%%%%%%%%%%%%%%%

In this section we show $X_n$ is a cubing for each $n\in N$, i.e., $X_n$ is a $1$-connected non positively curved cubical complex. There are several ways to think about this result.

By Lemma \ref{lm_X_n_1-conn}, $X_n$ is simply connected. In Subsection \ref{section_gromov}, we prove $X_n$ is non positively curved by using Gromov's $link\;condition$.

Another way to see $X_n$ is a cubing is to use the fact (\cite{Chepoi}, \cite{Roller}) that there is $1$-to-$1$ correspondence
$$
\text{the class of cubings} \;\leftrightarrow\; \text{the class of median graphs}
$$
One associates a median graph to a cubing by considering the $1$--skeleton of the cubing. In the reverse direction, one thinks of a median graph as the $1$--skeleton of a cubing, and defines the cubing inductively on skeleta, as in Definition \ref{dfn:X_n}. In Subsection \ref{sec_median} we prove $\C_n$ is a median graph for all $n \in \N$.

In \cite{Sageev}, Sageev provided explicit construction of a cubing associated to a multi-ended group. In Subsection \ref{sec_Sageev_construction}, we discuss an interpretation of $X_n$ in terms of Sageev's construction.

%%%%%%%%%%%%%%%%%%%%%%%%%%%%%%%%%%%%%%%%%%%%%%%%%%%%%%%%%%%%%%%%%%%%%%%%%%%%%%%%%%%%%%%%%%%
%%%%%%%%%%%%%%%%%%%%%%%%%%%%%%%%%%%%%%%%%%%%%%%%%%%%%%%%%%%%%%%%%%%%%%%%%%%%%%%%%%%%%%%%%%%
%%%%%%%%%%%%%%%%%%%%%%%%%%%%%%%%%%%%%%%%%%%%%%%%%%%%%%%%%%%%%%%%%%%%%%%%%%%%%%%%%%%%%%%%%%%
\subsection{Gromov link condition}\label{section_gromov}
%%%%%%%%%%%%%%%%%%%%%%%%%%%%%%%%%%%%%%%%%%%%%%%%%%%%%%%%%%%%%%%%%%%%%%%%%%%%%%%%%%%%%%%%%%%
%%%%%%%%%%%%%%%%%%%%%%%%%%%%%%%%%%%%%%%%%%%%%%%%%%%%%%%%%%%%%%%%%%%%%%%%%%%%%%%%%%%%%%%%%%%
%%%%%%%%%%%%%%%%%%%%%%%%%%%%%%%%%%%%%%%%%%%%%%%%%%%%%%%%%%%%%%%%%%%%%%%%%%%%%%%%%%%%%%%%%%%
\begin{definition}[\textbf{cubical complex}]
Intuitively, a cubical complex is a regular CW-complex, except that it is built out of Euclidean cubes $I^k$ instead of balls. More precisely, a \emph{cubical complex} $X$ is a CW-complex where for each $k$-cell $\sigma^k_j \subset X$ its attaching map $\varphi_j : \partial I^k \to X^{n-1}$ satisfies the following conditions:
\begin{itemize}
    \item[(1)] the restriction of $\varphi_j$ to each face of $I^k$ is a linear homeomorphism onto a cube of one lower dimension,
    \item[(2)] $\varphi_j$ is a homeomorphism onto its image.
\end{itemize}We give $X$ the standard CW-topology.
\end{definition}

The non-positive curvature condition we will use is a local condition captured by conditions on the link of a vertex.

\begin{definition}[\textbf{Link of a vertex}]
The \emph{link of a vertex $v$ in $I^k$}, denoted by $Lk(v,I^k)$, is defined to be intersection of the cube $I^k$ and the unit sphere $\mathds{S}^{k-1}$ centered at $v$ with respect to $L^1$ metric. Note that $Lk(v,I^k)$ is the standard simplex of dimension $k-1$. For a vertex $\A \in X$ and a cell $\Si^k_j$ containing $\A$, the \emph{link of $\A$ in $\sigma^k_j $} is defined to be the image $\varphi_j (Lk(v,I^k))$, where $\varphi_j (v)=\A$.
The \emph{link of $\A$ in $X$}, denoted by $Lk(\A,X)$, is defined to be the union of all links of $\A$ in cells containing $\A$.
\end{definition}

\begin{definition}[\textbf{Ascending/Descending links of a vertex}]
For a vertex $\A$ of a cubical complex $X$ equipped with a Morse function $h$, the descending link $Lk_\downarrow(\A, X)$ is defined by
$$
Lk_\downarrow(\A, X) = \bigcup \{ \varphi_j(Lk(v,\sigma^k_j): h \varphi_j\;\text{attains maximum at}\; v, \; \varphi_j(v)=\A\}.
$$
Likewise, the ascending link $Lk_\uparrow(\A, X)$ is defined by
$$
Lk_\uparrow(\A, X) = \bigcup \{ \varphi_j(Lk(v,\sigma^k_j): h \varphi_j\;\text{attains minimum at}\; v, \; \varphi_j(v)=\A\}.
$$
\end{definition}

\begin{definition}[\textbf{Flag complex}]
A simplicial complex $L$ is said to be a \emph{flag complex} if every collection of pairwise adjacent vertices of $L$ spans a simplex of $L$.
\end{definition}

\begin{definition}[\textbf{Gromov's condition}]
A cubical complex $X$ satisfies the Gromov's condition if $Lk(\A, X)$ is a flag complex for each $\A \in X^{(0)}$.
\end{definition}

Following Gromov, we say that a cubical complex $X$ is \emph{non positively curved} (simply NPC) if it satisfies the Gromov's condition. %A simply connected NPC cubical complex is called a \emph{cubing}.

From the way we constructed $X_n$, we see that any vertex of $\dlk{\A, X_n}$ corresponds to a vertex $\B$ such that $\A = t_i \B$ for some generator $t_i \in \TT_n$. By Remark \ref{rmk_maxima_1-1_corres}, there exists a bijection
$$
 \{\B \mid \A = t_i \B\;\text{for some}\; t_i\} \leftrightarrow \{i \mid 1 \leq i \leq n\} \times S(\A)
$$
Identify the later set with  $\{(i,k) \in  \mathds{N} \times \mathds{N} \mid 1\leq i \leq n,\; 1\leq k \leq h(\A) \} $. Under this (composition of) identification, points $\{(i,k)\mid 1\leq k \leq h(\A)\mid \} $ correspond to vertices in $\dlk{\A, X_n}$ which are joined to $\A$ by edges labelled by $t_i$. Consider the simplicial complex $L_{n,h}$ for $h\in \mathds{N}$ defined by
\begin{itemize}
    \item [($1$)] vertices: $L_{n,h}^{(0)} =\{(x,y) \in  \mathds{N} \times \mathds{N} \mid 1\leq i \leq n,\; 1\leq k \leq h \} $
    \item [($2$)] simplexes: A collection of vertices $\{(x_0, y_0), \cdots, (x_k, y_k)\}$ form a $k$-simplex if $x_i$'s are all distinct and $y_i$'s are all distinct.
\end{itemize}

\begin{remark}\label{dlink_ln,h}
By Corollary \ref{coro_cube}, we see that $L_{n,h}$ and $Lk_\downarrow (\A, X_n)$ are identical if $h = h(\A)$. Note that $L_{n,h}$ is flag for all $n, h \in \mathds{N}$. If two vertices of $L_{n,h}$ are connected by an edge, their first coordinates are distinct and second coordinates are distinct. This means that any collection $C \subset L_{n,h}^{(0)}$ of pairwise adjacent vertices satisfy the condition ($2$) of definition of $L_{n,h}$ above. So a collection $C$ forms a simplex of $L_{n,h}$.
\end{remark}

\begin{lemma}\label{link}
For every vertex $\A \in X_n$, $Lk (\A, X_n)$ is flag.
\end{lemma}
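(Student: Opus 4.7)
My plan is to extend the flagness of the descending link (established in Remark \ref{dlink_ln,h}) to the full link by carefully handling mixed ascending/descending faces, using the commutativity of $\TT_n$ and Corollary \ref{coro_cube}.

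First I would decompose the vertex set of $Lk(\A, X_n)$ into two types: ascending vertices, in bijection with the generators $\{t_1,\ldots,t_n\}$ of $\TT_n$ (one for each edge going up from $\A$), and descending vertices, in bijection with $\{1,\ldots,n\}\times S(\A)$ via the labelling of Remark \ref{rmk_maxima_1-1_corres} (one for each maximal element of $\A$). Any cube containing $\A$ determines a set of ascending indices $I\subseteq\{1,\ldots,n\}$ (the up-directions from $\A$ in the cube) together with a set of descending labels (maximal elements below $\A$ in the cube); the face of the cube opposite $\A$ lies in the link as a simplex that is the simplicial join of the ascending and descending contributions.

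Next I would catalogue pairwise adjacency. Two ascending vertices $t_i,t_j$ (with $i\neq j$) are always adjacent, because $\A,t_i\A,t_j\A,t_it_j\A$ form a square by commutativity of $\TT_n$. Two descending vertices $(t_i,y_1)$ and $(t_j,y_2)$ are adjacent precisely when $i\neq j$ and $y_1\neq y_2$, by Corollary \ref{coro_square}. For the mixed case, an ascending $t_i$ and a descending vertex $(t_j,y)$ corresponding to the maximal element $\A'$ with $\A=t_j\A'$ are adjacent iff $i\neq j$: if $i\neq j$ the square $\A',\A=t_j\A',t_i\A',t_it_j\A'=t_jt_i\A'$ realises the adjacency, while if $i=j$ the two edges are collinear (Remark \ref{rmk:unique_asc_edge} forbids a bigon) and cannot span a square.

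For the flag condition, let $V\subset Lk(\A,X_n)^{(0)}$ be a collection of pairwise adjacent vertices. Write $V=V_\uparrow\sqcup V_\downarrow$, with $V_\uparrow$ giving an index set $I\subseteq\{1,\ldots,n\}$ and $V_\downarrow$ giving descending labels $\{(t_{j_1},y_1),\ldots,(t_{j_k},y_k)\}$. From the pairwise adjacency analysis above, the $j_m$ are distinct, the $y_m$ are distinct, and $I\cap\{j_1,\ldots,j_k\}=\emptyset$. By Corollary \ref{coro_cube} the corresponding maximal elements $\A_1,\ldots,\A_k$ admit a greatest lower bound $\B=\A_1\vee\cdots\vee\A_k$, producing a $k$-cube with top $\A$ and bottom $\B$ in the directions $t_{j_1},\ldots,t_{j_k}$. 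Because the indices in $I$ are disjoint from $\{j_1,\ldots,j_k\}$, applying the generators $\prod_{i\in I}t_i$ (which all commute by $\TT_n$ abelian) to this $k$-cube produces a $(k+|I|)$-cube containing $\A$, and the simplex of $Lk(\A,X_n)$ opposite $\A$ in this cube is exactly the simplex spanned by $V$.

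I expect the main obstacle to be the bookkeeping for the mixed case, in particular verifying that the constructed $(k+|I|)$-cube genuinely sits in $X_n$: this amounts to checking that the $t_i$ for $i\in I$ act on $\B$ without introducing any collisions, which follows from injectivity of the maps in $\M_n$ together with the two conditions of Proposition \ref{prop_big_square}, already encoded in the disjointness $I\cap\{j_1,\ldots,j_k\}=\emptyset$ and the distinctness of the $y_m$'s in $S(\A)$.
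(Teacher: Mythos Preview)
Your proposal is correct and follows essentially the same approach as the paper: split the pairwise-adjacent set into ascending and descending vertices, use flagness of each piece separately (the ascending link is a single $(n-1)$-simplex, the descending link is $L_{n,h}$), and then build the full $(k+|I|)$-cube from the disjointness of the two sets of generator labels. If anything, you are more explicit than the paper on the mixed-adjacency analysis---the paper simply asserts ``Since $T_1$ and $T_2$ disjoint'' without spelling out the $i\neq j$ criterion you prove for an ascending $t_i$ versus a descending $(t_j,y)$.
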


\begin{proof}Suppose $C $ is a collection of pairwise adjacent vertices of $Lk(\A, X_n)$. We may assume $C =\{z_1, \cdots, z_k\} \cup \{z'_1, \cdots, z'_{k'}\}$ where $z_i \in Lk_\uparrow (\A, X_n)$ and $z'_j \in Lk_\downarrow (\A, X_n)$. For each vertex $\A \in X_n$, there exists unique $n$-cube having $\A$ as the bottom vertex. So $\alk{\A, X_n}$ is simply the standard $(n\!-\!1)$-simplex. By Remark \ref{dlink_ln,h} $Lk_\downarrow (\A, X_n)$ is also flag. So those subcollections of $C$ form a $(k\!-\!1)$-simplex $\Si$ and a $(k'\!-\!1)$-simplex $\Si'$ respectively in the ambient complexes.

Our claim is that there exist $(k+k')$-cube which contributes $(k+k'-1)$ simplex $\Si \ast \Si'$ to $Lk(\A, X_n)$. Consider the ordered pair notation introduced in Remark \ref{rmk_maxima_1-1_corres} for $\{z_1, \cdots, z_k\}$. Let $T_1 =\{t_{i_1}, \cdots, t_{i_k}\} \subset \{t_1, \cdots, t_n\}$ denote the set consisting of first coordinates of vertices $\{z_1, \cdots, z_k\}$. Likewise let $T_2 =\{t_{j_1}, \cdots, t_{j_{k'}}\}$ be the set corresponding to $\{z'_1, \cdots, z'_{k'}\}$. Observe that the $(k\!-\!1)$-simplex $\Si \subset Lk_\downarrow (\A, X_n)$ corresponds to $k$-cube $\rho$ with top vertex $\A$, which is generated by $T_1$. Similarly, $\Si' \subset Lk_\uparrow (\A, X_n)$ corresponds to $k'$-cube $\rho'$ which is generated by $T_2 $, whose bottom vertex is $\A$. Since $T_1$ and $T_2$ disjoint, the $k$-cube $\rho$ together with edges labelled by $T_2$ spans a $(k+k')$-cube. This cube is the desired cube containing $\rho$ and $\rho'$, and so $\A$ as well. So a collection $C$ forms a simplex $\Si \ast \Si'$ in $Lk (\A, X_n)$.
\end{proof}

By Lemma \ref{lm_X_n_1-conn} and Lemma \ref{link} we have the following Theorem.
\begin{theorem}
For all $n \in \mathds{N}$, $X_n$ is a cubing.
\end{theorem}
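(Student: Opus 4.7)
The plan is to assemble the theorem directly from the two lemmas that immediately precede it. By the definition of a cubing stated in Section \ref{section_CC}, I need to verify two things: that $X_n$ is simply connected, and that $X_n$ satisfies Gromov's non-positive curvature condition, i.e.\ that the link of every vertex is a flag simplicial complex. There is nothing subtle left to do beyond citing the appropriate earlier results.

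First I would invoke Lemma \ref{lm_X_n_1-conn} to conclude $X_n$ is $1$-connected. (For $n=1$ one uses Lemma \ref{lm_X_n_1_1-conn} instead, but the statement is subsumed by Lemma \ref{lm_X_n_1-conn} once $n\geq 2$, and $X_1$ is a tree by Lemma \ref{lm:C_n_conn} together with the height-function back-tracking argument of Lemma \ref{lm_X_n_1_1-conn}.) Second, I would invoke Lemma \ref{link}, which states that for every vertex $\A \in X_n$, the link $Lk(\A,X_n)$ is a flag complex. By the definition of Gromov's condition, this is exactly what is needed for $X_n$ to be non-positively curved as a cubical complex.

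Putting these together, $X_n$ is a simply-connected NPC cubical complex, which is the definition of a cubing. There really is no ``main obstacle'' at this stage: all the substantive combinatorial work, including the identification of the descending link with the flag complex $L_{n,h(\A)}$ (Remark \ref{dlink_ln,h}), the fact that the ascending link is the standard $(n{-}1)$-simplex, and the join decomposition $Lk(\A,X_n)=\alk{\A,X_n}\join\dlk{\A,X_n}$ used to verify flagness, has already been done in the proof of Lemma \ref{link}. Thus the proof consists of a one-line citation of both lemmas and an appeal to the definition of a cubing.
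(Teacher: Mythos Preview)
Your proposal is correct and matches the paper's approach exactly: the paper simply states that the theorem follows from Lemma \ref{lm_X_n_1-conn} and Lemma \ref{link}, with no additional argument. Your handling of the $n=1$ case via Lemma \ref{lm_X_n_1_1-conn} is actually more careful than the paper's one-line citation.
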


The following is known facts for a cubing, and proofs can be found in \cite{bible}.
\begin{corollary}\label{coro:contrac_unique_geod}
For all $n \in \mathds{N}$, $X_n$ is uniquely geodesic and is contractible.
\end{corollary}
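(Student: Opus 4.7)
The plan is to invoke the standard CAT(0) machinery for cubical complexes, which already does all the heavy lifting. By Lemma \ref{lm_X_n_1-conn}, $X_n$ is simply connected, and by Lemma \ref{link} every vertex link is flag, so $X_n$ satisfies Gromov's link condition, i.e., it is a non-positively curved (NPC) cubical complex. Equipped with the standard piecewise Euclidean metric in which each cube is a unit cube, $X_n$ is a complete geodesic metric space (it is built from finitely many shapes, so Bridson--Haefliger's completeness criterion applies).

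With these two ingredients in hand, I would apply Gromov's theorem (\cite{bible}, Chapter II.5): a cubical complex is locally CAT(0) if and only if the link of every vertex is a flag simplicial complex. Combined with the Cartan--Hadamard theorem for locally CAT(0) spaces (\cite{bible}, Chapter II.4), a complete, simply connected, locally CAT(0) space is globally CAT(0). Therefore $X_n$ is a CAT(0) space.

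Both claimed properties now follow from elementary features of CAT(0) spaces. Unique geodesics: in any CAT(0) space, any two points are joined by a unique geodesic, by definition of the CAT(0) inequality applied to degenerate comparison triangles. Contractibility: fix a basepoint $x_0 \in X_n$ and, for each $x \in X_n$, let $\gamma_x:[0,1] \to X_n$ be the unique geodesic from $x_0$ to $x$; then $H(x,t) = \gamma_x(1-t)$ is a continuous (in fact $1$-Lipschitz in $t$) deformation retraction of $X_n$ to $\{x_0\}$, where continuity of $H$ in $x$ is the standard fact that geodesics in a CAT(0) space vary continuously with their endpoints. The only real work is already done; the main obstacle would have been verifying the link condition and simple connectivity, both of which were handled in the preceding lemmas, so here the proof reduces to citing \cite{bible}.
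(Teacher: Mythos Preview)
Your proposal is correct and takes essentially the same approach as the paper: the paper simply states that these are known facts for cubings and refers to \cite{bible}, while you spell out the standard CAT(0) argument (Gromov's link condition, Cartan--Hadamard, geodesic contraction) in more detail than the paper bothers to.
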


%%%%%%%%%%%%%%%%%%%%%%%%%%%%%%%%%%%%%%%%%%%%%%%%%%%%%%%%%%%%%%%%%%%%%%%%%%%%%%%%%%%%%%%%%%%
%%%%%%%%%%%%%%%%%%%%%%%%%%%%%%%%%%%%%%%%%%%%%%%%%%%%%%%%%%%%%%%%%%%%%%%%%%%%%%%%%%%%%%%%%%%
%%%%%%%%%%%%%%%%%%%%%%%%%%%%%%%%%%%%%%%%%%%%%%%%%%%%%%%%%%%%%%%%%%%%%%%%%%%%%%%%%%%%%%%%%%%
\subsection{Cayley graph $\C_n$ is a median graph}\label{sec_median}
%%%%%%%%%%%%%%%%%%%%%%%%%%%%%%%%%%%%%%%%%%%%%%%%%%%%%%%%%%%%%%%%%%%%%%%%%%%%%%%%%%%%%%%%%%%
%%%%%%%%%%%%%%%%%%%%%%%%%%%%%%%%%%%%%%%%%%%%%%%%%%%%%%%%%%%%%%%%%%%%%%%%%%%%%%%%%%%%%%%%%%%
%%%%%%%%%%%%%%%%%%%%%%%%%%%%%%%%%%%%%%%%%%%%%%%%%%%%%%%%%%%%%%%%%%%%%%%%%%%%%%%%%%%%%%%%%%%

For an edge $e$ of $\C_n$, let $\partial_{\!-} e$ and $\partial_{\!+} e$ denote the initial and terminal vertices of $e$. Although generators of $\TT_n$ do not have inverses, we can consider the $reverse$ edge of $e$. Let $\overline{e}$ denote the reverse edge of an edge $e$, i.e.,  $\partial_{\!-} \overline{e} =\partial_{\!+} e$ and $\partial_{\!+} \overline{e}=\partial_{\!-} e$. Note that $\overline{\overline{e}} = e$.

By a \emph{path} we mean an edge path; concatenation of edges (including reverse edges) $e_1 , \cdots, e_k$ where $\partial_+(e_i) = \partial_-(e_{i+1})$ for $i=1, 2, \cdots, k-1$. If a path $p$ is a concatenation of edges $e_1, \cdots, e_k$ in order, we write this path as $p = e_1 \cdots e_k$ and the \emph{reverse} path $\overline{p}$ of $p$ is defined by $\overline{p} = \overline{e}_{k} \cdots \overline{e}_1$.

A path is called \emph{ascending} if it does not contain reverse edges. Similarly a path is called \emph{descending} it consists of only reverse edges. Obviously the reverse path of an ascending path is descending and vice versa. Suppose $p$ is an ascending path joining $\A$ to $\T \A$ for some $\T \in \TT_n$. One reads the edge labeling of $p$ in order to form a word $\T= \T_1 \cdots \T_k$, $\T_i \in \{t_1, \cdots, t_n\}$. The expression for $\T$ is not unique in ($\TT_n$) because of commutativity of $\TT_n$. Indeed permutations of generators in $\T$ produce a class of paths rel two end vertices $\A$ and $\T \A$. In the sequel, a choice of such path is less relevant. Instead we will be interested in two end vertices of an ascending path. When we say a path $p$ $given \,\;by$ $\T \in \TT_n$, we mean a choice from the class of paths rel two end vertices $\A$ and $\T\A$, and write $p\simeq\T$.

For a pair of vertices $\A$ and $ \B $, the \emph{distance in} $\C_n$ is the smallest length of paths joining them. We denote the distance by $d(\cdot,\cdot )$. A \emph{geodesic} $[\A, \B]$ joining vertices $\A$ and $\B$ is a path whose length is $d(\A, \B)$.

\begin{remark}\label{rmk_asc_des_height}
Note that $d(\A, \B) \geq |h (\A)- h( \B)|$. Every ascending/descending path is a geodesic. If $p$ is an ascending/descending path joining $\A$ to $\B$ then $d(\A,\B) = |h(\B)-h(\A)|$.
\end{remark}

\medskip
\noindent
{\bf Standard geodesics in $\C_n$.}

We say a path $p$ has a $turn$ if $p$ contains $\overline{e}_{t_i} {e}_{t_j}$ for some $i,j$. One can apply liftings (finitely many times) to transform $p$ to a path of the form
\begin{equation}
{e}_{t_{i_1}} \cdots {e}_{t_{i_k}} \overline{e}_{t_{j_1}}\cdots \overline{e}_{t_{j_\ell}} \;\;(k, \ell \geq 0).
\end{equation}
Such path is called \emph{standard}, i.e., a path is standard if it is a concatenation of one ascending path and one descending path where the ascending path occurs first.
Figure \ref{fig_liftings} illustrates liftings of paths. Note that the length of paths does not increase under liftings. As before, for a standard path $p$ given by $(\T )(\overline{\T}')$ joining $\A$ to $\B$, there are many expressions (determined by permutations in each parenthesis). However every such choice shares important information: initial vertex $\A$, terminal vertex $\B$ and the top vertex. We write $p\simeq\T \cdot \overline{\T}'$ and we mean $p$ is a concatenation of one ascending path and one descending path which are determined by choices in $\T$ and $\T'$ respectively.     %$[\widehat{\A,\B}]$.

\begin{figure}[h]
\includegraphics[width=1.0\textwidth]{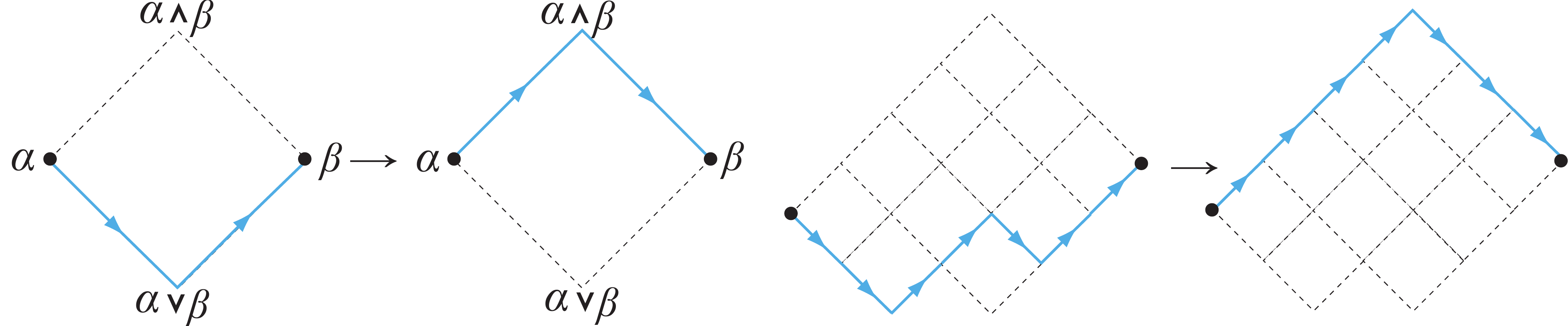}
\caption{ Every path can be transformed to a standard path via liftings} \label{fig_liftings}
\end{figure}

\begin{remark}\label{rmk:standard_geodesic}
Note that the top vertex of a standard path is a common upper bound of the two end vertices.  By Lemma \ref{lm_lub,glb}, a standard path $\pi \cdot \overline{\pi'}$ joining $\A$ to $\B$ is a geodesic if and only if $\pi \A  = \A \wedge \B = \pi'\B $.
\end{remark}
A standard path is called a \emph{standard geodesic} if it is a geodesic.

\begin{definition}[\textbf{Median graph}]
The \emph{geodesic interval} $\I[\A, \B]$ is the collection of vertices lying on geodesics $[\A, \B]$. A graph is called a \emph{median} if, for each triple of vertices $\A, \B, \gamma,$ the geodesic intervals $[\A, \B], [\B, \gamma]$ and $[\gamma,\A ]$ have a unique common point.
\end{definition}

\begin{proposition}\label{prop:inner_geodesic}
$\gamma \in \mathcal{I}[\A, \B]$ if and only if $d(\A, \B)=d(\A, \gamma)+d(\gamma, \B)$.
\end{proposition}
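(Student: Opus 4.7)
This is the standard three-points-on-a-geodesic characterization, and the plan is to verify both implications using only the triangle inequality together with the concatenation/decomposition of edge paths.

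For the forward direction, I would assume $\gamma \in \mathcal{I}[\alpha,\beta]$, so $\gamma$ is a vertex on some geodesic $p$ from $\alpha$ to $\beta$. Decompose $p = p_1 p_2$ where $p_1$ runs from $\alpha$ to $\gamma$ and $p_2$ from $\gamma$ to $\beta$. The lengths satisfy $|p_1| \geq d(\alpha,\gamma)$ and $|p_2| \geq d(\gamma,\beta)$, while $|p_1| + |p_2| = |p| = d(\alpha,\beta)$. Combined with the triangle inequality $d(\alpha,\beta) \leq d(\alpha,\gamma) + d(\gamma,\beta)$, all these inequalities must be equalities, giving $d(\alpha,\beta) = d(\alpha,\gamma) + d(\gamma,\beta)$.

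For the backward direction, I would pick any geodesic $q_1$ from $\alpha$ to $\gamma$ and any geodesic $q_2$ from $\gamma$ to $\beta$. The concatenation $q = q_1 q_2$ is an edge path from $\alpha$ to $\beta$ (legitimate because paths in $\mathcal{C}_n$ are allowed to mix edges and reverse edges), and its length equals $d(\alpha,\gamma) + d(\gamma,\beta) = d(\alpha,\beta)$ by hypothesis. Hence $q$ is a geodesic from $\alpha$ to $\beta$ passing through $\gamma$, so $\gamma \in \mathcal{I}[\alpha,\beta]$.

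The only subtlety worth flagging is that $\mathcal{C}_n$ has directed edges labelled by the generators $t_i$ of $\mathcal{T}_n$, which have no formal inverses; however, the notion of path used in the preceding subsection explicitly allows reverse edges $\overline{e}$, and distance is defined as the length of the shortest such edge path. So concatenation of two geodesic edge paths at a common endpoint is again an edge path, and no difficulty arises from the absence of inverses. I do not expect any genuine obstacle here: the proposition is a formal consequence of the definitions of distance and of $\mathcal{I}[\alpha,\beta]$, and does not require the cubing structure, the height function, or any of the lub/glb machinery developed earlier.
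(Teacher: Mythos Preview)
Your proof is correct and follows essentially the same approach as the paper: decompose a geodesic through $\gamma$ into two subpaths for the forward direction, and concatenate two geodesics at $\gamma$ for the converse. The paper's version is slightly terser (it simply asserts that the subpaths are geodesics rather than invoking the triangle inequality explicitly), and it does not comment on the reverse-edge issue, but the argument is the same.
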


\begin{proof}Suppose $\gamma \in \mathcal{I}[\A, \B]$. There exists a geodesic joining $\A$ and $\B$ that contains $\gamma$. So $\gamma$ determines two subpaths which are geodesics joining $\A$ to $\gamma$, and $\gamma$ to $\B$ respectively. Thus $d(\A, \B)=d(\A, \gamma)+d(\gamma, \B)$. Conversely, if $d(\A, \B)=d(\A,\gamma)+d(\gamma,\B)$ then any concatenation of two geodesics $[\A,\gamma]$ and $[\gamma,\B]$ is again a geodesic containing $\gamma$ and hence $\gamma \in \mathcal{I}[\A, \B]$.
\end{proof}

\begin{lemma}\label{lm:interior_median}
If $\gamma \in \mathcal{I}[\A, \B]$ then
$$
\mathcal{I}[\A, \B]\cap\mathcal{I}[\B, \gamma]\cap\mathcal{I}[ \gamma,\A] = \{\gamma\}.
$$
\end{lemma}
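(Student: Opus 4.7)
The approach is entirely metric: reduce the three incidence conditions defining membership in the three geodesic intervals to numerical distance identities via Proposition \ref{prop:inner_geodesic}, and then derive $d(\gamma, \delta) = 0$ for any $\delta$ in the triple intersection.

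First I would fix an arbitrary $\delta$ in $\mathcal{I}[\A,\B]\cap \mathcal{I}[\B,\gamma]\cap \mathcal{I}[\gamma,\A]$ and apply Proposition \ref{prop:inner_geodesic} four times: once to the hypothesis $\gamma \in \mathcal{I}[\A,\B]$, and once to each of the three conditions on $\delta$. This yields
\begin{align*}
d(\A,\B) &= d(\A,\gamma) + d(\gamma,\B), \\
d(\A,\B) &= d(\A,\delta) + d(\delta,\B), \\
d(\B,\gamma) &= d(\B,\delta) + d(\delta,\gamma), \\
d(\gamma,\A) &= d(\gamma,\delta) + d(\delta,\A).
\end{align*}

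Next I would add the third and fourth equations to obtain
\[
d(\A,\gamma) + d(\gamma,\B) \;=\; d(\A,\delta) + d(\delta,\B) + 2\,d(\delta,\gamma),
\]
and then use the first two equations to replace the left-hand side with $d(\A,\delta) + d(\delta,\B)$. Cancelling yields $2\,d(\delta,\gamma) = 0$, hence $\delta = \gamma$.

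There is no real obstacle here; the statement is a purely formal consequence of the additivity of distance along geodesics (Proposition \ref{prop:inner_geodesic}) and requires no use of the cubical or median structure of $\C_n$. It holds in any geodesic metric space, and in particular gives the ``only if'' half of uniqueness of the median in the forthcoming median-graph discussion.
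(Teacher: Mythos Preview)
Your proof is correct and is essentially the same as the paper's: both arguments use Proposition~\ref{prop:inner_geodesic} to convert the interval conditions into distance additivity equations and then combine them to force $2\,d(\gamma,\delta)=0$. The only cosmetic difference is that the paper writes the computation as a single chain of equalities (and explicitly notes that $\gamma$ itself lies in the triple intersection), while you list the four equations separately before combining them.
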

\begin{proof}
Obviously $\gamma$ is in the intersection if $\CC \in \mathcal{I}[\A, \B]$. Suppose the intersection contains other point $\gamma'$. From Proposition \ref{prop:inner_geodesic}, we have
\begin{align*}
d(\A, \B) & =d(\A,\gamma)+d(\gamma,\B)\\
 & =d(\A,\gamma')+d(\gamma',\gamma)+d(\gamma,\gamma')+d(\gamma',\B) &\; (\CC' \in \mathcal{I}[\A, \CC], \CC' \in \mathcal{I}[\B, \CC] )\\
& = d(\A,\gamma')+d(\gamma',\B) +2d(\gamma',\gamma) &(\CC' \in \mathcal{I}[\A, \B])\\
&=d(\A,\B) +2d(\gamma',\gamma).
\end{align*}
So $d(\gamma',\gamma)=0$ or $\gamma'=\gamma$.
\end{proof}

We will need the following two lemmas.
\begin{lemma}\label{n-rect}
Suppose $\CC, \CC' \in \I[\A,\B]$. Then $\CC \wedge \CC' \in \I[\A,\B]$. Moreover if there exists a lower bound of $\CC$ and $\CC'$ then $\CC \vee\CC' \in \I[\A,\B]$
\end{lemma}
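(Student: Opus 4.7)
My plan is to prove the lemma via an auxiliary algebraic characterization of the geodesic interval: $\gamma \in \I[\A,\B]$ if and only if $\gamma$ sits at the bottom of a ``big rectangle'' (in the sense of Proposition \ref{prop_big_square}) with corners $\gamma, \A\wedge\gamma, \gamma\wedge\B, \A\wedge\B$, i.e., $(\A\wedge\gamma)\vee(\gamma\wedge\B) = \gamma$ and $(\A\wedge\gamma)\wedge(\gamma\wedge\B) = \A\wedge\B$. The reverse direction of this claim is a direct length calculation: the zigzag $\A \to \A\wedge\gamma \to \gamma \to \gamma\wedge\B \to \B$ provided by the rectangle has length $2h(\A\wedge\B) - h(\A) - h(\B) = d(\A,\B)$ by the rectangle's built-in height identity, exhibiting $\gamma$ on a geodesic.

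For the forward direction I would concatenate the standard geodesics $[\A,\gamma]$ and $[\gamma,\B]$ (Remark \ref{rmk:standard_geodesic}) into the same zigzag, which has length $d(\A,\B)$ by Proposition \ref{prop:inner_geodesic}. Letting $\T_1, \T_2 \in \TT_n$ be minimal with $\T_1\gamma = \A\wedge\gamma$ and $\T_2\gamma = \gamma\wedge\B$, I argue that $\T_1$ and $\T_2$ share no common generator: a shared $t_i$ would, by commutativity of $\TT_n$, allow $t_i$ to be positioned at the end of the descent and the start of the ascent at $\gamma$, creating a back-tracking whose removal contradicts geodesicness. Since $\gamma$ is a common lower bound of $\A\wedge\gamma$ and $\gamma\wedge\B$, Lemma \ref{lm_glb} produces their glb, and Proposition \ref{prop_big_square} then certifies the big-rectangle conditions for this pair. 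Its ``moreover'' clause together with the disjointness $S(\T_1) \cap S(\T_2) = \emptyset$ (immediate from the no-common-generator condition) identifies this glb with $\gamma$; the top of the rectangle is the lub of $\{\A,\gamma,\B\}$, and the geodesic length forces the height identity $h(\A\wedge\B) = h(\gamma) + h(\T_1) + h(\T_2)$, pinning this top down as $\A\wedge\B$. In particular $\gamma \leq \A\wedge\B$.

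With the auxiliary claim, the lemma follows quickly. Applied to $\CC$ and $\CC'$ it gives $\CC, \CC' \leq \A\wedge\B$, hence $D := \CC\wedge\CC' \leq \A\wedge\B$. Since $D$ is a common lower bound of $\A\wedge D$ and $D\wedge\B$, Lemma \ref{lm_glb} produces their glb; the same cancellation argument (the no-common-generator condition coming from Proposition \ref{prop_big_square}) identifies it with $D$. The rectangle's top is the lub of $\{\A,D,\B\}$, which equals $\A\wedge\B$ since $D \leq \A\wedge\B$, so the reverse direction of the auxiliary claim yields $D \in \I[\A,\B]$. The $\vee$ case is symmetric: if $D' := \CC\vee\CC'$ exists then $D' \leq \CC \leq \A\wedge\B$, and the same rectangle argument places $D'$ in $\I[\A,\B]$.

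The main obstacle I anticipate is the cancellation step in the forward direction of the auxiliary claim: showing the glb of $\A\wedge\gamma$ and $\gamma\wedge\B$ actually equals $\gamma$, rather than some strictly larger common lower bound. This reduces to verifying that the two identities $\T_1(\mathrm{glb}) = \T_1 \gamma$ and $\T_2(\mathrm{glb}) = \T_2 \gamma$ force equality on all of $Y_n$, which in turn requires the disjointness $S(\T_1) \cap S(\T_2) = \emptyset$ so that $(Y_n)\T_1 \cup (Y_n)\T_2 = Y_n$. Unpacking how the $\TT_n$-actions interact with the support sets requires care, though the combinatorial structure of $\TT_n$ keeps it straightforward.
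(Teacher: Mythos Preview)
Your auxiliary characterization of $\I[\A,\B]$ via the big-rectangle conditions is correct and nicely packaged, and the backtracking argument showing that $\T_1,\T_2$ (with $\T_1\gamma=\A\wedge\gamma$, $\T_2\gamma=\gamma\wedge\B$) share no generator is exactly right when $\gamma\in\I[\A,\B]$.

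The gap is in the deduction of the lemma from the auxiliary claim. For $D=\CC\wedge\CC'$ you need to verify $(\A\wedge D)\vee(D\wedge\B)=D$, which amounts to showing that the elements $\T_1,\T_2\in\TT_n$ with $\T_1 D=\A\wedge D$ and $\T_2 D=D\wedge\B$ share no generator. You invoke ``the same cancellation argument'', but that backtracking argument required the zigzag through $\gamma$ to already be a geodesic---for $D$ this is precisely the statement you are trying to prove, so the appeal is circular. Appealing to Proposition~\ref{prop_big_square} does not help either: it tells you the \emph{top} elements $\T,\T'$ (from $\A\wedge D$ and $D\wedge\B$ up to their lub) share no generator and hence that the glb sits in a rectangle, but it does not force that glb to equal $D$. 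Indeed, the implication ``$D\leq\A\wedge\B\Rightarrow D\in\I[\A,\B]$'' is false in general (take $\A=\B$ and any $D<\A$), so \emph{some} input beyond $D\leq\A\wedge\B$ specific to $D=\CC\wedge\CC'$ is essential. You correctly flagged the cancellation step as the main obstacle---but only for the auxiliary claim, not for its reuse at $D$, which is where it actually bites.

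The paper's proof confronts exactly this point by working with explicit path decompositions: it takes the two geodesics $p_1,p_2$ through $\CC,\CC'$, lifts them to standard form to extract $\CC,\CC'\leq\A\wedge\B$ (your step), then uses the chain $\CC\leq\CC\wedge\CC'\leq\A\wedge\B$ to factor the $\TT_n$-elements and build a concrete path $p_3\simeq\pi\cdot\Si_1\cdot\overline{\T_2}\cdot\Si_2\cdot\overline{\T_1}\cdot\overline{\rho}$ through $\CC\wedge\CC'$ whose liftings recover a known geodesic, hence $p_3$ has geodesic length. Your rectangle framework could likely be made to work, but you would still need a substantive argument---using both $\CC$ and $\CC'$, not just $D\leq\A\wedge\B$---to rule out a shared generator in $\T_1,\T_2$ at $D$; that is the missing idea.
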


\begin{proof}Suppose $\CC, \CC' \in \I[\A,\B]$. We first construct a geodesic joining $\A$ to $\B$ which passes $\CC \wedge \CC'$. Consider elements $\pi, \pi', \T,\T',\sigma,\sigma', \rho,\rho' \in \TT_n$ to express $\A \wedge \CC, \,\A \wedge \CC', \, \CC \wedge \B,$ and $\CC'\wedge \B$ as follows

$\pi\A = \A \wedge \CC = \T\CC$, $\pi'\A = \A \wedge \CC' = \T'\CC'$,
$\sigma\CC = \CC \wedge \B= \rho\B$, and $\rho' \CC' = \CC'\wedge \B = \rho'\B$.\\
By Remark \ref{rmk:standard_geodesic}, the following standard paths are all geodesic:
$\pi \cdot \overline{\T}$ joining $\A$ and $\CC$; $\Si \cdot \overline{\rho}$ joining $\CC$ and $\B$; $\pi' \cdot \overline{\T'}$ joining $\A$ and $\CC'$; $\Si' \cdot \overline{\rho'}$ joining $\CC'$ and $\B$.
Figure \ref{interval} illustrates those standard paths.

\begin{figure}[ht]
\begin{center}
\includegraphics[width=1.0\textwidth]{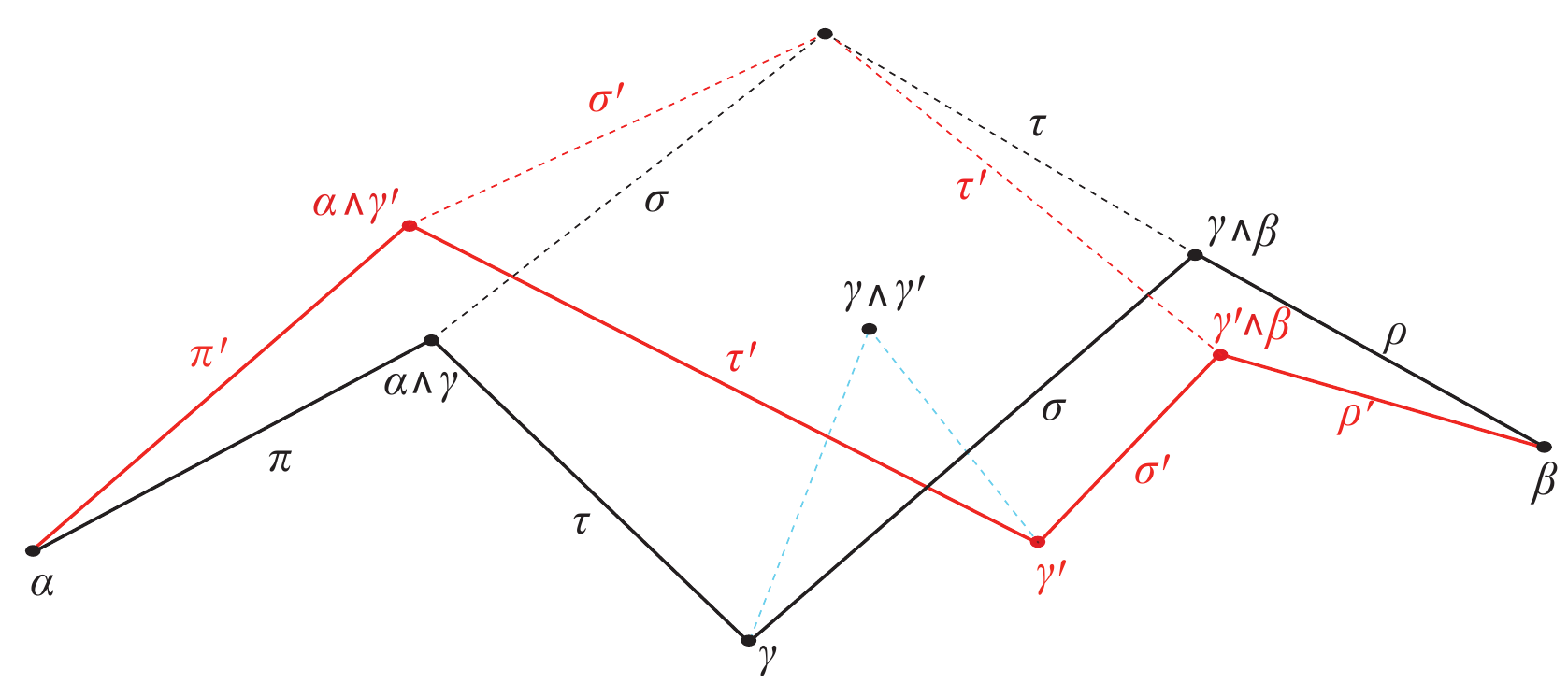}
\caption{Two geodesics $p_1 \simeq \pi \cdot\overline{\T}\cdot\Si \cdot\overline{\rho}$ and $p_2 \simeq  \pi' \cdot\overline{\T'}\cdot\Si' \cdot\overline{\rho'}$ of $\A$ and $\B$ passing $\CC$ and $\CC'$ respectively} \label{interval}
\end{center}
\end{figure}

\textbf{Claim 1}: $\CC \wedge \CC' \leq \A \wedge \B$. The standard path $\pi \overline{\T}$ connecting $\A$ to $\CC$ is a geodesic by Remark \ref{rmk:standard_geodesic}. By the same reason, the standard path $\sigma \overline{\T}$ connecting $\CC$ to $\B$ is a geodesic. Consider the path $p_1$ defined by the concatenation of $\pi \cdot \overline{\T}$ and $\Si \cdot \overline{\rho}$,
$$
p_1 \simeq \pi\cdot\overline{\T}\cdot \Si \cdot\overline{\rho}.
$$By Proposition \ref{prop:inner_geodesic}, this path $p_1$ is a geodesic since $\CC \in \I[\A,\B]$.
Similarly we have a geodesic $p_2$ of $\A$ and $\B$ which passes $\CC'$ given by
$$
    p_2\simeq \pi' \cdot\overline{\T'}\cdot \Si' \cdot\overline{\rho'}.
$$
By applying liftings to $p_1$ and $p_2$ one obtains geodesics $\tilde{p_1}$ and $\tilde{p_2}$ given by $$
    \tilde{p}_1 \simeq \pi \cdot\Si\cdot \overline{\T}\cdot \overline{\rho}, \;
    \tilde{p}_2\simeq \pi' \cdot\Si'\cdot \overline{\T'}\cdot \overline{\rho'}.
$$ From Remark \ref{rmk:standard_geodesic}, we see that the top vertex of these paths is $\A \wedge \B$. Thus $\A \wedge \B = \pi \Si\A = \T \Si \CC$ and $\A \wedge \B = \pi' \Si'\A = \T ' \Si'\CC'$. So $\A \wedge \B \geq \CC $ and $\A \wedge \B \geq \CC'$, and Claim $1$ is verified.

A chain $\CC \leq (\CC\wedge\CC') \leq (\A \wedge \B)$ allows one to decompose $\T$ and $\Si$ as $\T=\T_1 \T_2$ and $\Si = \Si_1 \Si_2$ so that
\begin{equation}\label{1}
\T_1 \Si_1\CC \!= \!\CC \wedge \CC', \;\,\T_2 \Si_2(\CC \wedge \CC')\! =\! \A \wedge \B.
\end{equation}Figure \ref{fig:interval_2} illustrates this decomposition. Consider the path connecting $\A$ to $\CC\wedge \CC'$ and then to $\B$ defined by
$$
    p_3\simeq \pi\cdot \Si_1 \cdot \overline{\T_2} \cdot\Si_2\cdot\overline{\T_1}\cdot\overline{\rho}.
$$

\begin{figure}[ht]
\begin{center}
\includegraphics[width=.8\textwidth]{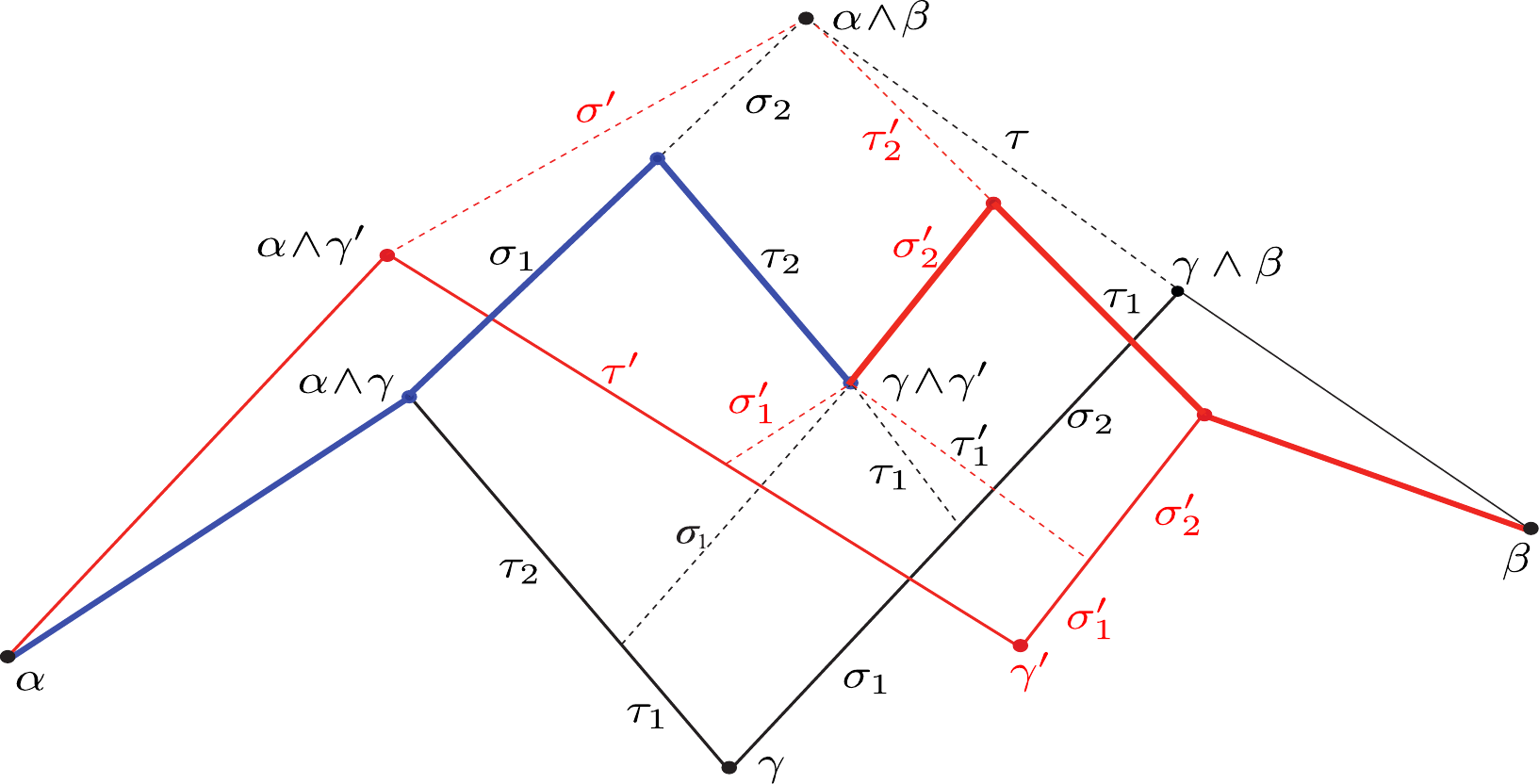}
\caption{A geodesic $p_3\simeq \pi\cdot \Si_1 \cdot \overline{\T_2} \cdot\Si_2\,\overline{\T_1}\cdot\overline{\rho}$ of $\A$ and $\B$ passing $\CC\wedge \CC'$.} \label{fig:interval_2}
\end{center}
\end{figure}
Apply appropriate liftings to the path $p_3$
to obtain the geodesic $\tilde{p_1}$. Since liftings do not increase lengths, two paths $\tilde{p_1}$ and $p_3$ have the same length. Thus $p_3$ is a geodesic joining $\A$ to $\B$, which passes $\CC \wedge \CC'$. Therefore $\CC \wedge \CC' \in \I[\A,\B]$.
%\T'_i ,, \Si_i' $\,\,\CC' \T'_1 \Si'_1 \!=\! \CC \wedge \CC'$,  $\,(\CC \wedge \CC') \T'_2 %\Si'_2 \!=\! \A \wedge \B$,\; \T'_1 \T'_2 = \T', \; \Si'_1 \Si'_2 = \Si'

For the second assertion of the Lemma, we assume the existence of $\CC \vee\CC'$. As before, use a chain $\CC' \leq (\CC\wedge\CC') \leq (\A \wedge \B)$ to get the decomposition of $\T'$ and $\Si'$; $\T'=\T'_1 \T'_2 , \; \Si'=\Si'_1 \Si'_2 $ such that
\begin{equation}\label{2}
\T'_1 \Si'_1\CC \!= \!\CC \wedge \CC', \;\,\T'_2 \Si'_2(\CC \wedge \CC')\! =\! \A \wedge \B \;\,
\end{equation}
From identities (\ref{1}) and (\ref{2}) we have $\T_2 \Si_2= \T'_2 \Si'_2$. Proposition \ref{prop_big_square} together with the existence of $\CC \vee\CC'$ implies that
$$
\T'_1 \Si'_1(\CC \vee \CC') = \CC, \; \text{and}\; \T_1 \Si_1(\CC \vee \CC') = \CC'.
$$

\textbf{Claim 2}: $\T_2 = \T'_2$ and $\Si_2 =\Si'_2$. Look at the loop $\ell_1$ formed by two ascending paths $\T \Si$ and $\Si \T$ emanating from $\CC$. We first show $\ell_1$ fits into the situation described in Proposition \ref{prop_big_square}, i.e., $(\A \wedge \CC) \wedge (\CC \wedge \B ) = \A \wedge \B$ and $(\A \wedge \CC) \vee (\CC \wedge \B ) = \CC$. This loop has $\CC$ and $\A \wedge \B$ as bottom and top vertices respectively. Recall that the path $p_1\simeq \pi\cdot\overline{\T}\cdot \Si \cdot\overline{\rho}$ is a geodesic of $\A$ and $\B$. The restriction $p_4$ of $p_1$ defined by
$$
    p_4\simeq \overline{\T}\cdot \Si
$$
is also a geodesic joining $\A \wedge \CC$ and $\CC \wedge \B $. Consider the standard geodesic $p_5$, joining $\A$ to $\B$, defined by
$$
p_5 \simeq  \pi\cdot\Si\cdot\overline{\T} \cdot\overline{\rho}.
$$
This geodesic $p_5$ also passes $\A \wedge \CC$ and $\CC \wedge \B $. So the restriction $p_6$ of $p_5$ defined by
$$
p_6\simeq \Si\cdot\overline{\T}
$$
is a standard geodesic connecting $\A \wedge \CC$ to $\CC \wedge \B $. By Remark \ref{rmk:standard_geodesic}, $(\A \wedge \CC) \wedge (\CC \wedge \B ) = \A \wedge \B$. Moreover, by Proposition \ref{prop_big_square}, $(\A \wedge \CC) \vee (\CC \wedge \B ) = \CC$.

There is another loop $\ell_2$ formed by two ascending paths $\T' \Si'$ and $\Si' \T'$ emanating from $\CC'$. This loop $\ell_2$ has $\A \wedge \B$ as top vertex and $\CC'$ as bottom vertex. By an analogous argument that we applied to the loop $\ell_1$, one can show
$$
(\A \wedge \CC') \wedge (\CC' \wedge \B ) = \A \wedge \B,  \; (\A \wedge \CC') \vee (\CC' \wedge \B ) = \CC'.
$$
Again, one can deduce the following fact from $\ell_3$, formed by two ascending paths $\T'_1 \Si'_1 \T \Si$ and $\T_1 \Si_1\Si'\T'$ emanating from $\CC\vee\CC'$.
$$
(\A \wedge \CC) \wedge (\CC' \wedge \B ) = \A \wedge \B,  \; (\A \wedge \CC) \vee (\CC' \wedge \B ) = \CC \vee \CC'.
$$
Now Proposition \ref{prop_big_square} implies that $\Si$ and $\T$ do not share a letter $t_i$. Similarly there is no common letter between $\Si'$ and $\T'$, and $\Si$ and $\T'$. So we have $\T_2 = \T'_2$ and $\Si_2 =\Si'_2$ from the identity $\T_2 \Si_2= \T'_2 \Si'_2$.

\textbf{Claim 3} $\T_1 = \Si'_1 = 1$. Apply Proposition \ref{prop_big_square} to the loop $\ell_3$ again to see that two diagonal edges of $\ell_3$ are given by the same element of $\TT_n$. In particular we have

\begin{equation}\label{3}
\Si= \T_1 \Si_1 \Si'.
\end{equation}
This means
$$
\Si = \Si_1 \Si_2 = \T_1 \Si_1 \Si'_1 \Si'_2
$$
By Claim $2$ we have $\T_1 \Si'_1 =1$. This completes the proof since the path  $p  \simeq\pi \overline{\T} \,\overline{\T'_1}\, \Si_1 \,\Si' \,\overline{\rho'}$ passes $\CC \vee \CC'$ and lifts to the geodesic $p_3$;
\begin{align*}
 p & \simeq\pi \overline{\T_2} \,\overline{\T'_1}\, \Si_1 \,\Si'_2 \,\overline{\rho'} \\
& \simeq\pi \overline{\T_2} \,\Si_1\, \overline{\T'_1}\,\Si'_2 \,\overline{\rho'}\\
& \simeq \pi\,\Si_1\,\overline{\T_2} \,\Si_2 \overline{\T'_1}\, \overline{\rho'}.
\end{align*}
\end{proof}

\begin{corollary}\label{convex}
For any vertices $\A, \B$, $\mathcal{I}[\A, \B]$ is convex in $\mathcal{C}_n^{(0)}$, i.e., for any $\CC ,\CC' \in \mathcal{I}[\A, \B]$, $\mathcal{I}[\CC, \CC']\subset \mathcal{I}[\A, \B]$.
\end{corollary}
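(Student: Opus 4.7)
The plan is to reduce the statement to an application of Lemma~\ref{n-rect} by exhibiting $\delta$ as the glb of two explicit elements of $\I[\A,\B]$. Set $\tau := \CC \wedge \CC'$, $\mu_1 := \CC \wedge \delta$, and $\mu_2 := \delta \wedge \CC'$. By the first assertion of Lemma~\ref{n-rect}, $\tau \in \I[\A,\B]$. If I can show that $\mu_1, \mu_2 \in \I[\A,\B]$ and that $\delta = \mu_1 \vee \mu_2$, then the second assertion of Lemma~\ref{n-rect} will deliver $\delta \in \I[\A,\B]$.

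First I would establish the identity $h(\mu_1) + h(\mu_2) = h(\tau) + h(\delta)$ by expanding $d(\CC,\CC') = d(\CC,\delta) + d(\delta,\CC')$ using the formula $d(X,Y) = 2h(X \wedge Y) - h(X) - h(Y)$ from Remark~\ref{rmk:standard_geodesic}. Trivially $\mu_1 \wedge \mu_2$ is an upper bound of $\CC, \CC'$ and hence $\geq \tau$, while $\mu_1 \vee \mu_2$ exists (as $\delta$ is a common lower bound) and is $\geq \delta$. The general height identity $h(X) + h(Y) = h(X \wedge Y) + h(X \vee Y)$, valid whenever the glb exists via the elementary identity $a+b = \min(a,b)+\max(a,b)$ on $\TT_n$-exponents above a common base, combined with the preceding equation forces both inequalities to be equalities: $\mu_1 \wedge \mu_2 = \tau$ and $\mu_1 \vee \mu_2 = \delta$. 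In particular $\CC \leq \mu_1 \leq \tau$ and $\CC' \leq \mu_2 \leq \tau$.

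It remains to show $\mu_1, \mu_2 \in \I[\A,\B]$, which I would accomplish via the following order-convexity statement: \emph{if $x, y \in \I[\A,\B]$ with $x \leq y$ and $x \leq z \leq y$, then $z \in \I[\A,\B]$}, applied to $(x,y,z) = (\CC, \tau, \mu_1)$ and $(\CC', \tau, \mu_2)$. To prove order-convexity, write $\A \wedge \CC = s\CC$ and $\CC \wedge \B = u\CC$ with $s, u \in \TT_n$; since $\CC \in \I[\A,\B]$, Proposition~\ref{prop_big_square} forces $s, u$ to share no generator of $\TT_n$ and $\nu := \A \wedge \B = su\CC$. For $z$ with $\CC \leq z \leq \nu$ write $z = \lambda \CC$ and uniquely factor $\lambda = \lambda_s \lambda_u$ coprimely with $\lambda_s \leq s$, $\lambda_u \leq u$. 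A direct computation then yields $\A \wedge z = \lambda_u s\,\CC$ and $z \wedge \B = \lambda_s u\,\CC$, whence $(\A \wedge z) \wedge (z \wedge \B) = su\,\CC = \nu$ and $(\A \wedge z) \vee (z \wedge \B) = \lambda\CC = z$; via the distance formula these two identities together are equivalent to $z \in \I[\A,\B]$. The main obstacle is precisely this componentwise bookkeeping with $\TT_n$-exponents, which hinges on the coprime factorization provided by Proposition~\ref{prop_big_square}; the rest is routine manipulation of the height formula.
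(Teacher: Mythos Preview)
Your argument is correct. Both your approach and the paper's hinge on the same two ingredients: the order-convexity statement (the case $\CC \leq \CC'$, proved via the coprime factorisation coming from Proposition~\ref{prop_big_square}) and the lub/glb closure of Lemma~\ref{n-rect}. The paper first treats the comparable case and then handles an arbitrary $\delta\in\I[\CC,\CC']$ by an induction on $d(\CC,\CC')$, invoking Lemma~\ref{n-rect} together with the comparable pairs $\CC\leq\CC\wedge\CC'$, $\CC'\leq\CC\wedge\CC'$, $\CC\vee\CC'\leq\CC$, $\CC\vee\CC'\leq\CC'$. Your reduction is more direct: the height identities $h(\mu_1)+h(\mu_2)=h(\tau)+h(\delta)$ and $h(X)+h(Y)=h(X\wedge Y)+h(X\vee Y)$ pin down $\mu_1\wedge\mu_2=\tau$ and $\mu_1\vee\mu_2=\delta$ in one shot, so that order-convexity (applied to $\CC\leq\mu_1\leq\tau$ and $\CC'\leq\mu_2\leq\tau$) feeds straight into the second assertion of Lemma~\ref{n-rect}. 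This buys you a cleaner, induction-free argument; the paper's version is terser but leaves the inductive mechanism to the reader. One small point worth making explicit in your write-up: the inequality $\mu_1\leq\tau$ that you need for the order-convexity step is not immediate from the definitions, but follows once you have established $\mu_1\wedge\mu_2=\tau$, and similarly $\tau\leq\A\wedge\B$ is Claim~1 inside the proof of Lemma~\ref{n-rect}.
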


\begin{proof}In case of $\CC \leq \CC'$, one can show that $\mathcal{I}[\CC, \CC']\subset \mathcal{I}[\A, \B]$ by using Proposition \ref{prop_big_square} and Lemma \ref{n-rect}. For the general case, use induction on the distance between $\CC$ and $\CC'$ together with Lemma \ref{n-rect} and the previous observation to the pais of vertices
$$\CC \leq \CC \wedge \CC',\; \CC'\leq \CC \wedge \CC',
\;\CC \vee \CC' \leq \CC,\; \text{and}\; \CC \vee \CC' \leq \CC'
$$.

\end{proof}

The following notion of orthant of a vertex is useful for us to examine the structure of $\mathcal{C}_n^{(0)}$ carefully. For a vertex $\A \in\mathcal{C}_n^{(0)}$, orthant $\mathcal{O}(\A)$ of $\A$ is defined by
\begin{equation}\label{orthant}
\mathcal{O}(\A)= \{\B \mid \B \geq \A\}.
\end{equation}

Note that $\mathcal{O}(\A)$ is also convex in $\mathcal{C}_n^{(0)}$ for any vertex $\A$.

\begin{lemma}\label{geodes}
Suppose $\mathcal{O}(\CC) \cap \mathcal{I}[\A, \B] \neq \emptyset$ for some $\CC \notin \mathcal{I}[\A, \B]$. There exists unique $\delta_0 \in \mathcal{O}(\CC) \cap \mathcal{I}[\A, \B]$ with smallest distance from $\CC$. Moreover $d(\A, \CC) = d(\A, \delta_0) + d(\delta_0, \CC)$.
\end{lemma}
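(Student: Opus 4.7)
The plan is to reduce everything to the order-theoretic structure on $\M_n$ and exploit Lemma \ref{n-rect} together with Corollary \ref{convex}.

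First I handle existence and uniqueness together. For any $\delta \in \mathcal{O}(\CC)$, the unique ascending path from $\CC$ to $\delta$ is a geodesic (Remark \ref{rmk_asc_des_height}), so $d(\CC,\delta) = h(\delta) - h(\CC)$; thus minimizing distance from $\CC$ over $S := \mathcal{O}(\CC)\cap\mathcal{I}[\A,\B]$ is the same as minimizing height over $S$, which is the same as finding a minimum of $S$ under $\leq$. If $\delta_1,\delta_2 \in S$, then $\CC$ is a common lower bound, so by Lemma \ref{lm_glb} the $glb$ $\delta_1\vee\delta_2$ exists; it lies in $\mathcal{I}[\A,\B]$ by Lemma \ref{n-rect} and is $\geq \CC$ since it is the greatest lower bound of two elements exceeding $\CC$. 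Hence $S$ is closed under $\vee$. If now $\delta_1,\delta_2$ both realize the minimum distance to $\CC$, the element $\delta_1\vee\delta_2\in S$ satisfies $\delta_1\vee\delta_2 \leq \delta_i$ and $h(\delta_1\vee\delta_2) \leq h(\delta_i)$, with equality forced by minimality; antisymmetry (Proposition \ref{prop_porder}) gives $\delta_1\vee\delta_2 = \delta_1 = \delta_2$. Call this unique minimizer $\delta_0$.

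For the distance identity it suffices to prove $\A\wedge\delta_0 = \A\wedge\CC$, for then
$$
d(\A,\delta_0)+d(\delta_0,\CC) = \bigl(2h(\A\wedge\delta_0) - h(\A)- h(\delta_0)\bigr) + \bigl(h(\delta_0)-h(\CC)\bigr) = 2h(\A\wedge\CC) - h(\A) - h(\CC) = d(\A,\CC).
$$
Set $\mu := \A\wedge\delta_0$. By Lemma \ref{n-rect}, $\mu\in\mathcal{I}[\A,\B]$, and since $\delta_0\geq \CC$, the element $\mu$ is an upper bound of both $\A$ and $\CC$, whence $\mu \geq \A\wedge\CC$. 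So $\A\leq \A\wedge\CC\leq \mu$.

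The crux is showing $\A\wedge\CC\in\mathcal{I}[\A,\B]$. Since $\A,\mu\in\mathcal{I}[\A,\B]$, convexity (Corollary \ref{convex}) gives $\mathcal{I}[\A,\mu]\subseteq \mathcal{I}[\A,\B]$. Because $\A\leq\mu$, a standard geodesic from $\A$ to $\mu$ is simply an ascending path of length $h(\mu)-h(\A)$, and one checks easily from Remark \ref{rmk_asc_des_height} that $\mathcal{I}[\A,\mu]=\{\delta : \A\leq\delta\leq\mu\}$ in this situation. Hence $\A\wedge\CC\in\mathcal{I}[\A,\mu]\subseteq \mathcal{I}[\A,\B]$, so $\A\wedge\CC\in S$. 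Minimality of $\delta_0$ now yields $\delta_0\leq \A\wedge\CC$, so $\A\wedge\CC$ is a common upper bound of $\A$ and $\delta_0$; therefore $\A\wedge\CC\geq \A\wedge\delta_0 = \mu$, and combined with the opposite inequality $\mu\geq\A\wedge\CC$ shown above, we conclude $\mu = \A\wedge\CC$, completing the proof.

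The main obstacle is the middle step: establishing $\A\wedge\CC\in \mathcal{I}[\A,\B]$ despite the fact that $\CC$ itself lies outside $\mathcal{I}[\A,\B]$. The trick is to sandwich $\A\wedge\CC$ between $\A$ and the auxiliary vertex $\mu = \A\wedge\delta_0$, both of which sit comfortably inside $\mathcal{I}[\A,\B]$, and then invoke convexity.
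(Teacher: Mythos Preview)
Your proof is correct, and your route differs substantively from the paper's. The paper argues more loosely: it asserts (without justification) that $S = \mathcal{O}(\CC)\cap\mathcal{I}[\A,\B]$ is a finite set, takes $\delta_0$ to be the $glb$ of all of $S$ at once, and then forward-references Claim~1 in the proof of Theorem~\ref{medianC_n} for uniqueness; the distance identity is handled by a one-line contradiction sketch (``it is not difficult to show''). Your argument is more self-contained: you show $S$ is closed under pairwise $\vee$ via Lemma~\ref{n-rect}, deduce that the height-minimizer is the $\leq$-least element, and then prove the distance identity by establishing the clean equation $\A\wedge\delta_0=\A\wedge\CC$ through a sandwich argument using convexity (Corollary~\ref{convex}). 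One small point worth tightening: when you write ``Minimality of $\delta_0$ now yields $\delta_0\leq \A\wedge\CC$'', you are implicitly using that the unique height-minimizer is in fact the $\leq$-minimum of $S$; this follows immediately from the $\vee$-closure you already established (take $\delta_0\vee(\A\wedge\CC)\in S$ and compare heights), but it deserves one explicit sentence since ``minimum distance'' a priori only gives minimum height, not comparability. What your approach buys is independence from the later Theorem~\ref{medianC_n} and a transparent reason for the distance identity; what the paper's approach buys is brevity, at the cost of forward references and hand-waving.
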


\begin{proof}Suppose $\CC \notin \mathcal{I}[\A, \B]$ and $\mathcal{O}(\CC) \cap \mathcal{I}[\A, \B] = \{\delta_1, \cdots \delta_k\}$. Since $\CC$ is a common lower bound of those elements, there exists $glb$ of $\{\delta_1, \cdots \delta_k\}$. Let $\delta_0$ denote the $glb$. The Claim $1$ in proof of Theorem \ref{medianC_n} shows that $\delta_0$ is the unique vertex in $\mathcal{I}[\A, \B]$ with the smallest distance from $\CC$. If $d(\A, \CC) >d(\A, \delta_0) + d(\delta_0, \CC)$ then it is not difficult to show there exists another vertex $\delta' \in \mathcal{O}(\CC) \cap \mathcal{I}[\A, \B]$ with smaller distance. So $\delta_0$ satisfies the equality.
\end{proof}

\begin{theorem}\label{medianC_n}
$\C_n$ is a median graph for any $n \in \mathds{N}$.
\end{theorem}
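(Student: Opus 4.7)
Given $\alpha, \beta, \gamma \in \mathcal{C}_n^{(0)}$, I would exhibit an explicit candidate for the median and verify its defining property. Set $u := \alpha\wedge\beta$, $v := \beta\wedge\gamma$, $w := \gamma\wedge\alpha$, which exist by Lemma \ref{lm:lub}, and take
\[
 m \;:=\; u\vee v\vee w,
\]
the greatest common lower bound of the three pairwise lubs. The first step is to show this triple glb exists: $\beta$ is a common lower bound of $u$ and $v$, so $u\vee v$ exists by Lemma \ref{lm_glb}; for the triple, the plan is a ray-by-ray construction in the spirit of Lemma \ref{lm:lub} and Proposition \ref{prop:lub_multi}, building a common lower bound by taking, along each ray $R_i$, the coordinatewise median of the three translation offsets of $\alpha, \beta, \gamma$ relative to the triple lub $\delta = \alpha\wedge\beta\wedge\gamma$, and verifying via Proposition \ref{prop_big_square} that this defines a legitimate injective eventual translation dominated by $u, v$, and $w$.

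Next I would place $m$ in all three geodesic intervals. By Remark \ref{rmk:standard_geodesic}, $u\in\mathcal{I}[\alpha,\beta]$; of course $\alpha,\beta\in\mathcal{I}[\alpha,\beta]$ as well. Lemma \ref{n-rect} gives closure of $\mathcal{I}[\alpha,\beta]$ under $\wedge$ and under $\vee$ (when the latter is defined), so using $v\geq\beta\in\mathcal{I}[\alpha,\beta]$ and $w\geq\alpha\in\mathcal{I}[\alpha,\beta]$, one realizes $m$ as an iterated lub/glb of vertices already known to sit in $\mathcal{I}[\alpha,\beta]$, placing $m$ in that interval. Cycling the roles of $\alpha,\beta,\gamma$ puts $m$ in each of the three intervals, establishing existence of a common point.

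For uniqueness, suppose $m'$ lies in all three intervals. Every element of $\mathcal{I}[\alpha,\beta]$ is dominated by $u$ via the ``big rectangle'' picture of Proposition \ref{prop_big_square} (and analogously for the other two intervals), so $m'\leq u, v, w$, and hence $m'\leq m$ by the definition of glb. The ascending edge-path from $m'$ to $m$ is contained, by convexity (Corollary \ref{convex}), in all three intervals, so every vertex on it is again a common point of the three intervals; a sign-and-parity analysis applied at consecutive vertices $m'_i, m'_{i+1}=t_j m'_i$ to the three equations $d(\alpha,\beta)=d(\alpha,m'_i)+d(m'_i,\beta)$, $d(\beta,\gamma)=d(\beta,m'_i)+d(m'_i,\gamma)$, $d(\gamma,\alpha)=d(\gamma,m'_i)+d(m'_i,\alpha)$ (combined with Lemma \ref{lm:interior_median}) forces the ascending path to have length zero, so $m'=m$. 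The hardest step is the first: proving existence of the triple glb, since general pairs in $\mathcal{M}_n$ need not admit any common lower bound (e.g., $\mathrm{id}$ and a nontrivial element of $\Sigma_{n,\infty}$), so the construction must exploit the three-way data simultaneously along each ray and carefully handle the finite non-translation parts of $\alpha,\beta,\gamma$ via the coordinate description of maximal elements in Remark \ref{rmk_maxima_1-1_corres}.
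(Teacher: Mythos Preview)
Your approach is genuinely different from the paper's. The paper does not write down a closed formula for the median; instead it fixes $\gamma\notin\mathcal I[\alpha,\beta]$ and runs a two–case geometric argument based on the orthant $\mathcal O(\gamma)=\{\beta':\beta'\geq\gamma\}$: when $\mathcal O(\gamma)\cap\mathcal I[\alpha,\beta]\neq\emptyset$ the median is the unique closest point $\delta_0$ of that intersection to $\gamma$ (via Lemma~\ref{geodes}), and when the intersection is empty one manufactures a pair $(\epsilon,\epsilon')$ realising the distance between $\mathcal O(\gamma)$ and $\mathcal I[\alpha,\beta]$ and shows $\epsilon'$ works. Your formula $m=(\alpha\wedge\beta)\vee(\beta\wedge\gamma)\vee(\gamma\wedge\alpha)$ is the natural median–semilattice candidate and, if carried through, gives a cleaner statement; the paper's route trades that elegance for avoiding any existence questions about the triple glb.

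That said, your sketch has a real gap in the step placing $m$ in $\mathcal I[\alpha,\beta]$. You write that ``using $v\geq\beta\in\mathcal I[\alpha,\beta]$ and $w\geq\alpha\in\mathcal I[\alpha,\beta]$, one realizes $m$ as an iterated lub/glb of vertices already known to sit in $\mathcal I[\alpha,\beta]$.'' But $v=\beta\wedge\gamma$ and $w=\gamma\wedge\alpha$ are \emph{not} in $\mathcal I[\alpha,\beta]$ in general, and Lemma~\ref{n-rect} only lets you combine vertices that are already in the interval; starting from $\{\alpha,\beta,u\}$ alone you cannot reach $m$ by iterated $\wedge,\vee$. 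A workable fix is to first observe that $\beta\leq u\vee v\leq u$ and $\alpha\leq u\vee w\leq u$, so $u\vee v$ and $u\vee w$ lie on ascending geodesics inside $\mathcal I[\beta,u]\subset\mathcal I[\alpha,\beta]$ and $\mathcal I[\alpha,u]\subset\mathcal I[\alpha,\beta]$ respectively (Corollary~\ref{convex}); then check $(u\vee v)\vee(u\vee w)=u\vee v\vee w=m$ and apply Lemma~\ref{n-rect} to this pair. Your uniqueness paragraph is also too loose: knowing every $m'$ in the triple intersection satisfies $m'\leq m$ only gives a maximum, and Lemma~\ref{lm:interior_median} does not apply directly since none of $\alpha,\beta,\gamma$ is assumed to lie in the opposite interval. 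You still need an honest argument (for instance, the perimeter identity $2\sum d(\cdot,m')=d(\alpha,\beta)+d(\beta,\gamma)+d(\gamma,\alpha)$ combined with $m'\leq m$) to force $m'=m$.
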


\begin{proof}From Lemma \ref{lm:interior_median}, it suffices to show $\mathcal{I}[\A, \B]\cap\mathcal{I}[\B, \gamma]\cap\mathcal{I}[ \gamma,\A]$ is a singleton set for any $\A, \B$ and $\CC \notin \mathcal{I}[\A, \B]$.

\textbf{Case 1:} $\mathcal{O}(\CC) \cap \mathcal{I}[\A, \B] \neq \emptyset$. From Lemma \ref{geodes}, we see that there exists unique $\delta_0 \in \mathcal{O}(\CC)$ with smallest distance from $\CC$. We want to show $\mathcal{I}[\A, \B]\cap\mathcal{I}[\B, \gamma]\cap\mathcal{I}[ \gamma,\A] = \{\delta_0\}$. From Lemma \ref{geodes} (the second assertion), a concatenation of two geodesics joining $\A$ to $\delta_0$ and $\delta_0$ to $\CC$ is again a geodesic. Thus the intersection of three intervals contains $\delta_0$. Suppose the intersection contains another vertex $w$.

\begin{figure}[ht]
\begin{center}
\includegraphics[width=1.0\textwidth]{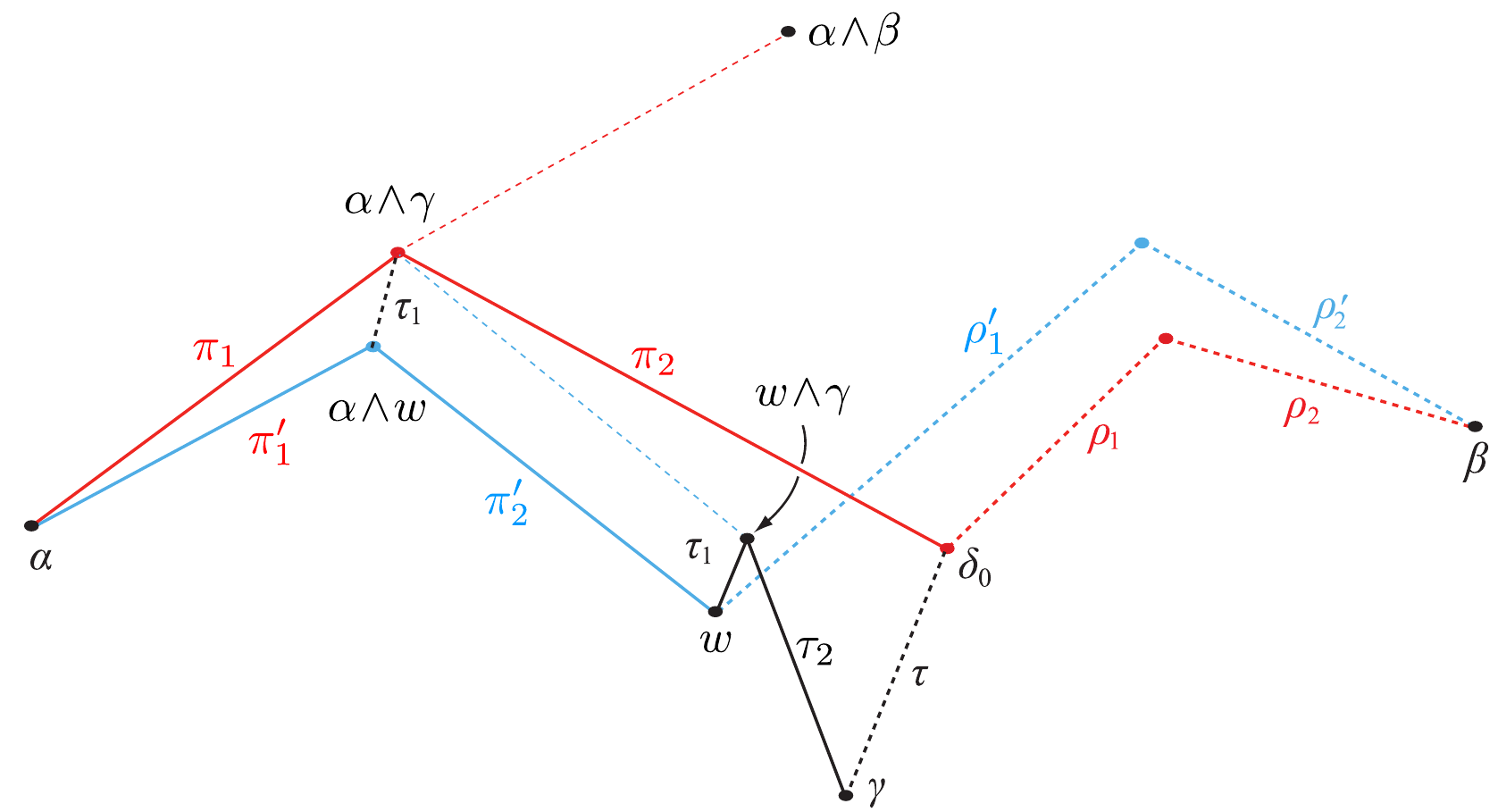}
\caption{$\delta_0 \in \mathcal{O}(\CC)$ with $d( w,\CC)>d(\delta_0,\CC)$ } \label{interval_a}
\end{center}
\end{figure}

\textbf{Claim 1:} $d( w,\CC)>d(\delta_0,\CC)$. Say $p_1, p_2, p_3$ are standard geodesics joining $\A$ to $\delta_0$, $\A$ to $w$ and $w$ to $\CC$ respectively, given by
$$
p_1= \pi_1 \overline{\pi}_2, \; p_2 = \pi'_1 \overline{\pi}'_2, \; p_3 =\T_1 \overline{\T}_2,  $$ and $\delta_0 = \T \CC $ for some $\pi_1, \pi_2,\pi'_1, \pi'_2 , \T_1, \T_2, \T \in \TT_n$. Figure \ref{interval_a} illustrates this situation for those paths $p_1$(solid red), $p_2$(solid blue) and $p_3$(solid black). Since the concatenation $p_2 p_3$ is a geodesic joining $\A$ to $\CC$, after applying liftings to $p_2 p_3$, one obtains a standard geodesic passing $\A \wedge \CC$. Consider the ascending path joining $w$ to $\A \wedge \CC$ given by $\T_1 \pi'_2$. This passes $w\wedge\CC$ an so $w\wedge\CC \in \mathcal{I}[\A, \B]$ by Corollary \ref{convex}. Since $w\wedge\CC \geq \CC$ we see that $w\wedge\CC \geq \delta_0$ and $w\wedge\CC \neq \delta_0$ by the definition of $\delta_0$. Thus $d( w,\CC)= d( w,w\wedge\CC)+d(w\wedge\CC, \CC) > |\T_2| > |\T | = d(\delta_0,\CC)$.

\textbf{Claim 2:} $w \notin \in \mathcal{I}[\A, \B]\cap\mathcal{I}[\B, \gamma]\cap\mathcal{I}[ \gamma,\A]$. Say $p_4$ and $ p_5$ are standard geodesics joining $\delta_0$ to $\B$, $w$ to $\B$ respectively given by
$$
p_4= \rho_1 \overline{\rho}_2, \; p_5 = \rho'_1 \overline{\rho}'_2.
$$

Since $\delta_0$ belongs to the intersection the concatenation $p_1 p_4$ is a geodesic with length $d(\A, \B)$. If $w$ belongs to the intersection then the concatenation $p_2 p_5$ is a geodesic with the same length. So we must have

$$
|\pi_1|+|\pi_2|+|\T| = d(\A, \CC) =|\pi'_1|+|\pi'_2|+|\T'_1|+|\T'_2|
$$and
$$
|\pi_1|+|\pi_2|+|\rho_1|+|\rho_2| = |\pi'_1|+|\pi'_2|+|\rho'_1|+|\rho'_2|.
$$

However claim $1$ implies that $|\T'_1|+|\T'_2| >|\T|$ and hence $|\pi_1|+|\pi_2| > |\pi'_1|+|\pi'_2|$. So we have $|\rho'_1|+|\rho'_2|>|\rho_1|+|\rho_2|$. This means that any standard path of the concatenation $\overline{p}_3\, p_5$ does not path $\CC \wedge \B$. So the path $\overline{p}_3\, p_5$ is not a geodesic. Since paths $p_3$ and $p_5$ are arbitrary, $w \notin \mathcal{I}[\CC, \B] $.

\textbf{Case 2:} $\mathcal{O}(\CC) \cap \mathcal{I}[\A, \B] = \emptyset$. Obviously $d(\mathcal{O}(\CC), \mathcal{I}[\A, \B]) = min \{d(u,v) \mid u \in \mathcal{O}(\CC),\; v \in \mathcal{I}[\A, \B]\}>0.$ Say the distance is $d>0$. Let $D \subset \mathcal{O}(\CC)$ denote the set of all vertices of $\mathcal{O}(\CC)$ realizing $d$. Note that the cardinality of $D$ is finite (there are finitely many elements in $\mathcal{O}(\CC)$ whose height is less than $h(\A \wedge \B)$ and, by the argument showing $\delta_0$ is unique (Lemma \ref{geodes}), there are finitely many vertices realizing $d$ whose height is greater than $h(\A \wedge \B)$). Take \emph{glb} over $D$ and let $\E$ denote this unique element. Say $\E'\in \mathcal{I}[\A, \B]$ is unique vertex with $d(\E', \E)=d$  (uniqueness follows from the argument in the proof of Lemma \ref{geodes}). See Figure \ref{interval_b}.

\textbf{Claim 3:} $d(\A, \E') + d + d(\E, \CC)= d(\A, \CC)$ and $d(\B, \E') + d + d(\E, \CC)= d(\B, \CC)$. Pick any standard geodesics $q_1 \overline{q_2}$ connecting $\A$ to $\E'$ and ascending paths  $q_3$ and $q_4$ joining $\E'$ to $\E$, and $\CC$ to $\E'$ respectively. Then the concatenation $q_1 \overline{q}_2 q_3 \overline{q}_4$ is a geodesic. Consider the standard path $q_1 q_2 \overline{q}_3\overline{q}_4$. Observe that the ascending path $q_1 q_2$ connecting $\A$ to $\A \wedge \CC$ realizes the distance $d(\A,\mathcal{O}(\CC))$. Thus any standard path given by $q_1 q_2 \overline{q}_3\overline{q}_4$ is a geodesic. Similar argument shows that $d(\B, \E') + d + d(\E, \CC)= d(\B, \CC)$. So $\E'$ belongs to the intersection $\mathcal{I}[\A, \B]\cap\mathcal{I}[\B, \gamma]\cap\mathcal{I}[ \gamma,\A]$.

\begin{figure}[ht]
\begin{center}
\includegraphics[width=1.0\textwidth]{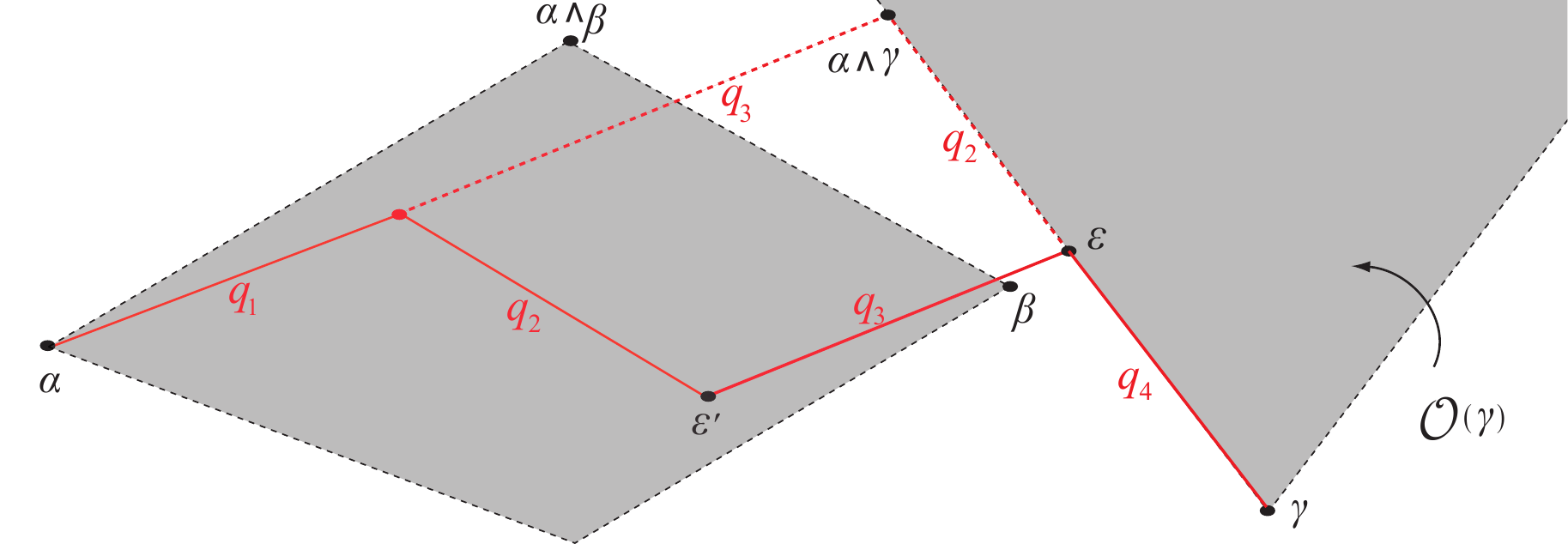}
\caption{Vertices $\E$ and $\E'$ with $d(\E', \E)= d(\mathcal{O}(\CC), \mathcal{I}[\A, \B])$} \label{interval_b}
\end{center}
\end{figure}

\textbf{Claim 4:} $\mathcal{I}[\A, \B]\cap\mathcal{I}[\B, \gamma]\cap\mathcal{I}[ \gamma,\A]= \{\E'\}$. Suppose the intersection contains another vertex $w \in \mathcal{I}[\A, \B]$. We apply an analogous trick as in \textbf{Case 2:} with small difference with $d(w, \CC) > d + d(\E ,\CC)$. One can show this inequality together with
$$
d(\A, \CC)=d(\A, w)+d(w, \CC)
$$ yields
$$
d(\B, \CC)>d(\B, w)+d(w, \CC).
$$
\end{proof}

%%%%%%%%%%%%%%%%%%%%%%%%%%%%%%%%%%%%%%%%%%%%%%%%%%%%%%%%%%%%%%%%%%%%%%%%%%%%%%%%%%%%%%%%%%%%%

%\subsection{Sageev's construction}\label{sec_Sageev_construction}
%Sageev's construction

\subsection{Properties of the action of $\ho_n$ on $X_n$}\label{sec_action_of_H_n_on_X_n}
In this section we examine the action of $\ho_n$ on $X_n$. First we examine that the stabilizer of every cell is a finite symmetric group.

\begin{lemma}\label{lm:stab_finite_symm}
Suppose $\A$ is a vertex of $X_n$ with $h(\A)=h$. Then the stabilizer of $\A$ is the finite symmetric group $\Sigma_h$ on $h$ points.
\end{lemma}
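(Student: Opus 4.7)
The plan is to show that an element $g\in\ho_n$ stabilizes the vertex $\A$ under right multiplication precisely when $g$ fixes the image $(Y_n)\A$ pointwise, and then to identify such elements with permutations of the complementary set $S(\A) = Y_n - (Y_n)\A$, which has cardinality $h(\A)=h$ by the definition of $h$ in Section~3.1.

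The first step is to unwind the action. Recalling that the action of $\ho_n$ on $\M_n$ is by post-composition and that maps act on the right, the condition $\A g = \A$ reads $((y)\A)g = (y)\A$ for every $y \in Y_n$. Thus $g$ fixes every point of the cofinite subset $(Y_n)\A \subset Y_n$. Conversely, any $g\in\ho_n$ fixing $(Y_n)\A$ pointwise clearly satisfies $\A g = \A$.

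The second step is to describe such $g$. Since $g$ is a bijection of $Y_n$ that restricts to the identity on the cofinite set $(Y_n)\A$, it must send the finite complement $S(\A)$ bijectively to itself; hence $g$ is entirely determined by a permutation of $S(\A)$. This gives an injective homomorphism $\mathrm{Stab}(\A) \hookrightarrow \mathrm{Sym}(S(\A)) \cong \Sigma_h$. For surjectivity, given any $\sigma \in \mathrm{Sym}(S(\A))$, extend $\sigma$ by the identity on $(Y_n)\A$ to obtain a permutation of $Y_n$ with finite support; by the first lemma of Section~2.2, this permutation lies in $\Sigma_{n,\infty} \leq \ho_n$ (it is an eventual translation with trivial translation tuple), and by construction it fixes $(Y_n)\A$ pointwise, hence stabilizes $\A$.

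The argument is essentially bookkeeping, so there is no serious obstacle; the one point to be careful about is the direction of the action. Because $\ho_n$ acts on $X_n$ by post-composition while $\TT_n$ acts by pre-composition, it is the image $(Y_n)\A$ (not the source or the finite exceptional set from the definition of $\M_n$) that matters for the stabilizer, and the identification $|S(\A)| = h(\A)$ from Section~3.1 then gives the stated isomorphism $\mathrm{Stab}(\A)\cong \Sigma_h$.
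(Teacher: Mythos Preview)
Your proof is correct and follows essentially the same approach as the paper's: both arguments identify $\mathrm{Stab}(\A)$ with the permutations of $Y_n$ supported on $S(\A)=Y_n-(Y_n)\A$ by observing that $\A g=\A$ is equivalent to $g$ fixing $(Y_n)\A$ pointwise. Your version is slightly more explicit in separating the injectivity and surjectivity of the map $\mathrm{Stab}(\A)\to\mathrm{Sym}(S(\A))$ and in flagging the direction of the action, but the content is the same.
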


\begin{proof}Let $\Sigma_h \leq \ho_n$ denote the symmetric group on a finite set $S(\A)= Y_n - (Y_n)\A$. We show the stabilizer of $\A$ is simply $\Sigma_h \leq \ho_n$. If $g \in \Sigma$ then $\A g = \A$. Conversely, if $g \in \ho_n$ and $\A g = \A$ then $g$ restricted to the set $(Y_n)\A$ must be the identity. So $supp(g) \subset (\A)$ and hence $g \in \Sigma_h$.
\end{proof}

%The Morse theory developed in \cite{BeBr} provides a powerful tool to study the topology of some subsapces of $X_n$. Equip $\mathds{R}^m$ with a $m$-dimensional cubical complex structure and consider a `height' function $f :\mathds{R}^m \to \mathds{R}$ defined by
%$$
%f(x_1,\cdots, x_m) = \sum x_i.
%$$
%Note that $f$ restricts to each cube to give the standard height function one obtains by letting the cube hang freely from one vertex. Let us call the above map the \emph{standard height function on} $\mathds{R}^m$.
Recall the following definition of a Morse function defined on a (affine) CW-complex complex $X$ where
$\varphi_j : \Box^k \to \Si_j^k \subset  X^{k} $ denote the attaching map of $k$-cell $\sigma_j^k$.
\begin{definition}[\textbf{Morse function}]\label{dfn:Morse_function}
A map $f :X \to \mathds{R}$ is a $Morse\; function$
if
\begin{itemize}
    \item[-] for every cell $\varphi_j(\Box^k)$ of $X$ $f \varphi_j :\Box^k \to \mathds{R}$ extends to an affine map $\mathds{R}^m \to \mathds{ R}$	 and $f \varphi_j $ is constant only when $ k=0$ and

    \item[-] the image of the $0$-skeleton is discrete in $\mathds{R}$.

\end{itemize}
\end{definition}

%It is immediate to see that a map $h :\C_n \to \R_{\geq0}$ (defined in section \ref{construction}) extends to

Note that the map $h$ defined on a cubing  $X_n$ has the property that $h \varphi_j :\Box^k \to \mathds{R}$ extends to the standard height function $\R^k \to \R$ up to the translation by $h \varphi (0)$ (see Figure \ref{fig_diagram_morse}). Note also that $h \varphi_j:\Box^k \to \mathds{R}$ is trivial only when $k=0$ for all $j$. Moreover, the image $h(X_n^{(0)})$ is just $h(\C_n) = h(\M_n) = \mathds{Z}_{\geq0}$. Therefore the map $h$ is a Morse function.

\begin{remark}\label{rmk:height-preserving_action}
The map $h :X_n \to \R_{\geq0}$ satisfies the following
$$
h(\A g ) = h(\A) \;\,\text{and}\; \,h(t \A) = h(t) + h(\A)
$$for all $g \in  \ho_n$ and $t \in \TT_n$. In that sense, the action of $\ho_n$ is `horizontal' and the action of $\TT_n$ is `vertical'.
\end{remark}

Let $X_{n,r}$ denote the subcomplex of $X_n$ consisting of cubes up to height $r$, i.e.,
$$
X_{n,r} := \{\sigma \in X_n \mid h(\sigma) \subset  [0, r]\}.
$$

\begin{lemma}\label{lm:ccpt_action}
Suppose a $k$-cube $\sigma \subset X_{n}$ is generated by $T=\{\T_i, \cdots, \T_k\} \subset \{t_1, \cdots, t_n\}$ with bottom vertex $\A$. There exists $g \in \ho_n$ such that $\sigma \cdot g$ is the $k$-cube generated by $T$ with bottom vertex $t_1^{h(\A)}$.
\end{lemma}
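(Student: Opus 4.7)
The plan is to reduce the lemma to finding a single element $g \in \ho_n$ with $\A g = t_1^{h(\A)}$. This reduction comes from the observation, noted in the paragraph introducing the two actions on $X_n$, that the left action of $\TT_n$ commutes with the right action of $\ho_n$. Consequently, if the vertices of $\sigma$ are $\{(\prod_{\T \in T'}\T)\A : T' \subseteq T\}$, then after right-translating by $g$ the vertices become $\{(\prod_{\T \in T'}\T)(\A g) : T' \subseteq T\}$. So $\sigma \cdot g$ is the cube generated by the same $T$ but with bottom vertex $\A g$, and it suffices to achieve $\A g = t_1^{h(\A)}$.

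Setting $h = h(\A)$, I will define $g$ on the image $(Y_n)\A$ by the rule $(k,p)\A \mapsto (k,p)\,t_1^{h}$, which is unambiguous by injectivity of $\A$. Both $S(\A) = Y_n - (Y_n)\A$ and $S(t_1^h) = \{(1,1),\ldots,(1,h)\}$ are finite sets of cardinality $h$, so I can extend $g$ to $S(\A)$ by any bijection onto $S(t_1^h)$, producing a bijection $g : Y_n \to Y_n$ that satisfies $\A g = t_1^h$ on the nose.

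It remains to verify $g \in \ho_n$, i.e.\ that $g$ is an eventual translation. Writing $\varphi(\A) = (m_1,\ldots,m_n)$ and using that $(k,p)\A = (k, p + m_k)$ for all sufficiently large $p$, a direct computation on each ray shows that for $q$ sufficiently large, $g$ sends $(k,q)$ to $(k, q - m_k)$ when $k \neq 1$ and to $(1, q + h - m_1)$ when $k = 1$. Hence $g$ is an eventual translation with asymptotic tuple $(h - m_1,\, -m_2,\, \ldots,\, -m_n)$, whose coordinates sum to $h - \sum_i m_i = h - h(\A) = 0$, as required for membership in $\ho_n$.

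I do not anticipate a substantive obstacle: the entire argument rests on the commuting-actions observation, which collapses the lemma to a right-coset question, plus the elementary fact that any two elements of $\M_n$ with the same image complement up to cardinality differ on the right by an eventual translation. Alternatively one could deduce the existence of $g$ from Lemma~\ref{genM_n}, but the explicit construction above makes the bookkeeping on the tuple $\varphi(g)$ transparent.
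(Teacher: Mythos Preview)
Your proof is correct and follows essentially the same approach as the paper: reduce to finding $g \in \ho_n$ with $\A g = t_1^{h(\A)}$, then use that the $\ho_n$-action commutes with the $\TT_n$-action (equivalently, is cellular) to carry the cube along. The only difference is cosmetic: the paper obtains $g$ by citing the proof of Lemma~\ref{genM_n}, whereas you unpack that construction explicitly and verify the eventual-translation tuple directly---a choice you yourself flag in your final paragraph.
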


\begin{proof}From the proof of Lemma \ref{genM_n}, we see that there exists an element $g \in \ho_n$ such that $\A g = t_1^{h(\A)}$. Since the action of $\ho_n$ on $X_n$ is cellular, a cube $\sigma g$ is the desired cube bottom vertex $t_1^{h(\A)}$.
\end{proof}

\begin{corollary}\label{coro:ccpt_action_level}
For $r \in \Z_{\geq0}$, $\ho_n$ acts on $X_{n,r}$ cocompactly.
\end{corollary}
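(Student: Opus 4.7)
First, observe that $\ho_n$ acts on $X_{n,r}$: by Remark \ref{rmk:height-preserving_action}, the action of $\ho_n$ on $X_n$ is height-preserving, so for every cube $\sigma \subset X_n$ with $h(\sigma) \subset [0,r]$ and every $g \in \ho_n$, the translate $\sigma \cdot g$ satisfies $h(\sigma \cdot g) = h(\sigma) \subset [0,r]$, hence $\sigma \cdot g \subset X_{n,r}$.

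To establish cocompactness it suffices to show that $\ho_n$ has only finitely many orbits of cubes in $X_{n,r}$. Let $\sigma$ be a $k$-cube of $X_{n,r}$ with bottom vertex $\A$ and generating set $T = \{\T_{i_1}, \ldots, \T_{i_k}\} \subset \{t_1, \ldots, t_n\}$. Since $\sigma \subset X_{n,r}$, we have $0 \leq h(\A) \leq r$ (in fact $h(\A) + k \leq r$). By Lemma \ref{lm:ccpt_action}, there exists $g \in \ho_n$ such that $\sigma \cdot g$ is the $k$-cube generated by $T$ with bottom vertex $t_1^{h(\A)}$.

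Thus every $\ho_n$-orbit of cubes in $X_{n,r}$ has a representative of the form $(h, T)$, where $h \in \{0, 1, \ldots, r\}$ is the height of the bottom vertex and $T \subset \{t_1, \ldots, t_n\}$ is a subset of generators determining the shape of the cube. The number of such pairs is at most $(r+1) \cdot 2^n$, which is finite. Therefore the quotient $X_{n,r}/\ho_n$ has only finitely many cubes, and so the action of $\ho_n$ on $X_{n,r}$ is cocompact.
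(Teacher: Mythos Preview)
Your proof is correct and follows essentially the same approach as the paper: both invoke Lemma \ref{lm:ccpt_action} to move an arbitrary cube so that its bottom vertex becomes $t_1^{h(\A)}$, and then observe that only finitely many cubes have bottom vertex of this form with height at most $r$. Your version adds a bit more detail (the explicit check that the action preserves $X_{n,r}$ and the bound $(r+1)\cdot 2^n$), but the argument is the same.
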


\begin{proof}
Fix $r \in \Z_{\geq0}$. There are finitely many $k$-cubes with bottom vertex $t_1^{r'}$ for $1 \leq r' \leq r$ and $ 0 \leq k \leq n$. Lemma \ref{lm:ccpt_action} implies that for any cube $\sigma \subset X_{n,r}$ there exists $g\in\ho_n$ such that $\sigma \cdot g$ is a $k$-cube with bottom vertex $t_1^{r'}$ for some $r' \leq r$. So the quotient $X_{n,r} / \ho_n$ is finite.
\end{proof}

\begin{definition}[\textbf{Semi-simple group action}]
Let $X$ be a metric space and let $g$ be an isometry of $X$. The displacement
function of $g$ is the function $d_g: X \to \mathds{R}_{\geq 0}$ defined by $d_g (x) = d(x, x. g)$. The translation length of $d_g$ is the number $|d_g | := inf\{d_g (x) | x \in X\}$. The set of points where $d_g$ attains this infimum will be denoted Min($d_g$). More generally, if $G$ is a group acting by isometries on $X$, then Min($G$)$ := \cap_{g \in G}$ Min($d_g$). An isometry $d_g$ is called semi-simple if Min($d_g$) is non-empty. An action of a group by isometries of $X$ is called semi-simple if all of its elements are semi-simple.
\end{definition}

The following theorem summarizes the action of $\ho_n$ on $X_n$, see Figure \ref{setup} for the underlying theme.

\medskip
{\bf Theorem \ref{thm_action_on_X_n}.}\;
{\sl For each integer $n\!\geq\! 1$, there exists a $n$-dimensional cubing $X_n$ and a Morse function $h:X_n\rightarrow \mathds{R}_{\geq0}$ such that $\mathcal{H}_n$ acts on $X_n$ properly (but not cocompactly) by height-preserving semi-simple isometries. Furthermore, for each $r \in \mathds{R}_{\geq0}$ the action of $\mathcal{H}_n$ restricted to the level set $h^{\!-1}\!(r)$ is cocompact.}
\medskip

\begin{figure}[h]
\begin{center}
\includegraphics[width=0.5\textwidth]{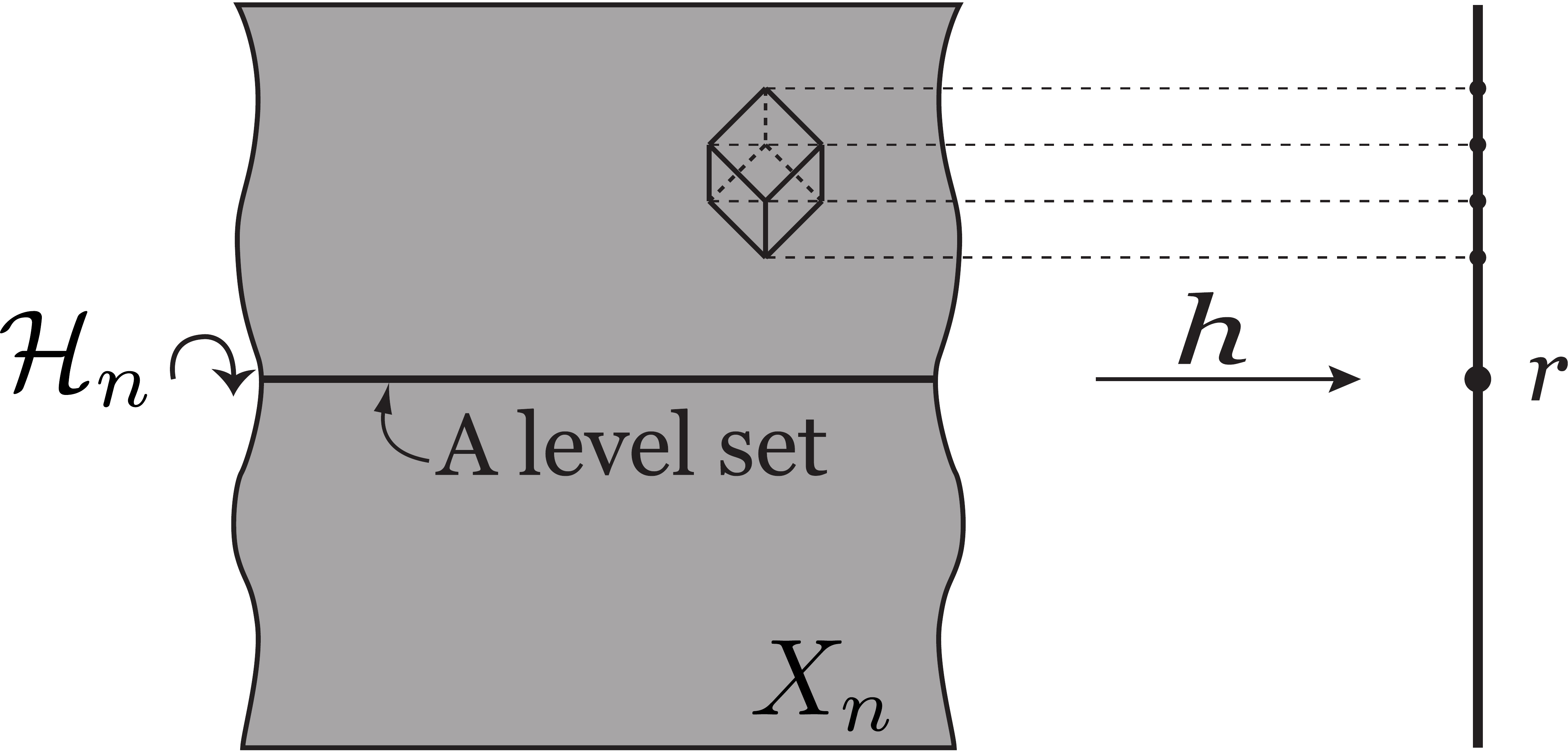}
\caption{A Morse function $h$ on a CAT(0) cubical complex $X$} \label{setup}
\end{center}
\end{figure}

\begin{proof}\textbf{Proper action.} We want show that, for $\A , \B \in X_n^{(0)}$ the set $S(\A, \B):=\{g \in \ho_n | \A g = \B\}$ is finite. An element $\A$ has the right inverse (left inverse in composition of functions) $\A^{-1} : (Y_n)\A \to Y_n$. So if $g \in S(\A, \B)$ then $g_{|(Y_n)\A}$ is completely determined by
$$
g = \A^{-1} \B.
$$This means that one can decompose $g$ by
\begin{equation*}
g =
    \begin{cases}
      \A^{-1} \B& \text{on }  (Y_n)\A\\
       f & \text{on } S(\A)
    \end{cases}
\end{equation*}for some $f\in \ho_n$ with $supp(f) \subset  S(\A)$. Since there are finitely many $f$ with $supp(f) \subset  S(\A)$, $S(\A, \B)$ is finite. Note that if $\A = \B$ then $S(\A, \B)$ is simply the finite symmetric group on $h(\A)=h(\B)$ points that we discussed in Lemma \ref{lm:stab_finite_symm}. Now suppose $\Si$ is a cube with vertices $\{\A_1, \cdots, \A_k \}$. If $\Si g $ intersects $\Si$ non trivially then $g \in S(\A_i, \A_j)$ for some $1 \leq i ,j \leq k$. Therefore the action is proper.

\textbf{Height-preserving isometries.} The action preserves height by Remark \ref{rmk:height-preserving_action}. To show the action in question is by isometries, we want to use the fact that two actions of $\TT_n$ and $\ho_n$ on $X_n$ commute. Suppose $\A, \B \in X_n^{(0)}$. Consider the smallest convex set $H$ containing $\A$ and $\B$, i.e., the intersection of all convex sets containing $\A$ and $\B$. Observe that the cell structure of $X_n$ is completely determined by the action of $\TT_n$ on $X_n^{(0)}$. Since the two actions of $\TT_n$ and $\ho_n$ on $X_n$ commute, $H g$ is a convex set with the same cell structure of $H$. By Corollary \ref{coro:contrac_unique_geod}, $\A$ and $\B$ are joined by the unique geodesic $\ell$, which lies in $H$. The image $\ell g$ is the local geodesic (in $H g$) joining $\A g$ and $\B g$. Since $H g$ is convex $\ell g$ is the global geodesic. So we have $d(\A, \B)= d(\A g, \B g)$ for $\A, \B \in X_n^{(0)}$ and $g \in \ho_n$.

\textbf{Semi-simple action.} For any element $f \in \ho_n$ with finite order, it is not difficult to check that $d_f = 0$ and Min($d_g$) is non empty. For example one can take $\A \in \M_n$ such that $supp (f) \subset S(\A)$ to see $d(\A f, \A)=0$ and $\A \in $Min($d_f$). Suppose $g \in \ho_n$ with $\varphi (g) = (m_1, \cdots, m_n) \neq \overrightarrow{0}$ ($m_1= -(m_2 +\cdots +m_n)$).

\textbf{Claim:} $d_g =\sqrt{\sum m_i^2}$. We want to find explicit element $\A \in X_n^{(0)}$ realizing $d_g$. The idea is to choose a vertex $\A$ with big enough height so that `translation' by $g$ is realized clearly. Consider the element $g' = g_1^{m_2} g_2^{m_3} \cdots g_{n-1}^{m_n}$. Since $\varphi (g)=\varphi (g)'$, there exists $f\in \ho_n$ such that $g = f g'$ and that $supp(f) \subset B_{n,r}$ where $B_{n,r} \subset Y_n$ is the ball centered at the origin of radius $r$. Set $r' = max\{r,\sum |m_i|\}$ and consider the element $\A = t_1^{r'} t_2^{r' }\cdots t_n^{r'}$. The maximality of $r'$ ensures that $\A, \A g' \in \TT_n$. By Corollary \ref{coro_glb_T_n}, there exists $\A \vee \A g'$. It is possible to find explicit expression for $\A \wedge \A g'$ as follows. Define $k_i, k'_i \in Z$, $i=1, \cdots, n$ by
$$
    k_1 = max\{0, -\sum_2^n m_i\},\; k'_1 = max\{0, \sum_2^n m_i\}  \;\text{and}
$$
$$
    k_i=max\{ 0, m_i\},\;k_i=max\{ 0, -m_i\}\;\text{for}\; 2\leq i \leq n.
$$
Since $\varphi(\A ) =(r', \cdots, r')$ and $\varphi(\A g')= (r' +m_1 ,r' +m_2, \cdots, r'+{m_n})$, the top vertex is given by
\begin{equation}\label{eq_big_lub}
   \T \A = \A  \wedge (\A g')= \T'(\A g')
\end{equation}
where $\T = t_1^{k_1} \cdots t_n^{k_n}$ and $\T' =  t_1^{k_1} \cdots t_n^{k_n}$.
By Proposition \ref{prop_big_square}, $\A \vee \A g'$ satisfies
\begin{equation}\label{eq_big_glb}
    \A = \T'(\A \vee \A g')\;\text{and}\;\A g' = \T(\A \vee \A g').
\end{equation}

\begin{figure}[h]
\begin{center}
\includegraphics[width=0.35\textwidth]{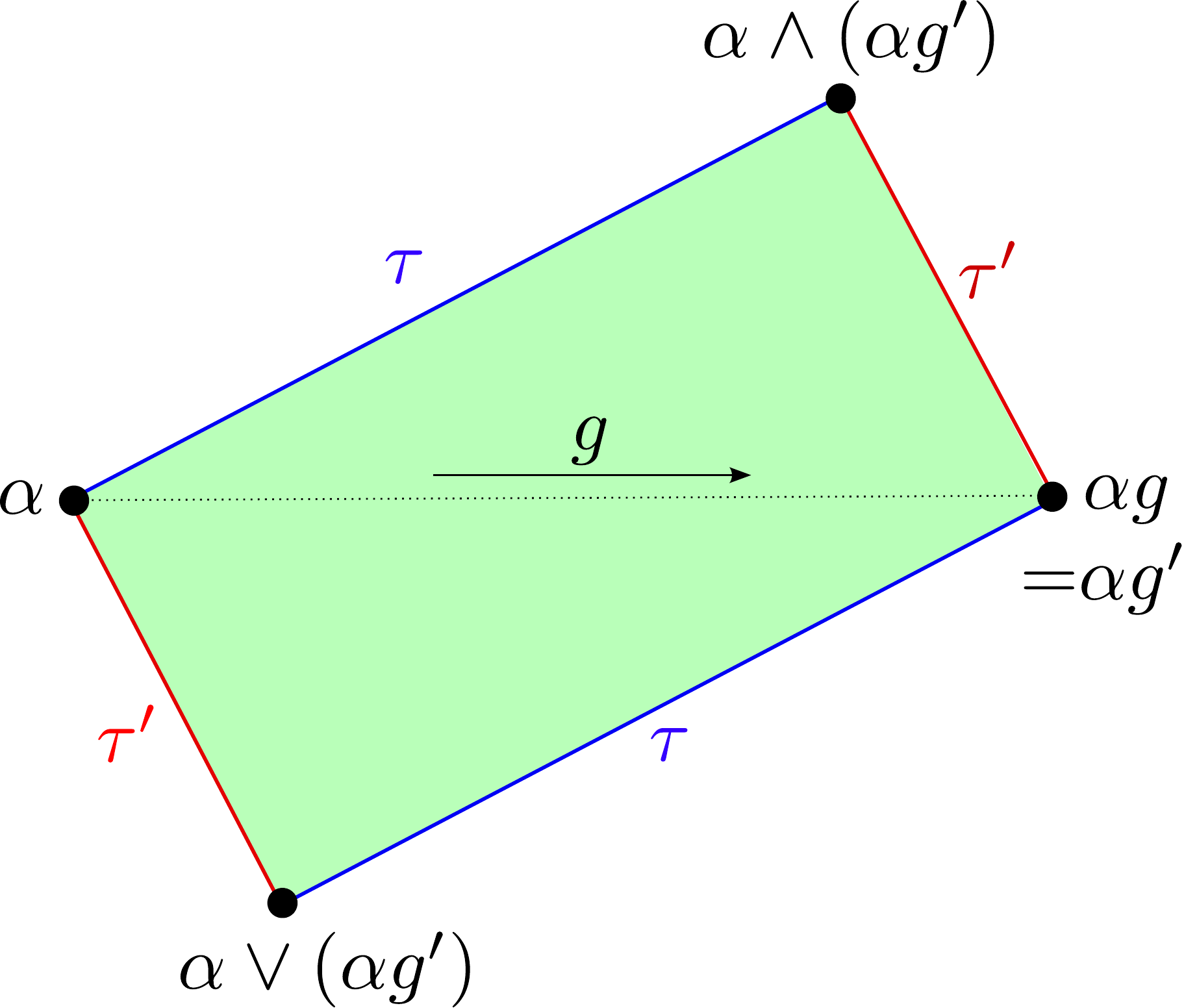}
\caption{If $h(\A)$ is big enough, the translation by $g$ is realized in a `big rectangle' $R$} \label{fig:big_rectangle_semi-simple}
\end{center}
\end{figure}
Equalities (\ref{eq_big_lub}) and (\ref{eq_big_lub}) imply that there exists a `big rectangle' $R$ which contains $\A \wedge \A g'$ and $\A \vee \A g'$ as its top and bottom vertices respectively (as described in Proposition \ref{prop_big_square}). See Figure \ref{fig:big_rectangle_semi-simple}. Observe that $R$ is the convex hull of two vertices $\A$ and $\A g'$. The diagonal of $R$ joining $\A$ and $\A g'$ has length $\sqrt{\sum m_i^2}$. The diagonal is the geodesic joining $\A$ to $\A g'$ because no smaller cube contains those two vertices. So we have
$$
d(\A , \A g) = d(\A, \A f g')=d(\A, \A g') =\sqrt{\sum m_i^2}.
$$
Note that the size of $R$ is determined by $\varphi (g) = (m_1, \cdots, m_n)$ and that $R$ has the smallest size among convex hulls containing $\B$ and $ \B g$ for $\B \in X_n^{(0)}$. This means $d_g \geq d(\A, \A g)$. Therefore $d_g$ attains minimum at $\A$ and $\A \in $Min($d_f$).

\textbf{Cocompact action.}
Note that $h^{\!-1}\!(r) \subset X_{n,r'}$ for $r\in \R$ and $r\leq r' \in \Z$. By Remark \ref{coro:ccpt_action_level}, the action on the level set is cocompact for each $r \in Z_{\geq 0}$.
\end{proof}

%\begin{corollary} \label{filtration_X_n}
%For each integer $n\!\geq\! 1$, there exists a $n$-dimensional contractible $\ho_n$-CW complex $X_n$ such that the stabilizer of every cell is finitely presented and has type $F\!P_n$ for all $n$. Furthermore, $X_n$ admits a $\ho_n$-filtrationf $X_n = \cup _h X_{n,h}$ such that
%\begin{itemize}
%    \item[-] each $X_{n,h}$ is finite mod $\ho_n$
%    \item[-] for all sufficiently large $h$, $X_{n,h+1}$ is obtained from $X_{n,h}$ by the adjunctions of $n$-cells, up to homotopy.
%\end{itemize}
%\end{corollary}

%Asc./Desc. links\\
%Bestvina-Brady Thm\\
%Geometric action

\subsection{Finiteness Properties of $\ho_n$} \label{sec_finite_prop}
In this section we discuss finite properties of $\ho_n$. We first recall properties $F_n$ and $F\!P_n$ (see \cite{BrownBook},\cite{Geoghegan}) which are generalized concepts of being finitely generated and finitely presented of groups.

We say a group $G= \langle\mathcal{A}\,|\, \mathcal{R}\rangle$ is \emph{finitely generated} if $\mathcal{A}$ is finite and \emph{finitely presented} if both $\mathcal{A}$ and $\mathcal{R}$ are finite. These finite conditions can be interpreted topologically via \emph{presentaion $2$-complex (Cayley complex)} $K=K(\mathcal{A};\mathcal{R})$. $K$ has one vertex and it has one edge $e_a^1$ (oriented and labelled by $a$) for each generator $a\in \mathcal{A}$. The $2$-cells $e_r^2$ of $K$ are indexed by the relators $r \in \mathcal{R}$; if $r=a_1 \cdots a_k$ then $\sigma_r$ is attached along the loop labelled by $a_1 \cdots a_k$. From the construction of $K$ we see that it has finite $1$-skeleton if $G$ is finitely generated and it has finite $2$-skeleton as well if $G$ is finite presented. By the Seifert-Van Kampen theorem we have $\pi_1 (K) \cong G$. Note that the existence of a complex $K$ with finite $2$-skeleton with $\pi_1 (K) \cong G$ guarantees finite presentedness of a group $G$.

More general finiteness properties are based on $K(G,1)$ spaces (\emph{Eilenberg-Mac Lane complex}). For a group $G$, a complex $K$ is called a $K(G,1)$ complex if $\pi_1 (K) \cong G$ and the universal cover $\widetilde{K}$ is contractible. It is known that for a group $G$ there exists unique $K(G,1)$ complex having one vertex up to homotopy.

\begin{definition}[\textbf{Property $F_n$}]
We say a group $G$ has type $F_n$ if there exists a $K(G,1)$ complex having finite $n$-skeleton.
\end{definition}

%The following equivalences for property $F_n$ are useful.

%\begin{proposition}\label{equivalences}
%A group $G$ has type $F_1$ if and only if $G$ is finitely generated; $G$ has type $F_2$ if and only if $G$ is finitely presented. For $n \geq 2$, $G$ has type $F_n$ if and only if there exists a finite $n$-dimensional $(n\!-\!1)$-aspherical CW complex $X$ such that $\pi_1 (X) \cong G$.
%\end{proposition}

Consider the augmented chain complex $C_\ast (\widetilde {K};\mathds{Z})$ of the universal cover of a $K(G,1)$ complex $K$.
\begin{equation*}
\cdots \xrightarrow{\;\;\;\partial_3\;\;\;} C_2 (\widetilde {K};\mathds{Z})\xrightarrow{\;\;\;\partial_2\;\;\;} C_1 (\widetilde {K};\mathds{Z})\xrightarrow{\;\;\;\partial_1\;\;\;} C_0 (\widetilde {K};\mathds{Z})\xrightarrow{\;\;\;\E\;\;\;} \mathds{Z} \to 0.
\end{equation*}This is unique free $\mathds{Z}G$-resolution of $\mathds{Z}$ up to chain homotopy. Note that each $C_i(\widetilde {K};\mathds{Z})$ is finitely generated free $\mathds{Z}G$-module if $K$ has finite $i$-skeleton. A module is \emph{projective} if it is a direct summand of a free module.

\begin{definition}[\textbf{Property $F\!P_n$}]
We say a group $G$ has type $F\!P_n$ if there exists a projective $\mathds{Z}G$-resolution of $\mathds{Z}$ which is finitely generated in dimensions $\leq n$.
\end{definition}

\begin{remark}An immediate consequence is that if a group $G$ is has type $F_n$ then it has type $F\!P_n$ for integers $n \in \mathds{N}$.
\end{remark}

%\begin{proposition}Let $X$ be an $n$-dimensional CW complex on which $G$ acts freely and cocompactly. If $H_k(K;\mathds{Z}) = 0$ for all $k \leq n-1$ then $G$ has type $F\!P_n$; and $G$ has type $F\!P_{n+1}$ if and only if $H_n(K;\mathds{Z})$ is a finitely generated $\mathds{Z}G$-module.
%\end{proposition}

A $G$-CW complex is a complex $K$ together with a homomorphism $G \to Aut K$. A \emph{$G$-filtration} of a CW complex $K$ is a countable collection of $G$-subcomplexes $K_0 \subset K_1 \subset \cdots $ such that $K = \bigcup K_i$.

Let $K$ be a contractible $G$-CW complex which admits a $G$-filtration $\{K_i\}$ satisfying
\begin{itemize}
    \item the stabilizer of every cell is finitely presented and has type $F\!P_n$ for all $n$
    \item each $K_i$ is finite mod $G$
    \item for all sufficiently large $j$, $K_{j+1}$ is obtained from $K_j$ by the adjunctions of $n$-cells, up to homotopy.
\end{itemize}

\begin{theorem}[\textbf{Brown's Criterion}]\label{KBT}
With the above assumption, $G$ has type $F\!P_{n-1}$ but not $F\!P_n$. If $n\geq 3$ then $G$ is finitely presented.
\end{theorem}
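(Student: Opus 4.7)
The plan is to translate the topological information about $K$ into algebraic finiteness of a $\mathds{Z}G$-resolution of $\mathds{Z}$. Since $K$ is a contractible $G$-CW complex, the augmented cellular chain complex $C_\ast(K)\to\mathds{Z}$ is exact, and $C_k(K)\cong\bigoplus_i \mathds{Z}[G/G_{\sigma_i}]$ summed over representatives of the $G$-orbits of $k$-cells. Since each stabilizer $G_\sigma$ is assumed finitely presented and of type $FP_\infty$, one can replace each permutation module $\mathds{Z}[G/G_\sigma]$ by a free $\mathds{Z}G$-resolution whose modules are finitely generated in each degree, and then splice these resolutions together via an equivariant double-complex (Cartan--Eilenberg) construction. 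The output is a free $\mathds{Z}G$-resolution $P_\ast\to\mathds{Z}$ in which $P_k$ is finitely generated whenever $K$ has only finitely many $G$-orbits of cells of dimension $\leq k$.

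Next I would exploit the filtration. The hypothesis that $K_{j+1}$ is obtained from $K_j$ by adjunction of $n$-cells for all sufficiently large $j$ forces the skeleta in dimensions $<n$ to stabilize, and combined with each $K_j$ being finite mod $G$ this yields only finitely many $G$-orbits of cells in dimensions strictly below $n$. Feeding this into the construction above makes $P_k$ finitely generated for every $k<n$, which is precisely the definition of type $FP_{n-1}$.

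To obstruct $FP_n$, I would appeal to Brown's homological criterion: $G$ is of type $FP_n$ if and only if for every filtered directed system $(M_\alpha)$ of $\mathds{Z}G$-modules with $\varinjlim M_\alpha = 0$ one has $\varinjlim H_i(G;M_\alpha)=0$ for all $i\leq n$. I would then apply this criterion to the directed system arising from the long exact sequences of the pairs $(K,K_j)$. Contractibility of $K$ together with cocompactness of each $K_j$ lets one identify the relevant limit with a module detecting the $G$-orbits of $n$-cells that must be added in passing from $K_j$ to $K_{j+1}$; the hypothesis that such adjunctions persist for arbitrarily large $j$ then makes this limit nonzero in degree $n-1$, ruling out $FP_n$. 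This conversion---from the purely topological statement ``one must keep adjoining $n$-cells'' into genuine non-vanishing of a direct limit of equivariant homology---is the main obstacle of the argument, and where I expect essentially all of the technical work to live.

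For the finite-presentation conclusion when $n\geq 3$, stabilization of the filtration below dimension $n$ produces a cocompact, simply connected $2$-dimensional $G$-subcomplex $L\subseteq K$ whose finitely many cell-orbit stabilizers are themselves finitely presented. An equivariant Seifert--van Kampen argument (Brown's presentation formula for groups acting on simply connected complexes) then extracts a finite presentation of $G$ from the finitely many orbit representatives of vertices, edges, and $2$-cells of $L$ together with finite presentations of their stabilizers.
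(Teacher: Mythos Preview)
The paper does not prove this theorem; it is quoted from \cite{Brown} and used as a black box in the proof of Corollary~\ref{coroll_fin_prop_H_n}. So there is no ``paper's own proof'' to compare against, and your sketch is essentially a reconstruction of Brown's original argument. That reconstruction is broadly on target---the Cartan--Eilenberg splice to pass from permutation modules to a free resolution, the Bieri--Brown direct-limit criterion for obstructing $FP_n$, and the equivariant van~Kampen argument for finite presentation are exactly the ingredients Brown uses.

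One point to tighten: the hypothesis is that $K_{j+1}$ arises from $K_j$ by attaching $n$-cells \emph{up to homotopy}, so the literal low-dimensional skeleta of $K$ need not stabilize and there may well be infinitely many $G$-orbits of cells below dimension $n$. What you actually get is that for large $j_0$ the pair $(K,K_{j_0})$ is $(n-1)$-connected, hence $K_{j_0}$ itself is $(n-2)$-connected; since $K_{j_0}$ is $G$-cocompact with $FP_\infty$ stabilizers, this single complex already yields $FP_{n-1}$ (and, for $n\geq 3$, simple connectivity for the presentation argument). You should route the argument through this, not through stabilization of skeleta.

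A second caution concerns the negative direction. The hypothesis as the paper phrases it is a little loose: ``obtained by adjunction of $n$-cells up to homotopy'' formally allows the trivial adjunction, in which case some $K_{j_0}$ is already contractible and $G$ is $FP_\infty$. Brown's precise statement requires that the directed system $\{\widetilde H_{n-1}(K_j)\}$ is \emph{not essentially trivial}, i.e.\ there exists $j$ such that $\widetilde H_{n-1}(K_j)\to\widetilde H_{n-1}(K_{j'})$ is nonzero for all $j'\geq j$. You correctly flag this conversion as the crux, but be aware that ``adjunctions persist for arbitrarily large $j$'' is by itself not enough; one needs the attached $n$-cells to be homotopically essential in a way that survives the entire direct system, and in applications (such as the one in this paper via Lemma~\ref{sphere}) this is verified by identifying the homotopy type of descending links.
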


Brown show the following finiteness properties of $\ho_n$ by constructing a CW complexes satisfying the above criterion (\cite{Brown}). By Theorem \ref{thm_action_on_X_n}, a cubing $X_n$, $n \geq 1$, satisfies the criterion above, Remark \ref{coro:ccpt_action_level}.

\medskip
{\bf Corollary \ref{coroll_fin_prop_H_n}.}\;
{\sl For $n\!\geq\!2$, $\mathcal{H}_n$ is of type $F\!P_{\!{n-1}}$ but not $F\!P_{\!n}$, it is finitely presented for $n \geq 3$.}
\medskip

\begin{proof}We show our $X_n$ satisfies the above Brown's criterion. By Corollary \ref{coro:contrac_unique_geod}, $X_n$ is contractible. Lemma \ref{lm:stab_finite_symm} implies the first condition is satisfied since a finite symmetric group satisfies required finiteness properties. The second condition follows from Remark \ref{coro:ccpt_action_level}. With respect to a Morse function $h:X_n \to \R$, each descending link of a vertex $v \in X_n$ is homotopic to bouquet of spheres $\mathds{S}^{n-1}$ if the height of $h(v)  \geq 2n-1$ ( Lemma \ref{sphere}). This means that the passage from $X_{n,h}$ to $X_{n,h+1}$ consists of the adjunction of $n$-cubes, up to homotopy. The theorem therefore follows from the criterion Theorem \ref{KBT}.
\end{proof}

\begin{lemma}\label{sphere}
If $h \geq 2n-1$ then $L_{n,h}$ homotopic to a bouquet of spheres $\mathds{S}^{n-1}$.
\end{lemma}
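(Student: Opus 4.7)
I plan to identify $L_{n,h}$ with the classical \emph{chessboard complex} $M_{n,h}$: simplices are sets of non-attacking rooks on an $n\times h$ board, or equivalently partial injections from a subset of $[n]$ into $[h]$. The approach has two steps. First I will show that $L_{n,h}$ is $(n-2)$-connected whenever $h\geq 2n-1$. Second, since $\dim L_{n,h}=n-1$, I will invoke the standard fact that any $(d-1)$-connected $d$-dimensional CW complex is homotopy equivalent to a wedge of $d$-spheres: Hurewicz's theorem yields $\pi_{n-1}(L_{n,h})\cong H_{n-1}(L_{n,h};\mathds{Z})$, which is free abelian as the top-dimensional simplicial homology of an $(n-1)$-dimensional complex; choosing a basis, representing each class by a map $S^{n-1}\to L_{n,h}$, and taking the wedge gives a map from a wedge of spheres that induces an isomorphism on all (reduced) homology groups, hence a homotopy equivalence by Whitehead's theorem.

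For the connectivity step I will induct on $n$. The base case $n=1$ is immediate since $L_{1,h}$ is a nonempty discrete set (so $(-1)$-connected). For the inductive step, I will cover $L_{n,h}$ by the subcomplex $B:=L_{n-1,h}$ of simplices avoiding column $n$ together with the closed stars $K_i:=\mathrm{star}((n,i),L_{n,h})$ for $1\leq i\leq h$. Each $K_i$ is a cone on $\mathrm{lk}((n,i),L_{n,h})$ and hence contractible, and by the rule defining simplices of $L_{n,h}$ one checks
\[
\mathrm{lk}((n,i),L_{n,h})\cong L_{n-1,h-1}, \qquad K_{i_1}\cap\cdots\cap K_{i_k}\cong L_{n-1,h-k}.
\]
The hypothesis $h\geq 2n-1$ gives $h-1\geq 2(n-1)-1$ and, more generally, $h-k\geq 2(n-1)-1$ for the relevant values of $k$, so the induction hypothesis provides $(n-3)$-connectedness of each such intersection. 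A Mayer--Vietoris or nerve-type argument then propagates the connectivity: gluing the contractible $K_i$'s onto $B$ along $(n-3)$-connected subcomplexes, together with the inductively established $(n-3)$-connectedness of $B=L_{n-1,h}$, yields $(n-2)$-connectedness of the union $L_{n,h}$.

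The hard part is executing the gluing cleanly; the natural tool is Björner's nerve lemma for covers by subcomplexes whose multi-intersections have controlled connectivity, or equivalently a discrete Morse matching that collapses the entire complex down through the filtration $B\subset B\cup K_1\subset B\cup K_1\cup K_2\subset\cdots$. The bound $h\geq 2n-1$ is used precisely so that every multi-intersection encountered in this process remains non-empty and sufficiently connected. As a shortcut, one may instead cite the Bj\"orner--Lov\'asz--Vre\'cica--\v Zivaljevi\'c connectivity bound
\[
\mathrm{conn}(M_{n,h})\;\geq\;\min\!\left(n,\,h,\,\lfloor (n+h+1)/3\rfloor\right)-2,
\]
which for $h\geq 2n-1$ evaluates to $n-2$, giving $(n-2)$-connectedness directly. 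Combined with the dimension count $\dim L_{n,h}=n-1$ and Whitehead's theorem as above, this completes the proof that $L_{n,h}$ is homotopy equivalent to a bouquet of $(n-1)$-spheres.
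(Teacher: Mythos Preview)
Your proposal is correct but takes a genuinely different route from the paper. The paper gives a self-contained Bestvina--Brady Morse argument directly on $L_{n,h}$: an explicit three-level height function has contractible bottom level (a cone on $L_{n-1,h-1}$ with cone point $(1,1)$), and the descending links of the remaining vertices are copies of $L_{n-1,h-2}$ and $L_{n-1,h-1}$, which by induction are wedges of $(n-2)$-spheres; coning each off contributes $(n-1)$-sphere summands. This builds the wedge decomposition hands-on, uses nothing beyond the Morse lemma, and makes transparent exactly where the bound $h\geq 2n-1$ enters (it is needed so that $h-2\geq 2(n-1)-1$ for the descending links at the middle level). Your approach instead identifies $L_{n,h}$ with the chessboard complex and imports the Bj\"orner--Lov\'asz--Vre\'cica--\v{Z}ivaljevi\'c connectivity bound, then finishes with Hurewicz and Whitehead. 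This is slicker but not self-contained: the BLVZ bound is itself a substantial theorem, and your own inductive star-cover sketch is left unexecuted (you rightly flag it as ``the hard part'' and retreat to the citation). One minor gap to patch: your Hurewicz/Whitehead step as written needs simple connectivity and hence $n\geq 3$; the case $n=2$ (a connected graph is homotopy equivalent to a wedge of circles) should be noted separately, whereas the paper's Morse argument handles all $n$ uniformly.
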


\begin{proof}We argue by induction on $n$. If $n=1$, then $L_{n,h}$ is a bouquet of spheres $\mathds{S}^0$, provided $h \geq 2 \cdot 1 -1$. Assume $L_{n-1,k}$ is homotopy equivalent to a bouquet of copies of $\mathds{S}^{n-2}$ if $k \geq 2h -1$ and $n \geq 2$.

We use Betvina-Brady Morse theory to understand the topology. Equip $L=L_{n,h}$ with a Morse function $f : L \to [0, 2] \subset \mathds{R}$ as follows. First consider the partition of the vertex set of $L_{n,h}$: $L_0=\{(x,y) \mid x \geq 2,\;y \geq 2\}\cup \{(1,1)\}$, $L_1=\{(1,y) \mid y \geq 2\}$ and $L_2=\{(x,1) \mid x \geq 2\}$. Arrange the vertices so that $f(x,y) = h$ if $(x,y)\in L_h$, $h= 0,1,2$. See Figure \ref{morse}.

\begin{figure}[ht]
\begin{center}
\includegraphics[width=0.5\textwidth]{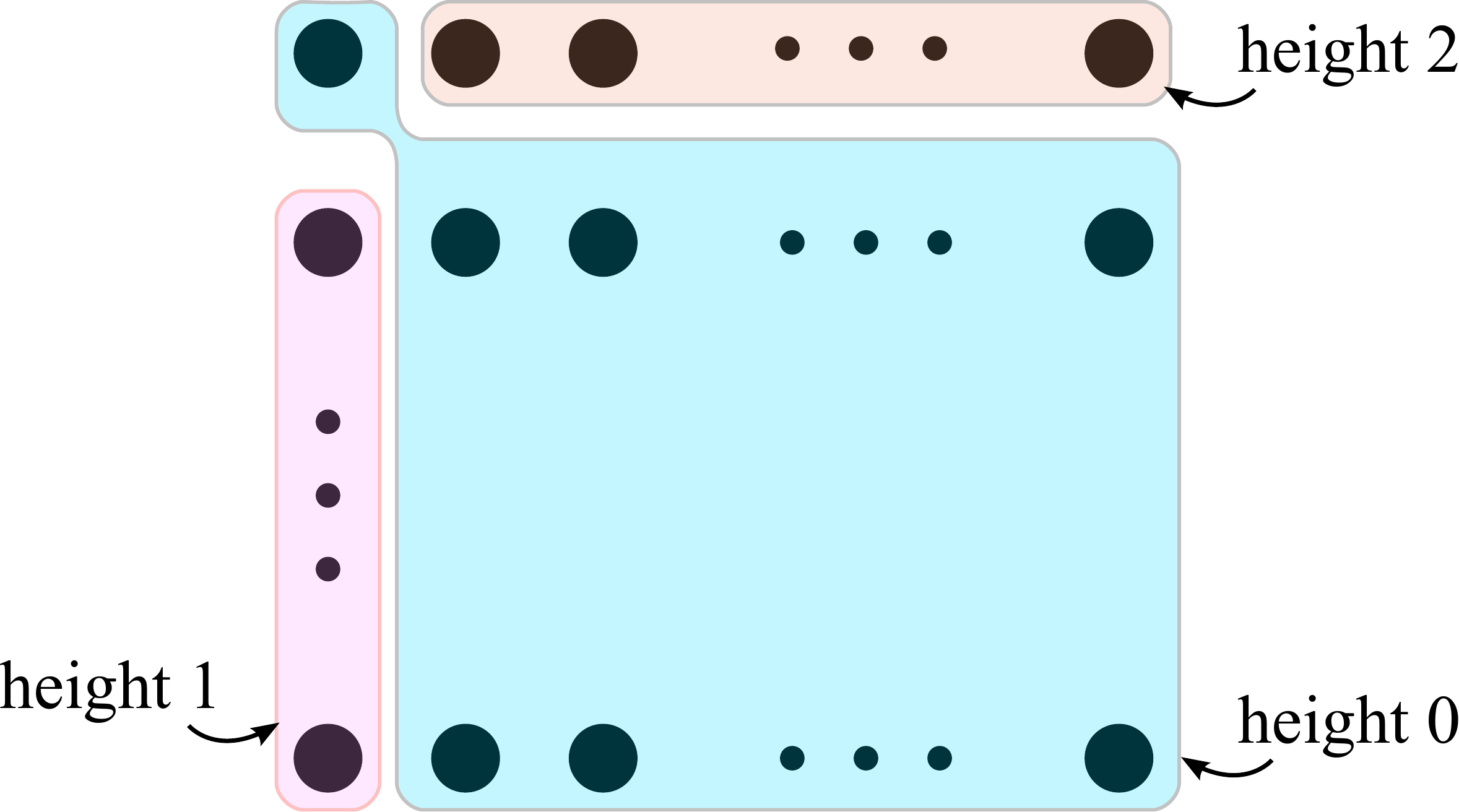}
\caption{The complex $L_{n,h}$ equipped with a Morse function $f : L_{n,h} \to [0, 2]$} \label{morse}
\end{center}
\end{figure}

It is clear that $f^{-1}(0)$ is a cone on $L_{n-1,h-1}$ and so it is contractible. Note that even if there are horizontal cells at height $0$ it is still true that $f^{-1}[0,1]$ is homotopy equivalent to $f^{-1}(0)$ with the copies of $Lk_\downarrow((1,y),L)$ conned off ($(1, y)\in L_1$). Observe that the descending link of a vertex $(1, y) \in L_1$ (in $L$) is spanned by vertices $(x,y')\in L_0$ with $ y' \neq y$. Now $Lk_\downarrow((1,y),L)\simeq L_{n-1,h-2}$, $f^{-1}[0,1]$ is obtained from $f^{-1}(0)$ by adjoining, for each $(1, y)$, a cone over $L_{n-1,h-2}$. In view of inductive hypothesis, $f^{-1}[0,1]$ is homotopy equivalent to a bouquet of spheres $\mathds{S}^{n-1}$.

Observe that $Lk_\downarrow((x,1),L)\simeq L_{n-1,h-1}$ for each vertex $(x,1)\in L_2$. So, by the inductive hypothesis again, the complex $L = f^{-1}[0,2]$ is homotopy equivalent to $f^{-1}[0,1]$ with the copies of spheres $\mathds{S}^{n-2}$ conned off. (Similar proof can be found in \cite{Brown}.)
\end{proof}

%\begin{remark}\label{minimal_cubing}Our complex $X_n$ can be thought of a `minimal' complex for $\ho_n$. Note that the last condition of the criterion implies no complexes with smaller dimension can satisfy Brown's Criterion, or provide geometric model for groups $\ho_n$.
%\end{remark}
%
%A sequence of groups and homomorphisms $G_1 \xrightarrow{\;\;\;f^1_2\;\;\;} G_2 \xrightarrow{\;\;\;f^2_3\;\;\;} G_3 \cdots$ is \emph{ind trivial} if for each $i$ there exists $j>i$ such that the composition $f^i_j :G_i \to G_j$ is trivial.

%%%%%%%%%%%%%%%%%%%%%%%%%%%%%%%%%%%%%%%%%%%%%%%%%%%%%%%%%%%%%%%%%%%%%%%%%%%%%%%%%%%%%%%%%%%%%
%%%%%%%%%%%%%%%%%%%%%%%%%%%%%%%%%%%%%%%%%%%%%%%%%%%%%%%%%%%%%%%%%%%%%%%%%%%%%%%%%%%%%%%%%%%%%
%%%%%%%%%%%%%%%%%%%%%%%%%%%%%%%%%%%%%%%%%%%%%%%%%%%%%%%%%%%%%%%%%%%%%%%%%%%%%%%%%%%%%%%%%%%%%

\section{Isoperimetric inequalities for $\ho_n$}
\subsection{Dehn functions of finitely presented groups}\label{section_Dehn_function}

We recall the (algebraic) definition of Dehn function of a group from \cite{Bridson}. Let $P = \langle \mathcal{A} \mid \mathcal{R} \rangle$ be a finite presentation for a group $G$ with the identity $1_G$. A word $w$ is an element of the free monoid with the generating set $\mathcal{A} \cup \mathcal{A}^{-1} $. Denote the length of $w$ by $| w |_G$. We say
$w$ is null-homotopic when $w = 1_G$, i.e., $w$ lies in the normal closure of $\mathcal{R}$ in the free group $F(\mathcal{A})$. We define the \emph{area} of a null-homotopic word $w$ to be
$$
Area(w):= min\{N \mid w=^{free} \prod _{i=1}^{N} u_i^{-1} r_i u_i \; \text{with}\; u_i \in F(\mathcal{A}), \, r_i \in \mathcal{R}^{\pm1}\}.
$$
The \emph{Dehn function} of $P$ is the function $\delta_P : \mathds{N} \to \mathds{N}$ defined by
$$
\delta_P (x): = max\{Area(w) \mid w= 1_G, |w|_G \leq x\}.
$$
Although the Dehn function $\delta_P (x)$ depends on the presentation $P$, asymptotic growth type of $\delta_P (x)$ (as $x$ tends to infinity) only depends on $G$ up to the equivalence relation $\simeq$ defined as follows (see \cite{Bridson}).
Two functions $f,g: \mathds{N} \to [0, \infty)$ are said to be $\simeq$ equivalent if $f \preceq g$ and $g \preceq f$, where $f \preceq g$ means that there exists a constant $C>0$ such that $f(x) \leq C g(Cx +C) +Cx +C$ for all $x \in \mathds{N}$.
Up to this equivalence relation, any finite presentation $P$ of a group $G$ determines the same asymptotic growth type, which is called the $Dehn \;function$ of $G$. The Dehn function of $G$ is denoted by $\delta_G (x)$. The Dehn function is an important invariant of group theory. If a group $G$ is CAT(0), the $\delta_G(x)$ is bounded above by $x^2$, see \cite{[Bridson-Haefliger]}. A group $G$ is hyperbolic if and only if $\delta_G(x) \simeq x$, see \cite{Gromov}. In particular, every finite group has a linear Dehn function.
\begin{remark}
For any $m \in \mathds{N}$ the symmetric group $\Sigma_m$ on $\{1,2, \cdots, m\}$ satisfies
$\delta_{\Sigma_k} (x) \leq Cx +C$ for some $C >0$. Note that the constant $C$ depends on the group $\Sigma_m$ and hence on $m$. We need an upper bound for $\delta_{\Sigma_m} (x)$ which only depends on $x$ (see Lemma \ref{lm:cubic}).
\end{remark}

\subsection{Exponential isoperimetric inequalities for $\ho_n$}\label{section_Exp_isoperi_ineq_ho}

In this section we aim to establish exponential upper bounds for $\delta_{\ho_n}$ for $n \geq 3$.

\medskip
{\bf Theorem \ref{thm:exp_bd_n}.}\;
{\sl For $n\geq 3$, the Dehn function $\delta_{\ho_n}(x)$ satisfies
$$
 \delta_{\ho_n}(x)\preceq e^x.
$$}
\medskip

We start with the case $n=3$. Let $B_{3,x} \subset Y_3$ centered at the origin with radius $x \in N$. Let $\Sigma_{3,x}\leq \ho_3$ denote the finite symmetric group on $B_{3,x}$ given by (\ref{eq_presen_fin_symm_new}). The following is an outline for the proof.

\begin{enumerate}
    \item For a given word $w=1 \in \ho_3$ with $|w|\leq x$, rewrite $w$ by $w' \in \Sigma_{3x}$ by using a canonical way such that
        \item [-] $|w'|_{ \Sigma_{3,x}} \leq x^5$,
        \item [-] the gap between $w$ and $w'$ is filled with area$\leq x^2$.
    \item Establish a cubic upper bound for  $\delta_{\Sigma_{3,x}}(x)$.
    \item Bound $Area_{\ho_3}(r)$ for all relators $r$ of $\Sigma_{3,x}$ by $e^x$,
\end{enumerate}

\begin{figure}[h]
\includegraphics[width=1.0\textwidth]{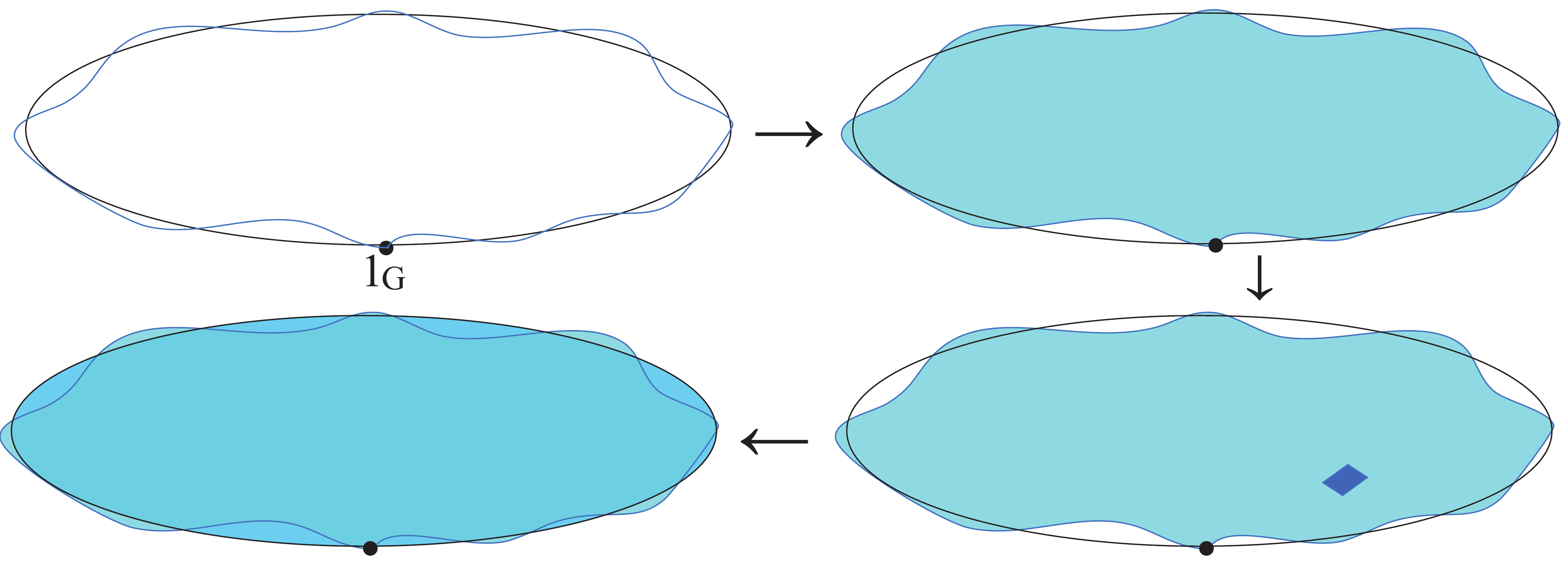}
\caption{Sketch of the proof for $\delta_{\mathcal{H}_3}(x) \preccurlyeq e^x$} \label{sketch}
\end{figure}

Figure \ref{sketch} illustrates the strategy. Now we have a desired upper bound for the area of $w$ since

\begin{equation}\label{eq_calculation_exp_3}
Area_{\mathcal{H}_3}(w) \leq (x^5)^{3} \cdot e^{Ax+A} +x^2\leq e^{(A+18)x+A}+x^2 \preccurlyeq e^x.
\end{equation}
We remark that, in each step, any bound not exceeding exponential function is good enough for us. We have established exponential upper bound for $\delta_{\ho_3}(x)$ up to those claims.

First we establish a cubic upper bound for $\delta_{\Sigma_{3,x}}(x)$ by using the following fact. Let $S_m$ denote the finite symmetric group on $m$ points with the Coxeter presentation given in (\ref{coxeter_sys}), where $\{\Si_1, \cdots, \Si_{m-1}\}$ is the generating set.
Let $s_i = \Si_{\phi(i)}$ for some $\phi :\mathds{N} \to \{1,2,\cdots, m-1\}$.

\begin{theorem}[Deletion Theorem, \cite{Humphreys}]\label{Humphreys}
Let $\Sigma_N $ denote the symmetric group on $N$ letters with generatros $\sigma_1, \cdots , \sigma_{N-1}$. Suppose $w=s_1 s_2 \cdots s_k$, $s_i \in \{\sigma_1, \cdots , \sigma_{N-1}\}$. If $|w|<k$ then there exists $i$ and $j$ ($1 \leq i < j \leq k$) such that
\begin{equation}\label{conj1}
s_{i+1} s_{i+2} \cdots s_{j} = s_{i} s_{i+1} \cdots s_{j-1},
\end{equation} and so
$$
w= s_1 s_2 \cdots \widehat{s}_i \cdots \widehat{s}_j\cdots s_{k}.
$$
\end{theorem}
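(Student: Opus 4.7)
The plan is to derive the Deletion Theorem from the Strong Exchange Condition, which is the workhorse lemma for length arguments in Coxeter groups and which in the case of $\Sigma_N$ admits a direct combinatorial proof using inversions. Recall that $\ell(w) = |w|$ equals the number of inversions $\mathrm{inv}(w)=|\{(a,b):a<b,\,w^{-1}(a)>w^{-1}(b)\}|$, and that right multiplication by a simple transposition $\sigma_p=(p,p+1)$ changes $\ell$ by exactly $\pm 1$: it raises $\ell$ when $(p,p+1)$ is not yet an inversion and lowers it otherwise.

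First I would establish the \emph{Strong Exchange Condition}: if $u=s_{b_1}s_{b_2}\cdots s_{b_m}$ is a reduced expression in $\Sigma_N$ and $\sigma_p$ is a simple reflection with $\ell(u\sigma_p)<\ell(u)$, then there is a unique index $\nu\in\{1,\ldots,m\}$ with
\begin{equation*}
u\sigma_p \;=\; s_{b_1}\cdots \widehat{s_{b_\nu}}\cdots s_{b_m}.
\end{equation*}
To prove it, consider the partial products $u_r=s_{b_1}\cdots s_{b_r}$. Because $u$ is reduced, each step contributes exactly one new inversion. The hypothesis says $(p,p+1)$ is an inversion of $u$, so there is a first index $\nu$ at which this pair becomes inverted, and at that step $s_{b_\nu}$ is the responsible simple transposition. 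One then checks, by induction on $m-\nu$ using the braid relations $(\sigma_a\sigma_{a+1})^3=1$ and the commutation relations $[\sigma_a,\sigma_b]=1$ for $|a-b|\ge 2$, that $s_{b_\nu}$ can be commuted past $s_{b_{\nu+1}}\cdots s_{b_m}$ to replace the trailing $\sigma_p$, yielding the deletion identity.

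With Strong Exchange in hand, the Deletion Theorem follows by locating the earliest place where reducedness fails. Let $j$ be the smallest index with $\ell(s_1s_2\cdots s_j)<j$; such $j$ exists because $|w|<k$. By minimality, the prefix $u:=s_1\cdots s_{j-1}$ is reduced and $\ell(us_j)<\ell(u)$, so the Strong Exchange Condition (applied to $u$ and the simple reflection $s_j$) produces an index $i\in\{1,\ldots,j-1\}$ with
\begin{equation*}
s_1s_2\cdots s_{j-1}\, s_j \;=\; s_1\cdots s_{i-1}\, s_{i+1}\cdots s_{j-1}.
\end{equation*}
Cancelling the common prefix $s_1\cdots s_{i-1}$ on the left and right-multiplying by $s_j$ (using $s_j^2=1$) rearranges this to $s_i s_{i+1}\cdots s_{j-1}=s_{i+1}s_{i+2}\cdots s_j$, which is precisely the stated identity. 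Substituting back into the original expression collapses the block $s_i\cdots s_{j-1}\,s_j$ to $s_{i+1}\cdots s_{j-1}$, so $w=s_1\cdots\widehat{s_i}\cdots\widehat{s_j}\cdots s_k$, as required.

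The only real content is the Strong Exchange Condition; the passage from Exchange to Deletion is purely formal. I expect the main delicacy to lie in tracking which simple transposition is responsible for creating the inversion $(p,p+1)$ and in verifying rigorously that it can be commuted through the remaining factors using braid and commutation moves — this is essentially the inductive heart of Matsumoto's theorem and is where the Coxeter relations are genuinely used. Everything else is bookkeeping.
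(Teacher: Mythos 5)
First, a point of reference: the paper does not prove this statement at all --- it is quoted from the literature and attributed to \cite{Humphreys} --- so there is no internal proof to compare against, and your Exchange-implies-Deletion route is precisely the standard one from that source. The second half of your argument, deducing Deletion from Exchange, is correct and complete: taking the least $j$ with $\ell(s_1\cdots s_j)<j$, noting that the prefix $s_1\cdots s_{j-1}$ is then reduced and that $\ell$ must drop at step $j$, applying Exchange, and cancelling does yield (\ref{conj1}) and the two-letter deletion. (A naming quibble: what you state and use is the ordinary Exchange Condition, for a simple reflection against a reduced word; the Strong Exchange Condition concerns arbitrary reflections and is not needed here.)

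The genuine gap is in your sketch of the Exchange Condition itself, namely the claim that $s_{b_\nu}$ ``can be commuted past $s_{b_{\nu+1}}\cdots s_{b_m}$ to replace the trailing $\sigma_p$'' by induction using the braid and commutation relations. This mechanism breaks at the first non-commuting step: if $|b_\nu-b_{\nu+1}|=1$ then $s_{b_\nu}s_{b_{\nu+1}}=s_{b_{\nu+1}}\bigl(s_{b_{\nu+1}}s_{b_\nu}s_{b_{\nu+1}}\bigr)$, and the conjugate $s_{b_{\nu+1}}s_{b_\nu}s_{b_{\nu+1}}$ is a transposition of the form $(a,\,a+2)$, which is not simple, so neither a braid relation nor a commutation relation applies to the moving element thereafter and the induction does not close. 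The standard repair uses no Coxeter relations at this point: set $t_\nu=(s_{b_1}\cdots s_{b_{\nu-1}})\,s_{b_\nu}\,(s_{b_1}\cdots s_{b_{\nu-1}})^{-1}$ and observe that $t_\nu u=s_{b_1}\cdots \widehat{s_{b_\nu}}\cdots s_{b_m}$ by cancellation alone; the only substantive claim is that when $\ell(u\sigma_p)<\ell(u)$ the transposition $u\sigma_p u^{-1}$ equals $t_\nu$ for the index $\nu$ you identified, which is exactly the inversion (root) bookkeeping you set up in your first paragraph. Insisting on realizing the identity by braid and commutation moves instead amounts to invoking Matsumoto--Tits, which is both stronger than needed and circular here, since one must already know the two words represent the same group element before that theorem applies. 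With that one step replaced, your argument is the standard proof of the cited result.
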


\begin{lemma}\label{lm:cubic}
For any $m \in \mathds{N}$, a null-homotopic word $w \in \S_{m}$ with $|w|\leq x$ satisfies $Area(w) \leq x^3$.
\end{lemma}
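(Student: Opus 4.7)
The plan is to induct on the length $k=|w|$ of the word, using Theorem \ref{Humphreys} to trim two letters at a time. The base case $k=0$ is trivial since the empty word has area $0$.

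For the inductive step, assume the claim holds for all null-homotopic words of length less than $k$, and let $w = s_1 s_2 \cdots s_k$ be null-homotopic. Since $w$ represents the identity in $S_m$, we have $|w|_{S_m} = 0 < k$, so Theorem \ref{Humphreys} produces indices $1 \leq i < j \leq k$ with $s_{i+1} s_{i+2} \cdots s_j = s_i s_{i+1} \cdots s_{j-1}$. Left-multiplying both sides by $s_i$ and using $s_i^2 = 1$ gives $s_i s_{i+1} \cdots s_j = s_{i+1} \cdots s_{j-1}$, so substituting yields the shorter null-homotopic word
$$
w' = s_1 \cdots s_{i-1}(s_{i+1} \cdots s_{j-1})s_{j+1} \cdots s_k
$$
of length $k-2$. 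By the inductive hypothesis, $\mathrm{Area}(w') \leq (k-2)^3$.

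The heart of the argument is to bound the number of Coxeter relators required to rewrite $w$ as $w'$ at the free-group level. Unwinding in the free group,
$$
w(w')^{-1} = s_1 \cdots s_k \, s_k s_{k-1} \cdots s_{j+1} s_{j-1} \cdots s_{i+1} s_{i-1} \cdots s_1,
$$
and $k-j$ applications of the involution relators $\sigma_\ell^2 = 1$ collapse the $s_\ell \cdot s_\ell$ pairs, reducing the problem to exhibiting the central identity
$$
s_{i+1}\cdots s_j \cdot s_{j-1}\cdots s_{i+1} = s_i
$$
(equivalently (\ref{conj1})) as a consequence of the Coxeter relators. This identity is precisely the content of the constructive Strong Exchange Condition: one iteratively pushes $s_i$ rightward through the segment $s_{i+1},\ldots,s_{j-1}$ using commutation relators $[\sigma_\ell,\sigma_{\ell'}]$ (when the adjacent generators are disjoint) and braid relators $(\sigma_\ell\sigma_{\ell+1})^3$ (when they are not), until $s_i$ reaches and cancels with $s_j$ via $\sigma^2 = 1$. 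Each elementary push is a single relator of bounded length, and a careful count shows that at most $O(k^2)$ such relators are used in total (each of the $O(k)$ pushes may require $O(k)$ relators to propagate the swap through the surrounding letters).

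Combining, $\mathrm{Area}(w) \leq C k^2 + (k-2)^3 \leq k^3$ once constants are absorbed into the leading cubic term (and for small $k$ the inequality is verified directly). The main obstacle is the bookkeeping in the previous paragraph: extracting from the proof of the Deletion/Strong Exchange Condition an explicit polynomial bound on the number of braid and commutation relators required for a single exchange. Since the use of Lemma \ref{lm:cubic} in (\ref{eq_calculation_exp_3}) only requires a polynomial bound in the word length, any polynomial constant can be absorbed without affecting the final exponential isoperimetric estimate.
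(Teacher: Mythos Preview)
Your overall architecture matches the paper's exactly: apply the Deletion Theorem to peel off two letters at a time, show that each such deletion costs at most $O(k^2)$ relators, and sum to get the cubic bound. The gap is that you have not actually proven the quadratic bound for a single deletion; you assert that ``a careful count shows'' it and then sketch a mechanism (``push $s_i$ rightward using commutation and braid relators'') that does not work as stated. When $s_i$ and $s_{i+1}$ are adjacent Coxeter generators, a braid move does not simply swap them: $\sigma_\ell\sigma_{\ell+1}\sigma_\ell=\sigma_{\ell+1}\sigma_\ell\sigma_{\ell+1}$ replaces a triple by a different triple, so there is no elementary ``push'' of a single letter past its neighbour. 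Your parenthetical (``each of the $O(k)$ pushes may require $O(k)$ relators to propagate the swap through the surrounding letters'') does not explain what these surrounding letters are or why the propagation terminates with a linear count; and you yourself flag this step as ``the main obstacle.''

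This is not mere bookkeeping: the quadratic bound for the identity $s_{i+1}\cdots s_j = s_i s_{i+1}\cdots s_{j-1}$ is the entire technical content of the paper's proof. The paper proceeds by induction on the length of this conjugation word, splitting into two cases: either some letter $s_{i_0}$ commutes with every letter to one side (in which case a commutation corridor of linear length strips it out and one recurses), or every letter is braided to neighbours on both sides. The second case requires a nontrivial combinatorial argument tracking the support of the partial conjugates $w_j=s_0^{s_1\cdots s_j}$ via a distance function $d(j)=d_+(w_j)-d_-(w_j)$, locating the first index where $d$ stops increasing, and using the resulting structure to find three specific letters on which a single braid move reduces to Case~1. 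None of this is visible in your sketch. Until you supply an argument at this level of detail, the proof is incomplete; a polynomial bound of unspecified degree would suffice for the downstream application, but even that is not established by what you have written.
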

\begin{proof} We first show any word $w \in \S_{m}$ given by the form
$$
s_j = s_i^{s_{i+1} s_{i+2}\cdots s_{j-1}}
$$
as in (\ref{conj1}) has at most quadratic area in $|j-i|$. ($g^{h} :=h^{-1} g h$.) Suppose $w= {s_{k+1}}({\overline{s}_0}^{s_1 \cdots s_k})$ represents the identity of $\Sigma_m$. We want to show $Area(w) \leq k^2$ by induction on $k$. %We keep the same notation in Theorem \ref{Humphreys}; $s_i = \Si_{\phi(i)}$ is a generator of $\Sigma_m$ for some $\sigma :\mathds{N} \to \{1,2,\cdots, m-1\}$.

The base case is obvious since $ {s_{2}}({\overline{s}_0}^{s_1})=1$ is a single commutation relation. We may assume no two consecutive letters in $w$ are the same. Now suppose $w= {s_{k+2}}({\overline{s}_0}^{s_1 \cdots s_{k+1}})$ is null-homotopic. We consider two cases. \\
\textbf{Case 1}. There exists $i_0$ ($k+2 \geq i_0 \geq 1$) such that either
\begin{equation}\label{comm1}
[s_{i_0}, s_i]=1 \;\,\text{for all} \;\, i_0 > i \geq 0
\end{equation} or
\begin{equation}\label{comm2}
[s_{i_0}, s_i]=1\;\, \text{for all} \,\; k+2 \geq i > i_0 .
\end{equation}

If the condition (\ref{comm1}) holds then one can apply commutation relations consecutively to decompose the diagram of $w$ into a $s_{i_0}-$corridor with length $2(i_0 -1) +1$ and a diagram of $w'=  {s_{k+1}}({\overline{s}_0}^{s_1 \cdots \widehat{s}_{i_0} \cdots  s_k})$. (See Figure \ref{fig:reduced_diagram}) Thus, by induction assumption, we have
$$
Area(w) \leq Area(w')+ 2(i_0 -1)+1 < (k+1)^2.
$$ By using an analogous argument together with condition (\ref{comm2}), one can draw the same upper bound.

\begin{figure}[h]
\begin{center}
\includegraphics[width=.55\textwidth]{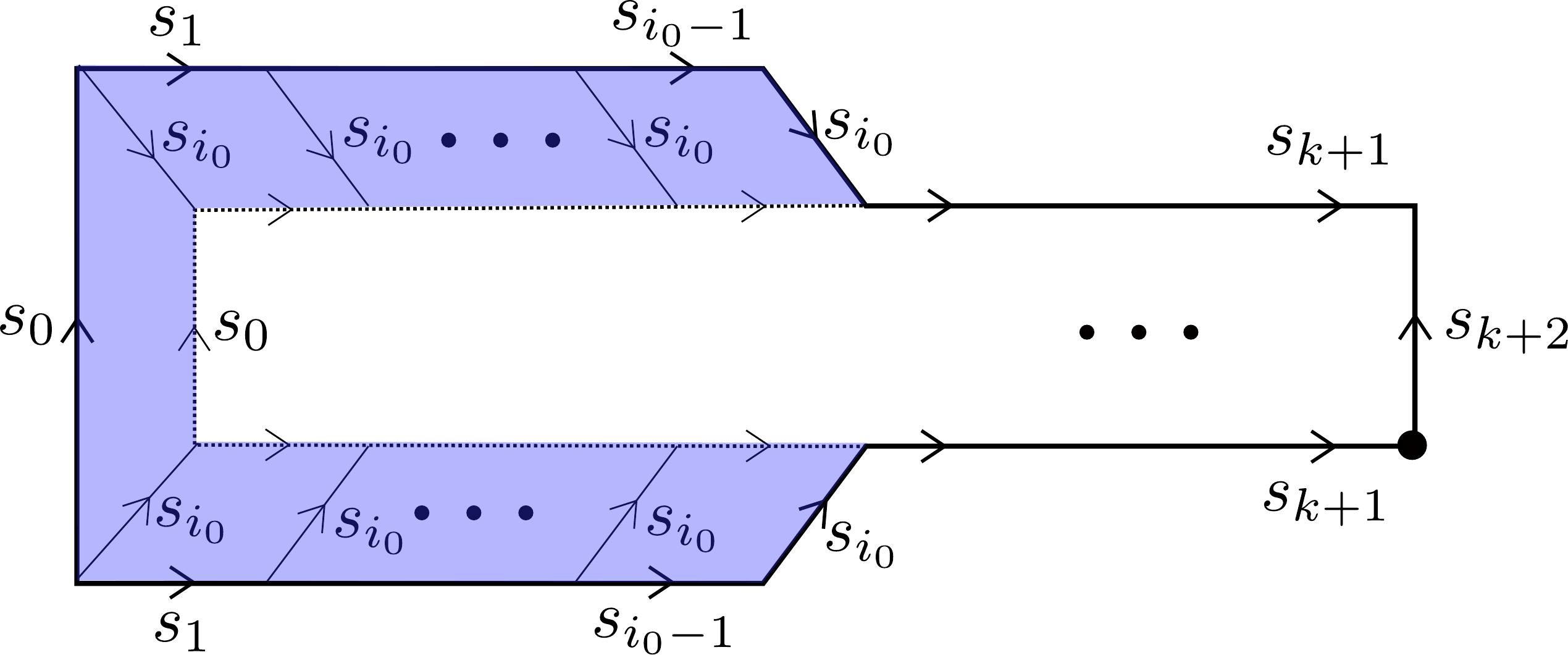}
\caption{The diagram for $w$ can be reduced by commutation relations in Case 1.} \label{fig:reduced_diagram}
\end{center}
\end{figure}
\textbf{Case 2}. For each $k+1 \geq i \geq 1$ there exists $ i'' >i > i'$ such that
\begin{equation}\label{comm3}
[s_{i''}, s_{i}] \neq 1 \;\,\text{and} \;\,[s_{i}, s_{i'}] \neq 1.
\end{equation} The above condition simply says that $\sigma(i'') = \sigma(i)\pm1$ and $\sigma(i') = \sigma(i)\pm1$. We need to examine subwords of $w$. Let $w_j$ denote the subword ${\overline{s}_0}^{s_1 \cdots  s_j}$ for $j=1, \cdots, k+1$. For $j=0$, $w_0$ is simply defined to be $s_0$. Note that each $w_j$ is a transposition since it is a conjugation of a transposition $s_0$. For a transposition $s \in \Sigma_m$ let $d_{+}(s)$ and $d_{-}(s)$ denote the two points of $supp(s)$ with $d_{+}(s) > d_{-}(s)$. The function $d :\{0, 1, \cdots, k+1\} \to \N$ measures the difference $d_{+} - d_{-}$, i.e., $d(j)=d_{+}(w_j) - d_{-}(w_j)$ for $j=0,1,\cdots k+1$. Set $D(j) = \{d_- (w_{j}), d_- (w_{j}) +1, \cdots , d_+ (w_{j})\}$. Note that $d(0)=d(k+1)=1$. So there exists $i$ such that $d(i+1) \leq d(i)$, say $i_0$ is the smallest such number. Observe that $|d(j) - d(j+1)| \leq 1$ for all $j$ since conjugating by $s_j$ introduces at most one point to $D(j-1)$. %Moreover, $j$.
The following observation is crucial to establish desired bound;\\
$$
d(w_{i_0})= i_0 +1.
$$ This identity implies
there is $1$-to-$1$ correspondence between
\begin{equation}\label{1to1}
D(i_0)- D(0)\; \text{ and}\; \{s_1, s_2, \cdots, s_{i_0}\}.
\end{equation}
This bijection and the condition (\ref{comm3}) implies
\begin{equation}\label{order}
d_+(s_i) = d_+ (s_j) +1 \Rightarrow i>j
\end{equation}
for all $i$ with $i_0 \leq i \leq 1$. % $p$ with $ d_- (w_{i_0})\leq p< d_- (w_0)$ there exist unique $2 \leq i_p \leq i_0$ such that
%$$
%d_- (s_{i_p})=p
%$$ and similarly for any $q$ with $d_+ (w_0) < q \leq  d_+ (w_{i_0})$ there exist unique
%$2 \leq i_q \leq i_0$ such that
%$$
% d_+ (s_{i_q})=q
%$$

From the $1$-to-$1$ correspondence (\ref{1to1}) and the fact that $D(s_{i_0+1}) \subset D(w_{i_0})$, there exists unique $i_0   > i'  \geq 0$ such that $s_{i_0 +1} = s_{i'}$. Say $s_{i_0 +1} = s_{i'} = (p \,\,p\!+\!1)$, transposition exchanging $p$ and $p\!+\!1$. On the other hand, if $d_-(s_{i_0 +1}) \geq d_+(s_0)$ then, from (\ref{order}), we see that there exists unique $i''$ with $i_0 \geq i'' > i'$ such that $s_{i''} = (p\!+\!1 \,\, p\!+\!2)$. Observe that $s_{i_0 +1}$ commutes with  $s_i$ for all $i$ with $i_0 \geq i >i''$ and that $s_{i'}$ commute with $ s_j$ for all $j$ with $i'' > j \geq i'$. Apply those commutation relations to rearrange letters in the expression of $w = {s_{k+2}}({\overline{s}_0}^{s_1 \cdots s_{k+1}})$ so that $s_{i_0 +1}$, $s_{i''}$ and $s_{i'}$ show up in a row. Then apply the relation
$$
s_{i_0 +1} s_{i''}s_{i'} = s_{i'} s_{i''} s_{i'}= s_{i''} s_{i'} s_{i''},
$$ where the second identity comes from the braid relation $((p \, p\!+\!1)(p\!+\!1 \,\, p\!+\!2))^3 =1$. Now one applies the argument of \textbf{Case 1} since $s_{i''}$ commute with all $s_i$ for $i \leq i''$. In all, from

\begin{align*}
s_1 \cdots s_{i_0} s_{i_0 +1} &= s_1 \cdots s_{i'} \cdots  s_{i''} \cdots s_{i_0} s_{i_0 +1} \\
 &= s_1 \cdots \widehat{s}_{i'} \cdots  s_{i''-1} (s_{i'} s_{i''} s_{i_0 +1}) s_{i''+1} \cdots s_{i_0}%&(\ref{eq_rel_3}), \; R'_2,\; R'_3
 \\
& =s_1 \cdots \widehat{s}_{i'} \cdots  s_{i''-1} (s_{i''} s_{i'} s_{i''}) s_{i''+1} \cdots s_{i_0}%&(\ref{eq_rel_3}), \;(\ref{eq_rel_3})
\\
& = s_{i''} s_1 \cdots \widehat{s}_{i'} \cdots  s_{i''-1} ( s_{i'} s_{i''}) s_{i''+1} \cdots s_{i_0}
\end{align*}
we have
$$
w= {s_{k+2}}({\overline{s}_0}^{s_1 \cdots s_{i''} \cdots s_{k+1}})= {s_{k+2}}({\overline{s}_0}^{s_{i''} s_1 \cdots \widehat{s}_{i''} \cdots s_{k+1}})={s_{k+2}}({\overline{s}_0}^{ s_1 \cdots \widehat{s}_{i''} \cdots s_{k+1}})
$$
The number of required relators in the above process is at most
$$
2\{(i_0 -i'' -1) + (i'' - i' -1) +1+ i''\} + k^2 \leq k^2 + 2k < (k+1)^2
$$
since $i_0 \leq {k \over 2}$. So we have shown any word in the form of (\ref{conj1}) has area at most $|j-i|^2 \leq k^2$.

This means whenever one applies the identity (\ref{conj1}) to a null-homotopic word $s_1 \cdots s_x \in \Sigma_m$, the number of relators is bounded by $x^2$. At the same time one can reduce the number of generators in the expression by two. So we have $\delta_{\Sigma_m} (x) \leq x^3$.
\end{proof}

%\begin{corollary}\label{coro_quartic_bound_fin_symm}
%For each $n,r \in \N$, $\delta_{\Sigma_{3,r}}(x)\preccurlyeq x^4$.
%\end{corollary}
%\begin{proof}
%By Theorem \ref{thm_iso_presen_symm}, $\Sigma_{n,r}\cong S_{nr}$. Under this isomorphism each generator of $S_{nr}$ has length $O(r)$.
%\end{proof}

As discussed in Remark \ref{lm_Si=ker}, one of important feature of $\ho_n$ is $\Sigma_{n,\infty} \hookrightarrow \ho_n$ when $n\geq 3$. If a word $w \in \ho_3$ is null-homotopic then $w \in \Sigma_{3,\infty}  = \cup_r \Sigma_{3,r}$ (Lemma \ref{lm_A_n_iso_Sigma}) It is natural to ask the minimum $r$ so that $w \in \Sigma_{3,r}$. We have a reasonable bound for $r$ as well as the length of $w$ in $\Sigma_{3,r}$.

\begin{lemma}\label{lm:canonical_bound}
There is a canonical way so that any null-homotopic word $w \in \ho_3 $ with $|w|\leq x$ can be written as a word $w' \in \Sigma_{3,x}$ with length at most $O(x^5)$.
\end{lemma}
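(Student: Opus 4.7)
The plan is to rewrite $w$ canonically in two passes: first, inside $\ho_3$ and using a controlled number of defining relators from Theorem \ref{thm_fin_presen_Hough}, reduce $w$ to a product of $O(x^2)$ conjugates of $\alpha^{\pm 1}$; then translate each such conjugate (a transposition supported near the origin) into a word in the generators of $\Sigma_{3,x}$ from equation (\ref{eq_presen_fin_symm_new}).

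For the first pass, write $w = s_1 s_2 \cdots s_k$ with $k \leq x$ and each $s_i \in \{g_1^{\pm 1}, g_2^{\pm 1}, \alpha^{\pm 1}\}$. I sweep through $w$ from left to right, and whenever an $\alpha^{\pm 1}$-letter is met, push it to the leftmost position using the free-group identity $\alpha^{\pm 1} g_j^{\pm 1} = g_j^{\pm 1} \cdot (g_j^{\mp 1} \alpha^{\pm 1} g_j^{\pm 1})$; after the sweep, $w = A \cdot G$, where $A$ is a product of at most $k$ conjugates $\alpha^{u_j}$ with $|u_j| \leq x$, and $G$ is a word of length $\leq k$ in $\{g_1^{\pm 1}, g_2^{\pm 1}\}$. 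Next I bubble-sort $G$ using the relator $[g_1,g_2] = \alpha$, which allows one to swap adjacent $g_1^{\pm 1}$- and $g_2^{\pm 1}$-letters at the cost of inserting one $\alpha^{\pm 1}$-letter per swap; this costs at most $\binom{k}{2} = O(x^2)$ applications of the relator and introduces at most $O(x^2)$ new $\alpha$-letters, each of which I then push leftward into $A$ by the same free identity. Because $w=1$ forces $\varphi(w)=0$, the sorted pure-$g$ residue has zero exponent sum in each of $g_1, g_2$ and reduces freely to the empty word. The outcome is $w = A' := \prod_{j=1}^{N} (\alpha^{u_j})^{\epsilon_j}$ with $N = O(x^2)$, each $|u_j| = O(x)$, and $\epsilon_j \in \{\pm 1\}$; the total number of $\ho_3$-relators used in this rewriting is $O(x^2)$, matching the ``gap area $\leq x^2$'' required by the outline.

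For the second pass, interpret each $\alpha^{u_j}$ as a concrete transposition in $\Sigma_{3,\infty}$ exchanging $(1,1)\cdot u_j$ and $(1,2)\cdot u_j$. Since $|u_j| = O(x)$ both endpoints lie in $B_{3,cx}$ for a universal constant $c$, so each $\alpha^{u_j}$ is a transposition belonging to $\Sigma_{3,x}$ (absorbing $c$ into the $O(x^5)$ estimate). A standard argument in the symmetric group $S_{3x}$ writes any transposition as a product of $O(x)$ adjacent transpositions from the generating set $\{\alpha_p^i, \alpha_0^{j+1}\}$ of $\Sigma_{3,x}$: slide one endpoint next to the other via adjacent swaps and conjugate the corresponding basic transposition back. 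Concatenating over the $N$ factors produces $w' \in \Sigma_{3,x}$ of length at most $N \cdot O(x) = O(x^3) \leq O(x^5)$, completing the bound.

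The hard part will be controlling the conjugating words $u_j$ throughout the bubble-sort phase: new $\alpha$-letters are produced at interior positions of an evolving word whose total length can reach $O(x^2)$, and to absorb each into $A$ requires pushing it past the surviving letters. The key invariant to maintain is that the conjugating prefix $u_j$ of each such new $\alpha$ records only the $g$-letters crossed (the intervening $\alpha$-letters contribute only inner conjugations and no additional $g$-displacement), so $|u_j|$ stays bounded by $|G| \leq k = O(x)$ rather than by the full evolving word length. Once this invariant is established, each transposition $\alpha^{u_j}$ is supported in $B_{3,O(x)}$ and the polynomial length and area bounds assemble as above.
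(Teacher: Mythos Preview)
Your proposal is correct and follows essentially the same two-pass strategy as the paper: push all $\alpha$-letters left via the free identity $g\alpha \equiv \alpha^{g^{-1}}g$, then sort the residual $g$-word using $[g_1,g_2]=\alpha$ (producing $O(x^2)$ new transpositions with conjugators of length $O(x)$), and finally express each resulting transposition in the $\Sigma_{3,x}$-generators. The only difference is cosmetic: you translate each transposition directly into $O(x)$ adjacent generators of $\Sigma_{3,x}$, whereas the paper detours through the isomorphism $\chi^\ast:\Sigma_{3,x}\cong S_{3x}$ and Lemma~\ref{lm:Humph_fact}, obtaining a looser $O(x^3)$ bound per transposition---so your total length estimate is actually $O(x^3)$, comfortably inside the stated $O(x^5)$.
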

\begin{proof} Suppose $w =1_{\mathcal{H}_3}$ with $|w|_{\mathcal{H}_3} =x$. Then $w$ can be written as $w=g_1^{m_1} g_2^{n_1} \alpha^{\epsilon_1} \cdots g_1^{m_k} g_2^{n_k} \alpha^{\epsilon_k}$ for some $m_i, n_i \in \mathds{Z}$, $\epsilon_i \in \{-1, 0, 1\}$.

Use the identity
\begin{equation}\label{con}
g \alpha \equiv \alpha^{g^{-1}} g
\end{equation}
to rewrite $w$ as
\begin{equation}\label{w}
w\equiv f \, g_1^{m_1} g_2^{n_1} \cdots g_1^{m_k} g_2^{n_k},
\end{equation}
where $f$ is a product of at most $k(<x)$ many transpositions whose supports belong to the ball $ B_{3,x}$ of radius $x$. The symbol $\equiv$ indicates that no relators are required in the above rewriting. The identity in free group $[g_1, g_2]=g_1 g_2 g_1^{-1} g_2^{-1}= \alpha$ allows one to exchange $g_1$ and $g_2$;
\begin{equation}\label{eq_comm_identity_free_1}
g_2 g_1 = \alpha^{-1} g_1 g_2, \; g_2 g_1^{-1} = \alpha^{g_1} g_1^{-1} g_2,
\end{equation}
\begin{equation}\label{eq_comm_identity_free_2}
g_2^{-1} g_1 = \alpha^{g_1 g_2 g_1^{-1}} g_1 g_2^{-1},\;\text{and}\; g_2^{-1} g_1^{-1} = (\alpha^{-1})^{g_2 g_1} g_1^{-1} g_2^{-1}.
\end{equation}

Use identities (\ref{eq_comm_identity_free_1}) and (\ref{eq_comm_identity_free_2}) together with (\ref{con}) to rewrite (\ref{w}) again as
\begin{equation}\label{eq_comm_identity_free_3}
w=f \,f_1 f_2 \cdots f_{\ell}\,g_1^{0} g_2^{0}
\end{equation}
where each $f_i$ is a transposition of $B_{3,x}$. By applying identities (\ref{eq_comm_identity_free_1}) and (\ref{eq_comm_identity_free_2}) consecutively one can show that $g_2^{n_i} g_1^{m_j}$ can be written as $f'  g_1^{m_j}g_2^{n_i}$ and that $f'$ is a product of at most $|n_i m_j|$ transpositions of $B_{3,x}$. This means that the number $\ell$ in expression (\ref{eq_comm_identity_free_3}) is bounded by
$$
(\sum  |m_j|)(\sum  |n_i|) \leq x^2.
$$
We need to bound the length of transpositions $f_i$'s in (\ref{eq_comm_identity_free_3}), $i=1, \cdots, \ell$. The isomorphism $\chi^\ast:\Sigma_{3,x} \cong S_{3x}$ (Theorem \ref{thm_iso_presen_symm}) transforms each $f_i \in \Sigma_{3,x}$ into a transposition $\chi^\ast(f_i)$ of the set $\{1, \cdots, 3x\}$. By Lemma \ref{lm:Humph_fact}, $\chi^\ast(f_i)$ has length at most $O(x^2)$ in $S_{3x}$. Since each generator of $S_{3x}$ becomes a word of $S_{3x}$ of length $\leq 2x$ under $\chi^\ast$, $\chi^\ast(f_i)$ correspond to a word of $\Sigma_{3,x}$ with length at most $O(x^3)$. In all, the element $f \,f_1 f_2 \cdots f_{\ell}$ in the expression (\ref{eq_comm_identity_free_3}) has length at most $O(x^5)$.

Finally we need to calculate the area between two loops, namely one given by $w$ and the other one given by $w'$. Let $B>0$ denote the maximum number of relations that we used in (\ref{eq_comm_identity_free_1}) and (\ref{eq_comm_identity_free_2}). Note that whenever one exchanges $g_1$ and $g_2$ by applying an identity of (\ref{eq_comm_identity_free_1}) and (\ref{eq_comm_identity_free_2}), a transposition is produced. We already calculated the number of those exchanges of two generators, which is bounded by $x^2$.
\end{proof}

The argument if the proof of the above lemma extends to general cases $n\geq 4$.
\begin{lemma}\label{lm:canonical_bound_2}
There is a canonical way so that any null-homotopic word $w \in \ho_n $ with $|w|\leq x$ can be written as a word in $\Sigma_{n,x}$ with length at most $O(x^5)$.
\end{lemma}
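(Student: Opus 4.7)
The plan is to mimic the proof of Lemma \ref{lm:canonical_bound} with the generating set enlarged from $\{g_1, g_2, \alpha\}$ to $\{g_1, \ldots, g_{n-1}, \alpha\}$. Given a null-homotopic word $w \in \ho_n$ with $|w| \leq x$, first write $w$ as an alternating product of powers of $g_1, \ldots, g_{n-1}$ and factors $\alpha^{\pm 1}$ whose total length is at most $x$. Then apply the identity $g\alpha \equiv \alpha^{g^{-1}} g$ repeatedly to push every factor $\alpha^{\pm 1}$ to the leftmost position, yielding
$$
w \;=\; f \cdot g_{i_1}^{m_1} g_{i_2}^{m_2} \cdots g_{i_k}^{m_k},
$$
where $f$ is a product of at most $x$ transpositions (conjugates of $\alpha$ by subwords of length $\leq x$), each with support contained in $B_{n,x}$.

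Next, for each pair $1 \leq i < j \leq n-1$, the relator $[g_i,g_j] = \alpha$ together with $\alpha^{\bar{g}_i} = \alpha^{\bar{g}_j}$ yields commutation identities entirely analogous to (\ref{eq_comm_identity_free_1}) and (\ref{eq_comm_identity_free_2}), for instance $g_j g_i = \alpha^{-1} g_i g_j$ and $g_j g_i^{-1} = \alpha^{g_i} g_i^{-1} g_j$, valid uniformly for every $i \neq j$. Apply these to sort the tail into the canonical form $g_1^{N_1} g_2^{N_2} \cdots g_{n-1}^{N_{n-1}}$. Each swap of two adjacent letters $g_i^{\pm 1}$ and $g_j^{\pm 1}$ introduces exactly one new transposition-factor, and the number of swaps is bounded by the number of inversions
$$
\sum_{i<j}\Bigl(\sum_{\ell} |m_{i,\ell}|\Bigr)\Bigl(\sum_{\ell} |m_{j,\ell}|\Bigr) \;\leq\; x^2.
$$
Since $w$ is null-homotopic, $\varphi(w) = 0$ and each $N_i$ vanishes, so the tail trivializes freely. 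After pushing the newly-produced $\alpha$-conjugates to the left by further applications of $g\alpha \equiv \alpha^{g^{-1}} g$, what remains is an expression of $w$ as a product of $O(x^2)$ transpositions, each with support in $B_{n,x}$.

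To bound the length of this product in the generating set of the presentation (\ref{eq_presen_fin_symm_new}) for $\Sigma_{n,x}$, transport each transposition $\tau$ to $S_{nx}$ via the isomorphism $\chi^{\ast}$ of Theorem \ref{thm_iso_presen_symm}. Applying the argument from the proof of Lemma \ref{lm:canonical_bound} (based on Lemma \ref{lm:Humph_fact}) expresses $\chi^{\ast}(\tau)$ in the Coxeter generators of $S_{nx}$ with length $O(x^2)$; pulling back through $\chi^{\ast}$ costs a factor of at most $2nx$ per Coxeter generator (with $n$ fixed), producing a word of length $O(x^3)$ in $\Sigma_{n,x}$. Multiplying by the $O(x^2)$ transpositions gives a word $w' \in \Sigma_{n,x}$ of length $O(x^5)$, as required.

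The principal obstacle will be the bookkeeping: verifying that every conjugate of $\alpha$ introduced either by the initial left-push or by the sorting step has support confined to $B_{n,x}$, and that the required commutation identities for general pairs $(g_i, g_j)$ with $i \neq j$ can be written down uniformly. Both facts rest on the symmetry of the presentation in Theorem \ref{thm_fin_presen_Hough}, where the relators $\alpha = [g_i, g_j]$ and $\alpha^{\bar{g}_i} = \alpha^{\bar{g}_j}$ hold for all $i \neq j$; so no new idea is required beyond the $n=3$ case, merely a consistent indexing over the enlarged generating set.
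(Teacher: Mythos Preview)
Your proposal is correct and follows essentially the same approach as the paper: extend the identities (\ref{con}), (\ref{eq_comm_identity_free_1}), (\ref{eq_comm_identity_free_2}) from the pair $(g_1,g_2)$ to every pair $(g_i,g_j)$, sort the tail at a cost of at most $x^2$ transpositions supported in $B_{n,x}$, and then invoke Theorem \ref{thm_iso_presen_symm} and Lemma \ref{lm:Humph_fact} to bound each transposition by $O(x^3)$ in $\Sigma_{n,x}$. In fact your write-up is more detailed than the paper's own proof, which simply asserts that the $n=3$ argument goes through verbatim once one observes that the commutation identities hold for all pairs of generators.
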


\begin{proof} Identities (\ref{con})(\ref{eq_comm_identity_free_1})(\ref{eq_comm_identity_free_2}) holds for $g\in \ho_n$ and for all pair of generators $g_i, g_j$ of $\ho_n$. Moreover the isomorphism between  $\Sigma_{3,x} $ and $S_{3x}$ (Theorem \ref{thm_iso_presen_symm}) allows one to bound the length of each transposition of $B_{n,r}$ by $O(x^3)$. As before we establish an upper bounds: $O(x^5)$ for the length of rewritten word $w'$, and $x^2$ for the area between two loops $w$ and $w'$.
\end{proof}

The following fact can be found in \cite{Humphreys}.
\begin{lemma}\label{lm:Humph_fact}
Let $S_m$ be the finite symmetric group with the Coxeter system. For $\Si \in S_m$, set $r_i(\Si) = |\{j: i<j\; \text{but}\; \Si(i) > \Si(j)\}|$. The length of an element $\Si$ is given by $\sum_1^m r_i(\Si)$. In particular, there exist a unique element with the largest length $\sum_1^{m-1} i$.
\end{lemma}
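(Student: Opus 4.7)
The plan is to prove the two standard claims behind the inversion formula: first that every simple transposition changes the inversion count by exactly $\pm 1$, and second that the minimum number of simple transpositions needed to write $\sigma$ is achieved by a bubble-sort style reduction. Together these yield $\ell(\sigma) = \sum_i r_i(\sigma)$, where $\ell$ denotes the Coxeter length with respect to the generators $\sigma_1,\ldots,\sigma_{m-1}$.

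First I would analyze how the total inversion count $\mathrm{inv}(\sigma) := \sum_{i=1}^{m} r_i(\sigma)$ changes under right multiplication by a simple transposition $\sigma_k = (k\,\, k{+}1)$. The permutation $\sigma \sigma_k$ differs from $\sigma$ only by swapping the values at positions $k$ and $k{+}1$, so the only pair whose inversion status can change is $(k, k{+}1)$. Hence $\mathrm{inv}(\sigma \sigma_k) = \mathrm{inv}(\sigma) \pm 1$, where the sign is $+1$ if $\sigma(k) < \sigma(k{+}1)$ and $-1$ otherwise. By induction on the length of an expression, this gives the lower bound $\ell(\sigma) \geq \mathrm{inv}(\sigma)$, since reducing from $\sigma$ to the identity (which has zero inversions) requires at least $\mathrm{inv}(\sigma)$ applications of generators.

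For the matching upper bound, I would describe an explicit algorithm producing a word of length $\mathrm{inv}(\sigma)$ representing $\sigma$. Scan for any adjacent pair of positions $(k, k{+}1)$ with $\sigma(k) > \sigma(k{+}1)$; right-multiplying by $\sigma_k$ reduces $\mathrm{inv}$ by $1$. Iterating, one reaches the identity in exactly $\mathrm{inv}(\sigma)$ steps, and reading these generators in reverse order gives an expression for $\sigma$ of length $\mathrm{inv}(\sigma)$. Combining both bounds, $\ell(\sigma) = \mathrm{inv}(\sigma) = \sum_i r_i(\sigma)$.

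For the final assertion about the longest element, note that $r_i(\sigma) \leq m - i$ for every $i$, since there are only $m-i$ indices $j > i$. Equality holds for all $i$ simultaneously precisely when $\sigma(i) > \sigma(j)$ for every $j > i$, which forces the strictly decreasing permutation $w_0: i \mapsto m+1-i$. This element is unique and has length $\sum_{i=1}^{m-1}(m-i) = \sum_{i=1}^{m-1} i$. The only potential subtlety, and the step I would be most careful about, is cleanly handling the sign change in the first step and confirming that the bubble-sort procedure always terminates at the identity (which is immediate since $\mathrm{inv}$ is a nonnegative integer strictly decreasing at each step, and is zero iff $\sigma$ is the identity).
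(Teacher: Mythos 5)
Your proof is correct; the paper gives no proof of this lemma at all, simply citing Humphreys, and your argument (the $\pm 1$ change of the inversion count under right multiplication by a simple transposition, the bubble-sort upper bound, and the identification of the unique longest element as the order-reversing permutation $i \mapsto m+1-i$) is exactly the standard one found there. The only point to tighten is the claim that ``the only pair whose inversion status can change is $(k,k+1)$'': pairs of positions involving exactly one of $k, k+1$ can individually change status, but they do so in cancelling pairs, so the net change is still $\pm 1$.
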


\begin{lemma}\label{exp_u_k_v_k_3}
Any relator $R$ of $\Sigma_{3,r}$ requires at most $O(e^x)$ relators of $ \ho_n $.
\end{lemma}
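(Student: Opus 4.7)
The plan is to treat the six families of relators $R_1', \ldots, R_6'$ of $\Sigma_{3,r}$ listed in presentation (\ref{eq_presen_fin_symm_new}) one family at a time, and in each case exhibit a van Kampen diagram over $P_3$ whose area is polynomial in $x$; since $x^c \preccurlyeq e^x$ for every $c$, the desired bound follows immediately. By Lemma \ref{lm_fin_gen_H_n}, every transposition generator $\A \in \{\A_p^i,\A_0^{j+1}\}$ of $\Sigma_{3,r}$ can be written canonically as $\A = \alpha^{u_\A}$, where $u_\A$ is a word of length $O(x)$ in $\{g_1,g_2\}^{\pm1}$; substituting these expressions into any relator $R$ of $\Sigma_{3,r}$ produces a null-homotopic word $W$ of length $O(x)$ over $\{g_1,g_2,\alpha\}^{\pm1}$ whose area over $P_3$ I must estimate.

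For the involutive family $R_1'$, the word $(u^{-1}\alpha u)^2$ reduces freely to $u^{-1}\alpha^2 u$ and is killed by a single application of $r_1'$, yielding area $O(1)$. For the braid families $R_5'$ and $R_6'$, each such $R$ has the shape $(\sigma\tau)^3$ for transpositions $\sigma,\tau$ sharing exactly one point; using the transitivity of $\ho_3$ on ordered triples of distinct points of $Y_3$ I choose a single word $v$ of length $O(x)$ with $\sigma=\alpha^v$ and $\tau=(\alpha^{g_1})^v$, so that $(\sigma\tau)^3 = v^{-1}(\alpha\alpha^{g_1})^3 v$ is a single conjugate of $r_2'$. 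Rewriting the canonical expressions $\alpha^{u_\sigma}, \alpha^{u_\tau}$ into this joint form is a chain of $O(x)$ free reductions and applications of $r_1'$, for a total area of $O(x)$.

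For the commutation families $R_2',R_3',R_4'$, each $R$ is $[\sigma,\tau]$ for transpositions of disjoint support. The key input is identity (\ref{stone2}) of Lemma \ref{lm_key_identity}, whose simultaneous induction derives $[\alpha,\alpha^{\overline{g}_1^k}]=1$ from $P_3$ in $O(k)=O(x)$ relators. Proceeding as in the braid case but with $\alpha^{g_1}$ replaced by $\alpha^{\overline{g}_1^k}$ for the appropriate $k$ encoding the gap between the supports of $\sigma$ and $\tau$, I align the pair into $\alpha^v$ and $(\alpha^{\overline{g}_1^k})^v$ at cost $O(x)$; then $[\sigma,\tau] = v^{-1}[\alpha,\alpha^{\overline{g}_1^k}]v$ is filled with area $O(x)$. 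In all three cases the area of $W$ over $P_3$ is at most $O(x^2)$, which is $\preccurlyeq e^x$, completing the proof.

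The main obstacle I expect is the commutation family: explicitly translating the chain of substitutions in Lemma \ref{lm_key_identity} into an area bound linear in $k$, and splicing this with the alignment rewriting without incurring hidden exponential costs. The involutive and braid families are essentially one-line conjugations of the base relators $r_1'$ and $r_2'$, and only the commutation case genuinely exercises the inductive depth of $P_3$, so careful accounting in that step is where the substance of the argument lies.
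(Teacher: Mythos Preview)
Your treatment of the involutive and braid families $R'_1, R'_5, R'_6$ is essentially the same as the paper's: each relator is a conjugate of $r'_1$ or $r'_2$ and costs $O(1)$ area once aligned.  The issue is your handling of the commutation families $R'_2, R'_3, R'_4$.

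You assert that the simultaneous induction in Lemma~\ref{lm_key_identity} derives $[\alpha,\alpha^{\overline{g}_1^{k}}]=1$ in $O(k)$ relators, but you have not justified this count, and it is not what the induction actually delivers.  Write $U_k$ for the identity $\alpha^{\overline g_1^{k}}=\alpha^{\overline g_2^{k}}$ and $V_k$ for $[\alpha,\alpha^{\overline g_1^{k+1}}]=1$.  The derivation of $U_k$ in the proof of Lemma~\ref{lm_key_identity} invokes $V_{k-1}$, then $U_{k-1}$, then $U_{k-2}$; the derivation of $V_k$ invokes $U_{k-1}$ together with $O(k)$ commutations.  Unwinding this recursion gives a Fibonacci-type recurrence $\mathrm{Area}(U_k)\gtrsim \mathrm{Area}(U_{k-1})+\mathrm{Area}(U_{k-2})$, hence exponential growth in $k$, not linear.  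This is exactly why the paper only claims $O(e^x)$ for these relators (and in the remark following Theorem~\ref{thm:exp_bd_n} explicitly notes that if $u_k,v_k$ genuinely have exponential area then the exponential bound on $\delta_{\ho_3}$ is sharp).

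In other words, your argument, if the $O(k)$ claim were correct, would upgrade the paper's exponential isoperimetric inequality to a polynomial one.  That would be a stronger theorem than what is being proved, and nothing in your proposal supplies the missing ingredient.  Either replace the $O(k)$ claim with the honest $O(e^k)$ bound coming from the recursive structure (which recovers the paper's result), or provide an explicit filling of $v_k=[\alpha,\alpha^{\overline g_1^{k+1}}]$ of area polynomial in $k$---which would be new content beyond the paper.
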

\begin{proof}Each generator of $\Sigma_{3,r}$ is an involution. Observe that this can be expressed as appropriate conjugation of $\A^2 =1$ in $\ho_3$. Similarly a braid relator $(\Si_i \Si_{i+1})^3 =1$ can be expressed by conjugation of the relator $(\alpha \alpha^{g_1}\!)^3$. Thus each relator of two types of $\Sigma_{3,r}$ requires area only $1$ in $\ho_n$. For commutation relator of $S_{n,r}$ we need to examine two sequences of words $u_k \!=\!\alpha^{\overline{g}_1 ^k} \alpha^{\overline{g}_2 ^{k}}$ and $v_k\! =\![\alpha ,\alpha^{\overline{g}_1^{k+1}}\!]$. Note that we already check $u_k$ and $u_k$ represent the identity in Lemma \ref{lm_key_identity}. One can use simultaneous induction on $k$ together with the argument in the proof of Lemma \ref{lm_key_identity} to show areas of $u_k$ and $v_k$ are bounded above by $O(e^x)$. The fillings depicted in Figure \ref{diagram} require $O(e^x)$ relators. Since one can produce all commutation relation of $\Sigma_{3,r}$ by taking appropriate conjugation of $v_k$, all the relators of $\Sigma_{3,r}$ require at most $\mathcal{O}(e^x)$ relators of $ \ho_n $.
\end{proof}

\begin{figure}[h]
\includegraphics[width=1.0\textwidth]{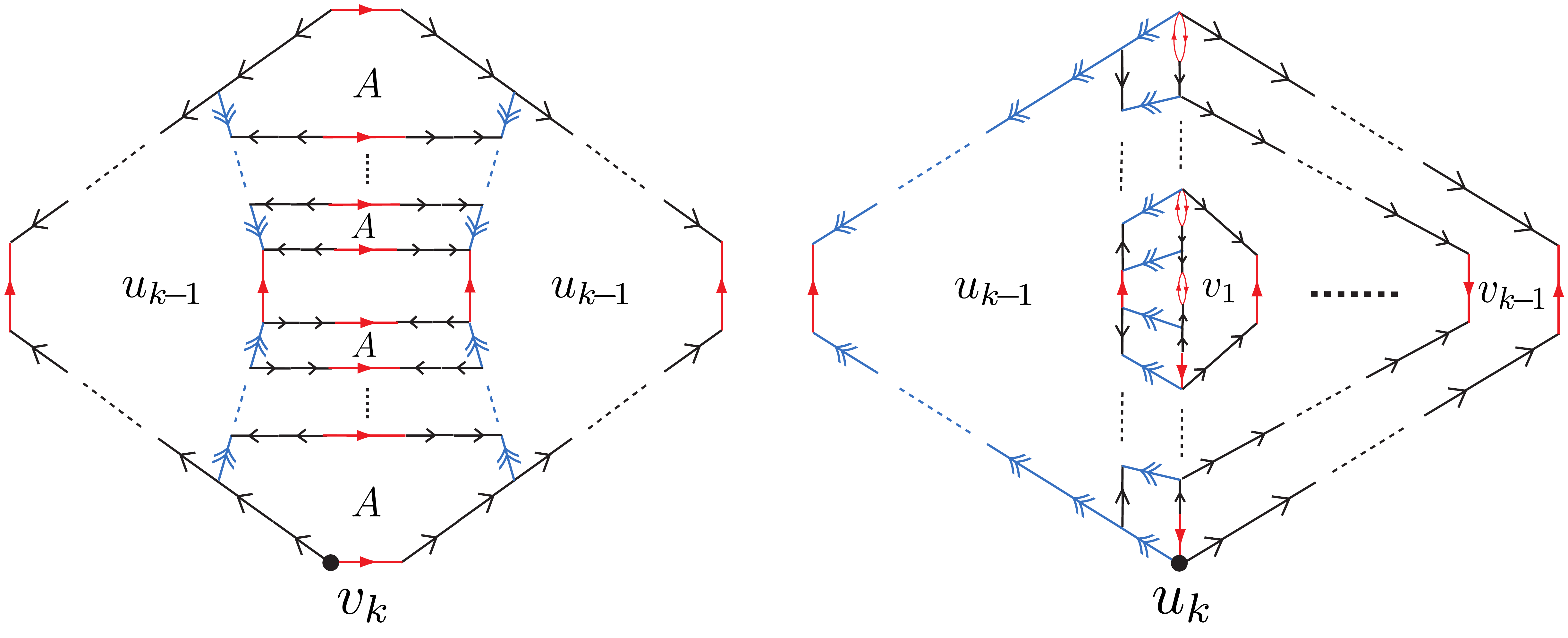}
\caption{Diagrams of $u_k \!=\!\alpha^{\overline{g}_1 ^k} \alpha^{\overline{g}_2 ^{k}}$ and $v_k\! =\![\alpha ,\alpha^{\overline{g}_1^{k+1}}\!]$ $(A:\alpha^{g_1^{-2}g_2} =\alpha^{g_1^{-2}})$}  \label{diagram}
\end{figure}

\textit{Proof of Theorem \ref{thm:exp_bd_n}.} Fix $n \geq 4$. We follow the $3$-step plan that we applied to $\ho_3$. For Step $1$, Lemma \ref{lm:canonical_bound_2} provides the same upper bounds: $O(x^5)$ for the length of rewritten word, and $O(x^2)$ for the gap between two words. The upper bound established in Lemma \ref{lm:cubic} does not depend on the subscript. This means we still have the same cubic upper bound for $\delta_{\Sigma_{n,x}}(x)$. One can show first two types of relations of $\Sigma_{n,x}$ require area $1$ by taking appropriate conjugations as before. Note that there are variations of words $u_k$ and $v_k$ in $\ho_n$. Instead of using $g_1$ and $g_2$ one can use $g_i$ and $g_i$ ($1\leq i < j \leq n-1$) to generate commutation relations of $\Sigma_{n,x}$. Again, exponential upper bounds for those words can be established by simultaneous induction on the length of two sequences of words. In all, we establish exponential upper bound for $\ho_n$.

%\begin{proof}Induction on $n$ together with the base case $\delta_{\ho_3}(x)\preceq e^x$. Assume the Dehn function of $\ho_n$ is bound above by exponential function for $n \geq 3$. From the presentation for  $\ho_{n+1}$ we see that, a Van Kampen diagram $D_w$ for a word $w \in \ho_{n+1}$, naturally contains $g_{n}$-corridors. (If it does not, we have nothing to prove.) Cut the diagram $D_w$ along all of those $g_{n}$-corridors to decompose $D_w$ into diagrams $D_1, \cdots, D_k$. Note that the boundary each diagram $D_i$ corresponds to a word in $\ho_n$. Moreover $\ho_n$ is linearly distorted in $\ho_{n+1}$ by Lemma \ref{distorsion}. This means each area of $D_i$ is bounded above by $e^{Ax+A}$ for some $A >0$. Since $k\leq x$, $Area(w) \leq x e^{Ax+A}\preceq e^x$.
%\end{proof}

%\begin{lemma}\label{distorsion}
%$\ho_n$ is linearly distorted in $\ho_{n+1}$ for $n\geq 3$, i.e., \\$max\{|w|_{\ho_n} : w %\in\ho_n,\; |w|_{\ho_{n+1}}\leq x\}\leq Bx$ for some $B>0$.
%\end{lemma}

\begin{remark}
We remark that if words $u_k$ and $v_k$ (and their variations for general cases) have exponential areas then exponential upper bounds for $\delta_{\ho_3}$ ($\delta_{\ho_3}$)is sharp.
\end{remark}
%Note that $\ho_2$ is not linearly distorted in $\ho_3$. For example the word $w = g_1^k\, %g_2^k\, \overline{g}_1^k \,\overline{g}_2^k$ requires $\mathcal{O}(k^2)$ genrators of %$\ho_2$. However, the word $w'= g_1^k\, g_3^k\, \overline{g}_1^k \,\overline{g}_3^k = %g_1^k\, g_2^k \,\overline{g}_1^k\, \overline{g}_2^k$ is not distorted in $\ho_4$.

%\Large{Geena Lee}

%\section{Related questions about $\aho_n$ ($n\geq 3$).}\label{sec:question}

\bibliographystyle{siam}
\bibliography{thesisbib}

\end{document}